\numberwithin{equation}{section}
\newtheorem{thm}{Theorem}[section]
\newtheorem{prop}[thm]{Proposition}
\newtheorem{lem}[thm]{Lemma}
\newtheorem{cor}[thm]{Corollary}
\theoremstyle{remark}
\newtheorem{rem}{Remark}[section]
\newtheorem{defn}{Definition}
\newcommand{\BBB}{\mathbb}
\newcommand{\R}{{\BBB R}}
\newcommand{\Z}{{\BBB Z}}
\newcommand{\N}{{\BBB N}}
\newcommand{\C}{{\BBB C}}
\newcommand{\LR}[1]{{\langle {#1} \rangle }}
\newcommand{\al}{\alpha}
\newcommand{\ga}{\gamma}
\newcommand{\te}{\theta}
\newcommand{\vp}{\varphi}
\newcommand{\e}{\varepsilon}
\newcommand{\ta}{\tau}
\newcommand{\p}{\partial}
\newcommand{\la}{\lambda}
\newcommand{\de}{\delta}
\newcommand{\om}{\omega}
\newcommand{\Om}{\Omega}
\newcommand{\supp}{\operatorname{supp}}
\newcommand{\I}{\infty}
\newcommand{\D}{{\mathcal D}}
\newcommand{\E}{{\mathcal E}}
\newcommand{\RE}{\operatorname{Re}}
\newcommand{\IM}{\operatorname{Im}}
\newcommand{\EQS}[1]{\begin{align} #1 \end{align}}
\newcommand{\EQQS}[1]{\begin{align*} #1 \end{align*}}
\newcommand{\Sp}{\mathcal{S'}}
\newcommand{\F}{\mathcal{F}}
\newcommand{\1}{{\mathbf 1}}
\newcommand{\dP}{\dot{P}}
\newcommand{\ti}{\widetilde}
\newcommand{\ov}{\overline}
\newcommand{\les}{\lesssim}
\newcommand{\gts}{\gtrsim}
\title[DNLS with nonvanishing boundary conditions]
{Local well-posedness for the derivative nonlinear Schr\"odinger equation with nonvanishing boundary conditions}
\author[L. Molinet and T. Tanaka]{Luc Molinet and Tomoyuki Tanaka}
\address[L. Molinet]{Institut Denis Poisson, Universit\'e de Tours, Universit\'e d'Orl\'eans, CNRS, Parc Grandmont, 37200 Tours, France}
\email[L. Molinet]{Luc.Molinet@lmpt.univ-tours.fr}
\address[T. Tanaka]{Graduate School of Engineering Science, Yokohama National University, Yokohama, Kanagawa, 240-8501, Japan}
\email[T. Tanaka]{tanaka-tomoyuki-fp@ynu.ac.jp}
\keywords{derivative NLS, nonlinear dispersive equation, well-posedness, unconditional uniqueness, energy method}
\begin{document}

\begin{abstract}
  We consider the derivative nonlinear Schr\"odinger equation on the real line, with a background function $\psi(t,x)\in L^\infty(\mathbb{R}^2)$ that satisfies suitable conditions.
  Such a function may, for example, be a non-decaying solution of the equation, such as a dark soliton.
  By developing the energy method with correction terms, we prove that the Cauchy problem for perturbations around such an $L^\infty$ function is unconditionally locally well-posed in $ H^s(\mathbb{R}) $ for $ s>3/4 $.
  As a byproduct, we also establish local well-posedness in the Zhidkov space.
\end{abstract}

\maketitle
\setcounter{page}{001}

\section{Introduction}

We consider the Cauchy problem for the derivative nonlinear Schr\"odinger equation (DNLS for short):
\EQS{\label{eq1}
  i \p_t v+\p_x^2 v
  =i\la|v|^2 \p_x v+ i \mu v^2\p_x \ov{v}, \quad (t,x)\in\R^2,
}
where $\la,\mu\in\R$ are constants and $v(t,x):\R^2\to \C$ is an unknown function.
The equation \eqref{eq1} with $\la=2\mu$ arises in plasma physics, and its solution describes the nonlinear evolution of Alfv\'en waves propagating along a static magnetic field (see, e.g., \cite{MOMT,M76}).
We point out that in this case, the right hand side takes the divergence form, i.e., $i\mu\p_x(|v|^2 v)$.
Moreover, \eqref{eq1} with $\la=2\mu$ is also known to be completely integrable \cite{KN78}, which implies that the equation admits infinitely many conservation laws.

\subsection{Settings}\label{subs_set}
It is well-known that \eqref{eq1} possesses, at least formally, two conservation laws for solutions that do vanish at infinity.
These are often referred to as the mass and the energy, given respectively by
\EQQS{
  M[v]
   &=\int_\R |v(x)|^2 dx,\\
  E[v]
   &=\int_\R |v'|^2dx+\frac{\la+\mu}{2}\IM\int_\R |v|^2v\bar{v}'dx
    +\frac{(\la+\mu)\mu}{6}\int_\R |v|^6 dx,
}
where the prime denotes spatial derivatives.
Moreover, the momentum
\EQQS{
  P[v]
   &=\frac{1}{2}\IM\int_\R v\bar{v}' dx+\frac{\mu}{4}\int_\R |v|^4 dx.
}
is also conserved for \eqref{eq1} with $\la=2\mu$.
This quantity can be interpreted as the Hamiltonian in the sense that the equation takes the form $\p_t v=\p_x P'[v]$, where $P'[v]$ denotes the functional derivative of $P$.
These functionals are well-defined for functions belonging to $L^2$-based Sobolev spaces $H^s(\R)$ for some $s>0$, and they play a fundamental role in the study of the long time behavior of solutions.
It is therefore natural to consider the Cauchy problem for \eqref{eq1} with the initial data in $H^s(\R)$.
Indeed, a substantial body of research exists on this topic; see Subsection \ref{subs_prev} for previous results on well-posedness in $H^s(\R)$.

In contrast, one can find solutions outside the $L^2$-framework; for example, a solution to \eqref{eq1} with nonvanishing boundary conditions at infinity can be constructed through an elementary (though tedious) computation.
Concretely, the function
\EQS{\label{soliton1}
  \vp(t,x)=A e^{i\theta(x+2A^2t)} \sqrt{1-h(x+2A^2 t)}
}
is the solution to \eqref{eq1} with $(\la,\mu)=(2,1)$, where $A>0$ is a constant and
\EQQS{
  h(x)=\frac{2}{\sqrt{2}\cosh(A^2x)+1},\quad
  \theta(x)=\frac{A^2}{4}\int_0^x \frac{3h(y)^2-2 h(y)}{1-h(y)}dy.
}
We remark that $\te(x)$ is well-defined, since $1-h(x)\ge 1-h(0)=3-2\sqrt{2}>0$.
It is easy to see that $\lim_{|x|\to\I}|\vp(t,x)|=A>0$ for any $t\in\R$ and $h(x)$ is a Schwartz function.
Such solutions are called \textit{dark solitons} in the literature, and they do not belong to $L^2(\R)$.
Moreover, dark solitons are classified as \textit{black solitons} or \textit{gray solitons}, depending on whether they satisfy $|v(0,0)|=0$ or $|v(0,0)|>0$, respectively (see Chapter 5 of \cite{Agr}).
In the case of \eqref{soliton1}, we have $|\vp(0,0)|=(\sqrt{2}-1)A>0$.
There are also other types of solutions, called \textit{kinks}, which do not decay at infinity and whose profiles have different limits as $x\to-\I$ and $x\to\I$ (see, e.g., \cite{LCTP}).

In this work, we are interested in the well-posedness of the Cauchy problem for \eqref{eq1} under nonvanishing boundary conditions, within a framework that accommodates solutions such as \eqref{soliton1}.
One approach to handling this type of problem is to decompose the solution $v(t,x)$ as
\EQS{\label{decom1}
  v(t,x)=u(t,x)+\psi(t,x),
}
where $\psi\in L^{\I}(\R^2; \C)$ is a given function (possibly a special solution such as \eqref{soliton1}), and the goal is to construct $u(t,x)\in L^\I([0,T];H^s(\R))$ at low regularity.
For that purpose, using the decomposition \eqref{decom1}, we rewrite \eqref{eq1} as
\EQS{\label{eq2}
  \begin{aligned}
    i\p_t u+\p_x^2 u
    &=F(u,\psi)-\Psi,
  \end{aligned}
}
where
\EQS{\label{def_F}
  &\begin{aligned}
    F(u,\psi)&:=i\la(|u|^2\p_x u+|u|^2\p_x \psi +2\p_x u \RE u\bar{\psi}
     +2\p_x \psi \RE u\bar{\psi} +|\psi|^2 \p_x u)\\
    &\quad+i\mu(u^2\p_x \bar{u}+u^2\p_x \bar{\psi}+2u\psi \p_x\bar{u}
     +2u\psi \p_x\bar{\psi}+\psi^2\p_x \bar{u}),
  \end{aligned}\\
 \label{def_psi}
  &\Psi(\psi;t,x):=i\p_t \psi+\p_x^2 \psi-i\la|\psi|^2\p_x \psi-i\mu\psi^2 \p_x \bar{\psi}.
}
In what follows, we study the Cauchy problem for \eqref{eq2} in $H^s(\R)$, with a given function $\psi$.
Certain conditions are imposed on $\psi$; see \eqref{hyp_psi} and \eqref{hyp_psi2}.
These hypotheses are not artificial, but rather natural enough to capture certain special solutions.
Indeed, a straightforward calculation shows that the function \eqref{soliton1} satisfies \eqref{hyp_psi2} (and thus also \eqref{hyp_psi}).
The presence of a function $\psi$ that does not belong to $H^s(\R)$ causes certain difficulties, particularly when $s<1$, even if $\psi$ is sufficiently regular.
We will discuss this issue in Subsection \ref{subs_main} and Section \ref{nonregular}.

\subsection{Resonant and Nonresonant Interactions}

In the equation \eqref{eq1}, the nonlinearity contains a derivative, which makes it difficult to obtain closed estimates.
This kind of difficulty is commonly referred to as \textit{derivative loss}.
To eliminate derivative loss, the resonance function plays an important role.
To see this, we present a heuristic argument.
For simplicity, we may assume $(\la,\mu)=(2,1)$.
Consider the pullback of the solution defined by $\theta(t,x)=U(-t)v(t,x)$, where $v(t)\in C([0,T];H^s(\R))$ is the solution to \eqref{eq1} and $U(t)$ is the linear propagator (see Subsection \ref{subs_notation} for its definition).
By the Duhamel principle, the equation \eqref{eq1} can be rewritten as
\EQS{\label{eq_INT1}
  \theta(t)=v(0)-i\int_0^t \p_x U(-t')(|U(t')\te(t')|^2 U(t')\te(t'))dt'.
}
Taking the Fourier transform in spatial variable to \eqref{eq_INT1}, one obtains
\EQQS{
  \hat{\theta}(t,\xi)
  =\hat{v}(0,\xi)+\int_0^t
   \int_{\xi=\xi_1+\xi_2+\xi_3} e^{it'\Om}
   \xi\hat{\te}(t',\xi_1)\hat{\bar{\te}}(t',\xi_2)
   \hat{\te}(t',\xi_3)d\xi_1d\xi_2d\xi_3dt',
}
where the phase function $\Om$, called the \textit{resonance function}, is defined by
\EQS{\label{eq_INT4}
  \Om(\xi_1,\xi_2,\xi_3)
  :=(\xi_1+\xi_2+\xi_3)^2-\xi_1^2+\xi_2^2-\xi_3^2
  =2(\xi_1+\xi_2)(\xi_2+\xi_3).
}
If $\Om$ is sufficiently large (i.e., the interaction is nonresonant), one can integrate by parts in time, gaining a factor of $\Om^{-1}$ in the denominator.
This gain allows one to cancel the derivative $\xi$ in the nonlinear term.
This idea, now referred to as the \textit{normal form reduction}, was used in \cite{BIT11} for the KdV equation to show the unconditional uniqueness.
It has since been refined by many authors (see, e.g., \cite{GKO13,K19p}).
We also essentially adopt this approach, as developed in \cite{MV15}, to handle nonresonant interactions via using the Bourgain-type space $X^{s-1,1}$ (see Subsection \ref{subs_funct} for its definition).
We remark that nonlinear interactions are (nearly) resonant when $\Om$ is not sufficiently large.
Needless to say, the threshold for what constitutes ``sufficiently large" depends on the specific problem under consideration.

In the above observation, it is important to consider where the derivative falls.
As we will discuss in Section \ref{sec_apri}, from the perspective of the resonance function, the nonlinear term $v^2\p_x \bar{v}$ is significantly easier to handle than the term $|v|^2\p_x v$.
Moreover, it is well-known that the term $|v|^2\p_x v$ can be eliminated via a gauge transformation, which we discuss in the next subsection.

\subsection{Gauge Transformation}\label{subs_gaug}

The gauge transformation is a nonlinear transform that transforms the original equation into another one in which the most problematic term (or interaction) is removed.
In the present context, this term is $|v|^2 \p_x v$.
To illustrate this, we consider \eqref{eq1} with $\la\neq0$, and define a new function $\theta(t,x)$ by
\EQS{\label{eq_INT3}
  \theta(t,x):=v(t,x)\exp\bigg(i\de\int_{-\I}^x |v(t,y)|^2dy\bigg),
}
where $v$ is the solution to \eqref{eq1} and $\de\in\R$ is a constant to be chosen later.
It is not difficult to check that this $\theta$ satisfies the equation:
\EQQS{
  i\p_t  \theta+\p_x^2  \theta
  =i(\la+2\de)| \theta|^2\p_x  \theta+i(\mu+2\de) \theta^2\p_x\bar{ \theta}
   +\frac{\de}{2}(\la-3\mu-2\de)| \theta|^4 \theta.
}
By choosing $\de$ so that $\la+2\de=0$, the unfavorable term $| \theta|^2 \p_x  \theta$ is eliminated.
Then, we study the Cauchy problem for this transformed equation instead of \eqref{eq1}, and invert the gauge transform in the final step.
This idea is widely used in several dispersive equations with derivative nonlinearities (see, e.g., \cite{KT23p,Taka99,Tao04}).

It is important for functions to belong to $L^2(\R)$ in order for the gauge transformation to be well-defined.
Now, we come back to our problem \eqref{eq2}.
As in \eqref{def_F}, there are three terms in which the derivative lands on $u$:
\EQS{\label{eq_INT2}
  |u|^2 \p_x u,\quad
  \p_x u \RE u \bar{\psi}, \quad
  |\psi|^2 \p_x u.
}
As discussed above, the first term can be handled using the gauge transformation.
Moreover, if we try to define a new function $w(t,x)$ by
\EQQS{
  w(t,x)
  :=u(t,x)\exp\bigg(i\int_{-\I}^x \de_1|u(t,y)|^2
   +\de_2 \RE u(t,y) \ov{\psi(t,y)} dy\bigg),
}
we may hope to remove both the first and second terms in \eqref{eq_INT2} by choosing $\de_1,\de_2$ appropriately.
However, this definition of $w$ poses a problem, since we cannot guarantee that $u\bar{\psi}$ belongs to $ L^1(\R)$.
Here, $\psi$ is a given function satisfying \eqref{hyp_psi}.
For this reason, we do not attempt to apply the gauge transformation, and instead treat the equation \eqref{eq2} directly.
This leads to several difficulties, discussed in Subsection \ref{subs_main}.
Finally, since $ \psi$ does not belong to $ L^2(\R) $ we notice that we would encounter the same type of difficulty if one attempted to remove the third term using a gauge transformation.
On the other hand, this term is harmless within our approach, since we employ an energy method.
By integration by parts, we can shift the derivative onto $ \psi$, which is acceptable under our hypotheses.

\subsection{Previous Results on Well-Posedness}\label{subs_prev}

The local well-posedness for \eqref{eq1} has been extensively studied over the past four decades, beginning with the work of Tsutsumi and Fukuda \cite{TF,TF2}.
They established the local existence of solutions in $H^s(\R)$ for $s>3/2$, using the parabolic regularization and compactness arguments.
They also constructed global solutions in $H^2(\R)$ by utilizing the conservation law associated with the $H^2$-norm.
Subsequently, Hayashi \cite{H93} and Hayashi and Ozawa \cite{HO92} introduced the gauge transformation \eqref{eq_INT3} and constructed global solutions in $H^1(\R)$ under certain conditions on the size of the initial data.
Takaoka \cite{Taka99} combined this idea with a contraction argument in Bourgain spaces \cite{B93}, and proved the local well-posedness in $H^s(\R)$ for $s\ge 1/2$.
The work of Bourgain \cite{B93} revolutionized the well-posedness theory for dispersive equations, and is referred to as the \textit{Fourier restriction norm method} (see also \cite{KM93}).
This method effectively captures the time oscillation of the resonance function $\Om$, as defined in \eqref{eq_INT4}, and has been applied in various settings.
Biagioni and Linares \cite{BL01} showed that the data-to-solution map (also called the flow map) is not uniformly continuous in $H^s(\R)$ for $s<1/2$, which implies that the result of \cite{Taka99} is optimal at least when one relies on a contraction argument.
See also \cite{CKSTT01,CKSTT02,FHI17,O96,W15} for results on global well-posedness and \cite{H06,MST} for related topics.
It is worth mentioning that these results are established without using the complete integrability of \eqref{eq1} (which would require the assumption $\la=2\mu$), and are robust under certain perturbations.
Recently, Harrop-Griffiths, Killip, Ntekoume, and Vi\c{s}an \cite{HGKNV} established the global well-posedness for \eqref{eq1} with $\la=2\mu$ in $H^s(\R)$ for $s\ge 0$, exploiting the method of commuting flows, which is rooted in the complete integrability of the equation.
See also \cite{BP22,JLPS18,KV19} for results along these lines.

We now turn to results concerning the well-posedness under nonvanishing boundary conditions.
To this end, it is convenient to work with the Zhidkov space $Y^s(\R)$, introduced by Zhidkov \cite{Z01} (see Definition \ref{def_Zhid}).
The Zhidkov space is an effective function space for capturing solutions such as dark solitons and bore-like data \cite{ILS98}.
Van Tin \cite{VT22} showed the local well-posedness for \eqref{eq1} with $(\la,\mu)=(0,1)$ in $Y^4(\R)$.
He also obtained the well-posedness in space of the form $\varphi+ H^k(\R)$ for $k=1,2$, where $\varphi\in Y^4(\R)$.
In the case $k=1$, the result requires that $\|\p_x \varphi\|_{H^2}$, along with another norm, is sufficiently small.
It is worth mentioning that he excludes the nonlinearity $|u|^2\p_x u$ from the outset.
For further results on the well-posedness of dispersive equations in Zhidkov spaces, see, e.g., \cite{May,G05,P23}.
To the best of our knowledge, no results had been established for the equation \eqref{eq1} with general values of $(\la,\mu)$ under nonvanishing boundary conditions.
In this paper, we establish the (unconditional) local well-posedness in Theorems \ref{thm1} and \ref{thm2}, under assumptions \eqref{hyp_psi} and \eqref{hyp_psi2}, respectively.
As a corollary, we also obtain the well-posedness in the Zhidkov space in Theorem \ref{thm3}, which, in turn, improves upon the result of \cite{VT22}.

\subsection{Main Results, Difficulties and Strategy}\label{subs_main}

In this subsection, we first fix notation related to well-posedness and then introduce the main results.
We also discuss the difficulties and our approach to addressing them.
We begin by stating the assumptions on $\psi$.
Throughout this paper, we assume that  the given function $\psi(t,x)$ satisfies the following hypotheses:
\EQS{\label{hyp_psi}
  \begin{cases}
    J_x^{s+1+\e}\psi\in L^\I(\R^2),\\
    \p_t \psi\in L^\I(\R^2),\\
    \Psi\in L^\I(\R;H^{s+\e}(\R)),
  \end{cases}
}
where $\e>0$ is a fixed sufficiently small constant and $s\ge 0$.
See Subsection \ref{subs_notation} for the definition of $J_x$.
We remark that any constant function satisfies \eqref{hyp_psi}.
The aim of this paper is to prove the two local well-posedness results (Theorems \ref{thm1} and \ref{thm2}) for \eqref{eq2}.
For that purpose, we first recall the definition of a solution.

\begin{defn}[Solution]
  Let $T>0$ and $s>1/2$.
  We say that $u\in L^\I([0,T];H^s(\R))$ is a solution to \eqref{eq2} with the initial data $u(0,x)=u_0(x)$ if $u$ satisfies \eqref{eq2} in the distributional sense, i.e., it holds that for any test function $\vp\in C_c^\I([-T,T]\times \R)$,
  \EQS{\label{def_sol}
    \int_0^\I \int_\R
    \big\{(\ov{i\p_t {\vp}+\p_x^2{\vp}}) u +\bar{\vp}(F(u,\psi)-\Psi)\big\}dxdt'
    +i\int_\R \bar{\vp}(0,\cdot)u_0dx=0.
  }
\end{defn}

\begin{rem}\label{rem_sol}
  By a standard abstract argument, we can deduce some properties of the solution $u\in L^\I([0,T];H^s(\R))$.
  It holds that $u\in C([0,T];H^{s'}(\R))$ for any $s'<s$ and $u\in C_w([0,T];H^{s}(\R))$.
  Moreover, $u$ satisfies $u(0,x)=u_0(x)$ and the Duhamel formula associated with \eqref{eq2} in $C([0,T];H^{-2}(\R))$.
  See Remark 2.1 in \cite{MPV19} for details.
\end{rem}

We also recall the notion of unconditional local well-posedness, introduced by Kato \cite{Kato95}.
Unconditional uniqueness (abbreviated as UU) means that uniqueness holds without reference to auxiliary spaces.
For \eqref{eq1}, the unconditional local well-posedness in $H^s(\R)$ for $s>3/2$ follows from the classical energy method.
Thus, it is meaningful to investigate UU for $s\le 3/2$.
This was done by Win \cite{W08}  in $H^1(\R)$, while Mosincat and Yoon \cite{MY20} proved the UU in $H^{1/2}(\R)$.

\begin{defn}\label{def_UWP}
  We say that the Cauchy problem for \eqref{eq2} is unconditionally locally well-posed in $ H^s(\R)$ if the following conditions hold:
  \begin{itemize}
    \item For any initial data $ u_0\in H^s(\R) $, there exist $ T=T(\|u_0\|_{H^s})>0 $ and a solution $ u \in C([0,T]; H^s(\R)) $ to \eqref{eq2} emanating from $ u_0 $.
    \item The solution $u$ is unique in the class $ L^\I([0,T]; H^s(\R))$.
    \item For any $ R>0$, the flow map $ u_0 \mapsto u $ is continuous from the ball of radius $R$ centered at the origin in $ H^s(\R) $  into $C([0,T(R)]; H^s(\R))$.
  \end{itemize}
\end{defn}

We are now ready to present the main results of this article.

\begin{thm}\label{thm1}
  Assume that $\psi$ satisfies \eqref{hyp_psi} with $s\ge 0$ as stated below.
  Then, the Cauchy problem for \eqref{eq2} is unconditionally locally well-posed in $H^s(\R)$ under the following conditions:
  \begin{itemize}
    \item for any $s>3/4$ when $\la=2\mu$ or  $ \lambda=0 $.
    \item for any $s\ge 1$ otherwise.
  \end{itemize}
  Moreover, the maximal time of existence $T$ satisfies $T\ge g(\|u_0\|_{H^{\frac{3}{4}+}})>0$ when $\la=2\mu$ or $ \lambda=0$, and $T \ge g(\|u_0\|_{H^1})>0$ otherwise, where $g$ is a smooth decreasing function.
\end{thm}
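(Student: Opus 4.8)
The proof rests on two ingredients obtained by an energy method in which the $H^s$ (resp.\ a lower order) norm is corrected by terms that absorb the nonresonant part of the worst cubic interactions: an a priori estimate for smooth solutions, and a difference estimate for two solutions. The plan is as follows.

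\textbf{Step 1 (A priori estimate).} I would first work with a solution $u$ smooth enough that all manipulations are justified and compute $\frac{d}{dt}\|J_x^s u(t)\|_{L^2}^2$. Among the contributions of $F(u,\psi)-\Psi$, the terms in which $\p_x$ falls on $\psi$ or on $\bar\psi$, together with the forcing $\Psi$, are estimated directly using $J_x^{s+1+\e}\psi\in L^\infty$, $\Psi\in L^\infty_t H^{s+\e}_x$, Sobolev embedding and the fractional Leibniz rule, after one integration by parts when a derivative still lands on $u$. The dangerous contributions are those coming from the three terms in \eqref{eq_INT2} together with $u^2\p_x\bar u$, $u\psi\p_x\bar u$, $\psi^2\p_x\bar u$. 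For the terms carrying $\p_x\bar u$ the resonance function $\Om$ in \eqref{eq_INT4} is never small on the relevant region, so after passing to the Fourier side one integrates by parts in time, gains $\Om^{-1}$, and closes. For $|u|^2\p_x u$ and its $\psi$-variants one splits the frequency interaction into a resonant region (where $|\Om|$ is at most comparable to the square of the largest frequency) and a nonresonant region; on the latter one again integrates by parts in time, which is where the space $X^{s-1,1}$ enters and which produces quartic and, after iteration, higher order boundary and remainder terms, as well as terms in which $\p_t u$ is replaced through the equation \eqref{eq2}. All of these, and the resonant contributions, are bounded by a polynomial in $\|u(t)\|_{H^s}$; the resonant trilinear estimate for $|u|^2\p_x u$ is precisely what forces $s>3/4$ when $\la=2\mu$ (where the divergence structure yields an extra derivative from the symbol) or $\la=0$ (where this term is absent), while for $\la\neq 2\mu,0$ the argument only closes at $s\ge 1$. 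Adding the correction terms to the left-hand side gives $\frac{d}{dt}\mathcal E(u)\lesssim (1+\mathcal E(u))^N$ with $\mathcal E(u)\sim\|u\|_{H^s}^2$, hence an $H^s$ bound on a time $T=g(\|u_0\|_{H^s})>0$, $g$ smooth and decreasing.

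\textbf{Step 2 (Existence).} I would approximate $u_0\in H^s$ by data $u_0^n$ of regularity as high as the hypotheses on $\psi$ allow, solve \eqref{eq2} with data $u_0^n$ by the classical energy method, and use Step 1 to propagate these solutions on a common interval $[0,T]$, $T=g(\|u_0\|_{H^s})$, with a uniform bound in $L^\infty([0,T];H^s)$. A Banach--Alaoglu and Aubin--Lions compactness argument then yields a subsequence converging weak-$*$ in $L^\infty_t H^s_x$ and strongly in $C([0,T];H^{s'}_{\mathrm{loc}})$ for every $s'<s$ to a limit $u$ solving \eqref{eq2} in the sense of \eqref{def_sol}; by Remark \ref{rem_sol}, $u\in C_w([0,T];H^s)\cap C([0,T];H^{s'})$. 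Strong $H^s$ continuity follows in the usual way: the a priori bound applied forward and backward in time gives $\limsup_{t\to t_0}\|u(t)\|_{H^s}\le\|u(t_0)\|_{H^s}$, and weak lower semicontinuity of the norm gives the reverse inequality, so $t\mapsto\|u(t)\|_{H^s}$ is continuous and $u\in C([0,T];H^s)$.

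\textbf{Step 3 (Uniqueness and continuous dependence; main obstacle).} Given two solutions $u_1,u_2\in L^\infty([0,T];H^s)$, I would first establish that each $u_i$ enjoys the extra localized-in-time a priori regularity of Step 1 (in particular an $X^{s-1,1}$ bound on short subintervals), which is legitimate since the estimates there only use $L^\infty_t H^s_x$ control of the right-hand side. Then $w:=u_1-u_2$ solves the difference equation, and the corrected-energy computation of Step 1, carried out at a regularity level $\sigma\le s$ for which the differences of the cubic terms close, gives $\frac{d}{dt}\mathcal E_{\mathrm{diff}}(w)\lesssim C(\|u_1\|_{L^\infty_tH^s},\|u_2\|_{L^\infty_tH^s})\,\mathcal E_{\mathrm{diff}}(w)$ with $\mathcal E_{\mathrm{diff}}(w)\sim\|w\|_{H^\sigma}^2+\text{corrections}$; Gronwall's inequality and $w(0)=0$ force $w\equiv 0$. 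For continuous dependence I would run a Bona--Smith type argument: approximate $u_0$ by smooth data, use the difference estimate to get convergence of the corresponding solutions in $H^\sigma$, upgrade this to convergence in $C([0,T];H^s)$ using the a priori control of the $H^s$ norms, and conclude by a triangle inequality for arbitrary data converging in $H^s$. The crux of the whole argument is the resonant trilinear estimate for $|u|^2\p_x u$ and for the mixed terms $\p_x u\,\RE(u\bar\psi)$ and $|\psi|^2\p_x u$: since $\psi\notin L^2$ the gauge transform \eqref{eq_INT3} is unavailable, so one must instead integrate by parts onto $\psi$, and it is this estimate, together with the bookkeeping of the higher order terms generated by iterating the time integration by parts, that pins down the thresholds $s>3/4$ (for $\la=2\mu$ or $\la=0$) and $s\ge 1$ (otherwise) and has to be pushed down to the lower level $\sigma$ in the difference estimate.
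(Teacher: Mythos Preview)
Your overall architecture (a priori estimate, approximation/compactness for existence, difference estimate for uniqueness, Bona--Smith for continuity) matches the paper, but there is a genuine misidentification of where the regularity thresholds come from, and this matters.

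\textbf{The a priori estimate does not see the distinction between $\lambda=2\mu$, $\lambda=0$, and the general case.} In the paper (Proposition~\ref{prop_apri}) the $H^s$ a priori estimate closes for every $s\ge 3/4$ and for \emph{every} $(\lambda,\mu)$. The reason is that in the energy identity for a single solution one can always symmetrize: for each of $|u|^2\partial_x u$, $\RE(u\bar\psi)\partial_x u$, $|\psi|^2\partial_x u$, $u^2\partial_x\bar u$, etc., one pairs the term with its complex conjugate to obtain a structure $\Pi_N(u,\bar u)\cdot(\text{lower order})$ (Lemma~\ref{lem_comm1}), which moves the derivative off the high frequency. The divergence form plays no role here, and no ``correction terms'' are added to the energy; the nonresonant interactions are handled by Bourgain-space estimates in $X^{s-1,1}$ (Lemmas~\ref{lem_Bourgain1}--\ref{lem_Bourgain4}), and the resonant ones by the refined Strichartz estimate (Proposition~\ref{prop_stri1}). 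So your claim that Step~1 ``only closes at $s\ge 1$'' for general $(\lambda,\mu)$ is wrong.

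\textbf{The thresholds come from the difference estimate.} The difference $w=u_1-u_2$ satisfies an equation with less symmetry: terms such as $u_1\bar w\,\partial_x u_2$ or $zw\,\partial_x\bar u_2$ cannot be symmetrized as above, because $w$ and $u_2$ are distinct functions. When $\lambda=2\mu$ the nonlinearity is in divergence form, the difference equation inherits a divergence structure (see \eqref{eq_w}), and one can estimate $w$ in $H^{s-1}$ for $s>3/4$ (Proposition~\ref{prop_dif1}, estimate \eqref{eq_dif1}). When $\lambda\neq 2\mu$, the non-divergence terms $g_8,\dots,g_{14}$ force one to estimate $w$ in $L^2$, which requires $u_i\in H^1$, i.e.\ $s\ge 1$ (estimate \eqref{eq_dif2}). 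The case $\lambda=0$ is special because the only non-divergence terms carry $\partial_x\bar w$, which enjoys the strong resonance relation $|\Omega|\sim N_1^2$ (Lemma~\ref{lem_res1}, \eqref{eq_res2}); this allows the difference to be closed in $H^{s-1/2}$ for $s>3/4$ without any modified energy. Your Step~3 should therefore identify $\sigma=s-1$, $\sigma=0$, or $\sigma=s-1/2$ according to the three cases, and explain that the obstruction in the general case is the \emph{low$\times$high$\times$high$\to$low} interaction in $u\bar u\,\partial_x w$-type terms, which is resonant and not even well-defined for $u\in H^s$, $w\in H^{s-1}$ when $s<1$.

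Finally, the ``correction terms'' you invoke in Step~1 belong to the proof of Theorem~\ref{thm2}, not Theorem~\ref{thm1}; for the latter no modified energy is used.
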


\begin{rem}
As explained in the paragraph just after Theorem \ref{thm2}, the results for $ \lambda=2\mu $ and $ \lambda=0 $ are not of exactly the same nature.
Indeed, in the first case we estimate the difference of two solutions in $ H^{s-1}$, whereas in the second case, the difference has to be  estimated in $ H^{s-\frac{1}{2}}$.
\end{rem}

If we assume further $\p_x \psi\in L^\I(\R;L^1(\R))$, we can show the unconditional local well-posedness below $s=1$ even when $ \lambda\neq0,2\mu$.
Once again, the function \eqref{soliton1} serves as an example which satisfies all the required conditions.

\begin{thm}\label{thm2}
  Let $3/4<s<1$.
  Assume that $\psi$ satisfies \eqref{hyp_psi} with this value of $s$ and $\p_x \psi\in L^\I(\R;L^1(\R))$.
  Then, the Cauchy problem for \eqref{eq2} is unconditionally locally well-posed in $H^s(\R)$ with a maximal time of existence $T\ge g(\|u_0\|_{H^{\frac 34 +}})>0$, where $g$ is a smooth decreasing function.
\end{thm}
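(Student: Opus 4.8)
The plan is to reproduce, with the necessary modifications, the scheme used for Theorem~\ref{thm1}; the only structural novelty is that, when $\lambda\neq 0,2\mu$, the divergence form $i\mu\p_x(|v|^2v)$ that was exploited in the case $\lambda=2\mu$ is no longer available and must be replaced by the decay hypothesis $\p_x\psi\in L^\I(\R;L^1(\R))$. As in Theorem~\ref{thm1} we do \emph{not} perform a gauge transformation; instead we couple an $H^s$ energy estimate equipped with normal form correction terms (to treat the nonresonant part of the nonlinearity, placed in the Bourgain-type space $X^{s-1,1}$) with a direct analysis of the (near-)resonant part, and we then run the analogous estimate for the difference of two solutions at the regularity level $H^{s-1}$.

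\emph{A priori bound in $H^s$.} For a smooth solution $u$ of \eqref{eq2} one differentiates $\|J_x^s u(t)\|_{L^2}^2$, substitutes \eqref{eq2}, and absorbs the inhomogeneity using $\Psi\in L^\I_tH^{s+\e}_x$. The contribution of $F(u,\psi)$ from \eqref{def_F} is split, on the Fourier side, according to the size of the resonance function $\Om$ of \eqref{eq_INT4}. On the region where $|\Om|$ is large one integrates by parts in time, gaining the factor $\Om^{-1}$ needed to absorb the derivative at the cost of boundary terms and a remainder estimated in $X^{s-1,1}$; the factors of $\psi$ occurring there are controlled by the fractional Leibniz rule together with $J_x^{s+1+\e}\psi\in L^\I$, exactly as in Theorem~\ref{thm1}. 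The purely cubic (near-)resonant interactions from $i\lambda|u|^2\p_x u+i\mu u^2\p_x\bar u$ reduce, using $\IM\bigl(i\lambda|u|^2\,\p_x J_x^s u\,\overline{J_x^s u}\bigr)=-\tfrac\lambda2\,\p_x(|u|^2)\,|J_x^s u|^2$ and the favorable resonance of the $u^2\p_x\bar u$ term, to a $\dot H^{1/2}$-type quantity that the resonance constraint renders admissible for $s>1/2$. The remaining (near-)resonant contributions are those of \eqref{def_F} carrying both the derivative on $u$ and a factor of $\psi$: the terms $|\psi|^2\p_x u$, $\psi^2\p_x\bar u$ and the pieces containing $\RE(u\,\p_x\bar\psi)$ already reduce, under \eqref{hyp_psi} alone, to integrals such as $\int\p_x(|\psi|^2)\,|J_x^s u|^2$, while the delicate one is the resonant part of $\p_x u\,\RE(u\bar\psi)$. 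For $\lambda=2\mu$ this is absorbed into the divergence structure; for $\lambda\neq 0,2\mu$ we instead perform a further resonance decomposition, send the nonresonant portion to $X^{s-1,1}$, and in the resonant regime integrate by parts in $x$ so as to transfer the derivative from $u$ onto $\psi$. This produces (up to easier Leibniz remainders) a term in which $\p_x\psi$ multiplies a quadratic expression in $J_x^s u$ together with companion pieces carrying one further derivative; writing $\p_x\psi\in L^p(\R)$ for every $p\in[1,\I]$ by interpolating between its $L^1$ and $L^\I$ bounds and estimating by H\"older together with $H^s\hookrightarrow L^\I$ (valid since $s>1/2$) and the appropriate Sobolev (or, for the companion pieces, Bourgain-space) estimate, one closes the estimate for every $s>3/4$. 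Altogether this yields $\sup_{[0,T]}\|u(t)\|_{H^s}\le 2\|u_0\|_{H^s}$ on an interval $T\ge g(\|u_0\|_{H^{3/4+}})$.

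\emph{Difference estimate, uniqueness, continuity, existence.} For two solutions $u_1,u_2\in L^\I_TH^s$ with nearby data, set $w:=u_1-u_2$ and run the same scheme on $\|J_x^{s-1}w(t)\|_{L^2}^2$: the loss of a derivative in the terms linear in $w$ that carry $\p_x u_j$ (such as $(|u_1|^2-|u_2|^2)\p_x u_2$ and $w\,\bar\psi\,\p_x u_2$) is compensated by the drop from $s$ to $s-1$, the nonresonant parts are placed in the corresponding Bourgain space, and the (near-)resonant parts are again treated by transferring the derivative onto $\psi$ and invoking $\p_x\psi\in L^1\cap L^\I$, which closes for $3/4<s<1$. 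This gives Lipschitz dependence of the solution on the data measured in $H^{s-1}$, hence unconditional uniqueness in $L^\I_TH^s$; combined with the a priori bound and a weak-continuity plus interpolation argument (cf.\ Remark~\ref{rem_sol}), it also yields continuity of the flow in $H^s$. Existence follows by solving a regularized problem (mollified data and/or truncated nonlinearity), noting that the a priori bound holds uniformly in the regularization parameter, and passing to the limit using the difference estimate to obtain strong convergence in $H^{s-1}$ and, by the a priori bound, a solution in $C([0,T];H^s)$.

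The crux of the argument is the resonant part of the mixed term $\p_x u\,\RE(u\bar\psi)$ (together with its analogue in the difference equation): absent the divergence cancellation available when $\lambda=2\mu$, one genuinely needs the integrability of $\p_x\psi$ to dispose of the pieces in which $\p_x\psi$ multiplies a quadratic expression in $u$ (resp.\ in $w$) carrying an extra derivative below $s=1$, and the interpolation, Sobolev and Bourgain-space exponents must be balanced with care so that only the range $s>3/4$ is required.
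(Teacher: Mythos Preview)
Your proposal contains a genuine gap in the difference estimate. You propose to estimate $w=u_1-u_2$ in $H^{s-1}$ and to close the resonant contributions by moving the derivative onto $\psi$ and using $\partial_x\psi\in L^1\cap L^\infty$. But the obstruction to working in $H^{s-1}$ when $\lambda\neq 0,2\mu$ and $s<1$ is not a $\psi$-term at all: it is the purely cubic interaction $u\bar u\,\partial_x w$ (coming from $|u|^2\partial_x u$ in the difference equation), specifically its \emph{low$\times$high$\times$high$\to$low} component, which is resonant and is not even well defined for $u\in H^s$, $w\in H^{s-1}$ once $s<1$. No amount of integrability of $\partial_x\psi$ touches this term, and your integration-by-parts in $x$ does not help either, since the derivative cannot be moved onto $\psi$ there. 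The paper flags exactly this obstruction in Subsection~\ref{subs_main} and at the start of Section~\ref{nonregular}.

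The paper's actual route is substantially different from what you sketch. The a~priori $H^s$ bound (Proposition~\ref{prop_apri}) already holds for all $s\ge 3/4$ under \eqref{hyp_psi} alone, with no use of $\partial_x\psi\in L^1$; so your identification of the role of the extra hypothesis in the a~priori step is also off. For the difference, the paper abandons $H^{s-1}$ and instead works at level $H^\theta$ with $1/4<\theta<s/2-1/8$ (morally $H^{s-1/2}$), in the weighted space $\overline{H}^{\theta,\delta}$, and introduces an explicit \emph{modified energy} $E^\theta$ with correction terms $\mathcal{E}_N^j$ built from anti-derivatives $\partial_x^{-1}\dot P_M(\cdot)$ (see \eqref{def_E2}--\eqref{def_E}). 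These corrections are chosen so that their time derivatives cancel the bad nonresonant pieces of $u\bar u\,\partial_x w$ and its $\psi$-analogues; a further delicate cancellation (between the terms called $A_{5,3}$ and $A_{6,1}$) handles the higher-order remainders. The hypothesis $\partial_x\psi\in L^\infty_t L^1_x$ enters only here, and for a specific reason: to control $\partial_x^{-1}\dot P_M(w\,\bar\psi)$ at very small $M$ (see \eqref{trf2}), where neither $w$ nor $\psi$ lies in $L^1$ so the Bernstein trick that works for $\partial_x^{-1}\dot P_M(w\,u_1)$ fails. None of this machinery---the $H^{s-1/2}$ level, the low-frequency weight, the modified energy, or the anti-derivative analysis---appears in your proposal.
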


We now explain the structure of the proofs.
First, it is worth noticing that Bourgain's method \cite{B93} does not appear to be applicable due to the presence of the nonlinear term $|u|^2 \p_x u$, and more importantly, the linear term $ |\psi|^2 \p_x u $, which involves a derivative acting on $ u$. (In sharp contrast, a linear term involving $\bar{u}$ with a derivative falling on $\bar{u}$ does not pose any difficulty, thanks to the good resonance relation.)
The proofs for these theorems are based on the approach developed by the first author and Vento \cite{MV15}.
This method allows us to exploit nonlinear dispersive properties, such as nonresonance structures, within the framework of the energy method.
It was later adapted to the Cauchy problem with nonvanishing boundary conditions in \cite{P23} (see also \cite{May}), incorporating refined Strichartz estimates from \cite{KT1}.
A relatively direct application of \cite{May,MV15,P23} suffices to prove Theorem \ref{thm1}, which establishes the unconditional local well-posedness of \eqref{eq2} in $H^s(\R)$ for $s > 3/4 $ in the divergence form case $(\la=2\mu)$, and for $s\ge 1$ otherwise.
In the non-divergence form case, however, the tools developed in \cite{MV15} do not seem to be sufficient to extend Theorem \ref{thm1} below $H^1(\R)$.
More precisely, given two solutions to \eqref{eq2} belonging to $ L^\I_T H^s $, it is difficult to evaluate their difference at the $H^{s-1}$ level when $s<1$, due to some {\it low$\times$high$\times$high$\to$low} interactions.
Such interactions arise, for example, in the nonlinear term $u\bar{u}\p_x w$, and do not appear to be well-defined at this level of regularity.
To overcome this difficulty, we perform the energy estimate for the difference in $H^{s-1/2}$ instead of $H^{s-1}$.
However, this makes it more difficult to recover the derivative loss, even in some nonresonant cases.
At this stage it is worth noticing that this issue arises solely from the first nonlinear term in \eqref{eq1}.
In contrast, when $\la=0$, the estimates for the difference can be closed in $ H^{s-1/2}$, thanks to the strong nonresonance properties of the second nonlinear term.
Now, in the case $ \lambda\neq 0$, we address this issue by introducing correction terms, also known as the \textit{modified energy} (see \eqref{def_E}), following the approach in \cite{MPV18}.
The modified energy effectively cancels the most problematic nonresonant interactions, which allows us to control the difference in $H^{s-1/2}$.
Finally, one more difficulty emerges: a derivative loss appears in the higher-order terms generated by the time derivative of the modified energy.
This again stems from certain {\it low$\times$high$\times$high$\to$low} interactions.
This issue is resolved by extracting a key cancellation through a delicate analysis, as observed in \cite{EW19,MPV18} for quadratic nonlinearities.
See Section \ref{nonregular} for details.

Next, we present the well-posedness result in the Zhidkov space $Y^s(\R)$ for \eqref{eq1}.
See Subsection \ref{subs_funct} for its definition.
By Lemma \ref{lem_Zhidkov1}, any function in $Y^s(\R)$ can be decomposed into the sum of a function in $H^s(\R)$ and a smooth bounded function.
This allows us to apply Theorem \ref{thm1} to establish the result.

\begin{thm}\label{thm3}
  For any $s>3/4$ when $ \lambda=2\mu$, and for any $ s\ge 1 $ otherwise, the following statements hold:
  \begin{itemize}
    \item(Existence) Let $v_0\in Y^s(\R)$.
    Then, there exist $T=T(\|v_0\|_{Y^s})>0$ and $v\in C([0,T];Y^s(\R))$ such that $v$ satisfies \eqref{eq1} with $v(0,x)=v_0(x)$.
    Moreover, it holds that $v(t)-v_0\in C([0,T];H^s(\R))$.
    \item(Uniqueness) Let $\de>0$, and let $v_1,v_2\in C([0,\de];Y^s(\R))$ be two solutions to \eqref{eq1} on $[0,\de]$, both emanating from the same initial data $v_0\in Y^s(\R)$.
    Assume that $v_1(t)-v_0,v_2(t)-v_0\in L^\infty([0,\de];H^s(\R))$.
    Then it holds that $v_1(t)=v_2(t)$ on $[0,\de]$.
    \item(Continuous dependence) Let $R>0$ and $S:\{u_0\in Y^s(\R)\ |\ \|v_0\|_{Y^s}\le R\}\to C([0,T];Y^s(\R))$ be the flow map defined as above.
    Then, $S$ is continuous.
  \end{itemize}
\end{thm}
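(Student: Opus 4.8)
The plan is to deduce all three statements from Theorem~\ref{thm1} by splitting the datum according to Lemma~\ref{lem_Zhidkov1} and working with a \emph{time-independent} background. Given $v_0\in Y^s(\R)$, write $v_0=u_0+\psi_0$ with $u_0\in H^s(\R)$ and $\psi_0$ smooth and bounded; concretely one may take $\psi_0:=P_{\le1}v_0$ and $u_0:=P_{>1}v_0$, so that $\|u_0\|_{H^s}\lesssim\|\p_x v_0\|_{H^{s-1}}\le\|v_0\|_{Y^s}$, while $\psi_0\in\bigcap_k W^{k,\infty}(\R)$, $\psi_0\in Y^s(\R)$, and $\p_x\psi_0,\p_x^2\psi_0\in\bigcap_k H^{k}(\R)$ (since $P_{\le1}$ localizes $\p_x v_0\in H^{s-1}(\R)$ to bounded frequencies). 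Put $\psi(t,x):=\psi_0(x)$. The first task is to verify \eqref{hyp_psi} for this $\psi$ and the prescribed $s$: $J_x^{s+1+\e}\psi_0$ is the convolution of $v_0\in L^\I(\R)$ with a fixed Schwartz kernel, hence $J_x^{s+1+\e}\psi\in L^\I(\R^2)$; $\p_t\psi\equiv0$, which trivially lies in $L^\I(\R^2)$; and $\Psi(\psi_0)=\p_x^2\psi_0-i\la|\psi_0|^2\p_x\psi_0-i\mu\psi_0^2\p_x\bar{\psi}_0$ (cf.\ \eqref{def_psi}, using $\p_t\psi_0=0$) belongs to $\bigcap_k H^{k}(\R)\subset H^{s+\e}(\R)$, so $\Psi\in L^\I(\R;H^{s+\e}(\R))$.

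\emph{Existence.} Applying Theorem~\ref{thm1} to \eqref{eq2} with this background and datum $u_0$ gives $T=T(\|u_0\|_{H^s})>0$ and $u\in C([0,T];H^s(\R))$ with $u(0)=u_0$. Since \eqref{eq2} is \eqref{eq1} written for $v=u+\psi$, the function $v:=u+\psi_0$ solves \eqref{eq1} with $v(0)=v_0$; as $H^s(\R)\hookrightarrow Y^s(\R)$ (here $s>1/2$) and $\psi_0\in Y^s(\R)$ is fixed, $v\in C([0,T];Y^s(\R))$ and $v(t)-v_0=u(t)-u_0\in C([0,T];H^s(\R))$. Finally $\|u_0\|_{H^s}\lesssim\|v_0\|_{Y^s}$ and the monotonicity of $g$ give $T\ge g(\|v_0\|_{Y^s})>0$.

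\emph{Uniqueness.} Let $v_1,v_2\in C([0,\de];Y^s)$ solve \eqref{eq1} from the same $v_0$ with $v_i-v_0\in L^\I([0,\de];H^s)$. Using the same $\psi_0=P_{\le1}v_0$, set $u_i:=v_i-\psi_0=(v_i-v_0)+u_0\in L^\I([0,\de];H^s)$; then $u_i$ solves \eqref{eq2} with background $\psi_0$ and $u_i(0)=u_0$, and by Remark~\ref{rem_sol} one has $u_i\in C([0,\de];H^{s'})$ for $s'<s$ and $\|u_i(t)\|_{H^s}\le M:=\|u_0\|_{H^s}+\max_i\|v_i-v_0\|_{L^\I([0,\de];H^s)}$ for all $t$. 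The uniqueness part of Theorem~\ref{thm1} then shows: if $u_1(t_0)=u_2(t_0)$ for some $t_0\in[0,\de)$, then $u_1\equiv u_2$ on $[t_0,\min(t_0+g(M),\de)]$; here it is essential that $\psi$ is independent of $t$, so the Cauchy problem restarted at $t_0$ has the same background $\psi_0$ and existence time $\ge g(M)>0$. A connectedness argument in $t_0$ (the set of $t_0$ with $u_1\equiv u_2$ on $[0,t_0]$ is nonempty, closed by continuity in $H^{s'}$, and relatively open by the preceding step with the uniform increment $g(M)$) yields $u_1\equiv u_2$, i.e.\ $v_1\equiv v_2$, on $[0,\de]$.

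\emph{Continuous dependence.} Let $v_0^{(n)}\to v_0$ in $Y^s$; for $n$ large all of them lie in a common ball, so by the above the corresponding solutions exist on a common interval $[0,T]$, $T>0$. Set $\psi_0^{(n)}:=P_{\le1}v_0^{(n)}$ and $u_0^{(n)}:=P_{>1}v_0^{(n)}$. Then $u_0^{(n)}\to u_0$ in $H^s$, while the backgrounds converge in every norm in \eqref{hyp_psi}: $J_x^{s+1+\e}\psi_0^{(n)}\to J_x^{s+1+\e}\psi_0$ in $L^\I$ (convolution of $v_0^{(n)}\to v_0$ in $L^\I$ with a fixed Schwartz kernel), $\p_t\psi_0^{(n)}\equiv0$, and $\Psi(\psi_0^{(n)})\to\Psi(\psi_0)$ in $H^{s+\e}$ (since $\p_x\psi_0^{(n)}\to\p_x\psi_0$ in $H^{k}$ for every $k$ and $\psi_0^{(n)}\to\psi_0$ in $W^{k,\infty}$). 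As $\|\psi_0^{(n)}-\psi_0\|_{Y^s}\to0$, writing $v^{(n)}=u^{(n)}+\psi_0^{(n)}$ and $v=u+\psi_0$ it remains to prove $u^{(n)}\to u$ in $C([0,T];H^s)$, where $u^{(n)},u$ solve \eqref{eq2} with backgrounds $\psi_0^{(n)},\psi_0$. This is the one place where Theorem~\ref{thm1} cannot be used as a black box, since its continuous dependence is stated for a \emph{fixed} background; I expect it to be the main obstacle. I would resolve it by reinspecting the proof of Theorem~\ref{thm1}: its a priori and difference estimates depend on the background only through the norms in \eqref{hyp_psi}, hence are uniform over the bounded convergent family $\{\psi_0^{(n)}\}$, and the difference equation for two solutions with data $u_0^{(1)},u_0^{(2)}$ and backgrounds $\psi^{(1)},\psi^{(2)}$ acquires source terms multilinear in $\psi^{(1)}-\psi^{(2)}$ and in $\Psi(\psi^{(1)})-\Psi(\psi^{(2)})$, which tend to $0$; inserting this, together with the same regularization of the data already used for fixed background, into the energy/modified-energy scheme upgrades convergence in the auxiliary lower norm ($H^{s-1}$ for $\la=2\mu$, $H^{s-1/2}$ otherwise) to convergence in $C([0,T];H^s)$. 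In the ranges considered, namely $\la=2\mu$ with $s>3/4$ and $s\ge1$ otherwise, one moreover stays in the ``direct'' regime of Theorem~\ref{thm1}, so no cancellation beyond those already present there is needed.
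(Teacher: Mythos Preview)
Your proof follows essentially the same strategy as the paper: decompose via Lemma~\ref{lem_Zhidkov1} (you make it concrete with $P_{\le1}$), apply Theorem~\ref{thm1} for existence and uniqueness, and for continuous dependence re-derive the difference estimate allowing two distinct backgrounds (this is exactly Proposition~\ref{prop_dif1Z1} in the paper) before upgrading to $H^s$ via the frequency-envelope argument. One minor slip: in the non-divergence case ($s\ge1$) the auxiliary lower norm used is $L^2$, not $H^{s-1/2}$; the $H^{s-1/2}$ level belongs to Theorem~\ref{thm2}, which requires the extra hypothesis $\partial_x\psi\in L^\infty_t L^1_x$ that your $\psi_0=P_{\le1}v_0$ need not satisfy.
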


Note that $H^s(\R)\hookrightarrow Y^s(\R)$ when $s>1/2$.
The uniqueness in Theorem \ref{thm3} is based on the unconditional uniqueness of Theorem \ref{thm1}, which requires the assumption that $v_j(t)-v_0\in L^\infty([0,T];H^s(\R))$.

\subsection{Plan of This Paper}

This paper is organized as follows.
In Section \ref{notation}, we fix the notation and collect some fundamental estimates, including Sobolev inequalities and Leibniz rules.
In Section \ref{sec_stri}, we discuss refined Strichartz estimates which will be used for resonant interactions throughout this article.
In Section \ref{sec_apri}, we first introduce Bourgain type estimates that are essential for handling nonresonant interactions.
We also derive a key a priori estimate for solutions.
In Section \ref{regular}, we obtain the a priori estimate for the difference of two solutions.
This estimate plays a crucial role in the proof of Theorem \ref{thm1}.
In Section \ref{nonregular}, we define the modified energy and establish an a priori estimate for the difference, which is the key ingredient for the proof of Theorem \ref{thm2}.
In Section \ref{sec_proof}, we present the proofs of Theorems \ref{thm1}, \ref{thm2} and \ref{thm3}, with a primary focus on Theorem \ref{thm2}.

\section{Notation, Function Spaces and Basic Estimates}\label{notation}

\subsection{Notation}\label{subs_notation}

Throughout this paper, $\N$ denotes the set of nonnegative integers.
For any positive numbers $a$ and $b$, we write $a\les b$ when there exists a positive constant $C$ such that $a\le Cb$.
We also write $a\sim b$ when $a\les b$ and $b\les a$ hold.
Moreover, we denote $a\ll b$ if the estimate $b\les a$ does not hold.
For $a\in\R$, we denote by $a+$ (respectively, $a-$) a number slightly greater (respectively, slightly smaller) than $a$.
For two nonnegative numbers $a,b$, we denote $a\vee b:=\max\{a,b\}$ and $a\wedge b:=\min\{a,b\}$.
We also write $\LR{\cdot}=(1+|\cdot|^2)^{1/2}$.

For $u=u(t,x)$, $\F u=\tilde{u}$ denotes its space-time Fourier transform, whereas $\F_x u=\hat{u}$ (resp. $\F_t u$) denotes its Fourier transform in space (resp. time).
We define $D_x^s g:=\F_x^{-1}(|\xi|^s \F_x g)$ and $J_x=\LR{D_x}$.
We also denote the unitary group associated to the linear part of \eqref{eq1} by $U(t)=e^{it\p_x^2}$, i.e.,
\EQQS{
  U(t)u=\F_x^{-1}(e^{-it\xi^2}\F_x u).
}

In the present paper, we fix a smooth cutoff function $\chi$:
let $\chi\in C_c^\I(\R)$ satisfy
\EQQS{
  0\le \chi\le 1, \quad \chi|_{-1,1}=1\quad
  \textrm{and}\quad \supp\chi\subset[-2,2].
}
We set $\phi(\xi):=\chi(\xi)-\chi(2\xi)$.
For any $l\in\N\setminus\{0\} $, we define
\EQQS{
  \phi_{2^l}(\xi):=\phi(2^{-l}\xi),\quad
  \psi_{2^l}(\ta,\xi):=\phi_{2^l}(\ta+\xi^2).
}
By convention, we also denote
\EQQS{
  \phi_1(\xi)=\chi(\xi)\quad
  \textrm{and}\quad
  \psi_1(\ta,\xi)=\chi(\ta+\xi^2).
}
Any summations over capitalized variables such as $K, L, M$ and $N$ are presumed to be dyadic.
We mainly work with non-homogeneous dyadic decompositions, i.e., these variables ranges over numbers of the form $\{2^k; k\in\N\}$.
We call those numbers \textit{non-homogeneous dyadic numbers}.
It is worth pointing out that $\sum_{N\in 2^\N}\phi_N(\xi)=1$ for any $\xi\in\R$,
\EQQS{
  \supp(\phi_N)\subset\{N/2\le |\xi|\le 2N\},\ N\ge 2,\quad
  \textrm{and}\quad
   \supp(\phi_1)\subset\{|\xi|\le 2\}.
}
On the other hand, we use homogeneous decompositions only in Section \ref{nonregular}, i.e., $N\in 2^\Z$.
In this case, it holds that
\EQQS{
  \sum_{N\in 2^\Z}\phi_N(\xi)=
  \begin{cases}
    1,\quad \xi\neq0,\\
    0,\quad \xi=0\\
  \end{cases}
}
and that $\supp(\phi_N)\subset\{N/2\le |\xi|\le 2N\}$ for $N\in 2^\Z$.

Finally, we define the Littlewood--Paley multipliers $P_N$, $R_N$ and $Q_L$ by
\EQQS{
  P_N u:=\F_x^{-1}(\phi_N \F_x u),\quad
  R_N u:=\F_t^{-1}(\phi_N F_t u) \quad\textrm{and}
  \quad Q_L u=\F^{-1}(\psi_L \F u).
}
We also set $P_{\ge N}:=\sum_{K\ge N}P_K, P_{\le N}:=\sum_{K\le N}P_K, Q_{\ge L}:=\sum_{K\ge L}Q_K$ and $Q_{\le L}:=\sum_{K\le L}Q_K
$.
We represent homogeneous (resp. non-homogeneous) decompositions by $\dot{P}_N$ (resp. $P_N$).

\subsection{Function Spaces}\label{subs_funct}
For $1\le p\le \I$, $L^p(\R)$ is the standard Lebesgue space with the norm $\|\cdot\|_{L^p}$.
For $s\in\R$, $H^s(\R)$ is the Sobolev space equipped with the norm $\|f\|_{H^s}:=\|N^s \|P_N f\|_{L_x^2} \|_{\ell_N^2}$. After this section, we explicitly use the notation $L_t^p, H_x^s$, and so on, in place of $L^p, H^s$, etc., for the sake of completeness.
We also define the Zhidkov space $Y^s(\R)$ for $s\in \R$ as follows:
\EQS{\label{def_Zhid}
  Y^s(\R)
  :=\{f\in \D'(\R)\ |\ f\in L^\I(\R), f'\in H^{s-1}(\R)\}
}
with the norm $\|f\|_{Y^s}:=\|f\|_{L^\I}+\|\p_x f\|_{H^{s-1}}$.
By the Sobolev embedding, it is worth noticing that we have $H^s(\R)\hookrightarrow Y^s(\R)$ if $s>1/2$.

In this paper we will use the frequency envelope method (see for instance \cite{Tao04} and \cite{KT1}) in order to show the continuity result with respect to initial data.
To this aim, we have to slightly modulate the classical Sobolev spaces in the following way:
for $s\in\R$ and a dyadic sequence $\{\om_N\}$, we define $H_\om^s(\R)$ with the norm
\EQS{\label{def_H}
  \|u\|_{H_\om^s}
  :=\bigg(\sum_{N\in 2^\N}\om_N^2 N^{2s}\|P_N u\|_{L^2}^2\bigg)^{1/2}.
}
For consistency and with a slight abuse of notation, we equip $H^s(\R)$ with the norm given in \eqref{def_H}, setting $\om_N\equiv 1$.
If $B_x$ is one of spaces defined above, for $1\le p\le \I$ and $T>0$, we define the space-time spaces $L_t^p B_x :=L^p(\R;B_x)$ and $L_T^p B_x :=L^p([0,T];B_x)$ equipped with the norms (with obvious modifications for $p=\I$)
\EQQS{
  \|u\|_{L_t^p B_x}=\bigg(\int_\R\|u(t,\cdot)\|_{B_x}^p dt\bigg)^{1/p}\quad
  \textrm{and}\quad
  \|u\|_{L_T^p B_x}=\bigg(\int_0^T\|u(t,\cdot)\|_{B_x}^p dt\bigg)^{1/p},
}
respectively.
For $s,b\in\R$, we introduce the Bourgain spaces $X^{s,b}$ associated to the operator $i\p_x^2$ endowed with the norm
\EQQS{
  \|u\|_{X^{s,b}}
  =\Bigg(\int_{\R^2} \LR{\xi}^{2s}\LR{\ta+\xi^2}^{2b}|\tilde{u}(\ta,\xi)|^2d\ta d\xi\Bigg)^{1/2}.
}
We also use a slightly stronger space $X_\om^{s,b}$ with the norm
\EQQS{
  \|u\|_{X_\om^{s,b}}
  :=\bigg(\sum_{N\in 2^\N}\om_N^2 N^{2s}\|P_N u\|_{X^{0,b}}^2\bigg)^{1/2}.
}
Hereafter, for simplicity, we will write $\sum_{N\in 2^\N}$ as $\sum_{N\ge 1}$ whenever there is no risk of confusion.
We define the function spaces $Z^s $ (resp. $Z^s_\om $), with $s\in \R$, as $Z^s:= L_t^\I H^s\cap X^{s-1,1}$ (resp. $Z^s_\om:= L_t^\I H_\om^s\cap X_\om^{s-1,1}$), endowed with the natural norm
\EQQS{
  \|u\|_{Z^s}=\|u\|_{L_t^\I H^s}+\|u\|_{X^{s-1,1}} \quad
  (\text{resp}.\  \|u\|_{Z^s_\om}=\|u\|_{L_t^\I H^s_\om}+\|u\|_{X^{s-1,1}_\om}) .
}
We also use the restriction in time versions of these spaces.
Let $T>0$ be a positive time and $B$ be a normed space of space-time functions.
The restriction space $B_T$ will be the space of functions $u:[0,T]\times\R\to\R$ or $\C$ satisfying
\EQQS{
  \|u\|_{B_T}
  :=\inf\{\|\tilde{u}\|_B \ |\ \tilde{u}:\R^2 \to\R\ \textrm{or}\ \C,\ \tilde{u}=u\ \textrm{on}\ [0,T]\times\R\}<\I.
}

Finally, we introduce a bounded linear operator from $X_{\om,T}^{s-1,1}\cap L_T^\I H_\om^s$ into $Z_\om^s$ with a bound which does not depend on $s$ and $T$. The existence of this operator ensures that actually $ Z^s_{\om,T}= L_T^\I H^s_\om\cap X^{s-1,1}_{\om,T}$.
Following \cite{MN08}, we define $\rho_T$ as
\EQS{\label{def_ext}
  \rho_T(u)(t):=U(t)\chi(t)U(-\mu_T(t))u(\mu_T(t)),
}
where $\mu_T$ is the continuous piecewise affine function defined by
\EQQS{
  \mu_T(t)=
  \begin{cases}
    0 &\textrm{for}\quad t\notin]0,2T[,\\
    t &\textrm{for}\quad t\in [0,T],\\
    2T-t &\textrm{for}\quad t\in [T,2T].
  \end{cases}
}

\begin{lem}\label{extensionlem}
  Let $\de\ge 1$, and suppose that the dyadic sequence $\{\om_N\}$ of positive numbers satisfies $\om_N\le \om_{2N}\le \de\om_N$ for $N\ge1$.
  Let $0<T\le 1$ and $s\in\R$.
  Then,
  \EQQS{
    \rho_T:&X_{\om,T}^{s-1,1} \cap L_T^\I H_\om^s\to Z^s_\om\\
    &u\mapsto \rho_T(u)
  }
  is a bounded linear operator, i.e.,
  \EQS{\label{eq2.1}
    \|\rho_T(u)\|_{L_t^\I H_\om^s}
    +\|\rho_T(u)\|_{X^{s-1,1}_\om}\les
    \|u\|_{L_T^\I H_\om^s}
    +\|u\|_{X_{\om,T}^{s-1,1}},
  }
  for all $u\in X_{\om,T}^{s-1,1}\cap L_T^\I H_\om^s$.
  Moreover, it holds that
  \EQS{\label{eq2.1single}
  \|\rho_T(u)\|_{L_t^\I H_\om^s}
  \les
  \|u\|_{L_T^\I H_\om^s}
  }
  for all $u\in L_T^\I H_\om^s$.
  Here, the implicit constants in \eqref{eq2.1} and \eqref{eq2.1single} can be chosen independent of $0<T\le 1$ and $s\in\R$.
\end{lem}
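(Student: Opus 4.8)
The plan is to untwist the free evolution: after conjugating by the linear group $U(t)$, the operator $\rho_T$ collapses to an elementary reflect‑and‑cutoff operation in the time variable, so the whole estimate reduces to a routine change of variables. Linearity of $\rho_T$ is immediate, so I would focus on the bounds. Given any extension $\ti u$ of $u$ to $\R^2$, set $g(t):=U(-t)\ti u(t)$; then $\ti u(\mu_T(t))=U(\mu_T(t))g(\mu_T(t))$ and the group law $U(-\mu_T(t))U(\mu_T(t))=\mathrm{Id}$ give
\[
  \rho_T(u)(t)=U(t)\bigl[\chi(t)\,g(\mu_T(t))\bigr].
\]
Since $\mu_T$ takes values in $[0,T]$, $\rho_T(u)$ depends only on $u|_{[0,T]}$, so this identity holds for every admissible extension and I may compute each norm of $\rho_T(u)$ using whichever extension is convenient. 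I would then invoke two elementary facts: (i) $U(t)$ is a unitary group on $H^s_\om$ commuting with the projectors $P_N$, so $\|U(t)h\|_{H^s_\om}=\|h\|_{H^s_\om}$; and (ii) the substitution $\ta\mapsto\ta+\xi^2$ in the space-time Fourier transform gives $\|U(\cdot)P_Nh\|_{X^{0,1}}=\|P_Nh\|_{H^1_tL^2_x}$, hence $\|U(\cdot)h\|_{X^{s-1,1}_\om}^2=\sum_N\om_N^2N^{2(s-1)}\|P_Nh\|_{H^1_tL^2_x}^2$.

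From (i) I get $\|\rho_T(u)(t)\|_{H^s_\om}=|\chi(t)|\,\|g(\mu_T(t))\|_{H^s_\om}\le\|u\|_{L^\I_TH^s_\om}$ for a.e.\ $t$, which is exactly \eqref{eq2.1single}. For the $X^{s-1,1}_\om$ bound, using (ii) and that $P_N$ commutes with multiplication by $\chi(t)$ and with composition with $\mu_T$, I have $\|\rho_T(u)\|_{X^{s-1,1}_\om}^2=\sum_N\om_N^2N^{2(s-1)}\|\chi\,((P_Ng)\circ\mu_T)\|_{H^1_tL^2_x}^2$, so it suffices to prove, for each dyadic $N$ and with $h:=P_Ng$,
\[
  \|\chi\,(h\circ\mu_T)\|_{H^1(\R;L^2_x)}\les\|h(0)\|_{L^2_x}+\|h\|_{H^1([0,T];L^2_x)},
\]
with an implicit constant independent of $N$ and $T$, and then to sum against $\om_N^2N^{2(s-1)}$.

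To establish this last inequality I would split $\R$ along the three affine pieces of $\mu_T$. On $\R\setminus(0,2T)$ the function $\chi\,(h\circ\mu_T)$ equals $\chi(t)\,h(0)$, a fixed smooth bump times the constant $h(0)$, so this portion contributes $\les\|h(0)\|_{L^2_x}$ to the $H^1(\R;L^2_x)$ norm. On $(0,T)$ and $(T,2T)$, where $|\mu_T'|=1$, the change of variables $s=\mu_T(t)$ identifies the remaining contributions to $\|\chi\,(h\circ\mu_T)\|_{L^2_tL^2_x}$ with $L^2([0,T];L^2_x)$-norms of $h$, and — after the product rule on $\p_t[\chi\,(h\circ\mu_T)]$, whose $\chi'$-term is handled like the $L^2$ part and whose $\chi\,\mu_T'$-term satisfies $|\chi\,\mu_T'|\le1$ — with $L^2([0,T];L^2_x)$-norms of $h'$. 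Summing over $N$, the term $\|h(0)\|_{L^2_x}=\|P_Ng(0)\|_{L^2_x}$ yields $\|g(0)\|_{H^{s-1}_\om}$, which for $g$ arising from a near-optimal extension of $u$ in $X^{s-1,1}_\om$ is $\les\|g\|_{L^\I_tH^{s-1}_\om}\les\|g\|_{H^1_tH^{s-1}_\om}\les\|u\|_{X^{s-1,1}_{\om,T}}$ (using $H^1(\R)\hookrightarrow L^\I(\R)$ in the time variable), and the term $\|h\|_{H^1([0,T];L^2_x)}$ yields $\|g\|_{H^1([0,T];H^{s-1}_\om)}\le\|g\|_{H^1_tH^{s-1}_\om}\les\|u\|_{X^{s-1,1}_{\om,T}}$; combined with the $L^\I$ bound this gives \eqref{eq2.1}. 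The hypothesis $T\le1$ would be used only to guarantee $\chi\equiv1$ on $[0,T]$, hence $\rho_T(u)|_{[0,T]}=u$, which is what yields the asserted consequence $Z^s_{\om,T}=L^\I_TH^s_\om\cap X^{s-1,1}_{\om,T}$; all constants are manifestly independent of $s$, since $s$ enters only through the weights $\om_N^2N^{2(s-1)}$ common to both sides, and of $T$ (and the slow-variation hypothesis on $\{\om_N\}$ plays no role here, every step being carried out frequency by frequency).

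The main difficulty I anticipate is not analytic but a matter of bookkeeping: giving meaning to the composition $u\circ\mu_T$ and to the pointwise value $u(0)$ when $u$ belongs only to an $L^\I$-in-time class. This will be harmless because any $u\in X^{s-1,1}_{\om,T}$ — through which the $X^{s-1,1}$ estimate factors — is continuous in time with values in $H^{s-1}_\om$ (apply the $H^1_t$ embedding to a global extension), so $u(0)$ is well defined there; for \eqref{eq2.1single} on the larger class $L^\I_TH^s_\om$ I would first treat $u$ admitting a trace at $t=0$ and then pass to the general case by density. Beyond this point the argument is the elementary change of variables sketched above, so the only real work is organizing the case split for $\mu_T$ and the resummation in $N$.
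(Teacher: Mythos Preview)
Your proposal is correct and is precisely the standard argument: the paper does not supply its own proof but simply cites Lemma 2.4 of \cite{MPV19} for the unweighted case and remarks that the result is independent of $\{\om_N\}$. Your untwisting $\rho_T(u)(t)=U(t)[\chi(t)\,g(\mu_T(t))]$, reduction to the time-variable estimate $\|\chi\,(h\circ\mu_T)\|_{H^1_tL^2_x}\lesssim\|h(0)\|_{L^2_x}+\|h\|_{H^1([0,T];L^2_x)}$, and frequency-by-frequency resummation is exactly that argument, and your observation that the weights enter only as identical multiplicative factors on both sides is what makes the paper's ``obvious'' remark rigorous.
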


\begin{proof}
  See Lemma 2.4 in \cite{MPV19} for $ \om_N\equiv 1$ but it is obvious that the result does not depend on $ \om_N$.
\end{proof}

\subsection{Basic Estimates}

In this subsection, we collect some fundamental estimates.
For details of the following two estimates, see \cite{LP} for instance.

\begin{lem}
  Let $s\in(0,1)$ and $1\le p<\I$.
  Assume that $1< p_1,p_2,q_1,q_2\le \I$ satisfy $1/p=1/p_1+1/p_2=1/q_1+1/q_2$.
  Then we have the estimate
  \EQS{\label{eq2.6}
    \|D_x^{s}(uv)\|_{L^p}
    \les \|J_x^s u\|_{L^{p_1}}\|v\|_{L^{p_2}}+\|u\|_{L^{q_1}}\|J_x^sv\|_{L^{q_2}}.
  }
\end{lem}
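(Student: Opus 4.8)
The statement is the fractional Leibniz (Kato--Ponce) rule, and the plan is to prove it by a Littlewood--Paley paraproduct decomposition combined with standard harmonic-analysis tools: the square function characterization $\|f\|_{L^p}\sim\big\|\big(\sum_N|P_Nf|^2\big)^{1/2}\big\|_{L^p}$ (valid for $1<p<\I$), the Fefferman--Stein vector-valued maximal inequality, the Hardy--Littlewood maximal bound, and H\"older's inequality. Writing $uv=\sum_{N,M}P_Nu\,P_Mv$, one splits the sum into the \emph{low--high} region $N\ll M$, the \emph{high--low} region $N\gg M$, and the \emph{high--high} region $N\sim M$. Since the two terms on the right-hand side of \eqref{eq2.6} play symmetric roles, it is enough to estimate the low--high and the high--high contributions.

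In the low--high region $P_{\ll M}u\cdot P_Mv$ is frequency-localized in an annulus of size comparable to $M$, so applying $D_x^s$ amounts to inserting a factor $\sim M^s$ on the high-frequency factor; since in the non-homogeneous decomposition $M\gtrsim1$, this factor is comparable to $\LR{M}^s$. Using the pointwise bound $\sup_M|P_{\ll M}u|\lesssim\mathcal Mu$ ($\mathcal M$ the Hardy--Littlewood maximal operator), the almost orthogonality of the dyadic pieces, Fefferman--Stein, and H\"older, this contribution is bounded by $\|\mathcal Mu\|_{L^{q_1}}\big\|\big(\sum_M\LR{M}^{2s}|P_Mv|^2\big)^{1/2}\big\|_{L^{q_2}}\lesssim\|u\|_{L^{q_1}}\|J_x^sv\|_{L^{q_2}}$, which is the second term on the right of \eqref{eq2.6}. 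The high--low region is symmetric and produces the first term $\|J_x^su\|_{L^{p_1}}\|v\|_{L^{p_2}}$.

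The high--high region is the delicate one and the main obstacle: when $N\sim M$ the product $P_Nu\,P_Mv$ has output frequency spread over \emph{all} dyadic scales $K\lesssim N$, so $D_x^s$ no longer sees a single output frequency of size $\sim N$. Writing $D_x^s\sum_{N\sim M}P_Nu\,P_Mv=\sum_K P_K\big(D_x^s\sum_{N\sim M\gtrsim K}P_Nu\,P_Mv\big)$, on the $K$-th output piece $D_x^s$ contributes $\sim K^s$, and the derivative is recovered through the summability $\sum_{K\lesssim N}K^s\lesssim N^s$, which is exactly where $s>0$ enters (a discrete Schur-type estimate on the matrix $(K/N)^s\1_{K\le N}$). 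After reorganizing the dyadic sums, another application of the square function, Fefferman--Stein and H\"older bounds this piece by $\big\|\big(\sum_N\LR{N}^{2s}|P_Nu|^2\big)^{1/2}\big\|_{L^{p_1}}\|\mathcal Mv\|_{L^{p_2}}\lesssim\|J_x^su\|_{L^{p_1}}\|v\|_{L^{p_2}}$. The borderline configurations ($p=1$, or one of $p_2,q_2$ equal to $\I$) need only routine adjustments: replace the Fefferman--Stein step by the trivial embedding $\ell^2\hookrightarrow\ell^\I$ or by a direct pointwise estimate, or invoke the Coifman--Meyer multiplier theorem for the off-diagonal pieces. For $0<s<1$ one may alternatively start from the pointwise kernel representation $D_x^sf(x)=c_s\int_{\R}\big(2f(x)-f(x+y)-f(x-y)\big)|y|^{-1-s}\,dy$ with $c_s>0$, which, via the elementary identity relating the second difference of $uv$ to those of $u$ and $v$ plus a bilinear term built from first differences, yields $D_x^s(uv)=uD_x^sv+vD_x^su+R(u,v)$ with an explicit bilinear remainder $R$; estimating $R$ in the sharp form still requires essentially the same square-function machinery, so the Littlewood--Paley route is the cleaner one.
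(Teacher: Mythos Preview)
The paper does not prove this lemma; it simply cites the textbook \cite{LP} (Linares--Ponce) with the sentence ``For details of the following two estimates, see \cite{LP} for instance.'' Your paraproduct argument is exactly the standard route one finds in that reference and is correct in the main range $1<p<\infty$ with $1<p_j,q_j<\infty$.

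One caveat: you describe the borderline cases $p=1$ and $p_2,q_2=\infty$ as needing ``only routine adjustments.'' That is optimistic. The square-function characterization fails at $p=1$ and at $L^\infty$, and the $L^\infty$-endpoint Kato--Ponce inequality was in fact open until the work of Grafakos--Oh and Bourgain--Li (the paper itself cites \cite{GO14} in the remark following Lemma~\ref{Lem24}). These endpoints require genuinely different tools (BMO/Hardy-space duality, or the specific arguments in those papers), not a tweak of Fefferman--Stein. Since the paper only invokes \eqref{eq2.6} at interior exponents, this does not affect the applications, but your sketch should not claim the endpoints are routine.
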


\begin{lem}
  Let $s\ge0$.
  Then we have the estimate
  \EQS{\label{eq2.7}
    \|J_x^s(uv)\|_{L^{\I}}
    \les \|J_x^{s+}u\|_{L^{\I}}\|J_x^{s+}v\|_{L^{\I}}.
  }
\end{lem}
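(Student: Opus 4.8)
The plan is to run a Littlewood--Paley / Bony paraproduct decomposition; the whole point of the argument is that the extra ``$+$'' of regularity on each factor makes the logarithmically divergent diagonal contribution summable --- this is precisely the obstruction that prevents the plain Leibniz rule from holding at $L^\I$. First I would reduce to a dyadic sum over the output frequency: since $\sum_{N\ge1}\phi_N\equiv1$ and the convolution kernel of $J_x^sP_N$ has $L^1$-norm $\les N^s$ (with the convention that this factor is $\les1$ when $N=1$), the triangle inequality gives $\|J_x^s(uv)\|_{L^\I}\les\sum_{N\ge1}N^s\|P_N(uv)\|_{L^\I}$. Writing $P_N(uv)=\sum_{N_1,N_2}P_N(P_{N_1}u\,P_{N_2}v)$, I split this into the three standard regimes: \emph{high}$\times$\emph{high} ($N_1\sim N_2$, which forces $N\les N_1$), \emph{high}$\times$\emph{low} ($N_2\ll N_1$, which forces $N\sim N_1$), and \emph{low}$\times$\emph{high} ($N_1\ll N_2$, which forces $N\sim N_2$). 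The only ingredients needed are the elementary bounds
\EQQS{
  \|P_Mg\|_{L^\I}\les M^{-\al}\|J_x^\al g\|_{L^\I}\quad(\al\ge0),\qquad \|J_x^ag\|_{L^\I}\les\|J_x^bg\|_{L^\I}\quad(a\le b),
}
together with $\sum_{M\ge1}M^{-\e}\les_\e1$ and $\sum_{M\ge1}M^{-\e}\log\LR{M}\les_\e1$; I fix ``$s+$'' to mean $s+\e$ for a small $\e>0$.

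In the \emph{high}$\times$\emph{low} regime (the \emph{low}$\times$\emph{high} one being symmetric in $u$ and $v$) one has $N\sim N_1$ and $N_2\les N_1$, so that
\EQQS{
  N^s\|P_N(P_{N_1}u\,P_{N_2}v)\|_{L^\I}\les N_1^s\|P_{N_1}u\|_{L^\I}\|P_{N_2}v\|_{L^\I}.
}
Inserting $\|P_{N_2}v\|_{L^\I}\les N_2^{-\e}\|J_x^\e v\|_{L^\I}$, the sum over $N_2\les N_1$ converges and is bounded by $\les\|J_x^{s+}v\|_{L^\I}$; then inserting $\|P_{N_1}u\|_{L^\I}\les N_1^{-s-\e}\|J_x^{s+\e}u\|_{L^\I}$, the sum over $N_1\sim N$ followed by the sum over $N$ converges and is bounded by $\les\|J_x^{s+}u\|_{L^\I}$. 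Multiplying, this regime contributes $\les\|J_x^{s+}u\|_{L^\I}\|J_x^{s+}v\|_{L^\I}$.

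The main obstacle is the \emph{high}$\times$\emph{high} regime, because there the output frequency $N$ is free to range over every dyadic value $\les M:=N_1\sim N_2$. Using $s\ge0$ one has $N^s\|P_N(P_{N_1}u\,P_{N_2}v)\|_{L^\I}\les M^s\|P_Mu\|_{L^\I}\|P_Mv\|_{L^\I}$; summing over the $\sim\log\LR{M}$ dyadic values $N\les M$ produces a logarithmic factor, and then inserting $\|P_Mu\|_{L^\I}\les M^{-s-\e}\|J_x^{s+\e}u\|_{L^\I}$ together with $\|P_Mv\|_{L^\I}\les M^{-\e}\|J_x^\e v\|_{L^\I}\le M^{-\e}\|J_x^{s+\e}v\|_{L^\I}$ leaves $\sum_M M^{-2\e}\log\LR{M}\les_\e1$, so this regime also contributes $\les\|J_x^{s+}u\|_{L^\I}\|J_x^{s+}v\|_{L^\I}$. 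The two spare powers $M^{-\e}$ that absorb the logarithm are exactly what the extra derivatives provide, and this is the step where the plain (endpoint) Leibniz estimate breaks down. Finally, the remaining low-frequency contributions, i.e.\ the $N=1$ pieces, are controlled trivially by $\|u\|_{L^\I}\|v\|_{L^\I}\les\|J_x^{s+}u\|_{L^\I}\|J_x^{s+}v\|_{L^\I}$. Summing the three regimes gives \eqref{eq2.7}.
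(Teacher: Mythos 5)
Your proof is correct. The paper itself does not supply a proof of \eqref{eq2.7}; it only points to \cite{LP}, so there is no internal argument to compare against. Your Littlewood--Paley/paraproduct argument is a standard and valid way to establish this Besov-type product bound, and the key mechanism is exactly as you describe: passing $\|J_x^s(uv)\|_{L^\I}\les\sum_N N^s\|P_N(uv)\|_{L^\I}$ (using that $J_x^sP_N$ can be written as $(J_x^s\ti P_N)P_N$ with a fattened $\ti P_N$, so the kernel has $L^1$-norm $\lesssim N^s$), then splitting into the three paraproduct regimes, and in the \emph{high}$\times$\emph{high} regime using $s\ge 0$ plus the two spare factors $M^{-\e}$ to dominate the $\ell^1_N$-sum over $N\lesssim M$. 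Two minor remarks: (i) the logarithmic divergence $\sum_{N\lesssim M}N^s\sim M^s\log\LR{M}$ only really appears at the endpoint $s=0$; for $s>0$ the sum is already geometric and $\sim M^s$, but your estimate $\lesssim M^s\log\LR{M}$ covers both cases uniformly, so this is harmless; (ii) the closing remark about the ``$N=1$ pieces'' is redundant --- the $N=1$ output frequency is already absorbed in your three regimes with the convention $1^s=1$ --- but again it does no harm. The argument closes as written.
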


Well-known estimates are adapted for our setting $H_\om^s(\R)$.

\begin{lem}\label{Lem24}
  Let $s>0$ and $\de\ge 1$.
  Suppose that the dyadic sequence $\{\om_N\}$ of positive numbers satisfies $\om_N\le \om_{2N}\le \de\om_N$ for $N\ge1$.
  Then we have the estimate
  \EQS{\label{eq2.2}
    \|uv\|_{H_\om^s}
    \les \|u\|_{H_\om^s}\|v\|_{L^\I}+\|u\|_{L^\I}\|v\|_{H_\om^s}.
  }
  Moreover, if $\de<2^\e$ for a given $\e>0$, then
  \EQS{
    \label{eq2.2.1}
    \|uv\|_{H_\om^s}
    \les \|u\|_{H_\om^s}\|v\|_{L^\I}+\|u\|_{L^2}\|J_x^{s+}v\|_{L^{\I}}.
  }
 \end{lem}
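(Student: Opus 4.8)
\textbf{Proof plan for Lemma \ref{Lem24}.}
The plan is to decompose both factors into Littlewood--Paley pieces, $u=\sum_N P_N u$ and $v=\sum_M P_M v$, and split the product $uv = \sum_{N,M} P_N u\, P_M v$ according to the three standard frequency regimes: $M\ll N$ (high$\times$low), $N\ll M$ (low$\times$high), and $N\sim M$ (high$\times$high). For a fixed output dyadic block $P_K(uv)$, only the pieces with $N\sim K$ (in the first regime), $M\sim K$ (in the second), and $N\sim M\gtrsim K$ (in the third) contribute, by the usual support considerations in frequency. In each regime I would estimate the relevant convolution using the structure of $P_K$ and the hypothesis $\om_N\le\om_{2N}\le\de\om_N$, which guarantees that $\om_N\sim\om_K$ whenever $N\sim K$ up to a constant depending only on $\de$ (and, crucially for \eqref{eq2.2.1}, that $\om_K\le\om_N$ whenever $K\le N$, with the ratio controlled by $(\de)^{\log_2(N/K)}$, i.e. a small power of $N/K$ when $\de<2^\e$).

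For \eqref{eq2.2}: in the high$\times$low regime, $\om_K N^s\|P_K(P_N u\, P_M v)\|_{L^2}\lesssim \om_N N^s\|P_N u\|_{L^2}\|P_M v\|_{L^\I}$ after using that $\om_K\sim\om_N$ and $N\sim K$, and summing the $\ell^2_K$ norm in $K$ together with $\sum_M\|P_M v\|_{L^\I}\lesssim\|v\|_{L^\I}$ (this last sum being where one would more carefully use Bernstein or simply $\|P_M v\|_{L^\infty}\lesssim\|v\|_{L^\infty}$ with a harmless logarithmic loss absorbed by the slightly enlarged $L^\infty$ — actually one routes the low factor through a fixed $L^\infty$ bound on a smoothed-out piece, as is standard). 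The low$\times$high regime is symmetric in the roles of $u$ and $v$, producing the term $\|u\|_{L^\I}\|v\|_{H^s_\om}$. In the high$\times$high regime, $\om_K N^s\le \om_N N^s$ (using $K\lesssim N$ and monotonicity of $\om$), so this contribution is bounded by $\sum_{N}\om_N N^s\|P_N u\|_{L^2}\|P_N v\|_{L^\I}\lesssim\|u\|_{H^s_\om}\|v\|_{L^\I}$, and again a symmetric bound is available; summing over $K\lesssim N$ costs nothing since there is no $K^s$ weight on the output once $\om_K N^s$ has been bounded by $\om_N N^s$. Combining the three regimes yields \eqref{eq2.2}.

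For \eqref{eq2.2.1}: the only change is in the low$\times$high regime, where instead of placing $u$ in $L^\I$ we want it in $L^2$. Here $K\sim M\gg N$, and we estimate $\om_K K^s\|P_K(P_N u\, P_M v)\|_{L^2}\lesssim \om_M K^s\|P_N u\|_{L^2}\|P_M v\|_{L^\I}$; since $K\sim M$ we may absorb $K^s$ into $J_x^s$ acting on $v$ and write this as $\lesssim\|P_N u\|_{L^2}\|J_x^{s}P_M v\|_{L^\I}$ up to the weight $\om_M\sim\om_K$, and then $\sum_N\|P_N u\|_{L^2}$ is controlled by $\|u\|_{L^2}$ at the price of a small power, which is why the statement uses $J_x^{s+}v$ rather than $J_x^s v$ and requires $\de<2^\e$: the extra $0+$ derivative compensates the $\ell^1\hookrightarrow\ell^2$ summation in $N$ and the geometric factor $(\de)^{\log_2(M/N)}\lesssim (M/N)^{\e}$ coming from the comparison $\om_K\lesssim(\de)^{\log_2(K/N)}\om_N$ — wait, here we want $\om_K$ on the left, so in fact $\om_N\le\om_K$ and the weight is simply bounded by $\om_M$; the small power is needed purely for the frequency summation $\sum_N\langle N\rangle^{0-}\lesssim 1$ once we have extracted $\langle M\rangle^{0+}$ from $J_x^{s+}$. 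The high$\times$low and high$\times$high regimes are handled exactly as in \eqref{eq2.2}, giving the term $\|u\|_{H^s_\om}\|v\|_{L^\I}$.

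The main obstacle I anticipate is bookkeeping the $\om$-weights in the low$\times$high regime of \eqref{eq2.2.1}: one must be careful that the ratio $\om_K/\om_N$ for $K\gg N$ does \emph{not} blow up (monotonicity gives $\om_N\le\om_K$, so this is fine and the weight is just $\sim\om_K\sim\om_M$), while simultaneously ensuring the genuinely dangerous summation is the \emph{frequency} sum $\sum_N 1$ over low frequencies $N\lesssim M$, which is logarithmic and hence must be beaten by the extra $\e$ of regularity on $v$; the hypothesis $\de<2^\e$ is what makes the two $\e$'s (weight growth and the room in $J_x^{s+}$) compatible. Everything else is a routine Littlewood--Paley trichotomy.
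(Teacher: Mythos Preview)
Your plan is correct and matches the paper's approach, which is the standard paraproduct trichotomy (the paper simply cites Tao, Lemma A.8). One sharpening for \eqref{eq2.2.1}: in the low$\times$high block you should sum the low factor first to $P_{\ll K}u$ (so that $\|P_{\ll K}u\|_{L^2}\le\|u\|_{L^2}$ with no loss whatsoever---your $\ell^1_N$ worry is an artifact of bounding piece-by-piece before summing) and reserve the $\e$-room entirely for the estimate
\[
\Bigl(\sum_{K\gg 1}\om_K^2 K^{2s}\|P_K v\|_{L^\infty}^2\Bigr)^{1/2}\lesssim\|J_x^{s+}v\|_{L^\infty},
\]
which is exactly the paper's key observation and where both the weight bound $\om_K\lesssim K^{0+}$ (from $\de<2^\e$) and the $\ell^2_K$-summability are spent.
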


\begin{proof}
  The proof of \eqref{eq2.2} essentially follows from that of Lemma A.8 in \cite{Tao}. (See also Lemma 2.2 in \cite{MT21} for the proof \eqref{eq2.2}.)
  On the other hand, the proof of \eqref{eq2.2.1} is similar to \eqref{eq2.2} if we notice that
  \EQQS{
    \bigg(\sum_{N\gg1}\om_N^2 N^{2s}\|P_{N}v\|_{L^\I}^2\bigg)^{1/2}
    \les \|J_x^{s+}v\|_{L^{\I}}.
  }
  Here, we canceled $\om_N$ and the summation over $N$ by introducing $N^{-(0+)}$.
  This completes the proof.
\end{proof}

\begin{rem}
  In \eqref{eq2.2.1}, we lose $\e$ regularity for the $L^\I$ norm.
  However, when $\om_N\equiv 1$, Grafakos-Oh \cite{GO14} proved an estimate without a loss, i.e., for $s>0$,
  \EQS{\label{estGO14}
   \|uv\|_{H^s}
   \les \|u\|_{H^s}\|v\|_{L^\I}+\|u\|_{L^2}\|J_x^s v\|_{L^\I}.
  }
  To be precise, the above estimate \eqref{estGO14} is a special case of Theorem 1 in \cite{GO14}.
\end{rem}

To bound nonlinear terms at low regularity we will also need classical product estimates in Sobolev spaces (see, for instance, \cite{Lanlivre13}).
Below, we provide a version with a frequency envelope that can be proven in the same way as the classical one.

\begin{lem}\label{Lem25}
  Let $\de\ge 1$, and suppose that the dyadic sequence $\{\om_N\}$ of positive numbers satisfies $\om_N\le \om_{2N}\le \de\om_N$ for $N\ge1$.
  Assume that $s_1+s_2\ge 0, s_1\wedge s_2\ge s_3, s_3< s_1+s_2-1/2$.
  Then
  \EQS{\label{eq2.3}
    \|uv\|_{H_\om^{s_3}}\les \|u\|_{H_\om^{s_1}}\|v\|_{H_\om^{s_2}}.
  }
\end{lem}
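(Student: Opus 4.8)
The plan is to reduce the frequency-envelope product estimate \eqref{eq2.3} to the familiar Littlewood--Paley paraproduct decomposition, and simply to carry the weights $\om_N$ through the argument. First I would write $u=\sum_{N_1}P_{N_1}u$ and $v=\sum_{N_2}P_{N_2}v$ and split the sum over $(N_1,N_2)$ into the three regions $N_1\gg N_2$ (high-low), $N_1\ll N_2$ (low-high), and $N_1\sim N_2$ (high-high). In the first two regions the output frequency $N$ of $P_{N_1}u\,P_{N_2}v$ is comparable to the larger of $N_1,N_2$; in the diagonal region $N\lesssim N_1\sim N_2$.

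For the high-low region, on the piece $P_N(uv)$ one has $N\sim N_1$ and $N_2\lesssim N_1$, so
\EQQS{
  N^{s_3}\om_N\|P_N(P_{N_1}u\,P_{\le N_1}v)\|_{L^2}
  \les N_1^{s_3-s_1}\cdot N_1^{s_1}\om_{N_1}\|P_{N_1}u\|_{L^2}\,\|P_{\le N_1}v\|_{L^\I},
}
where I used $\om_N\sim\om_{N_1}$ (a consequence of the doubling condition $\om_N\le\om_{2N}\le\de\om_N$, which gives $\om_N\sim\om_{N'}$ whenever $N\sim N'$). Since $s_3\le s_1$ the factor $N_1^{s_3-s_1}$ is summable when $s_3<s_1$, and the borderline case $s_3=s_1$ is absorbed using Bernstein and $s_2>1/2$ to bound $\|P_{\le N_1}v\|_{L^\I}\les\|v\|_{H^{s_2}_\om}$ (here one needs $s_2\ge0$, or more precisely $s_1+s_2\ge0$ together with $s_2\ge s_3$; the hypothesis $s_1\wedge s_2\ge s_3$ covers the relevant subcases). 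Summing in $N_1$ in $\ell^2$ and using Cauchy--Schwarz yields the bound $\|u\|_{H^{s_1}_\om}\|v\|_{H^{s_2}_\om}$. The low-high region is symmetric, with the roles of $(s_1,N_1)$ and $(s_2,N_2)$ exchanged.

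For the high-high region, $P_N(uv)=\sum_{N_1\sim N_2\gtrsim N}P_N(P_{N_1}u\,P_{N_2}v)$, and here I would use Bernstein to pass an $L^1\to L^2$ (or $L^2\to L^\infty$) gain:
\EQQS{
  \|P_N(P_{N_1}u\,P_{N_2}v)\|_{L^2}
  \les N^{1/2}\|P_{N_1}u\,P_{N_2}v\|_{L^1}
  \les N^{1/2}\|P_{N_1}u\|_{L^2}\|P_{N_2}v\|_{L^2}.
}
Multiplying by $N^{s_3}\om_N$, using $\om_N\les\om_{N_1}\sim\om_{N_2}$ (monotonicity of $\om$), and writing $N^{s_3+1/2}=N^{s_3+1/2-s_1-s_2}(N_1^{s_1}N_2^{s_2})\cdot(N_1/N)^{-s_1}(N_2/N)^{-s_2}\cdot\ldots$, one sees that the exponent $s_3+1/2-s_1-s_2<0$ by hypothesis makes the sum over $N\lesssim N_1$ converge geometrically, while the remaining $\ell^2$-sum over $N_1\sim N_2$ is handled by Cauchy--Schwarz against $\{N_1^{s_1}\om_{N_1}\|P_{N_1}u\|_{L^2}\}\in\ell^2$ and $\{N_2^{s_2}\om_{N_2}\|P_{N_2}v\|_{L^2}\}\in\ell^2$. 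The condition $s_1+s_2\ge0$ guarantees that, after distributing the negative power $N^{-(s_1+s_2)}$ over the pair $(N_1,N_2)$ (both $\gtrsim N$), no individual factor blows up; when $s_1$ or $s_2$ is negative one trades the deficit against the other (which is then $\ge -s_3\ge$ the negative one by $s_1\wedge s_2\ge s_3$ and $s_1+s_2\ge0$).

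The main obstacle is purely bookkeeping: one must verify that the three scaling conditions $s_1+s_2\ge0$, $s_1\wedge s_2\ge s_3$, $s_3<s_1+s_2-1/2$ are exactly what is needed to make each of the three regional sums converge, and in particular one must treat with care the borderline high-low/low-high case $s_3=s_1$ or $s_3=s_2$, where the naive geometric summation fails and one instead extracts an $L^\infty$ bound from the Sobolev embedding $H^{s_2}\hookrightarrow L^\infty$ (valid since then $s_2\ge s_1+s_2-s_3> 1/2$). Once the unweighted estimate is organized this way, inserting $\om_N$ is immediate because the only property of $\om$ used is $\om_N\sim\om_{N'}$ for $N\sim N'$ and $\om_N\les\om_{N'}$ for $N\le N'$, both of which follow from the doubling hypothesis. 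Hence the frequency-envelope version follows from the classical one verbatim, which is precisely the content of the remark preceding the lemma.
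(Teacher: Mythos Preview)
Your proposal is correct and matches the paper's approach exactly: the paper does not give a detailed proof but simply remarks that the frequency-envelope version ``can be proven in the same way as the classical one'' (citing \cite{Lanlivre13}), which is precisely the paraproduct decomposition you outline, with the weights $\om_N$ carried through via the doubling property. Your identification of the three regions and the role of each hypothesis is accurate; the only minor looseness is in the high--low case when $s_2\le 1/2$ and $s_3<s_1$, where $\|P_{\le N_1}v\|_{L^\infty}\lesssim\|v\|_{H^{s_2}}$ fails and one must instead sum $N_2^{1/2-s_2}$ over $N_2\lesssim N_1$ to pick up $N_1^{1/2-s_2}$, which is then absorbed by the strict inequality $s_3<s_1+s_2-1/2$ --- but this is a routine refinement of what you wrote.
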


Similarly, we obtain the following estimate, which will be used to estimate $|\psi|^2 w$ in $H^{-1/4+}$ in the proof of \eqref{estZ1}.
We remark that it is important to take $s_2=s_3$ when we apply \eqref{eq_2.3apl4}.

\begin{cor}\label{cor26}
  Let $s_1+s_2>s_3\vee 0$, $s_1\ge 0$, $ s_1>s_3$, and $s_2\ge s_3$
  Then
  \EQS{\label{eq_2.3apl4}
    \|uv\|_{H^{s_3}}\les \|J_x^{s_1}u\|_{L^{\I}}\|v\|_{H^{s_2}}.
  }
\end{cor}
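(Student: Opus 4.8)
The plan is to deduce \eqref{eq_2.3apl4} from the frequency-envelope product estimate \eqref{eq2.3} of Lemma \ref{Lem25} by first trading the $L^\I$ norm on the factor carrying $s_1$ derivatives for a Sobolev norm with a tiny loss. Concretely, fix a small $\e>0$ with $s_1+s_2>s_3\vee 0 + 2\e$, $s_1-\e>s_3$ and $s_1-\e\ge 0$; this is possible by the strict inequalities in the hypotheses. Since $H^{1/2+}(\R)\hookrightarrow L^\I(\R)$, we have
\EQQS{
  \|J_x^{s_1}u\|_{L^\I}
  \gtrsim \|J_x^{s_1-\e}u\|_{H^{1/2+\e'}}
}
for a suitable $\e'>0$; equivalently $\|u\|_{H^{s_1-\e}} \wedge \|u\|_{H^{1/2+s_1-\e+}}\les \|J_x^{s_1}u\|_{L^\I}$, so in particular we may bound $\|u\|_{H^{\sigma}}\les\|J_x^{s_1}u\|_{L^\I}$ for any $\sigma\le s_1+1/2-\e/2$, say. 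The point is that $J_x^{s_1}u\in L^\I$ is a \emph{stronger} statement than $u\in H^{s_1-\e}$ once one also knows (as one does here, since all our $u$ come from $L^2$-based spaces) that $u$ has some $L^2$ integrability; but in fact for the purpose of \eqref{eq_2.3apl4} it suffices to use only the high-frequency gain, exactly as in the proof of \eqref{eq2.2.1}: writing $u=\sum_N P_N u$, one has $\|P_N u\|_{L^2}\les N^{-1/2-\e/2}\|P_N J_x^{s_1}u\|_{L^\I}\cdot N^{-s_1}\cdot N^{\e/2}\cdots$, and summing in $N$ with the extra $N^{-0+}$ one concludes $\|u\|_{H^{s_1'}}\les \|J_x^{s_1}u\|_{L^\I}$ for $s_1'$ slightly below $s_1+1/2$, with no frequency envelope needed since $\om_N\equiv1$ here.

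With that reduction in hand, I would apply \eqref{eq2.3} with the exponents $(s_1',s_2,s_3)$ in place of $(s_1,s_2,s_3)$, choosing $s_1'=s_1-\e/2\vee(s_3+\e)$ or, more simply, $s_1'$ any value with $s_3<s_1'\le s_1$, $s_1'\ge 0$, and $s_1'+s_2>s_3+1/2$. The last condition is where the hypothesis $s_1+s_2>s_3\vee0$ is used: since $s_1$ can be replaced by anything up to roughly $s_1+1/2$ at the cost of the $L^\I\to H^{\cdot}$ trade-off, we have room to guarantee $s_1'+s_2>s_3+1/2$. Then Lemma \ref{Lem25} (with $\om_N\equiv1$, $\de=1$) requires exactly $s_1'+s_2\ge0$ (clear), $s_1'\wedge s_2\ge s_3$ (clear from $s_1'>s_3$, $s_2\ge s_3$), and $s_3<s_1'+s_2-1/2$ (arranged above), and yields
\EQQS{
  \|uv\|_{H^{s_3}}\les \|u\|_{H^{s_1'}}\|v\|_{H^{s_2}}\les \|J_x^{s_1}u\|_{L^\I}\|v\|_{H^{s_2}},
}
which is \eqref{eq_2.3apl4}.

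The only mild subtlety — and the step I would treat most carefully — is making the choice of the auxiliary exponent $s_1'$ compatible with \emph{all} the constraints simultaneously, in particular reconciling the borderline case $s_2=s_3$ (which the paper explicitly flags as the relevant one for applications) with the strict inequality $s_3<s_1'+s_2-1/2$: when $s_2=s_3$ this forces $s_1'>1/2$, so one genuinely needs the high-frequency gain from $J_x^{s_1}u\in L^\I$ to push $s_1'$ above $1/2$, i.e. one cannot simply use $u\in H^{s_1}$ when $s_1\le1/2$. Thus the argument really does use the $L^\I$ hypothesis in an essential way, not merely as a convenient substitute for a Sobolev norm, and I would make sure the dyadic summation that extracts "$H^{s_1'}$ with $s_1'$ close to $s_1+1/2$" from "$J_x^{s_1}u\in L^\I$" is written out, mirroring the computation already displayed in the proof of Lemma \ref{Lem24}. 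Everything else is a routine invocation of \eqref{eq2.3}.
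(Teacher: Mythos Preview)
There is a genuine gap. Your reduction rests on the claim that $\|J_x^{s_1}u\|_{L^\I}$ controls $\|u\|_{H^{s_1'}}$ for some $s_1'$ close to $s_1+1/2$, and you try to justify this with a frequency-localized bound of the form $\|P_N u\|_{L^2}\les N^{-1/2}\|P_N u\|_{L^\I}$. But Bernstein's inequality goes the other way: for band-limited functions one has $\|P_N u\|_{L^\I}\les N^{1/2}\|P_N u\|_{L^2}$, not the reverse. In fact no bound of the type $\|u\|_{H^{s_1'}}\les \|J_x^{s_1}u\|_{L^\I}$ can hold for any $s_1'$: take $u\equiv 1$, for which $J_x^{s_1}u\equiv 1\in L^\I$ while $u\notin L^2$. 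Your parenthetical remark that ``all our $u$ come from $L^2$-based spaces'' does not save the argument, since the resulting constant would then depend on an $L^2$-type norm of $u$, not only on $\|J_x^{s_1}u\|_{L^\I}$, and the inequality \eqref{eq_2.3apl4} as stated would be false.

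The paper's route is the one it signals with ``Similarly'': redo the paraproduct decomposition that underlies Lemma~\ref{Lem25}, but at each step place the factor $P_{N_1}u$ in $L^\I$ (via H\"older) rather than in $L^2$. For $N_1\ge 1$ one has $\|P_{N_1}u\|_{L^\I}\les N_1^{-s_1}\|J_x^{s_1}u\|_{L^\I}$, and the three standard interactions (low--high, high--low, high--high) then close under exactly the stated hypotheses $s_1+s_2>s_3\vee 0$, $s_1>s_3$, $s_2\ge s_3$, $s_1\ge 0$. This is why the corollary genuinely requires the $L^\I$ control on $u$ and cannot be obtained by passing through an $H^{s_1'}$ norm.
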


\begin{rem}
  An immediate application of \eqref{eq2.3} and \eqref{eq2.2.1} is the following:
  \EQS{\label{eq_2.3apl}
    \|F(u,\psi)\|_{H^{-1/4}}
    \les (\|u\|_{H^{3/4}}+\|J_x\psi\|_{L^{\I}})^2\|u\|_{H^{3/4}},
  }
  where $F(u,\psi)$ is defined in \eqref{def_F}.
  Indeed, for $|u|^2 \p_x u$, we see from \eqref{eq2.3} that $\||u|^2 \p_x u\|_{H^{-1/4}}
  \les \||u|^2\|_{H^{3/4}}\|\p_x u\|_{H^{-1/4}}
  \les \|u\|_{H^{3/4}}^3$.
  For terms such as $|\psi|^2\p_x u$ (i.e., terms not in divergence form), we notice that $|\psi|^2\p_x u=\p_x(|\psi|^2 u)-\bar{\psi}\p_x \psi u-\psi \p_x \bar{\psi}u$.
  We can treat the second and the third terms easily with an trivial embedding $L^2\hookrightarrow H^{-1/4}.$
  For the first term, we use \eqref{eq2.2.1}.
  By a similar argument, we obtain
  \EQS{\label{eq_2.3apl1}
    \|F(u,\psi)\|_{H_\om^{s-1}}
    \les (\|u\|_{H^{3/4}}
     +\|J_x^{s+1}\psi\|_{L^{\I}})^2\|u\|_{H_\om^{s}}
  }
  for $s\ge 3/4$ and $\{\om_N\}_N$ satisfies $\om_N\le \om_{2N}\le \de \om_N$ with $\de<2^{\e/2}$.
\end{rem}

We will frequently use the following lemma, which can be seen as a variant of integration by parts.

\begin{lem}\label{lem_comm1}
  Let $N\in 2^{\Z}\cup\{0\}$.
  Then,
  \EQQS{
    \bigg|\int_\R \Pi_N(u,v)wdx\bigg|
    +\bigg|\int_\R \ti{\Pi}_N(u,v)wdx\bigg|
    \les\|u\|_{L^2}\|v\|_{L^2}\|\p_x w\|_{L^\I},
  }
  where
  \EQS{\label{def_pi}
    \Pi_N(u,v)&:=(\p_x v) P_N^2u +(\p_x u) P_N^2v,\\
    \label{def_pi2}
    \ti{\Pi}_N(u,v)&:=v \p_x P_N^2u +u \p_x P_N^2v.
  }
\end{lem}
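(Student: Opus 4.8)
The plan is to reduce the statement to a single clean identity that re-expresses $\Pi_N(u,v)$ and $\tilde\Pi_N(u,v)$ as exact spatial derivatives plus a harmless remainder, after which the bound follows from integration by parts and Cauchy--Schwarz. First I would treat $\tilde\Pi_N$: since $v\,\partial_x P_N^2 u + u\,\partial_x P_N^2 v$ differs from $\partial_x(u\,P_N^2 v) + \partial_x(v\,P_N^2 u)$ only by the terms $(\partial_x u)P_N^2 v + (\partial_x v)P_N^2 u = \Pi_N(u,v)$, it suffices to handle $\Pi_N$ and note $\tilde\Pi_N = \partial_x(uP_N^2v + vP_N^2u) - \Pi_N(u,v)$; the first piece integrates against $w$ to give $-\int (uP_N^2v+vP_N^2u)\,\partial_x w\,dx$, which is immediately $\lesssim \|u\|_{L^2}\|v\|_{L^2}\|\partial_x w\|_{L^\infty}$ using $\|P_N\|_{L^2\to L^2}\le 1$. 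So the whole lemma collapses to the bound for $\int_\R \Pi_N(u,v)w\,dx$.

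For $\Pi_N$, the key observation is the self-adjointness of $P_N$ together with the Leibniz rule: writing $f=P_N u$, $g=P_N v$ and moving one copy of $P_N$ onto $w$ in each term, one gets
\EQQS{
  \int_\R \big[(\partial_x v)P_N^2 u + (\partial_x u)P_N^2 v\big] w\,dx
  = \int_\R \big[(P_N\partial_x v)(P_N u) + (P_N\partial_x u)(P_N v)\big] P_N w\,dx,
}
but $P_N$ and $\partial_x$ commute, so the bracket equals $\partial_x\big((P_N u)(P_N v)\big)$. Hence the integral is $\int_\R \partial_x\big((P_N u)(P_N v)\big)\,P_N w\,dx = -\int_\R (P_N u)(P_N v)\,\partial_x P_N w\,dx$, and by Hölder and the uniform $L^2$- and $L^\infty$-boundedness of $P_N$ (valid for non-homogeneous dyadic $N\ge 1$, for the low-frequency piece $N=0$ interpreted as the identity or a fixed smooth cutoff, and for homogeneous $N\in 2^\Z$) this is bounded by $\|P_N u\|_{L^2}\|P_N v\|_{L^2}\|\partial_x P_N w\|_{L^\infty}\le \|u\|_{L^2}\|v\|_{L^2}\|\partial_x w\|_{L^\infty}$. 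Here I use that $\|\partial_x P_N w\|_{L^\infty} = \|P_N \partial_x w\|_{L^\infty}\lesssim \|\partial_x w\|_{L^\infty}$ since the convolution kernel of $P_N$ has $L^1$-norm bounded uniformly in $N$.

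The one point requiring a little care — and the main (minor) obstacle — is the treatment of $N=0$ and, more generally, making sure the constants are genuinely uniform in $N$ across both the homogeneous and non-homogeneous regimes: for $N=0$ one should read $P_0$ as the relevant low-frequency projector (or simply note the statement is trivial if $P_0$ is the identity), and for the commutator step one must invoke that $\{\phi_N\}$ and $\{\xi\mapsto \phi_N(\xi)\}$ give rise to kernels uniformly bounded in $L^1$, which is standard. Combining the $\Pi_N$ bound with the identity $\tilde\Pi_N = \partial_x(uP_N^2 v + vP_N^2 u) - \Pi_N$ and the triangle inequality yields the claimed estimate for both terms simultaneously.
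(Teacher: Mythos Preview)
Your reduction of $\tilde\Pi_N$ to $\Pi_N$ via $\tilde\Pi_N=\partial_x(uP_N^2v+vP_N^2u)-\Pi_N$ is correct and clean. The problem lies in the displayed identity
\[
\int_\R \big[(\partial_x v)P_N^2 u + (\partial_x u)P_N^2 v\big] w\,dx
= \int_\R \big[(P_N\partial_x v)(P_N u) + (P_N\partial_x u)(P_N v)\big] P_N w\,dx,
\]
which is false. Self-adjointness only gives $\int(\partial_x v)(P_N^2 u)\,w\,dx=\int (P_N u)\,P_N[(\partial_x v)w]\,dx$, and $P_N[(\partial_x v)w]$ is not $(P_N\partial_x v)(P_N w)$: the projector $P_N$ is not multiplicative. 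A concrete counterexample is $v=e^{iNx}$, $w=e^{-iNx}$ with $N\gg1$; then $(\partial_x v)w=iN$ sits at frequency $0$, so $P_N[(\partial_x v)w]=0$, whereas $(P_N\partial_x v)(P_N w)=iN$. In particular your claimed identity would make the left-hand side depend only on $P_N w$, but $\Pi_N(u,v)$ has no frequency localization forcing this.

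The paper's proof instead writes
\[
\int_\R \Pi_N(u,v)w\,dx=\int_\R u\big\{P_N^2(w\partial_x v)-\partial_x(wP_N^2 v)\big\}\,dx
\]
(by self-adjointness on the first term and integration by parts on the second), and then decomposes the brace as $[\partial_x P_N^2,w]v-P_N^2(v\partial_x w)-(P_N^2 v)\partial_x w$. The last two terms are trivially bounded; the genuine content is the commutator estimate $\|[\partial_x P_N^2,w]v\|_{L^2}\lesssim\|\partial_x w\|_{L^\infty}\|v\|_{L^2}$, obtained from the kernel representation and the mean value inequality $|w(y)-w(x)|\le|x-y|\|\partial_x w\|_{L^\infty}$, together with the uniform $L^1$ bound on $y\,\F_x^{-1}(\xi\phi_N^2)(y)$. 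This commutator step is precisely what your argument misses and cannot be avoided by the manipulation you attempted.
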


\begin{proof}
  We only show the estimate of \eqref{def_pi} since the estimate of \eqref{def_pi2} follows from an almost parallel argument.
  Observe that
  \EQQS{
    \int_\R \Pi_N(u,v)wdx = \int_\R u\{P_N^2(w\p_x v)-\p_x (wP_N^2 v)\}dx.
  }
  So, it suffices to show that
  \EQS{
    \|P_N^2(w\p_x v)-\p_x (wP_N^2 v)\|_{L^2}
    \les \|\p_x w\|_{L^\I}\|v\|_{L^2}.
  }
  Note that
  \EQQS{
    P_N^2(w\p_x v)-\p_x (wP_N^2 v)
    =[\p_x P_N^2, w]v-P_N^2(v\p_x w)-(P_N^2 v) \p_x w.
  }
  For the first term, as in Lemma 2.4 in \cite{MT21}, we obtain that
  \EQQS{
    |([\p_x P_N^2, w]v)(x)|
    &=\bigg|\int_\R \F_x^{-1}(\xi \phi_N^2(\xi))(x-y)
      (w(y)-w(x))v(y)dy\bigg|\\
    &\les \|\p_x w\|_{L^\I}\int_\R|(x-y)\F_x^{-1}(\xi \phi_N^2(\xi))(x-y)v(y)|dy,
  }
  which together with the Minkowski inequality and the change of variables implies that
  \EQS{\label{eq_comm2}
    \|[\p_x P_N^2, w]v\|_{L^2}
    \les \|\p_x w\|_{L^\I}\|v\|_{L^2}.
  }
  Other terms $\|P_N^2(v\p_x w)\|_{L^2}$ and $\|(P_N^2 v) \p_x w\|_{L^2}$ can be bounded by $\|\p_x w\|_{L^\I}\|v\|_{L^2}$ easily.
  This completes the proof.
\end{proof}

\begin{rem}
  If we replace $w$ with $P_{\ll N}w$, $v$ automatically has the Fourier support $\supp\hat{v}(\xi)\subset\{|\xi|\sim N\}$ by the impossible frequency interaction.
  Then, \eqref{eq_comm2} becomes
  \EQQS{
  \|[\p_x P_N, P_{\ll N} w]v\|_{L^2}
  =\|[\p_x P_N, P_{\ll N} w]P_{\sim N}v\|_{L^2}
  \les \|\p_x P_{\ll N} w\|_{L^\I}\|P_{\sim N}v\|_{L^2}.
  }
  In particular, we have
  \EQS{\label{eq_comm3}
   \|[P_N, P_{\ll N} w]\p_x v\|_{L^2}
   \les \|\p_x P_{\ll N} w\|_{L^\I}\|P_{\sim N}v\|_{L^2}
  }
  since $[P_N, P_{\ll N} w]\p_x v=[\p_x P_N, P_{\ll N} w]P_{\sim N}v- P_N( \p_x P_{\ll N} w P_{\sim N} v)$.
\end{rem}

\section{Refined Strichartz Estimates}\label{sec_stri}

In this section, we establish refined Strichartz estimates, developed by \cite{KT1}, which play an important role in our estimates.
Specifically, we use Proposition \eqref{prop_stri1} for resonant interactions.
See Case 1 in the estimates for $J_t^{(1)}$ in the proof of Proposition \ref{prop_apri}, for example.
First we recall the standard Strichartz estimate (see, for instance, Theorem 2.3.3 in \cite{C03}).

\begin{defn}
  A pair $(p,q)\in\R^2$ is called admissible if $2\le p\le \I$ and
  \EQQS{
    \frac{2}{q}=\frac{1}{2}-\frac{1}{p}.
  }
\end{defn}

\begin{lem}\label{lem_stri}
  Let $(p,q)$ be admissible.
  Then, for any $u\in L^2(\R)$
  \EQQS{
    \|e^{it\p_x^2}u\|_{L_t^q L_x^p}\les \|u\|_{L_x^2}.
  }
\end{lem}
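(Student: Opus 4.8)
The plan is to run the classical $TT^*$ argument together with the one-dimensional dispersive estimate. Writing $Tu:=e^{it\p_x^2}u$, by duality (and a density argument reducing, say, to Schwartz data so that all the quantities below are finite) the bound $\|Tu\|_{L_t^q L_x^p}\les\|u\|_{L_x^2}$ is equivalent to the boundedness of $T^*$ from $L_t^{q'}L_x^{p'}$ to $L_x^2$, and for this it suffices to bound $TT^*$ from $L_t^{q'}L_x^{p'}$ to $L_t^q L_x^p$, where $(TT^*F)(t)=\int_\R e^{i(t-s)\p_x^2}F(s)\,ds$ and $p',q'$ denote conjugate exponents. The case $p=2$ (hence $q=\I$) is immediate from the unitarity of $e^{it\p_x^2}$ on $L_x^2$, so I may assume $2<p\le\I$.

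The two inputs are mass conservation, $\|e^{it\p_x^2}f\|_{L_x^2}=\|f\|_{L_x^2}$, and the dispersive estimate $\|e^{it\p_x^2}f\|_{L_x^\I}\les|t|^{-1/2}\|f\|_{L_x^1}$, the latter coming from the explicit kernel of $e^{it\p_x^2}$ (a unimodular Gaussian times $|t|^{-1/2}$). Interpolating these two bounds via Riesz--Thorin gives $\|e^{it\p_x^2}f\|_{L_x^p}\les|t|^{-(1/2-1/p)}\|f\|_{L_x^{p'}}$ for $2\le p\le\I$, so that by Minkowski's inequality
\EQQS{
  \|(TT^*F)(t)\|_{L_x^p}\les\int_\R|t-s|^{-(1/2-1/p)}\|F(s)\|_{L_x^{p'}}\,ds .
}
The admissibility relation $2/q=1/2-1/p$ makes the kernel exponent equal to $2/q\in(0,1)$ and matches the Hardy--Littlewood--Sobolev scaling with source exponent $q'$ and target exponent $q$ (one checks $q'<q$ and $q<\I$ for $2<p\le\I$); applying Hardy--Littlewood--Sobolev in the $t$-variable then yields $\|TT^*F\|_{L_t^q L_x^p}\les\|F\|_{L_t^{q'}L_x^{p'}}$, which is the claim.

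The argument presents no genuine obstacle; the only point worth flagging is the endpoint $p=\I$, $q=4$, which in one dimension is still captured by this scheme because the fractional-integration exponent $1/2-1/p$ there equals $1/2<1$, so Hardy--Littlewood--Sobolev still applies — unlike in dimension $\ge 2$, where the corresponding endpoint requires the Keel--Tao machinery. Everything else is a routine application of interpolation and the Hardy--Littlewood--Sobolev inequality.
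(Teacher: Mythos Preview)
Your argument is correct: this is the standard $TT^*$ proof via the one-dimensional dispersive estimate and Hardy--Littlewood--Sobolev, and your handling of the endpoints $(p,q)=(2,\infty)$ and $(p,q)=(\infty,4)$ is fine. The paper itself does not give a proof of this lemma; it simply cites Theorem~2.3.3 in Cazenave's textbook~\cite{C03}, so there is nothing to compare against beyond noting that what you wrote is precisely the classical argument that reference contains.
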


\begin{prop}\label{prop_stri1}
  Let $0<T<1$, $N\ge 1$.
  Suppose that $(p,q)\in\R^2$ is admissible and $1\le \tilde{q}\le q$.
  Assume that $u\in C([0,T];L^2(\R))$ satisfies
  \EQQS{
    i\p_t u+\p_x^2 u= F
  }
  on $[0,T]$ for some $F\in L^{\tilde{q}}([0,T];L^2(\R))$.
  Then,
  \EQQS{
    \|P_N u\|_{L_T^{\tilde{q}}L_x^p}
    \les N^{\frac 1q}T^{-\frac 1q}\|P_N u\|_{L_T^{\tilde{q}}L_x^2}
     +N^{\frac 1q -1}T^{1-\frac 1q}\|P_N F\|_{L_T^{\tilde{q}}L_x^2}.
  }
\end{prop}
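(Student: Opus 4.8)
The strategy is to reduce the inhomogeneous estimate to the homogeneous Strichartz estimate of Lemma \ref{lem_stri} via the Duhamel formula, combined with a partition of the time interval $[0,T]$ into subintervals of length comparable to $N^{-1}$, so that on each piece the contribution of the inhomogeneity $F$ is cheap. First I would fix $N\ge 1$ and set $h:=N^{-1}\wedge T$, then cover $[0,T]$ by $O(T/h)$ consecutive intervals $I_j=[a_j,a_{j+1}]$ of length $\le h$. On each $I_j$, Duhamel gives, for $t\in I_j$,
\EQQS{
  P_N u(t)=U(t-a_j)P_N u(a_j)-i\int_{a_j}^t U(t-t')P_N F(t')\,dt'.
}

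Next I would estimate the two terms separately on $I_j$. For the linear evolution term, Lemma \ref{lem_stri} applied on $I_j$ (with the unitarity of $U$) gives $\|U(\cdot-a_j)P_N u(a_j)\|_{L^q(I_j;L_x^p)}\les \|P_N u(a_j)\|_{L_x^2}$. One then bounds $\|P_N u(a_j)\|_{L_x^2}$ by its average over $I_j$ plus an error: using again the Duhamel representation of $P_N u(a_j)$ in terms of $P_N u(t')$ for $t'\in I_j$, one has $\|P_N u(a_j)\|_{L_x^2}\les h^{-1}\|P_N u\|_{L^1(I_j;L_x^2)}+\|P_N F\|_{L^1(I_j;L_x^2)}$, and then Hölder in time on $I_j$ (interval of length $h$) converts the $L^1_t$ norms into $L_t^{\tilde q}$ norms at the cost of factors $h^{1-1/\tilde q}$. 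For the inhomogeneous term, I would use the dual/Christ–Kiselev-type Strichartz bound (or simply Minkowski together with Lemma \ref{lem_stri}) to get $\|\int_{a_j}^{\cdot}U(\cdot-t')P_N F(t')dt'\|_{L^q(I_j;L_x^p)}\les \|P_N F\|_{L^1(I_j;L_x^2)}\les h^{1-1/\tilde q}\|P_N F\|_{L^{\tilde q}(I_j;L_x^2)}$ by Hölder on $I_j$.

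Then I would pass from $L^q_t$ to $L^{\tilde q}_t$ on each $I_j$ (since $\tilde q\le q$, Hölder on an interval of length $\le h$ costs $h^{1/\tilde q-1/q}$) and sum the resulting estimates over $j$ in $\ell^{\tilde q}$. The key bookkeeping point is tracking powers of $h$ and of the number of intervals $T/h$: collecting everything, the linear term yields a factor $h^{1/\tilde q-1/q}\cdot h^{-1}\cdot h^{1-1/\tilde q}=h^{-1/q}$ per interval times the $\ell^{\tilde q}$ sum, which reproduces $N^{1/q}T^{-1/q}\|P_N u\|_{L_T^{\tilde q}L_x^2}$ after noting $h^{-1/q}\sim (N\vee T^{-1})^{1/q}\les N^{1/q}T^{-1/q}$ since $T<1$; similarly the inhomogeneous contributions assemble into $N^{1/q-1}T^{1-1/q}\|P_N F\|_{L_T^{\tilde q}L_x^2}$, using $h^{1-1/q}\le N^{-(1-1/q)}$ together with, where needed, the crude bound $h\le T$. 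A minor technical point is to use Minkowski's inequality to interchange the $\ell^{\tilde q}_j$ sum with the spatial $L^p_x$ and $L^2_x$ norms, which is licit because $\tilde q\le q$ and because on disjoint intervals the $L^q_t$ norms are genuinely $\ell^q_j$-summable (and $\ell^q_j\hookrightarrow\ell^{\tilde q}_j$ fails in general, so one should instead directly work in $\ell^{\tilde q}_j$ from the start, which is what the Hölder step on each $I_j$ already provides).

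The main obstacle I expect is the Christ–Kiselev-type issue in handling the retarded integral $\int_{a_j}^t$: on a single short interval this is harmless (one can dominate by the full integral $\int_{I_j}$ and invoke Minkowski plus Lemma \ref{lem_stri}, losing nothing since we only need $L^1_t\to L^\infty_t$-type control there), but some care is needed to ensure this crude bound does not destroy the gain. In fact on each interval of length $h\sim N^{-1}$ we can afford to be lossy in time regularity for $F$ precisely because the factor $h^{1-1/\tilde q}$ absorbs the loss; this is the mechanism behind the favorable power $N^{1/q-1}$ on the $F$-term. The remaining steps — applying Lemma \ref{lem_stri}, Hölder in time, and summing a geometric-type count of $T/h$ intervals — are routine.
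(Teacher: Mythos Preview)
Your approach is essentially the same as the paper's (Koch--Tzvetkov interval decomposition plus Duhamel on each piece), but there is a concrete bookkeeping error in your choice of interval length that prevents you from recovering the claimed factor on the $F$-term.

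You take $h=N^{-1}\wedge T$, and then assert that ``$h^{1-1/q}\le N^{-(1-1/q)}$ together with, where needed, the crude bound $h\le T$'' yields $h^{1-1/q}\le N^{1/q-1}T^{1-1/q}$. This is false: from $h\le\min(N^{-1},T)$ you only obtain $h^{1-1/q}\le \min(N^{-1},T)^{1-1/q}$, and since $N^{-1},T\le 1$ one has $\min(N^{-1},T)\ge N^{-1}T$, so in general $h^{1-1/q}\ge (N^{-1}T)^{1-1/q}$, the wrong direction. Concretely, when $N^{-1}\le T$ your argument yields only $N^{1/q-1}\|P_N F\|_{L_T^{\tilde q}L_x^2}$, missing the small factor $T^{1-1/q}$ that the statement requires (and which is genuinely used later in the paper to produce positive powers of $T$). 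The fix is immediate: take $h\sim N^{-1}T$, i.e.\ partition $[0,T]$ into $\sim N$ subintervals $I_{j,N}$ each of length $\sim N^{-1}T$, exactly as the paper does. Then $h^{-1/q}= N^{1/q}T^{-1/q}$ and $h^{1-1/q}= N^{1/q-1}T^{1-1/q}$ on the nose, and your computation goes through verbatim.

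One further simplification the paper uses that you might adopt: instead of anchoring Duhamel at the left endpoint $a_j$ and then bounding $\|P_N u(a_j)\|_{L_x^2}$ via a second Duhamel representation (which generates an extra $F$-term), choose the anchor $c_{j,N}\in I_{j,N}$ to be a point where $t\mapsto\|P_N u(t)\|_{L_x^2}^{\tilde q}$ attains its minimum on $I_{j,N}$. Then directly
\[
\|P_N u(c_{j,N})\|_{L_x^2}^{\tilde q}\le |I_{j,N}|^{-1}\int_{I_{j,N}}\|P_N u(t')\|_{L_x^2}^{\tilde q}\,dt',
\]
which gives the $L_T^{\tilde q}L_x^2$ norm without passing through $L^1_t$ and avoids the redundant contribution from $F$ in that step.
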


\begin{proof}
  For each $N$, define a series of time intervals $\{I_{j,N}\}_{j\in J}$ so that $\bigcup_{j\in J_N}I_{j,N}=[0,T]$ and $|I_{j,N}|\sim N^{-1}T$ and $\#J\sim N$.
  On $t\in I_{j,N}$, we have
  \EQQS{
    P_N u(t)
    =e^{i(t-c_{j,N})\p_x^2}P_N u(c_{j,N})
      - i \int_{c_{j,N}}^t e^{i(t-t')\p_x^2} P_N
      F dt',
  }
  where $c_{j,N}$ is chosen so that $\|P_{N}u(t)\|_{L_x^2}^{\tilde{q}}$ attains its minimum on $I_{j,N}$.
  Then, we see from the Minkowski inequality that
  \EQQS{
    \|P_N u\|_{L_T^{\tilde{q}}L_x^p}
    &\le \bigg[\sum_{j\in J_N}
      \| e^{i(t-c_{j,N})\p_x^2}P_N u(c_{j,N})
       \|_{L^{\tilde{q}}(I_{j,N};L_x^p)}^{\tilde{q}} \bigg]^{1/\tilde{q}}\\
    &\quad +\bigg[\sum_{j\in J_N}
      \bigg\|\int_{c_{j,N}}^t e^{i(t-t')\p_x^2}P_N F(t') dt'
       \bigg\|_{L^{\tilde{q}}(I_{j,N};L_x^p)}^{\tilde{q}} \bigg]^{1/\tilde{q}}
    =:A_1+A_2.
  }
  The H\"older inequality in time and the Strichartz estimate (Lemma \ref{lem_stri}) show that
  \EQQS{
    A_1
    &\le\bigg[\sum_{j\in J_N}
      |I_{j,N}|^{1-\tilde{q}/q}
       \| e^{i(t-c_{j,N})\p_x^2}P_N u(c_{j,N})
       \|_{L^{q}(I_{j,N};L_x^p)}^{\tilde{q}} \bigg]^{1/\tilde{q}}\\
    &\les \bigg[\sum_{j\in J_N}
      |I_{j,N}|^{1-\tilde{q}/q}
      \|P_N u(c_{j,N})
       \|_{L_x^{2}}^{\tilde{q}} \bigg]^{1/\tilde{q}}\\
    &\les N^{\frac 1q} T^{-\frac 1q} \bigg[\sum_{j\in J_N}
      \int_{I_{j,N}} \| P_N u(t')
       \|_{L_x^{2}}^{\tilde{q}}dt' \bigg]^{1/\tilde{q}}
    =N^{\frac 1q}T^{-\frac 1q}\|P_N u\|_{L_T^{\tilde{q}} L_x^2}.
  }
  Similarly, for $A_2$, we see from the H\"older inequality in time, the Minkowski inequality and the Strichartz estimate (Lemma \ref{lem_stri}) that
  \EQQS{
    A_2
    &\le\bigg[\sum_{j\in J_N}
      |I_{j,N}|^{1-\tilde{q}/q}
      \bigg(\int_{I_{j,N}}
      \| e^{i(t-t')\p_x^2}P_N F(t')\|_{L^q(I_{j,N};L_x^p)} dt'
       \bigg)^{\tilde{q}} \bigg]^{1/\tilde{q}}\\
    &\les \bigg[\sum_{j\in J_N} |I_{j,N}|^{\tilde{q}(1-1/q)}
      \int_{I_{j,N}} \| P_N F(t')\|_{L_x^2}^{\tilde{q}} dt'
      \bigg]^{1/\tilde{q}}
    \les N^{\frac 1q -1}T^{1-\frac 1q}\|F\|_{L_T^{\tilde{q}} L_x^2},
  }
  which completes the proof.
\end{proof}

\section{A Priori Estimate}\label{sec_apri}

In this section, we derive Proposition \ref{prop_apri}, which is one of the main estimates in this article.
For that purpose, we first collect technical tools which are based on the argument of \cite{MV15}.

\subsection{Preliminary Technical Estimates}

Let us denote by $\1_T$ the characteristic function of the interval $[0,T]$.
As pointed out in \cite{MV15}, $\1_T$ does not commute with $Q_L$.
To avoid this difficulty, following \cite{MV15}, we further decompose $\1_T$ as
\EQS{
  \1_T=\1_{T,R}^{\mathrm{low}}+\1_{T,R}^{\mathrm{high}},
  \quad \mathrm{with} \quad
  \F_t(\1_{T,R}^{\mathrm{low}})(\ta)=\chi(\ta/R)\F_t(\1_T)(\ta),
}
for some $R>0$ to be fixed later.
Note that $\1_{T,R}^{\mathrm{low}}(t),\1_{T,R}^{\mathrm{high}}(t) \in \R$.

\begin{lem}[Lemma 3.5 in \cite{MPV19}]
  Let $1\le p\le \I$ and let $L$ be a non-homogeneous dyadic number.
  Then the operator $Q_{\le L}$ is bounded in $L_t^p L_x^2$ uniformly in $L$.
  In other words,
  \EQS{\label{eq4.3}
    \|Q_{\le L}u\|_{L_t^p L_x^2}\les \|u\|_{L_t^p L_x^2},
  }
  for all $u\in L_t^p L_x^2$ and the implicit constant appearing in \eqref{eq4.3} does not depend on $L$.
\end{lem}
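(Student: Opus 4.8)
The statement is that $Q_{\le L}$ is bounded on $L_t^p L_x^2$, uniformly in the non-homogeneous dyadic number $L$. Since $Q_{\le L}$ is a Fourier multiplier in the $(\tau,\xi)$ variables with symbol $\widetilde{\chi}_L(\tau,\xi):=\chi(2^{-l}(\tau+\xi^2))$ when $L=2^l$ (and $\chi(\tau+\xi^2)$ when $L=1$), the plan is to exploit the fact that, after conjugating by the linear group $U(t)=e^{it\p_x^2}$, this multiplier becomes a pure time-frequency multiplier. Concretely, I would write $Q_{\le L} = U(t)\, \widetilde{Q}_{\le L}\, U(-t)$, where $\widetilde{Q}_{\le L}$ acts only in the $\tau$ variable with symbol $\chi(\tau/L)$ (resp. $\chi(\tau)$). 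Because $U(\pm t)$ is an isometry on $L_x^2$ for each fixed $t$, it suffices to bound $\widetilde{Q}_{\le L}$ on $L_t^p L_x^2$.

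The reduction to a scalar convolution is then routine: $\widetilde{Q}_{\le L} f = K_L *_t f$ in the time variable, with kernel $K_L(t) = L\, \check{\chi}(Lt)$ (resp. $\check\chi(t)$ when $L=1$), where $\check\chi = \F_t^{-1}\chi$. Since $\chi\in C_c^\infty(\R)$, the function $\check\chi$ is Schwartz, hence $\|K_L\|_{L_t^1} = \|\check\chi\|_{L^1}$ is a finite constant independent of $L$. By Minkowski's integral inequality applied in the Banach space $L_x^2$,
\EQQS{
  \|\widetilde{Q}_{\le L} f\|_{L_t^p L_x^2}
  = \Big\| \int_\R K_L(s) f(\cdot - s)\, ds \Big\|_{L_t^p L_x^2}
  \le \int_\R |K_L(s)|\, \|f(\cdot - s)\|_{L_t^p L_x^2}\, ds
  = \|\check\chi\|_{L^1}\, \|f\|_{L_t^p L_x^2},
}
which gives the claimed bound with implicit constant $\|\check\chi\|_{L^1}$, independent of $L$ and of $p$. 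Undoing the conjugation by $U(\pm t)$ recovers the estimate for $Q_{\le L}$ itself.

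The only point requiring a little care — and the main (mild) obstacle — is the conjugation identity $Q_{\le L} = U(t)\widetilde{Q}_{\le L}U(-t)$: one must check on the Fourier side that replacing $\widetilde u(\tau,\xi)$ by $\widetilde{U(-t)u}$, which amounts to the shift $\tau\mapsto \tau-\xi^2$ under $\F_t$, correctly moves the parabola-adapted symbol $\chi(2^{-l}(\tau+\xi^2))$ to the flat symbol $\chi(2^{-l}\tau)$, and that $U(\pm t)$ maps $L_t^p L_x^2$ to itself isometrically (immediate, as $U(t)$ is a unitary operator on $L_x^2$ pointwise in $t$ and does not touch the $t$-variable of the outer norm). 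The endpoint case $p=\infty$ and the degenerate dyadic value $L=1$ are handled identically, since $\chi$ itself plays the role of the cutoff there. This is exactly Lemma 3.5 of \cite{MPV19}, to which one may simply refer.
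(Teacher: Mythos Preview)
Your argument is correct and is precisely the standard proof: conjugating by the free group $U(\pm t)$ reduces $Q_{\le L}$ to a pure time convolution with an $L^1_t$-kernel of norm $\|\check\chi\|_{L^1}$ independent of $L$, and Minkowski's inequality in the Banach space $L^2_x$ finishes the job. The paper itself gives no proof and simply cites Lemma~3.5 of \cite{MPV19}; your write-up is essentially that lemma's proof, so there is nothing further to compare.
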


\begin{lem}[Lemma 3.6 in \cite{MPV19}]
  For any $R>0$ and $T>0$, it holds
  \EQS{\label{eq4.1}
    \|\1_{T,R}^{\mathrm{high}}\|_{L^1}&\les T\wedge R^{-1},\\
    \label{eq4.1.1}
    \|\1_{T,R}^{\mathrm{low}}\|_{L^1}&\les T
  }
  and
  \EQS{\label{eq4.2}
    \|\1_{T,R}^{\mathrm{high}}\|_{L^\I}
    +\|\1_{T,R}^{\mathrm{low}}\|_{L^\I}\les 1.
  }
\end{lem}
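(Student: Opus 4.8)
\textbf{Proof plan for Lemma~\ref{eq4.1}--\eqref{eq4.2}.}

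The plan is to analyze the Fourier transform $\F_t(\1_T)(\ta)$ explicitly and then split it according to the low/high frequency cut at scale $R$. One computes directly
\EQQS{
  \F_t(\1_T)(\ta)=\int_0^T e^{-it\ta}dt=\frac{1-e^{-iT\ta}}{i\ta},
}
so that $|\F_t(\1_T)(\ta)|\les \min\{T,|\ta|^{-1}\}$. The low part $\1_{T,R}^{\mathrm{low}}$ has Fourier transform $\chi(\ta/R)\F_t(\1_T)(\ta)$, and the high part carries the complementary multiplier $1-\chi(\ta/R)$, which is supported in $\{|\ta|\gtrsim R\}$.

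First I would prove the $L^\I$ bounds \eqref{eq4.2}. Since $\1_T$ itself is bounded by $1$, it suffices to bound the low-frequency cutoff operator on $L^\I_t$; but convolution with $\F_t^{-1}(\chi(\cdot/R))$, whose $L^1_t$ norm is $\|\F_t^{-1}\chi\|_{L^1}$ independently of $R$ (by scaling), maps $L^\I_t$ to $L^\I_t$ with a bound uniform in $R$. Hence $\|\1_{T,R}^{\mathrm{low}}\|_{L^\I}\les 1$, and then $\|\1_{T,R}^{\mathrm{high}}\|_{L^\I}\le \|\1_T\|_{L^\I}+\|\1_{T,R}^{\mathrm{low}}\|_{L^\I}\les 1$. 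The bound \eqref{eq4.1.1} follows similarly: $\1_{T,R}^{\mathrm{low}}=\1_T * \F_t^{-1}(\chi(\cdot/R))$ is a convolution of an $L^1$ function of norm $T$ with an $L^1$ function of norm $O(1)$, so $\|\1_{T,R}^{\mathrm{low}}\|_{L^1}\les T$.

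The main work is the high-frequency bound \eqref{eq4.1}. The bound $\|\1_{T,R}^{\mathrm{high}}\|_{L^1}\les T$ is immediate from $\|\1_{T,R}^{\mathrm{high}}\|_{L^1}\le \|\1_T\|_{L^1}+\|\1_{T,R}^{\mathrm{low}}\|_{L^1}\les T$. The decay $\|\1_{T,R}^{\mathrm{high}}\|_{L^1}\les R^{-1}$ is the delicate point. Here I would write $\1_{T,R}^{\mathrm{high}}(t)=\F_t^{-1}\big((1-\chi(\ta/R))\F_t(\1_T)(\ta)\big)(t)$ and split $\1_T=\1_{(-\I,T)}-\1_{(-\I,0)}$ (equivalently, use that $\1_T=H(t)-H(t-T)$ with $H$ the Heaviside function, up to sets of measure zero), so that $\1_{T,R}^{\mathrm{high}}(t)=G_R(t)-G_R(t-T)$ where $G_R:=\F_t^{-1}\big((1-\chi(\ta/R))\F_t H\big)$. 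Using $\F_t H(\ta)=\pi\de(\ta)+\mathrm{p.v.}(i\ta)^{-1}$ up to constants and noting that $1-\chi(\ta/R)$ vanishes near $\ta=0$, the delta contribution drops out and $G_R(t)=\F_t^{-1}\big((1-\chi(\ta/R))\,\mathrm{p.v.}(i\ta)^{-1}\big)(t)$. By scaling $\ta\mapsto R\ta$, $G_R(t)=G_1(Rt)$ for a fixed function $G_1\in L^1(\R)$ (one checks $G_1$ decays like $|t|^{-1}$ for large $|t|$ from the jump of $\mathrm{p.v.}(i\ta)^{-1}$-type behavior cancelled smoothly, and is bounded near $0$ since the symbol is smooth there; hence $G_1\in L^1$). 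Therefore
\EQQS{
  \|\1_{T,R}^{\mathrm{high}}\|_{L^1}=\|G_1(R\cdot)-G_1(R(\cdot-T))\|_{L^1}\le 2\|G_1(R\cdot)\|_{L^1}=2R^{-1}\|G_1\|_{L^1}\les R^{-1}.
}
Combining the two bounds gives $\|\1_{T,R}^{\mathrm{high}}\|_{L^1}\les T\wedge R^{-1}$.

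The step I expect to require the most care is verifying that $G_1\in L^1(\R)$, i.e., that the inverse Fourier transform of $(1-\chi(\ta))\,\mathrm{p.v.}(i\ta)^{-1}$ is integrable: near the origin the symbol is smooth and compactly supported away from $0$, giving rapid decay of that piece, while for large $|\ta|$ the symbol behaves like $(i\ta)^{-1}$ whose inverse transform is (a constant times) $\sgn(t)$, not integrable — so one must exploit the precise structure, writing $(1-\chi(\ta))(i\ta)^{-1}=(i\ta)^{-1}-\chi(\ta)(i\ta)^{-1}$ is not directly useful, and instead one integrates by parts once in $\ta$ to gain a factor $t^{-1}$ and uses that $\frac{d}{d\ta}\big[(1-\chi(\ta))(i\ta)^{-1}\big]$ is an integrable function of $\ta$ (it is $O(\ta^{-2})$ at infinity and smooth near $0$), yielding $|G_1(t)|\les \min\{1,|t|^{-2}\}\cdot|t|^{-1}$... more carefully, $|G_1(t)|\les |t|^{-1}$ fails, so one needs $|G_1(t)|\les \LR{t}^{-2}$ away from the origin, obtained by integrating by parts twice once the delta is removed — this is precisely the content of Lemma 3.6 in \cite{MPV19}, to which we refer for the remaining routine details.
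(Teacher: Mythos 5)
The paper itself gives no argument for this lemma; it simply cites Lemma 3.6 of \cite{MPV19}, so there is no in-paper proof to compare yours against. Your outline is essentially correct: the $L^\I$ and $L^1$ bounds for the low part and the trivial $T$-bound for the high part are routine, and your reduction of the $R^{-1}$ bound to showing $G_1\in L^1$ via the scaling $G_R(t)=G_1(Rt)$ is valid. However, the closing paragraph is muddled. The displayed bound ``$|G_1(t)|\les\min\{1,|t|^{-2}\}\cdot|t|^{-1}$'' is not meaningful, and you never actually address boundedness of $G_1$ near $t=0$ (the two integrations by parts give decay $|t|^{-2}$ at infinity but say nothing at the origin, where the symbol $(1-\chi(\ta))(i\ta)^{-1}$ is merely $O(|\ta|^{-1})$ and hence not in $L^1_\ta$; one has to verify, e.g.\ via the convergence of the Dirichlet integral $\int_a^\I \frac{\sin u}{u}\,du$ uniformly in $a\ge 0$, that $G_1$ stays bounded with at most a jump at $t=0$). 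Deferring ``the remaining routine details'' to \cite{MPV19} at exactly the step you identify as the delicate one is a real gap in a proof offered from scratch.

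There is also a considerably cleaner route that avoids both the Heaviside decomposition and the analysis of $G_1$ altogether. Write $\1_{T,R}^{\mathrm{high}}=\1_T-\1_T*\rho_R$ with $\rho_R(s)=R\,\rho(Rs)$, $\rho=\F_t^{-1}\chi\in\mathcal S(\R)$, $\int\rho=\chi(0)=1$, so that
\EQQS{
  \1_{T,R}^{\mathrm{high}}(t)=\int_\R\big(\1_T(t)-\1_T(t-s)\big)\rho_R(s)\,ds.
}
By Minkowski and the elementary modulus-of-continuity bound $\|\1_T-\1_T(\cdot-s)\|_{L^1_t}\le 2\min(|s|,T)$ one gets
\EQQS{
  \|\1_{T,R}^{\mathrm{high}}\|_{L^1}\les\int_\R\min(|u|/R,\,T)\,|\rho(u)|\,du
  \les \min\Big(T\|\rho\|_{L^1},\,R^{-1}\||u|\rho\|_{L^1}\Big)\les T\wedge R^{-1},
}
using only that $\rho$ is Schwartz. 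This one-line estimate does all the work that your Heaviside/$G_1$ argument does, with none of the distribution-theoretic bookkeeping, and it also delivers \eqref{eq4.1.1} and \eqref{eq4.2} in the same stroke (Young's inequality for the $L^1$ and $L^\I$ bounds on the low part, triangle inequality for the high part). I would recommend replacing the $G_1$ analysis with this.
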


\begin{lem}[Lemma 3.7 in \cite{MPV19}]
  Assume that $T>0$, $R>0$, and $L\gg R$.
  Then it holds that
  \EQS{\label{eq4.6}
    \|Q_L (\1_{T,R}^{\mathrm{low}}u)\|_{L_{t,x}^2}
    \les \|Q_{\sim L}  u\|_{L_{t,x}^2},
  }
  for all $u\in L^2(\R_t \times \R_x)$.
\end{lem}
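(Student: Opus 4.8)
The plan is a short Fourier-support argument in the modulation variable $\ta+\xi^2$. The starting point is that $\1_{T,R}^{\mathrm{low}}$ is a function of $t$ alone whose time-Fourier transform $\F_t(\1_{T,R}^{\mathrm{low}})(\ta)=\chi(\ta/R)\F_t(\1_T)(\ta)$ is supported in $\{|\ta|\le 2R\}$. Passing to the space-time Fourier transform, multiplication by $\1_{T,R}^{\mathrm{low}}$ in $t$ becomes convolution in the $\ta$ variable against a measure supported in $[-2R,2R]$; since the $\xi$ variable is untouched, this moves the modulation variable $\ta+\xi^2$ by at most $2R$ (for each fixed $\xi$).

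Next I would invoke the hypothesis $L\gg R$. Recall that $\psi_L(\ta,\xi)=\phi_L(\ta+\xi^2)$ is supported where $|\ta+\xi^2|\sim L$ when $L\ge 2$, and where $|\ta+\xi^2|\le 2$ when $L=1$. Choosing the implicit constant in $L\gg R$ large enough (say $L\ge 16R$), the $O(R)$ shift from the previous step cannot push a modulation of size $\sim L$ out of a fixed dyadic neighbourhood of $L$; hence only the part of $u$ with modulation comparable to $L$ contributes, and one gets the identity $Q_L(\1_{T,R}^{\mathrm{low}}u)=Q_L\big(\1_{T,R}^{\mathrm{low}}\,Q_{\sim L}u\big)$, where $Q_{\sim L}$ denotes the sum of the $Q_{L'}$ over the finitely many dyadic $L'$ with, say, $L/4\le L'\le 4L$ (the endpoint $L=1$ being handled by a separate, easier inspection of the support of $\psi_1$).

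Finally the bound is immediate: $Q_L$ is a Fourier multiplier with symbol bounded by $1$, hence a contraction on $L_{t,x}^2$, and $\|\1_{T,R}^{\mathrm{low}}\|_{L^\I}\les 1$ by \eqref{eq4.2}; combining these with the identity above yields $\|Q_L(\1_{T,R}^{\mathrm{low}}u)\|_{L_{t,x}^2}\le\|\1_{T,R}^{\mathrm{low}}Q_{\sim L}u\|_{L_{t,x}^2}\les\|Q_{\sim L}u\|_{L_{t,x}^2}$. I do not expect a genuine obstacle here; the only point requiring care is the bookkeeping behind the support identity — ensuring the fattening factor defining ``$\sim L$'' is wide enough to absorb a shift of size $2R$, which is exactly what the quantitative meaning of $L\gg R$ supplies.
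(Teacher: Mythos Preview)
Your argument is correct and is exactly the standard Fourier-support argument behind this lemma: the time-Fourier transform of $\1_{T,R}^{\mathrm{low}}$ is supported in $\{|\ta|\le 2R\}$, so convolution in $\ta$ shifts the modulation $\ta+\xi^2$ by at most $2R\ll L$, forcing the identity $Q_L(\1_{T,R}^{\mathrm{low}}u)=Q_L(\1_{T,R}^{\mathrm{low}}Q_{\sim L}u)$, after which \eqref{eq4.2} and the $L^2$-boundedness of $Q_L$ conclude. The paper does not supply its own proof here --- it simply cites Lemma~3.7 of \cite{MPV19} --- and your write-up is precisely the argument one finds there.
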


\begin{defn}
  Define $\Om:\R^{4}\to\R$ as
  \EQQS{
    \Om(\xi_1,\xi_2,\xi_3,\xi_{4})
    :=\xi_1^2-\xi_2^2+\xi_3^2-\xi_4^2.
  }
  for $(\xi_1,\xi_2,\xi_3,\xi_{4})\in\R^{4}$.
\end{defn}

\begin{lem}\label{lem_res1}
  Let $(\xi_1,\xi_2,\xi_3,\xi_{4})\in\R^{4}$ satisfy $\xi_1-\xi_2+\xi_3-\xi_4=0$.
  Assume that $|\xi_1|\sim |\xi_2|\gts |\xi_3|\gg|\xi_4| $ or $|\xi_1|\sim |\xi_2|\gts |\xi_4|\gg|\xi_3| $.
  Then,
  \EQS{\label{eq_res1}
    |\Om(\xi_1,\xi_2,\xi_3,\xi_{4})|\gts |\xi_1|(|\xi_3|\vee |\xi_4|).
  }
  On the other hand, when $|\xi_1|\sim |\xi_3|\gts |\xi_2|\gg |\xi_4|$, $|\xi_1|\sim |\xi_3|\gts |\xi_4|\gg |\xi_2|$ or $|\xi_1|\sim |\xi_3|\gg |\xi_2|\vee |\xi_4|$,
  \EQS{\label{eq_res2}
    |\Om(\xi_1,\xi_2,\xi_3,\xi_{4})|
    \gts |\xi_1|^2.
  }
\end{lem}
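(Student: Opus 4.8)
The plan is to prove Lemma~\ref{lem_res1} by a direct algebraic computation, exploiting the constraint $\xi_1-\xi_2+\xi_3-\xi_4=0$ to factor $\Om$ in a form where the dominant frequencies can be extracted. The key identity, analogous to \eqref{eq_INT4}, is that under the zero-sum constraint one can write
\EQQS{
  \Om(\xi_1,\xi_2,\xi_3,\xi_4)
  =\xi_1^2-\xi_2^2+\xi_3^2-\xi_4^2
  =(\xi_1-\xi_2)(\xi_1+\xi_2)+(\xi_3-\xi_4)(\xi_3+\xi_4),
}
and since $\xi_1-\xi_2=\xi_4-\xi_3=-(\xi_3-\xi_4)$, this becomes $(\xi_1-\xi_2)(\xi_1+\xi_2-\xi_3-\xi_4)$. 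Using the constraint once more to substitute $\xi_1-\xi_2=\xi_4-\xi_3$ inside the first factor, or equivalently writing $\xi_1+\xi_2=\xi_1+\xi_2$ and $\xi_3+\xi_4$ in terms of the others, one is led to the clean factorization $\Om=(\xi_1-\xi_2)(\xi_2-\xi_3)\cdot(-2)$ up to sign, or more symmetrically $\Om=-2(\xi_1-\xi_2)(\xi_2-\xi_3)$; I would double-check the exact constant and signs but the structure is that $\Om$ factors as (twice) a product of two \emph{differences} of consecutive frequencies. This is the heart of the matter.

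From this factorization the two estimates follow by case analysis on the frequency configurations. For \eqref{eq_res1}: when $|\xi_1|\sim|\xi_2|\gts|\xi_3|\gg|\xi_4|$, the constraint $\xi_1-\xi_2=\xi_4-\xi_3$ forces $|\xi_1-\xi_2|=|\xi_3-\xi_4|\sim|\xi_3|$ (since $|\xi_3|\gg|\xi_4|$), while the other factor $|\xi_2-\xi_3|\sim|\xi_2|\sim|\xi_1|$ because $|\xi_2|\gts|\xi_3|$; multiplying gives $|\Om|\gts|\xi_1||\xi_3|=|\xi_1|(|\xi_3|\vee|\xi_4|)$. The symmetric case $|\xi_1|\sim|\xi_2|\gts|\xi_4|\gg|\xi_3|$ is handled identically, now with $|\xi_1-\xi_2|=|\xi_3-\xi_4|\sim|\xi_4|$ and $|\xi_2-\xi_3|\sim|\xi_1|$, so $|\Om|\gts|\xi_1|(|\xi_3|\vee|\xi_4|)$ again. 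For \eqref{eq_res2}, in each of the three listed configurations one has $|\xi_1|\sim|\xi_3|$ dominating, so I would check that \emph{both} difference factors are comparable to $|\xi_1|$: when $|\xi_1|\sim|\xi_3|\gg|\xi_2|\vee|\xi_4|$ this is immediate since each consecutive difference involves one large and one small frequency; when $|\xi_1|\sim|\xi_3|\gts|\xi_2|\gg|\xi_4|$ one has $|\xi_1-\xi_2|\sim|\xi_1|$ trivially, and for the second factor the constraint gives $\xi_2-\xi_3 = \xi_1-\xi_4$ hence... actually one must be slightly careful here and instead use the pairing that yields $|\xi_1-\xi_4|\sim|\xi_1|$; choosing the factorization adapted to the pairing $(\xi_1,\xi_4)$ versus $(\xi_2,\xi_3)$ gives the desired bound. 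The point is that the zero-sum constraint admits several equivalent factorizations of $\Om$ into products of differences, and for each configuration one picks the factorization making both factors manifestly of size $|\xi_1|$.

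The main obstacle — really the only subtle point — is bookkeeping: ensuring that in every one of the five or six listed configurations one selects the correct among the (essentially three) factorizations of $\Om$ so that the lower bound comes out with the right power of $|\xi_1|$ and the right dependence on $|\xi_3|\vee|\xi_4|$, without accidentally landing on a factorization where one factor could be small due to cancellation. I would organize the proof by first recording all valid factorizations, namely
\EQQS{
  \Om=-2(\xi_1-\xi_2)(\xi_2-\xi_3)=-2(\xi_1-\xi_4)(\xi_4-\xi_3)=2(\xi_1-\xi_2)(\xi_1-\xi_4),
}
(modulo verification of constants), and then for each hypothesis simply reading off which product has both factors bounded below by the appropriate scale. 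No analytic input is needed beyond the triangle inequality and the comparability relations in the hypotheses, so the lemma reduces entirely to this elementary case-checking.
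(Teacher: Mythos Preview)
Your approach is essentially identical to the paper's: substitute the constraint to factor $\Om$ and read off the sizes. The paper uses the single factorization $|\Om|=2|(\xi_1-\xi_4)(\xi_3-\xi_4)|$ (obtained by eliminating $\xi_2=\xi_1+\xi_3-\xi_4$), which in fact handles \emph{every} listed configuration uniformly, so there is no need to switch between three factorizations as you suggest.

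One genuine slip to fix: in your treatment of \eqref{eq_res1} you assert ``$|\xi_2-\xi_3|\sim|\xi_2|$ because $|\xi_2|\gts|\xi_3|$'', and for \eqref{eq_res2} you write ``$|\xi_1-\xi_2|\sim|\xi_1|$ trivially''. Neither follows from the size hypotheses alone: when $|\xi_2|\sim|\xi_3|$ (resp.\ $|\xi_1|\sim|\xi_2|$) with the same sign, the difference can be arbitrarily small. The constraint is what saves you --- e.g.\ $\xi_2-\xi_3=\xi_1-\xi_4$ together with $|\xi_1|\gg|\xi_4|$ forces $|\xi_2-\xi_3|\sim|\xi_1|$ --- and you do invoke this mechanism later, but it must already be used at these earlier steps. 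Once that is repaired, your case analysis goes through and matches the paper's.
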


\begin{proof}
  First we treat \eqref{eq_res1} with $|\xi_3|\gg |\xi_4|$.
  Since $\xi_2=\xi_1+\xi_3-\xi_4$, we see that
  \EQQS{
    |\Om(\xi_1,\xi_2,\xi_3,\xi_{4})|
    =2|(\xi_1-\xi_4)(\xi_3-\xi_4)|
    \gts |\xi_1||\xi_3|
  }
  since $|\xi_3|\gg|\xi_4|$.
  By the same argument, we can show \eqref{eq_res1} with $|\xi_4|\gg|\xi_3|$.
  The estimate \eqref{eq_res2} follows similarly.
\end{proof}

\begin{lem}\label{resonance}
For $ j=1,2,3,4$, let  $L, L_j, N_j \ge 1 $ be dyadic numbers and let $ u_j\in {\mathcal S}(\R^2) $ with $ u_j=P_{N_j} u_j $.
Assume that for
\begin{equation}\label{za}
\Om(\xi_1,\xi_2,\xi_3,\xi_4) \sim L \gg 1
\end{equation}
for any $(\xi_1,\xi_2,\xi_3,\xi_4)\in \R^4 $ such that for $j=1,2,3,4$, $ |\xi_j|\sim N_j $ whenever $ N_j\neq 1 $ and $ |\xi_j|\les N_j $ otherwise.
Then
$$
I:=\int_{\R^2} Q_{L_1}u_1 \, \overline{Q_{L_2} u_2 }\, _{L_3} u_3 \,\overline{Q_{L_4} u_4}\, dxdt=0
$$
unless $ \max L_j \gts L $.
\end{lem}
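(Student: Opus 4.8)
The plan is to pass to the Fourier side and exploit the elementary identity that relates the physical-space integral $I$ to a convolution integral over the hyperplane $\xi_1-\xi_2+\xi_3-\xi_4=0$, and then to compare the modulation variables $\tau_j$ against the resonance function $\Om$. First I would write, by Plancherel, $I = c\int_{\Ga} \prod_{j=1}^4 \widetilde{Q_{L_j}u_j}^{(\pm)}(\tau_j,\xi_j)$, where $\Ga$ is the affine subspace $\{\tau_1-\tau_2+\tau_3-\tau_4=0,\ \xi_1-\xi_2+\xi_3-\xi_4=0\}$ and the bar on the even-indexed factors just means we integrate against the complex conjugate (so the frequencies of $u_2,u_4$ enter with a sign, which is exactly the sign convention built into the definition of $\Om$). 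The key point is that on the support of $\widetilde{Q_{L_j}u_j}$ one has $|\tau_j+\xi_j^2|\les L_j$, and because $u_j=P_{N_j}u_j$ the spatial frequencies satisfy the localization hypothesis of the lemma, so \eqref{za} applies: $|\Om(\xi_1,\xi_2,\xi_3,\xi_4)|\sim L\gg1$ on the relevant set.

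Next I would combine the two linear constraints defining $\Ga$ with the four modulation bounds. Using $\tau_1-\tau_2+\tau_3-\tau_4=0$ we can write
\[
  \Om(\xi_1,\xi_2,\xi_3,\xi_4)=\xi_1^2-\xi_2^2+\xi_3^2-\xi_4^2
   = (\tau_1+\xi_1^2)-(\tau_2+\xi_2^2)+(\tau_3+\xi_3^2)-(\tau_4+\xi_4^2),
\]
so that
\[
  |\Om(\xi_1,\xi_2,\xi_3,\xi_4)|\le \sum_{j=1}^4 |\tau_j+\xi_j^2|\les \sum_{j=1}^4 L_j \les \max_j L_j .
\]
Since the left-hand side is $\sim L$ on the support of the integrand, this forces $\max_j L_j\gts L$. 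Hence if $\max_j L_j\ll L$ the supports of the $\widetilde{Q_{L_j}u_j}$ cannot simultaneously meet the constraint surface $\Ga$, the integrand vanishes identically, and therefore $I=0$. This is exactly the claimed dichotomy.

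I do not expect any genuine obstacle here; the lemma is a clean algebraic observation. The only points requiring a little care are bookkeeping ones: making sure the sign pattern in $\Om$ matches the conjugation pattern $u_1\ov{u_2}u_3\ov{u_4}$ (it does, by the definition $\Om=\xi_1^2-\xi_2^2+\xi_3^2-\xi_4^2$), and checking that the frequency-localization hypothesis in the statement — $|\xi_j|\sim N_j$ when $N_j\neq1$ and $|\xi_j|\les N_j$ otherwise — is precisely what the projections $P_{N_j}$ (non-homogeneous) guarantee, so that hypothesis \eqref{za} can be invoked pointwise on the support. Once these are in place the triangle inequality on the modulation decomposition closes the argument immediately.
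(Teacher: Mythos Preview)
Your argument is correct and follows essentially the same route as the paper: pass to the Fourier side via Plancherel, use the identity $(\tau_1+\xi_1^2)-(\tau_2+\xi_2^2)+(\tau_3+\xi_3^2)-(\tau_4+\xi_4^2)=\Om(\xi_1,\xi_2,\xi_3,\xi_4)$ on the constraint set, and conclude by the triangle inequality that the supports cannot meet unless $\max_j L_j\gts L$. The paper's proof is in fact slightly terser, but the content is identical.
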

\begin{proof} By the Plancherel theorem, we have
\EQQS{
  I=\int_{\substack{\tau_1-\tau_2+\tau_3-\tau_4=0,\\ \xi_1-\xi_2+\xi_3-\xi_4=0}}
   \widehat{Q_{L_1} u_1}(\tau_1,\xi_1) \overline{\widehat{Q_{L_2}u_2}}(\tau_2,\xi_2) \widehat{Q_{L_3} u_3}(\tau_3,\xi_3)
    \overline{\widehat{Q_{L_4} u_4}}(\tau_4,\xi_4).
}
Since for $\tau_1-\tau_2+\tau_3-\tau_4=0 $, it holds that
\EQQS{
  (\tau_1+\xi_1^2) -(\tau_2+\xi_2^2)+(\tau_3+\xi_3^2) -(\tau_4+\xi_4^2) =\Om(\xi_1,\xi_2,\xi_3,\xi_4).
}
The result follows by making use of \eqref{za} and the properties of the projectors $ Q_{L_j} $.
This completes the proof.
\end{proof}

\subsection{Estimates for Solutions to \eqref{eq1}}

\begin{lem}\label{lem47}
Let $\e>0$.
Suppose that the dyadic sequence $\{\om_N\}$ of positive numbers satisfies $\om_N\le \om_{2N}\le 2^\e\om_N$ for $N\ge1$.
Let $ 0<T<1$, $ s\ge 3/4$  and $ u\in L^\infty_T H^s_\om $  be a solution to \eqref{eq1} associated with an initial datum
 $ u_0\in H^s_\om(\R) $.
Assume that there exists $K>0$ such that
\EQQS{
  \|u\|_{L_T^{\I}H_\om^{3/4}}+\|J_x^{s+1}\psi\|_{L_{T,x}^\I}
     \le K.
}
Then $ u\in Z^s_{\om,T}$ and it holds
\begin{equation}\label{estXregular}
  \|u\|_{Z^s_{\om,T}}
  \le C(K)(\|u\|_{L^\infty_T H^s_\om} +\|\Psi\|_{L_T^\I H_x^{s-1}}).
\end{equation}
Moreover, for any pair $(u_1, u_2)\in (L^\infty_T H^s)^2 $ of solutions
to \eqref{eq1} associated with a pair of initial data $ (u_{0,1}, u_{0,2})\in (H^s(\R))^2 $ it holds that for $3/2-s\le \te\le s$
\begin{equation}\label{estdiffXregular}
  \|u_1-u_2\|_{Z^{\te-1/2}_{T}}
  \le C(\|u_1\|_{L^\infty_T H_x^{s}}, \|u_2\|_{L^\infty_T H_x^{s}},
   \|J_x^{s+1}\psi\|_{L_{T,x}^\I})
   \|u_1-u_2\|_{L^\infty_T H_x^{\te-1/2}}.
\end{equation}
\end{lem}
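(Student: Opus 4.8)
The plan is to obtain both bounds by the same two-step scheme (reading $\om\equiv1$ wherever the statement is unweighted). The $L^\I_TH^\sigma_\om$-component of the $Z^\sigma_\om$-norm is either furnished by hypothesis --- for \eqref{estXregular}, with $\sigma=s$ --- or is tautological --- for \eqref{estdiffXregular}, with $\sigma=\te-1/2$, since that component of the left-hand side is literally $\|u_1-u_2\|_{L^\I_TH^{\te-1/2}}$. So everything reduces to controlling the $X^{\sigma-1,1}_{\om,T}$-component, for which I would use the Duhamel representation together with the standard linear estimates in Bourgain spaces: the sharp cutoff $\1_T$ is handled through the decomposition $\1_T=\1_{T,R}^{\mathrm{low}}+\1_{T,R}^{\mathrm{high}}$ and Lemmas \eqref{eq4.3}, \eqref{eq4.1}, \eqref{eq4.2}, \eqref{eq4.6}, exactly as in \cite{MV15,MPV19}, while the restriction-in-time bookkeeping is settled by applying the extension operator $\rho_T$ of Lemma \ref{extensionlem} to the Duhamel formula for the solution. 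Since $X^{\sigma-1,0}_\om=L^2_tH^{\sigma-1}_\om$, this produces, for any $v$ satisfying the Duhamel formula associated with \eqref{eq2} on $[0,T]$,
\[
  \|v\|_{X^{\sigma-1,1}_{\om,T}}\les\|v(0)\|_{H^{\sigma-1}_\om}+\|(i\p_t+\p_x^2)v\|_{L^2_TH^{\sigma-1}_\om};
\]
finiteness of the right-hand side in the cases at hand is what makes the two appeals to Lemma \ref{extensionlem} non-circular.

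For \eqref{estXregular} one takes $v=u$, so that $(i\p_t+\p_x^2)u=F(u,\psi)-\Psi$. By \eqref{eq_2.3apl1}, $\|F(u,\psi)\|_{H^{s-1}_\om}\les(\|u\|_{H^{3/4}}+\|J_x^{s+1}\psi\|_{L^\I})^2\|u\|_{H^s_\om}\le C(K)\|u\|_{H^s_\om}$, whence $\|F(u,\psi)\|_{L^2_TH^{s-1}_\om}\le T^{1/2}C(K)\|u\|_{L^\I_TH^s_\om}$, while $\|\Psi\|_{L^2_TH^{s-1}}\le T^{1/2}\|\Psi\|_{L^\I_TH^{s-1}}$. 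Combined with $\|u(0)\|_{H^{s-1}_\om}\le\|u\|_{L^\I_TH^s_\om}$ (Remark \ref{rem_sol}) and $0<T<1$, this gives \eqref{estXregular}.

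For \eqref{estdiffXregular} set $w:=u_1-u_2$, so that $(i\p_t+\p_x^2)w=F(u_1,\psi)-F(u_2,\psi)$ --- the forcing $\Psi$ and every purely $\psi$-dependent term cancel. Telescoping $u_1=u_2+w$ inside each monomial of $F$ expresses this difference as a finite sum of trilinear-type terms, each \emph{linear} in $w$ or $\bar w$, with the two remaining factors drawn from $\{u_1,\bar u_1,u_2,\bar u_2,\psi,\bar\psi\}$ and carrying exactly one spatial derivative. I would then estimate $\|F(u_1,\psi)-F(u_2,\psi)\|_{L^2_TH^{\te-3/2}}\le T^{1/2}\|F(u_1,\psi)-F(u_2,\psi)\|_{L^\I_TH^{\te-3/2}}$ and bound the $H^{\te-3/2}$-norm of each term via the product estimates \eqref{eq2.3}, \eqref{eq_2.3apl4} and \eqref{eq2.7}, keeping the derivative on whichever $u_j$- or $\psi$-factor it originally hits and placing $w$ at regularity $\te-1/2$. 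The condition $s_1+s_2\ge0$ required in \eqref{eq2.3} is exactly what $\te\ge3/2-s$ supplies: the borderline term is $|u_1|^2\p_x w$, controlled by $\||u_1|^2\|_{H^s}\|\p_x w\|_{H^{\te-3/2}}\les\|u_1\|_{H^s}^2\|w\|_{H^{\te-1/2}}$ precisely because $s+(\te-3/2)\ge0$, while $\te\le s$ keeps each $u_j$- and $\psi$-factor within its available regularity. The one non-divergence term carrying the derivative on $w$, namely $|\psi|^2\p_x w$, is first rewritten as $\p_x(|\psi|^2w)-(\p_x|\psi|^2)w$ so as to shift the derivative onto $\psi$; the resulting pieces are handled by \eqref{eq_2.3apl4}, \eqref{eq2.7} and the embedding $L^2\hookrightarrow H^{\te-3/2}$ using the bound on $\|J_x^{s+1}\psi\|_{L^\I}$. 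Together with $\|w(0)\|_{H^{\te-3/2}}\le\|w\|_{L^\I_TH^{\te-1/2}}$ and $0<T<1$, this yields \eqref{estdiffXregular}.

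The linear Bourgain estimate with the sharp time cutoff is technical but is available essentially verbatim from \cite{MV15,MPV19}, so the genuine content lies in the difference estimate. There the decisive mechanism is that the \emph{low$\times$high$\times$high} interaction in $|u_1|^2\p_x w$ --- and the analogous terms with a derivative on $w$ --- sits at the negative regularity $H^{\te-3/2}$, which is what pins the admissible range to $\te\ge3/2-s$; equivalently, for $s<1$ one cannot push the difference estimate down to $H^{s-1}$, in agreement with the discussion in Subsection \ref{subs_main}. I expect checking this threshold, together with the routine but numerous choices of Sobolev exponents for the non-divergence and $\psi$-dependent terms, to be the only delicate bookkeeping; everything else is a direct application of the tools recorded above.
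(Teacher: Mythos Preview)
Your proposal is correct and follows essentially the same approach as the paper: reduce to the $X^{\sigma-1,1}_{\om,T}$-component via the extension lemma, invoke the standard linear estimate $\|v\|_{X^{\sigma-1,1}_{\om,T}}\les\|v(0)\|_{H^{\sigma-1}_\om}+\|(i\p_t+\p_x^2)v\|_{L^2_TH^{\sigma-1}_\om}$, and then bound the forcing terms by product estimates (the paper cites \eqref{eq2.2} and \eqref{eq2.2.1}, you cite \eqref{eq_2.3apl1}, \eqref{eq2.3}, \eqref{eq_2.3apl4}). One small remark: the $\1_T=\1_{T,R}^{\mathrm{low}}+\1_{T,R}^{\mathrm{high}}$ decomposition you mention is not actually needed for this linear Duhamel estimate---that machinery is reserved for the nonlinear multilinear estimates later in the paper; here the paper simply appeals to ``standard linear estimates in Bourgain's spaces'' (e.g.\ \cite{GTV97}) together with $X^{\sigma-1,0}_\om=L^2_tH^{\sigma-1}_\om$.
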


\begin{proof}
According to the extension Lemma \ref{extensionlem}, it is clear that we only have to estimate the $ X^{s-1,1}_{\om,T} $-norm of $ u $ to prove \eqref{estXregular}.
As noticed in Remark \ref{rem_sol} (see also Remark 1.4 of \cite{MT21}),
$ u $ satisfies the Duhamel formula of
\eqref{eq1} and $\|u_0\|_{H^\theta_\om} \le \|u\|_{L^\infty_T H^\theta_\om} $ for any $ \theta\le s $.
Hence, standard linear estimates in Bourgain's spaces lead to
\EQQS{
  \|u\|_{X^{s-1,1}_{\om,T}}
  &\les  \|u_0\|_{H^{s-1}_\om}
    +\| F(u,\psi) \|_{X^{s-1,0}_{\om,T}} + \| \Psi \|_{X^{s-1,0}_{\om,T}} \\
  &\les \|u_0\|_{H^{s-1}_\om} +\| F(u,\psi) \|_{L_T^2 H_\om^{s-1}}
   + \| \Psi \|_{L_T^2 H_\om^{s-1}}\\
  &\le C(K) (\| u \|_{L^\infty_T H^s_\om} +\|\Psi\|_{L_T^\I H_x^{s-1}}),
}
by using \eqref{eq2.2} and \eqref{eq2.2.1} (see also \eqref{eq_2.3apl}).
In the same way, using \eqref{eq2.2} and \eqref{eq2.2.1}, we get
\EQQS{
  \|u_1-u_2\|_{X^{\te-3/2,1}_T}
  &\les \|u_{0,1}-u_{0,2}\|_{H_x^{\te-3/2}}
   +\|F(u_1,\psi)-F(u_2,\psi)\|_{L^2_T H_x^{\te-3/2}} \\
  &\le C(\|u_1\|_{L^\infty_T H_x^{s}}, \|u_2\|_{L^\infty_T H_x^{s}},
   \|J_x^{s+1}\psi\|_{L_{T,x}^\I})
   \|u_1-u_2\|_{L^\infty_T H_x^{\te-1/2}},
}
 which completes the proof.
\end{proof}

The following four lemmas are used in the proofs of Propositions \ref{prop_apri}, \ref{prop_dif1}, and \ref{prop_dif2}.
In particular, these estimates allow us to derive the desired estimates for nonresonant interactions.

\begin{lem}\label{lem_Bourgain1}
  Let $0<T<1$ and $N,N_1,N_2,N_3,N_4\ge 1$.
  Assume that $u_k\in Z^0$ is a function with spatial Fourier support in $\{|\xi_k|+1\sim N_k\}$ for $k=1,2,3,4$.
  Moreover, assume that $N_1\sim N_2\gts N_3\gg N_4$.
  For any $t>0$, we set
  \EQS{\label{def_I}
    I_t(u_1,u_2,u_3,u_4)
    :=\RE\int_0^t \int_\R
      \Pi_N(u_1,u_2)u_3u_4dxdt'.
  }
  Then it holds that for $0<t<T$
  \EQS{\label{eq_I1}
    \begin{aligned}
      &|I_t(\bar{u}_1,\bar{u}_2,u_3,u_4)|
       +|I_t(u_1,\bar{u}_2,\bar{u}_3,u_4)|
       +|I_t(u_1,\bar{u}_2,u_3,\bar{u}_4)|\\
      &\les (N_3N_4)^{1/2}
       \|u_3\|_{L_t^\I L_x^2}\|u_4\|_{L_t^\I L_x^2}
       (T^{1/4}N_1^{-1/4} \|u_1\|_{L_t^\I L_x^2}\|u_2\|_{L_t^\I L_x^2}\\
      &\quad\quad+\|u_1\|_{X^{-1,1}}
       \|\1_t u_2\|_{L_{t,x}^2}
       +T^{1/4}N_1^{-1/12}N_3^{-1/3}
        \|u_1\|_{X^{-1,1}}
        \|u_2\|_{L_{t}^\I L_x^2}\\
      &\quad\quad+\|\1_t u_1\|_{L_{t,x}^2}
        \|u_2\|_{X^{-1,1}}
       +T^{1/4}N_1^{-1/12}N_3^{-1/3}
        \|u_1\|_{L_{t}^\I L_x^2}
        \|u_2\|_{X^{-1,1}})\\
      &\quad+T^{1/2}N_1^{-1}(N_3N_4)^{1/2}
        \|u_1\|_{L_t^\I L_x^2}\|u_2\|_{L_t^\I L_x^2}
        (N_3\|u_3\|_{X^{-1,1}}\|u_4\|_{L_t^\I L_x^2}\\
      &\quad\quad+N_4\|u_3\|_{L_t^\I L_x^2}\|u_4\|_{X^{-1,1}}).
    \end{aligned}
  }
\end{lem}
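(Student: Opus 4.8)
The quantity $I_t$ in \eqref{def_I} is an integral of the form $\RE\int_0^t\int_\R \big((\p_x u_2)P_N^2 u_1 + (\p_x u_1)P_N^2 u_2\big)u_3 u_4\,dx\,dt'$, and by symmetry it suffices to treat the term with $(\p_x u_2)P_N^2 u_1$; the roles of $u_1$ and $u_2$ are then swapped in the final bound. The first step is to rewrite the time integral $\int_0^t(\cdots)dt' = \int_\R \1_t(\cdots)dt'$, split $\1_t = \1_{t,R}^{\mathrm{low}} + \1_{t,R}^{\mathrm{high}}$ for a parameter $R$ to be optimized at the end, and treat the two pieces separately. For the high-frequency piece $\1_{t,R}^{\mathrm{high}}$ one does not try to exploit the nonresonance: one simply uses \eqref{eq4.1}, i.e. $\|\1_{t,R}^{\mathrm{high}}\|_{L^1}\les T\wedge R^{-1}$, together with H\"older and the Strichartz/Bernstein estimates on each factor (putting the derivative factor $\p_x u_2$ in $L^\I_t L^2_x$ costs $N_2\sim N_1$ but this is compensated at the optimization stage). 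This will contribute the terms carrying the factor $T^{1/4}N_1^{-1/4}\|u_1\|_{L_t^\I L_x^2}\|u_2\|_{L_t^\I L_x^2}$ after balancing $R$.

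For the low-frequency piece $\1_{t,R}^{\mathrm{low}}$, the plan is to exploit the nonresonance relation. Under the hypothesis $N_1\sim N_2\gts N_3\gg N_4$ and the conjugation pattern (any of $(\bar u_1,\bar u_2,u_3,u_4)$, $(u_1,\bar u_2,\bar u_3,u_4)$, $(u_1,\bar u_2,u_3,\bar u_4)$), Lemma \ref{lem_res1} (specifically \eqref{eq_res1}) gives $|\Om|\gts N_1 (N_3\vee N_4)=N_1 N_3$ on the relevant frequency region. One then decomposes each $u_k = \sum_{L_k} Q_{L_k}u_k$ in modulation. By Lemma \ref{resonance}-type reasoning, the integral vanishes unless $\max_k L_k \gts N_1 N_3$, so at least one modulation is large. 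One distributes into cases according to which $L_k$ is the largest. In the case where the large modulation sits on $u_1$ or $u_2$, one uses the $X^{-1,1}$-norm: $\|Q_L u_k\|_{L^2_{t,x}} \sim L \|Q_L u_k\|_{X^{0,-1}\text{-type}}$, so summing the large-modulation factor against $L^{-1}\les (N_1N_3)^{-1}$ and using \eqref{eq4.6} (valid since $L\gg R$ once $R \ll N_1N_3$) to control $Q_L(\1^{\mathrm{low}}_{t,R}u)$ by $Q_{\sim L}u$, together with H\"older and Strichartz on the remaining three factors, produces the terms with $\|u_1\|_{X^{-1,1}}\|\1_t u_2\|_{L^2_{t,x}}$ and its symmetric partner, plus the mixed terms carrying $T^{1/4}N_1^{-1/12}N_3^{-1/3}\|u_j\|_{X^{-1,1}}\|u_k\|_{L^\I_t L^2_x}$ (the exponent $-1/12$ and $-1/3$ coming from interpolating the gain $(N_1N_3)^{-1}$ against a refined Strichartz bound from Proposition \ref{prop_stri1}). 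In the case where the large modulation sits on $u_3$ or $u_4$, those factors only live in $Z^0$ with no room to lose a full power of $L$, so one only extracts a partial gain: $\|Q_L u_k\|_{L^2_{t,x}}\les \min(1, N_k L^{-1})\cdots$ — this is where the factor $T^{1/2}N_1^{-1}(N_3N_4)^{1/2}$ multiplied by either $N_3\|u_3\|_{X^{-1,1}}$ or $N_4\|u_4\|_{X^{-1,1}}$ arises.

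The bookkeeping throughout uses: Bernstein's inequality to trade spatial $L^p$ for $L^2$ at the cost of powers of $N_k$ (the $(N_3N_4)^{1/2}$ prefactor comes from placing the two low-frequency factors $u_3,u_4$ in $L^\I_x$ via $\|P_{N_k}f\|_{L^\I_x}\les N_k^{1/2}\|f\|_{L^2_x}$); the Strichartz estimate of Lemma \ref{lem_stri} and its refined version Proposition \ref{prop_stri1} applied to the high-frequency factors to gain negative powers of $N_1$; the commutator/integration-by-parts structure implicit in $\Pi_N$ (though here we keep $\Pi_N$ as is rather than invoking Lemma \ref{lem_comm1}); and \eqref{eq4.3} to freely insert $Q_{\le L}$ projections. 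The main obstacle I anticipate is the careful case analysis for the location of the maximal modulation combined with the correct allocation of the nonresonant gain $(N_1N_3)^{-1}$: one must decide, in each case, how much of the factor $L^{-1}$ to spend "upgrading" the large-modulation factor from $L^2_{t,x}$ to its $X^{-1,1}$ (or $L^\I_t L^2_x$) norm versus how much to keep as a clean negative power of $N_1,N_3$, and the interpolation producing the slightly exotic exponents $N_1^{-1/12}N_3^{-1/3}$ must be done so that every term genuinely closes. A secondary technical point is ensuring that the restriction $L \gg R$ needed to apply \eqref{eq4.6} is consistent with the final choice of $R$ (one takes $R\sim T^{-1}$ or a small power of $N_1$, well below the nonresonant threshold $N_1 N_3$), so that the low-modulation contributions on the $\1^{\mathrm{low}}_{t,R}$ side are genuinely forbidden by Lemma \ref{resonance}.
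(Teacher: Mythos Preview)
Your overall architecture---split $\1_t=\1_{t,R}^{\mathrm{high}}+\1_{t,R}^{\mathrm{low}}$, bound the high piece directly, and decompose the low piece according to which factor carries the large modulation $\gts L:=N_1N_3$---matches the paper's. But there is a genuine gap in the execution.

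The key miss is your decision to ``keep $\Pi_N$ as is rather than invoking Lemma~\ref{lem_comm1}.'' In the paper, Lemma~\ref{lem_comm1} is used \emph{at every step}: it replaces the derivative that naively costs $N_1\sim N_2$ by $\|\p_x(u_3u_4)\|_{L^\infty}\les N_3\|u_3\|_{L^\infty}\|u_4\|_{L^\infty}$, i.e.\ a factor $N_3$. Without it, consider the subcase where the large modulation sits on $u_1$: you get $N_1\cdot\|Q_{\gts L}(\1_{t,R}^{\mathrm{low}}u_1)\|_{L^2_{t,x}}\les N_1\cdot L^{-1}\|u_1\|_{X^{0,1}}\sim N_1\cdot(N_1N_3)^{-1}\cdot N_1\|u_1\|_{X^{-1,1}}=N_1N_3^{-1}\|u_1\|_{X^{-1,1}}$, and after Bernstein on $u_3,u_4$ the prefactor is $(N_1/N_3)(N_3N_4)^{1/2}$, too large by $N_1/N_3$ compared to \eqref{eq_I1}. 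Since $N_1\gts N_3$ can be arbitrarily large, no choice of $R$ rescues this. The whole point of the symmetrized form $\Pi_N$ is precisely that Lemma~\ref{lem_comm1} applies and moves the derivative onto the low-frequency product.

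Two further corrections. First, the exponents $N_1^{-1/12}N_3^{-1/3}$ are not produced by interpolating against Proposition~\ref{prop_stri1}; that proposition is never used here. They arise directly from the specific choice $R=N_1^{1/3}N_3^{4/3}$: one has $\|\1_{t,R}^{\mathrm{high}}\|_{L^1}\les T^{1/4}R^{-3/4}=T^{1/4}N_1^{-1/4}N_3^{-1}$ (yielding the first line of \eqref{eq_I1} after the $N_3$ from Lemma~\ref{lem_comm1} and Bernstein), and $T^{1/4}R^{-1/4}=T^{1/4}N_1^{-1/12}N_3^{-1/3}$ in \eqref{eq4.7}. Second, the cutoff $\1_{t,R}^{\mathrm{low}/\mathrm{high}}$ is placed on \emph{both} high-frequency factors $u_1$ and $u_2$ (a three-term decomposition, see \eqref{eq_4.11}), not on a single factor: this is needed so that in the ``low--low'' piece both $u_1$ and $u_2$ carry $\1_{t,R}^{\mathrm{low}}$ and Lemma~\ref{resonance} together with \eqref{eq4.6} can be applied with $L\gg R$.
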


\begin{proof}
  We first prove the estimate for $I_t(u_1,\bar{u}_2,u_3,\bar{u}_4)$.
  Setting $R=N_1^{1/3}N_3^{4/3}$, we split $I_t$ as
  \EQS{\label{eq_4.11}
    \begin{aligned}
      |I_t(u_1,\bar{u}_2,u_3,\bar{u}_4)|
      &\le |I_{\I}( \1_{t,R}^{\text{high}} u_1, \1_t \bar{u}_2, u_3, \bar{u}_4)|
       +|I_{\I}( \1_{t,R}^{\text{low}} u_1, \1_{t,R}^{\text{high}} \bar{u}_2,u_3, \bar{u}_4)|\\
      &\quad+|I_{\I}( \1_{t,R}^{\text{low}} u_1, \1_{t,R}^{\text{low}} \bar{u}_2,u_3, \bar{u}_4)|
      =:I_{\I,1}+I_{\I,2}+I_{\I,3}.
    \end{aligned}
  }
  For $I_{\I,1}$, we see from \eqref{eq4.1} that $\|\1_{t,R}^{\text{high}}\|_{L_t^1}\les T^{1/4}N_1^{-1/4}N_3^{-1}$,
  which together with Lemma \ref{lem_comm1} implies that
  \EQS{\label{eq_time}
  \begin{aligned}
    I_{\I,1}
    &\les N_3
    \|\1_{t,R}^{\text{high}}\|_{L_t^1}
    \|u_1\|_{L_t^\I L_x^2}\|u_2\|_{L_t^\I L_x^2}
    \|u_3\|_{L_{t,x}^\I}\|u_4\|_{L_{t,x}^\I}\\
    &\les T^{1/4}N_1^{-1/4}N_3^{1/2}N_4^{1/2}
     \prod_{j=1}^4\|u_j\|_{L_t^\I L_x^2}.
  \end{aligned}
  }
  Using \eqref{eq4.2}, we can estimate $I_{\I,2}$ by the same bound.
  Next, we estimate $I_{\I,3}$.
  Lemma \ref{lem_res1} shows that $|\Om|\gts N_1N_3\gg R$ since $N_1\gg1$.
  By using this property, defining $L:= N_1N_3$, Lemma \ref{resonance} leads to
  \EQS{\label{eq_4.12}
    \begin{aligned}
      I_{\I,3}
      &\le |I_{\I}(Q_{\gts L} (\1_{t,R}^{\text{low}} u_1), \overline{\1_{t,R}^{\text{low}} u_2},u_3, \bar{u}_4)|\\
      &\quad+|I_{\I}(Q_{\ll L} (\1_{t,R}^{\text{low}} u_1),
       \overline{Q_{\gts L}(\1_{t,R}^{\text{low}}u_2)},u_3, \bar{u}_4)|\\
      &\quad +|I_{\I}(Q_{\ll L}^+ (\1_{t,R}^{\text{low}} u_1),
       \overline{Q_{\ll L}(\1_{t,R}^{\text{low}}u_2)},
       Q_{\gts L} u_3, \bar{u}_4)|\\
      &\quad +|I_{\I}(Q_{\ll L}^+ (\1_{t,R}^{\text{low}} u_1),
        \overline{Q_{\ll L}(\1_{t,R}^{\text{low}}u_2)},
       Q_{\ll L} u_3, \overline{Q_{\gts L} u_4})|\\
      &=:I_{\I,3,1}+I_{\I,3,2}+I_{\I,3,3}+I_{\I,3,4}.
    \end{aligned}
  }
where we used that $\1_{t,R}^{\text{low}}$ is real-valued.
  We also see from \eqref{eq4.1} that for a function $u$
  \EQS{\label{eq4.7}
    \begin{aligned}
      \|\1_{t,R}^{\mathrm{low}}u\|_{L_{t,x}^2}
      \le\|\1_{t}u\|_{L_{t,x}^2}
        +\|\1_{t,R}^{\mathrm{high}}u\|_{L_{t,x}^2}
      \les \|\1_{t}u\|_{L_{t,x}^2}
        +T^{1/4}R^{-1/4}\|u\|_{L_{t}^\I L_x^2}.
    \end{aligned}
  }
  For $I_{\I,3,1}$, the H\"older inequality,  Lemma \ref{lem_comm1} and \eqref{eq4.1.1}, \eqref{eq4.6}, \eqref{eq4.7} and \eqref{eq2.1} give
  \EQQS{
      I_{\I,3,1}
      &\les N_3
      \|Q_{\gts L}(\1_{t,R}^{\text{low}}u_1)\|_{L_{t,x}^2}
      (\|\1_{t}u_2\|_{L_{t,x}^2}+\|\1_{t,R}^{\text{high}}u_2\|_{L_{t,x}^2})
      \|u_3\|_{L_{t,x}^\I}\|u_4\|_{L_{t,x}^\I}\\
      &\les N_3^{1/2}N_4^{1/2}\|u_1\|_{X^{-1,1}}
       \|\1_{t}u_2\|_{L_{t,x}^2}
       \|u_3\|_{L_{t}^\I L_x^2}\|u_4\|_{L_{t}^\I L_x^2}\\
      &\quad+ T^{1/4} N_1^{-1/12}N_3^{1/6}N_4^{1/2}
       \|u_1\|_{X^{-1,1}}
       \|u_2\|_{L_{t}^\I L_x^2}
       \|u_3\|_{L_{t}^\I L_x^2}\|u_4\|_{L_{t}^\I L_x^2}.
  }
  By the same way as in \eqref{eq4.3}, we can obtain
  \EQQS{
    I_{\I,3,2}
    &\les N_3^{1/2}N_4^{1/2}\|\1_t u_1\|_{L_{t,x}^2}
     \|u_2\|_{X^{-1,1}}
     \|u_3\|_{L_{t}^\I L_x^2}\|u_4\|_{L_{t}^\I L_x^2}\\
    &\quad+ T^{1/4} N_1^{-1/12}N_3^{1/6}N_4^{1/2}
     \|u_1\|_{L_{t}^\I L_x^2}
     \|u_2\|_{X^{-1,1}}
     \|u_3\|_{L_{t}^\I L_x^2}\|u_4\|_{L_{t}^\I L_x^2}.
  }
  Next, we consider $I_{\I,3,3}$.
  Lemmas \ref{lem_comm1} and \ref{extensionlem}, the H\"older inequality, the Bernstein inequality and \eqref{eq4.3} show that
  \EQS{\label{eq_4.9}
    \begin{aligned}
      I_{\I,3,3}
      &\les N_3^{3/2} \|Q_{\ll L}
       (\1_{t,R}^{\text{low}} u_1)\|_{L_{t,x}^2}
       \|Q_{\ll L}(\1_{t,R}^{\text{low}} u_2)\|_{L_{t}^\I L_{t,x}^2}
       \|Q_{\gts L} u_3\|_{L_{t,x}^2}
       \|u_4\|_{L_{t,x}^\I}\\
      &\les T^{1/2}N_1^{-1}N_3^{3/2}N_4^{1/2}
        \|u_1\|_{L_t^\I L_x^2}
        \|u_2\|_{L_t^\I L_x^2}
        \|u_3\|_{X^{-1,1}}
        \|u_4\|_{L_t^\I L_x^2}.
    \end{aligned}
  }
  In the same manner, we can get
  \EQQS{
    I_{\I,3,4}
    \les T^{1/2}N_1^{-1}N_3^{1/2}N_4^{3/2}
      \|u_1\|_{L_t^\I L_x^2}
      \|u_2\|_{L_t^\I L_x^2}
      \|u_3\|_{L_t^\I L_x^2}
      \|u_4\|_{X^{-1,1}}.
  }
  For the estimate for $I_t(\bar{u}_1,\bar{u}_2,u_3,u_4)$ and $I_t(u_1,\bar{u}_2,\bar{u}_3,u_4)$, the same proof works.
  To be precise, on $I_t(\bar{u}_1,\bar{u}_2,u_3,u_4)$, the resonance relation is $|\xi_1^2+\xi_2^2-\xi_3^2-\xi_4^2|\sim N_1^2\gts N_1N_3$.
  Therefore, we can obtain stronger result than the right hand side of \eqref{eq_I1}, but it is enough for our purpose.
  This completes the proof.
\end{proof}

\begin{rem}\label{rem_Bourgain1}
  In Lemma \ref{lem_Bourgain1}, if we replace the hypothesis $N_1\sim N_2\gts N_3\gg N_4$ by $N_1\sim N_2\gts N_4\gg N_3$, the resultant is similar to \eqref{eq_I1}.
  We only have to replace $u_3,u_4,N_3,N_4$ by $u_4,u_3,N_4,N_3$, respectively.
\end{rem}

\begin{lem}\label{lem_Bourgain2}
  Let $0<T<1$ and $N_1,N_2,N_3,N_4\ge 1$.
  Assume that $u_k\in Z^0$ is a function with spatial Fourier support in $\{|\xi_k|+1\sim N_k\}$ for $k=1,2,3,4$.
  For any $t>0$, we set
  \EQS{\label{def_I2}
    I_t^{(2)}(u_1,u_2,u_3,u_4)
    :=\RE\int_0^t \int_\R
      u_1u_2u_3u_4dxdt'.
  }
  If $N_1\sim N_2\gts N_3\gg N_4$, then for $0<t<T$, it holds that
  \EQS{\label{eq_I2.1}
    \begin{aligned}
      &|I_t^{(2)}(u_1,\bar{u}_2,\bar{u}_3,u_4)|
       +|I_t^{(2)}(u_1,\bar{u}_2,u_3,\bar{u}_4)|\\
      &\les (N_3^{-1}N_4)^{1/2}
       \|u_3\|_{L_t^\I L_x^2}\|u_4\|_{L_t^\I L_x^2}
       \Bigl(T^{1/4}N_1^{-1/4} \|u_1\|_{L_t^\I L_x^2}\|u_2\|_{L_t^\I L_x^2}\\
      &\quad\quad+\|u_1\|_{X^{-1,1}}
       \|\1_t u_2\|_{L_{t,x}^2}
       +T^{1/4}N_1^{-1/12}N_3^{-1/3}
        \|u_1\|_{X^{-1,1}}
        \|u_2\|_{L_{t}^\I L_x^2}\\
      &\quad\quad+\|\1_t u_1\|_{L_{t,x}^2}
        \|u_2\|_{X^{-1,1}}
       +T^{1/4}N_1^{-1/12}N_3^{-1/3}
        \|u_1\|_{L_{t}^\I L_x^2}
        \|u_2\|_{X^{-1,1}}\Bigr)\\
      &\quad+T^{1/2}N_1^{-1}(N_3^{-1}N_4)^{1/2}
        \|u_1\|_{L_t^\I L_x^2}\|u_2\|_{L_t^\I L_x^2}
        \Bigl(N_3 \|u_3\|_{X^{-1,1}}\|u_4\|_{L_t^\I L_x^2}\\
      &\quad\quad+N_4\|u_3\|_{L_t^\I L_x^2}\|u_4\|_{X^{-1,1}}\Bigr),
    \end{aligned}
  }
  On the other hand, if $N_1\sim N_2\gg N_3\vee N_4$ or $N_1\sim N_2\gts N_3\gg N_4$, then
  it holds that for $0<t<T$
  \EQS{
    \begin{aligned}\label{eq_I2.2}
      &|I_t^{(2)}(\bar{u}_1,\bar{u}_2,u_3,u_4)|\\
      &\les N_1^{-1} (N_3N_4)^{1/2}
       \|u_3\|_{L_t^\I L_x^2}\|u_4\|_{L_t^\I L_x^2}
       (T^{1/4}N_1^{-1/4} \|u_1\|_{L_t^\I L_x^2}\|u_2\|_{L_t^\I L_x^2}\\
      &\quad\quad+\|u_1\|_{X^{-1,1}}
       \|\1_t u_2\|_{L_{t,x}^2}
       +T^{1/4}N_1^{-5/12}
        \|u_1\|_{X^{-1,1}}
        \|u_2\|_{L_{t}^\I L_x^2}\\
      &\quad\quad+\|\1_t u_1\|_{L_{t,x}^2}
        \|u_2\|_{X^{-1,1}}
       +T^{1/4}N_1^{-5/12}
        \|u_1\|_{L_{t}^\I L_x^2}
        \|u_2\|_{X^{-1,1}})\\
      &\quad+T^{1/2}N_1^{-2}(N_3N_4)^{1/2}
        \|u_1\|_{L_t^\I L_x^2}\|u_2\|_{L_t^\I L_x^2}
        (N_3\|u_3\|_{X^{-1,1}}\|u_4\|_{L_t^\I L_x^2}\\
      &\quad\quad+N_4\|u_3\|_{L_t^\I L_x^2}\|u_4\|_{X^{-1,1}}).
    \end{aligned}
  }
\end{lem}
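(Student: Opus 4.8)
The plan is to prove Lemma~\ref{lem_Bourgain2} by transcribing, essentially line by line, the proof of Lemma~\ref{lem_Bourgain1}: the quantity $I_t^{(2)}$ is obtained from $I_t$ by dropping the operator $\Pi_N$, i.e.\ $\Pi_N(u_1,u_2)u_3u_4$ is replaced by the plain product $u_1u_2u_3u_4$. Accordingly, wherever the proof of Lemma~\ref{lem_Bourgain1} invokes Lemma~\ref{lem_comm1} — which costs a factor equal to the frequency carried by the derivative inside $\Pi_N$ — we instead pay only a Bernstein factor, and this is exactly what produces the improved prefactors in \eqref{eq_I2.1} and \eqref{eq_I2.2} (namely $(N_3^{-1}N_4)^{1/2}$ in place of $(N_3N_4)^{1/2}$, and the extra global $N_1^{-1}$ in the second case). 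The elementary product bound I would use throughout is this: when $N_1\sim N_2$ dominate $N_3\geq N_4$, the convolution constraint forces the spatial Fourier support of each high$\times$high product, e.g.\ $u_1\bar u_2$, to lie in $\{|\xi|\les N_3\}$, so Bernstein gives $\|u_1\bar u_2\|_{L^2_x}\les N_3^{1/2}\|u_1\|_{L^2_x}\|u_2\|_{L^2_x}$; pairing this with $\|u_4\|_{L^\I_x}\les N_4^{1/2}\|u_4\|_{L^2_x}$ yields $|\int_\R u_1\bar u_2\bar u_3u_4\,dx|\les(N_3N_4)^{1/2}\prod_{j}\|u_j\|_{L^2_x}$, and when one factor is localized to a large modulation one simply keeps it in $L^2_x$, projects the remaining triple product onto its (small) output frequency, and then applies Bernstein. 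Throughout we may assume $N_1\gg1$, the complementary case being trivial.

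For \eqref{eq_I2.1} I would take $R:=N_1^{1/3}N_3^{4/3}$ exactly as in Lemma~\ref{lem_Bourgain1} and split $I_t^{(2)}(u_1,\bar u_2,\bar u_3,u_4)$ into three pieces according to $\1_t=\1_{t,R}^{\mathrm{low}}+\1_{t,R}^{\mathrm{high}}$ applied to $u_1$ and $u_2$, precisely as in \eqref{eq_4.11}. The two pieces carrying a $\1_{t,R}^{\mathrm{high}}$ are bounded by $\|\1_{t,R}^{\mathrm{high}}\|_{L^1}\les T^{1/4}R^{-3/4}=T^{1/4}N_1^{-1/4}N_3^{-1}$ (interpolating \eqref{eq4.1}) together with the product bound above, which gives the leading term $T^{1/4}N_1^{-1/4}(N_3^{-1}N_4)^{1/2}\prod_j\|u_j\|_{L^\I_tL^2_x}$. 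For the remaining low-low piece, after reordering and conjugating to bring it into the pattern of Lemma~\ref{resonance}, the associated phase factors under $\xi_1-\xi_2-\xi_3+\xi_4=0$ as $2(\xi_2-\xi_4)(\xi_3-\xi_4)$, so Lemma~\ref{lem_res1} yields $|\Om|\gtrsim N_1N_3\gg R$; setting $L:=N_1N_3$ and applying Lemma~\ref{resonance}, this piece splits into four sub-pieces in which one of the four factors has modulation $\gtrsim L$, and each is then estimated exactly as $I_{\I,3,1},\ldots,I_{\I,3,4}$ in Lemma~\ref{lem_Bourgain1}, using \eqref{eq4.3}, \eqref{eq4.6}, \eqref{eq4.7}, Lemma~\ref{extensionlem}, Bernstein, and $\|Q_{\gtrsim L}u_j\|_{L^2_{t,x}}\les L^{-1}\|u_j\|_{X^{0,1}}$. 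The term $I_t^{(2)}(u_1,\bar u_2,u_3,\bar u_4)$ is handled identically; its pattern matches Lemma~\ref{resonance} directly and its resonance is again $\gtrsim N_1N_3$.

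For \eqref{eq_I2.2} the scheme is the same but with a stronger, quadratic resonance. Using the $\RE$ to conjugate the whole integral and reordering, $I_t^{(2)}(\bar u_1,\bar u_2,u_3,u_4)$ takes the pattern $u_1\bar u_3u_2\bar u_4$ of Lemma~\ref{resonance}, and under $\xi_1+\xi_2=\xi_3+\xi_4$ its phase equals $\pm2(\xi_2-\xi_3)(\xi_2-\xi_4)$; by Lemma~\ref{lem_res1} this obeys $|\Om|\gtrsim N_1^2$ in both regimes allowed by the hypothesis ($N_1\sim N_2\gg N_3\vee N_4$ and $N_1\sim N_2\gtrsim N_3\gg N_4$; here one may also assume $N_3\geq N_4$, since the right-hand side of \eqref{eq_I2.2} is symmetric in $u_3$ and $u_4$). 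Accordingly I would take $R:=N_1^{5/3}$ and $L:=N_1^2\gg R$: the $\1_{t,R}^{\mathrm{high}}$ pieces contribute $\les\|\1_{t,R}^{\mathrm{high}}\|_{L^1}(N_3N_4)^{1/2}\prod_j\|u_j\|_{L^\I_tL^2_x}\les T^{1/4}N_1^{-5/4}(N_3N_4)^{1/2}\prod_j\|u_j\|_{L^\I_tL^2_x}$, while in the low-low piece $\|\1_{t,R}^{\mathrm{low}}u\|_{L^2_{t,x}}\les\|\1_tu\|_{L^2_{t,x}}+T^{1/4}R^{-1/4}\|u\|_{L^\I_tL^2_x}$ supplies the $T^{1/4}N_1^{-5/12}$ factors and $\|Q_{\gtrsim L}u_j\|_{L^2_{t,x}}\les L^{-1}\|u_j\|_{X^{0,1}}$ supplies the remaining $N_1^{-1}$ and $N_1^{-2}$ gains.

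I expect the only genuinely delicate point to be the bookkeeping: for each conjugation pattern one must correctly identify which pair of factors is the high$\times$high$\to$low product carrying the Bernstein gain, and must conjugate and relabel so as to land in the exact hypotheses of Lemmas~\ref{resonance} and \ref{lem_res1}. In particular, for \eqref{eq_I2.2} one has to check that both admissible frequency configurations yield the quadratic resonance $|\Om|\gtrsim N_1^2$ — this is what makes $R=N_1^{5/3}$ admissible (so that $L=N_1^2\gg R$) and what forces every power of $N_1,N_3,N_4$ on the right-hand side to be attained. Once these points are settled, the remaining estimates are a verbatim copy of the corresponding steps in the proof of Lemma~\ref{lem_Bourgain1}.
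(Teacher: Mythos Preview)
Your proposal is correct and follows exactly the paper's approach: the paper's proof is essentially a two-sentence sketch saying that \eqref{eq_I2.1} is Lemma~\ref{lem_Bourgain1} redone without invoking Lemma~\ref{lem_comm1}, while \eqref{eq_I2.2} uses the stronger resonance \eqref{eq_res2} with $R=N_1^{5/3}$ and $L=N_1^2$ in place of $R=N_1^{1/3}N_3^{4/3}$ and $L=N_1N_3$. Your write-up simply fills in the details the paper leaves implicit.
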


\begin{proof}
  The proofs for estimates on $|I_t^{(2)}(u_1,\bar{u}_2,\bar{u}_3,u_4)|$ and $|I_t^{(2)}(u_1,\bar{u}_2,u_3,\bar{u}_4)|$ are essentially the same as that of Lemma \ref{lem_Bourgain1} except for not using Lemma \ref{lem_comm1}.
  On the other hand, for $|I_t^{(2)}(\bar{u}_1,\bar{u}_2,u_3,u_4)|$, a stronger resonance relation \eqref{eq_res2} is available in this configuration.
  The proof of Lemma \ref{lem_Bourgain1} works with replacing $R=N_1^{1/3}N_3^{4/3}$ and $L=N_1N_3$ by $R=N_1^{5/3}$ and $L=N_1^2$, respectively.
\end{proof}

\begin{rem}\label{rem_Bourgain2.1}
  The right hand side of \eqref{eq_I2.2} is bounded by that of \eqref{eq_I2.1}.
  This strong bound enables us to avoid using the modified energy in the proof of the estimate for the difference in the nonregular case (Section \ref{nonregular}).
\end{rem}

\begin{lem}\label{lem_Bourgain3}
  Let $0<T<1$ and $N,N_1,N_2,N_3,N_4\ge 1$ and let $\psi$ satisfy \eqref{hyp_psi}.
  Assume that $u_k\in Z_T^0$ is a function with spatial Fourier support in $\{|\xi_k|\sim N_k\}$ for $k=1,2,3$.
  Moreover, assume that $N_1\sim N_2\gts N_3\gg N_4$.
  Then it holds that for $0<t<T$
  \EQS{\label{eq_I3}
    \begin{aligned}
      &|I_t(\bar{u}_1,\bar{u}_2,u_3,P_{N_4}\psi)|
       +|I_t(u_1,\bar{u}_2,\bar{u}_3,P_{N_4}\psi)|
       +|I_t(u_1,\bar{u}_2,u_3,P_{N_4}\bar{\psi})|\\
      &\les N_3^{1/2}\|u_3\|_{L_t^\I L_x^2}
        \|P_{N_4}\psi\|_{L_{t,x}^\I}
        (T^{1/4}N_1^{-1/4}\|u_1\|_{L_t^\I L_x^2}
        \|u_2\|_{L_t^\I L_x^2}\\
      &\quad\quad+\|u_1\|_{X^{-1,1}}
       \|\1_t u_2\|_{L_{t,x}^2}
        +T^{1/4}N_1^{-1/12}N_3^{-1/3}\|u_1\|_{X^{-1,1}}\|u_2\|_{L_{t}^\I L_x^2}\\
      &\quad\quad+\|\1_t u_1\|_{L_{t,x}^2}\|u_2\|_{X^{-1,1}}
       +T^{1/4}N_1^{-1/12}N_3^{-1/3}\|u_1\|_{L_{t}^\I L_x^2}
       \|u_2\|_{X^{-1,1}})\\
      &\quad+T^{1/2}N_1^{-1}N_3^{1/2}
        \|u_1\|_{L_t^\I L_x^2}
        \|u_2\|_{L_t^\I L_x^2}
        (N_3 \|u_3\|_{X^{-1,1}}\|P_{N_4}\psi\|_{L_{t,x}^\I}\\
      &\quad\quad+\|u_3\|_{L_t^\I L_x^2}
        \|P_{N_4}\p_t\psi\|_{L_{t,x}^\I}).
    \end{aligned}
  }
\end{lem}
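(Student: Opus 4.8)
The plan is to adapt the proof of Lemma \ref{lem_Bourgain1} to the present situation, where the fourth slot is occupied by $P_{N_4}\psi$ (or $P_{N_4}\bar\psi$) rather than a general function $u_4\in Z^0$. The key structural point is that $\psi$ satisfies \eqref{hyp_psi}, so $P_{N_4}\psi$ is bounded in $L^\I_{t,x}$ and, crucially, $\p_t\psi\in L^\I(\R^2)$, which replaces the role that $\|u_4\|_{X^{-1,1}}$ played in Lemma \ref{lem_Bourgain1}. As in that proof, I set $R=N_1^{1/3}N_3^{4/3}$ and decompose $\1_t$ as $\1_{t,R}^{\mathrm{low}}+\1_{t,R}^{\mathrm{high}}$ in the first and second slots, obtaining $I_{\I,1}+I_{\I,2}+I_{\I,3}$ exactly as in \eqref{eq_4.11}. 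The high-modulation pieces $I_{\I,1}$, $I_{\I,2}$ are handled identically to \eqref{eq_time}, using $\|\1_{t,R}^{\mathrm{high}}\|_{L^1_t}\les T^{1/4}N_1^{-1/4}N_3^{-1}$, Lemma \ref{lem_comm1}, and $\|P_{N_4}\psi\|_{L^\I_{t,x}}$ in place of $\|u_4\|_{L^\I_{t,x}}$ — there is no $N_4^{1/2}$ gain here since $\psi\notin L^2$, which is why the right-hand side of \eqref{eq_I3} carries $\|P_{N_4}\psi\|_{L^\I_{t,x}}$ rather than $N_4^{1/2}\|u_4\|_{L^\I_tL^2_x}$.

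For the low-modulation term $I_{\I,3}$, I again invoke Lemma \ref{lem_res1}: since $|\xi_1|\sim|\xi_2|\gts|\xi_3|\gg|\xi_4|$ and $N_1\gg1$, we have $|\Om|\gts N_1N_3\gg R$, so setting $L:=N_1N_3$ and applying Lemma \ref{resonance} forces one of the four modulations to be $\gts L$. This yields a decomposition analogous to \eqref{eq_4.12} into $I_{\I,3,1}+I_{\I,3,2}+I_{\I,3,3}+I_{\I,3,4}$. The first three terms — where the high modulation lands on $u_1$, $u_2$, or $u_3$ — are estimated exactly as in Lemma \ref{lem_Bourgain1}, using \eqref{eq4.6}, \eqref{eq4.7}, \eqref{eq2.1}, Lemma \ref{lem_comm1} and the Bernstein inequality, again with $\|P_{N_4}\psi\|_{L^\I_{t,x}}$ substituting for $N_4^{1/2}\|u_4\|_{L^\I_tL^2_x}$; this produces all terms in \eqref{eq_I3} except the very last one.

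The one genuinely new point — and the main obstacle — is the fourth term $I_{\I,3,4}$, where the high modulation $Q_{\gts L}$ must be placed on the $\psi$-factor. Since $\psi$ is merely in $L^\I$ (not $L^2$) we cannot estimate $\|Q_{\gts L}P_{N_4}\psi\|_{L^2_{t,x}}$ directly. Instead I exploit that $Q_{\gts L}$ with $L=N_1N_3\gg1$ acting on a function with spatial frequency $\sim N_4\ll N_3$ effectively measures $\p_t+\text{(bounded)}$ — concretely, on this frequency block $\ta+\xi_4^2$ is large only when $|\ta|\gtrsim L$, so $Q_{\gts L}P_{N_4}\psi$ is controlled using $\p_t\psi\in L^\I(\R^2)$: one writes the symbol $\psi_{\gtrsim L}(\ta,\xi)=\ta^{-1}\cdot\ta\psi_{\gtrsim L}$, the factor $\ta$ becomes $\p_t$, and $\ta^{-1}$ gains $L^{-1}=(N_1N_3)^{-1}$. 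This is exactly the mechanism behind the last line of \eqref{eq_I3}, which contains $\|P_{N_4}\p_t\psi\|_{L^\I_{t,x}}$ with the prefactor $T^{1/2}N_1^{-1}N_3^{1/2}\cdot\|u_3\|_{L^\I_tL^2_x}$ in place of the $N_4\|u_3\|_{L^\I_tL^2_x}\|u_4\|_{X^{-1,1}}$ term of \eqref{eq_I1}. After combining with Lemma \ref{lem_comm1} (which supplies the $N_3$ from the derivative in $\Pi_N$ and the $\|u_3\|_{L^\I_tL^2_x}$) and the bounds \eqref{eq4.1.1}, \eqref{eq4.3} for the $\1_{t,R}^{\mathrm{low}}$ factors, one arrives at \eqref{eq_I3}. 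Finally, the estimates for $I_t(\bar u_1,\bar u_2,u_3,P_{N_4}\psi)$ and $I_t(u_1,\bar u_2,\bar u_3,P_{N_4}\psi)$ follow by the same argument; in the first of these the resonance is even stronger, $|\Om|\sim N_1^2$, so the conclusion is again more than sufficient.
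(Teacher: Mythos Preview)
Your proposal is correct and follows essentially the same route as the paper's proof: the same choice $R=N_1^{1/3}N_3^{4/3}$, $L=N_1N_3$, the same splitting into $I_{\I,1}+I_{\I,2}+I_{\I,3}$, and the same treatment of $I_{\I,3,1},I_{\I,3,2},I_{\I,3,3}$ by carrying over Lemma \ref{lem_Bourgain1} with $\|P_{N_4}\psi\|_{L^\I_{t,x}}$ replacing $N_4^{1/2}\|u_4\|_{L^\I_tL^2_x}$. The only cosmetic difference is in $I_{\I,3,4}$: the paper places the \emph{pure temporal} projector $R_{\gts L}$ on $P_{N_4}\psi$ (observing directly that $|\tau_j+\xi_j^2|\ll L$ for $j=1,2,3$ together with $\xi_4^2\ll L$ forces $|\tau_4|\gts L$), and then bounds $\|R_{\gts L}P_{N_4}\psi\|_{L^\I_{t,x}}\les L^{-1}\|P_{N_4}\p_t\psi\|_{L^\I_{t,x}}$ via a temporal Bernstein/Young inequality, whereas you phrase it via $Q_{\gts L}$ and then reduce to the same $|\tau_4|\gts L$ conclusion --- the two formulations are equivalent here.
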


\begin{proof}
  First we consider $I_t(u_1,\bar{u}_2,u_3,P_{N_4}\bar{\psi})$.
  Putting $R=N_1^{1/3}N_3^{4/3}$, as in \eqref{eq_4.11}, we split $I_t$ as follows:
  \EQQS{
    |I_t(u_1,\bar{u}_2,u_3,P_{N_4}\bar{\psi})|
    &\le |I_t(\1_{t,R}^{\text{high}}u_1,\1_t \bar{u}_2,
       u_3,P_{N_4}\bar{\psi})|
     +|I_t(\1_{t,R}^{\text{low}}u_1,
      \1_{t,R}^{\text{high}}\bar{u}_2,
       u_3,P_{N_4}\bar{\psi})|\\
    &\quad+|I_t(\1_{t,R}^{\text{low}}u_1,
        \1_{t,R}^{\text{low}}\bar{u}_2,
         u_3,P_{N_4}\bar{\psi})|
      =:I_{\I,1}+I_{\I,2}+I_{\I,3}.
  }
  For $I_{\I,1}$ and $I_{\I,2}$, we can use the same argument (with Lemma \ref{lem_comm1}) as \eqref{eq_time} to get
  \EQQS{
    I_{\I,1}+I_{\I,2}
    \les T^{1/4}N_1^{-1/4}N_3^{1/2}\|u_1\|_{L_t^\I L_x^2}
    \|u_2\|_{L_t^\I L_x^2}
    \|u_3\|_{L_t^\I L_x^2}
    \|P_{N_4}\psi\|_{L_{t,x}^\I}
  }
  For the contribution of $I_{\I,3}$, putting $L=N_1N_3\gg R$, first we note that it holds that if we have
  \EQQS{
    |\ta_1+\xi_1^2-\ta_2-\xi_2^2+\ta_3+\xi_3^2-\ta_4-\xi_4^2|\gts L
  }
  under $|\ta_j+\xi_j^2|\ll L$ for $j=1,2,3$ and $\xi_4^2\ll L$,
  then it holds that $|\ta_4|\gts L$ by the impossible interactions.
  Therefore, we see from Lemmas \ref{lem_res1} and \ref{resonance}  that
  \EQQS{
    I_{\I,3}
    &\le |I_\I(Q_{\gts L}(\1_{t,R}^{\text{low}}u_1),
      \overline{\1_{t,R}^{\text{low}}u_2},
       u_3,P_{N_4}\bar{\psi})|\\
    &\quad+|I_\I(Q_{\ll L}(\1_{t,R}^{\text{low}}u_1),
      \overline{Q_{\gts L}(\1_{t,R}^{\text{low}}u_2}),
       u_3,P_{N_4}\bar{\psi})|\\
    &\quad+|I_\I(Q_{\ll L}(\1_{t,R}^{\text{low}}u_1),
      \overline{Q_{\ll L}(\1_{t,R}^{\text{low}}u_2)},
       Q_{\gts L}^+ u_3,P_{N_4}\bar{\psi})|\\
    &\quad+|I_\I(Q_{\ll L}(\1_{t,R}^{\text{low}}u_1),
       \overline{Q_{\ll L}(\1_{t,R}^{\text{low}}u_2)},
       Q_{\ll L} u_3,P_{N_4}R_{\gts L}\bar{\psi})|\\
    &=: I_{\I,3,1}+I_{\I,3,2}+I_{\I,3,3}+I_{\I,3,4}.
  }
  Then, following the proof of Lemma \ref{lem_Bourgain1}, we have
  \EQQS{
    I_{\I,3,1}
    &\les N_3^{1/2}\|u_1\|_{X^{-1,1}}
     (\|\1_t u_2\|_{L_{t,x}^2}
      +T^{1/4}R^{-1/4}\|u_2\|_{L_{t}^\I L_x^2})
     \|u_3\|_{L_{t}^\I L_x^2}\|P_{N_4}\psi\|_{L_{t,x}^\I},\\
    I_{\I,3,2}
    &\les N_3^{1/2}(\|\1_t u_1\|_{L_{t,x}^2}
     +T^{1/4}R^{-1/4}\|u_1\|_{L_{t}^\I L_x^2})
     \|u_2\|_{X^{-1,1}}
     \|u_3\|_{L_{t}^\I L_x^2}\|P_{N_4}\psi\|_{L_{t,x}^\I},\\
    I_{\I,3,3}
    &\les T^{1/2} N_1^{-1} N_3^{3/2}
     \|u_1\|_{L_{t}^\I L_x^2}
     \|u_2\|_{L_{t}^\I L_x^2}
     \|u_3\|_{X^{-1,1}}
     \|P_{N_4}\psi\|_{L_{t,x}^\I}.
  }
  For $I_{\I,3,4}$, the Young inequality in time shows that
  \EQQS{
    I_{\I,3,4}
    &\les T N_3^{3/2}
     \|u_1\|_{L_{t}^\I L_x^2}
     \|u_2\|_{L_{t}^\I L_x^2}
     \|u_3\|_{L_{t}^\I L_x^2}
     \|R_{\gts L}P_{N_4}\psi\|_{L_{t,x}^\I}\\
    &\les T N_1^{-1} N_3^{1/2}
     \|u_1\|_{L_{t}^\I L_x^2}
     \|u_2\|_{L_{t}^\I L_x^2}
     \|u_3\|_{L_{t}^\I L_x^2}
     \|P_{N_4}\p_t\psi\|_{L_{t,x}^\I}.
  }
  Collecting these estimates above, we obtain \eqref{eq_I3} for $I_t(u_1,\bar{u}_2,u_3,P_{N_4}\bar{\psi})$.
  Other two estimates follow similarly.
  This completes the proof.
\end{proof}

\begin{lem}\label{lem_Bourgain4}
  Let $0<T<1$ and $N_1,N_2,N_3,N_4\ge 1$ and let $\psi$ satisfy \eqref{hyp_psi}.
  Assume that $u_k\in Z_T^0$ is a function with spatial Fourier support in $\{|\xi_k|\sim N_k\}$ for $k=1,2,3$.
  If $N_1\sim N_2\gts N_3\gg N_4$, then it holds that for $0<t<T$
  \EQS{\label{eq_I4.1}
    &\begin{aligned}
      &|I_t^{(2)}(u_1,\bar{u}_2,u_3,P_{N_4}\bar{\psi})|
       +|I_t^{(2)}(u_1,\bar{u}_2,\bar{u}_3,P_{N_4}\psi)|\\
      &\les N_3^{-1/2}
       \|u_3\|_{L_t^\I L_x^2}\|P_{N_4}\psi\|_{L_{t,x}^\I}
       (T^{1/4}N_1^{-1/4} \|u_1\|_{L_t^\I L_x^2}\|u_2\|_{L_t^\I L_x^2}\\
      &\quad\quad+\|u_1\|_{X^{-1,1}}
       \|\1_t u_2\|_{L_{t,x}^2}
       +T^{1/4}N_1^{-1/12}N_3^{-1/3}
        \|u_1\|_{X^{-1,1}}
        \|u_2\|_{L_{t}^\I L_x^2}\\
      &\quad\quad+\|\1_t u_1\|_{L_{t,x}^2}
        \|u_2\|_{X^{-1,1}}
       +T^{1/4}N_1^{-1/12}N_3^{-1/3}
        \|u_1\|_{L_{t}^\I L_x^2}
        \|u_2\|_{X^{-1,1}})\\
      &\quad+T^{1/2}N_1^{-1}
        \|u_1\|_{L_t^\I L_x^2}\|u_2\|_{L_t^\I L_x^2}
        (N_3^{1/2}\|u_3\|_{X^{-1,1}}\|P_{N_4}\psi\|_{L_{t,x}^\I}\\
      &\quad\quad+\|u_3\|_{L_t^\I L_x^2}\|P_{N_4}\p_t\psi\|_{L_{t,x}^\I}).
    \end{aligned}
  }
  On the other hand, if $N_1\sim N_2\gg N_3\vee N_4$ or $N_1\sim N_2\gts N_3\gg N_4$, then it holds that for $0<t<T$
  \EQS{
    \begin{aligned}\label{eq_I4.2}
      &|I_t^{(2)}(\bar{u}_1,\bar{u}_2,u_3,P_{N_4}\psi)|\\
      &\les N_1^{-1} N_3^{1/2}
       \|u_3\|_{L_t^\I L_x^2}\|P_{N_4}\psi\|_{L_{t,x}^\I}
       (T^{1/4}N_1^{-1/4} \|u_1\|_{L_t^\I L_x^2}\|u_2\|_{L_t^\I L_x^2}\\
      &\quad\quad+\|u_1\|_{X^{-1,1}}
       \|\1_t u_2\|_{L_{t,x}^2}
       +T^{1/4}N_1^{-5/12}
        \|u_1\|_{X^{-1,1}}
        \|u_2\|_{L_{t}^\I L_x^2}\\
      &\quad\quad+\|\1_t u_1\|_{L_{t,x}^2}
        \|u_2\|_{X^{-1,1}}
       +T^{1/4}N_1^{-5/12}
        \|u_1\|_{L_{t}^\I L_x^2}
        \|u_2\|_{X^{-1,1}})\\
      &\quad+T^{1/2}N_1^{-2}N_3^{1/2}
        \|u_1\|_{L_t^\I L_x^2}\|u_2\|_{L_t^\I L_x^2}
        (N_3\|u_3\|_{X^{-1,1}}\|P_{N_4}\psi\|_{L_{t,x}^\I}\\
      &\quad\quad+\|u_3\|_{L_t^\I L_x^2}
        \|P_{N_4}\p_t\psi\|_{L_{t,x}^\I}).
    \end{aligned}
  }
\end{lem}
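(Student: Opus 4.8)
The proof runs in parallel with those of Lemmas \ref{lem_Bourgain2} and \ref{lem_Bourgain3}, combining the two modifications of Lemma \ref{lem_Bourgain1} already carried out there. First, since $I_t^{(2)}$ carries no derivative (compare \eqref{def_I2} with \eqref{def_I}), Lemma \ref{lem_comm1} is not used; one instead distributes $L^2_x$ and $L^\I_x$ norms by H\"older and applies Bernstein's inequality to the frequency-localized factors (in particular $\|P_{N_3}u_3\|_{L^\I_x}\les N_3^{1/2}\|P_{N_3}u_3\|_{L^2_x}$), exactly as in the passage from Lemma \ref{lem_Bourgain1} to Lemma \ref{lem_Bourgain2}; this Bernstein step, in place of the commutator lemma, is responsible for the improved $N_3$-powers of \eqref{eq_I4.1} compared with \eqref{eq_I3}. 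Second, since the fourth slot holds $P_{N_4}\psi$ rather than an $L^2$ function, the contribution in which all the modulation is placed on $\psi$ must be absorbed using the hypothesis $\p_t\psi\in L^\I(\R^2)$ from \eqref{hyp_psi}, exactly as in Lemma \ref{lem_Bourgain3}.

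For \eqref{eq_I4.1} I would fix $R:=N_1^{1/3}N_3^{4/3}$ and decompose $\1_t=\1_{t,R}^{\text{low}}+\1_{t,R}^{\text{high}}$, obtaining as in the proof of Lemma \ref{lem_Bourgain3} a sum $I_{\I,1}+I_{\I,2}+I_{\I,3}$. For $I_{\I,1}$ and $I_{\I,2}$ one places $\1_{t,R}^{\text{high}}$ in $L^1_t$, using \eqref{eq4.1} and interpolation to get $\|\1_{t,R}^{\text{high}}\|_{L^1_t}\les T^{1/4}N_1^{-1/4}N_3^{-1}$, and the remaining factors in $L^\I_tL^2_x$ and $L^\I_{t,x}$. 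For $I_{\I,3}$, Lemma \ref{lem_res1} gives $|\Om|\gts N_1N_3\gg R$ in this configuration, so with $L:=N_1N_3$ (note $N_4^2\ll L$ and $R\ll L$, hence \eqref{eq4.6} applies to the $\1_{t,R}^{\text{low}}$ factors) Lemma \ref{resonance} forces high modulation on one of the four factors, and I would split into the subcases $Q_{\gts L}u_1$, $Q_{\gts L}u_2$, $Q_{\gts L}u_3$, and $R_{\gts L}P_{N_4}\psi$. The first three are estimated exactly as $I_{\I,3,1},I_{\I,3,2},I_{\I,3,3}$ in the proof of Lemma \ref{lem_Bourgain3}, using \eqref{eq4.7} for the $\1_{t,R}^{\text{low}}$ factors and Bernstein in place of the commutator lemma; for the fourth, the impossible time--frequency interaction forces $|\ta_4|\gts L$, whence the Young inequality in time gives $\|R_{\gts L}P_{N_4}\psi\|_{L^\I_{t,x}}\les L^{-1}\|P_{N_4}\p_t\psi\|_{L^\I_{t,x}}$, producing with $L=N_1N_3$ the last term of \eqref{eq_I4.1}. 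The two sign variants $I_t^{(2)}(u_1,\bar u_2,u_3,P_{N_4}\bar\psi)$ and $I_t^{(2)}(u_1,\bar u_2,\bar u_3,P_{N_4}\psi)$ are handled identically.

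For \eqref{eq_I4.2} the two high-frequency factors $\bar u_1,\bar u_2$ carry the same conjugation, so the stronger resonance bound \eqref{eq_res2} applies and $|\Om|\gts N_1^2$, in both subcases $N_1\sim N_2\gg N_3\vee N_4$ and $N_1\sim N_2\gts N_3\gg N_4$ (here $N_4^2\ll N_1^2$ since $N_4\ll N_1$). Repeating the argument above with $R:=N_1^{5/3}$ and $L:=N_1^2$ in place of $N_1^{1/3}N_3^{4/3}$ and $N_1N_3$ (as in the last line of the proof of Lemma \ref{lem_Bourgain2}) then yields \eqref{eq_I4.2}; e.g.\ one now has $\|\1_{t,R}^{\text{high}}\|_{L^1_t}\les T^{1/4}N_1^{-5/4}$, $R^{-1/4}=N_1^{-5/12}$, and $L^{-1}=N_1^{-2}$ in the $\psi$-term.

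The only genuinely delicate point---already faced in Lemma \ref{lem_Bourgain3}---is expected to be the $R_{\gts L}P_{N_4}\psi$ subcase: one must verify that under $|\ta_j+\xi_j^2|\ll L$ for $j=1,2,3$ and $\xi_4^2\ll L$ the resonance identity $(\ta_1+\xi_1^2)-(\ta_2+\xi_2^2)+(\ta_3+\xi_3^2)-(\ta_4+\xi_4^2)=\Om(\xi_1,\xi_2,\xi_3,\xi_4)$ forces $|\ta_4|\gts L$, and then that the resulting gain $L^{-1}$, together with $\|P_{N_4}\p_t\psi\|_{L^\I_{t,x}}<\I$ (guaranteed by \eqref{hyp_psi}), is exactly enough to close the estimate. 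Everything else is a routine variant of computations already performed in the proofs of Lemmas \ref{lem_Bourgain2} and \ref{lem_Bourgain3}.
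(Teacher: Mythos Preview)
Your proposal is correct and follows exactly the same route as the paper's own proof, which simply remarks that the argument of Lemma \ref{lem_Bourgain3} carries over (with H\"older and Bernstein replacing the use of Lemma \ref{lem_comm1}, since $I_t^{(2)}$ carries no derivative), and that for \eqref{eq_I4.2} one replaces $R=N_1^{1/3}N_3^{4/3}$, $L=N_1N_3$ by $R=N_1^{5/3}$, $L=N_1^2$ thanks to the stronger resonance \eqref{eq_res2}. Your sketch is in fact considerably more detailed than the paper's three-line proof.
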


\begin{proof}
  The proof of Lemma \ref{lem_Bourgain3} works for estimates on $|I_t^{(2)}(u_1,\bar{u}_2,u_3,P_{N_4}\bar{\psi})|$ and $|I_t^{(2)}(u_1,\bar{u}_2,\bar{u}_3,P_{N_4}\psi)|$.
  On $|I_t^{(2)}(\bar{u}_1,\bar{u}_2,u_3,P_{N_4}\psi)|$, a stronger resonance relation \eqref{eq_res2} is available since $N_1\sim N_2\gts N_3\gg N_4$.
  The proof of Lemma \ref{lem_Bourgain3} works with replacing $R=N_1^{1/3}N_3^{4/3}$ and $L=N_1N_3$ by $R=N_1^{5/3}$ and $L=N_1^2$, respectively.
\end{proof}

\begin{rem}\label{rem_Bourgain4.1}
  The right hand side of \eqref{eq_I4.2} is controlled by that of \eqref{eq_I4.1}.
  This is thanks to the strong nonresonant relation.
\end{rem}

The following proposition is one of main estimates in the present paper.

\begin{prop}[A priori estimate]\label{prop_apri}
  Let $0<T<1$, $3/4\le s\le 2$.
  Let $\e>0$.
  Suppose that the dyadic sequence $\{\om_N\}$ of positive numbers satisfies $\om_N\le \om_{2N}\le 2^{\e/2}\om_N$ for $N\ge1$.
  Let $u_0\in H_\om^s(\R)$ and let $u\in L^\infty([0,T];H_\om^s(\R)) $ be a solution to \eqref{eq2} on $[0,T]$.
  Assume that there exists $K>0$ such that
  \EQQS{
    \|u\|_{Z_T^{3/4}}+\|\p_t \psi\|_{L_{t,x}^\I}
     +\|J_x^{s+1+\e}\psi\|_{L_{t,x}^\I}
      +\|\Psi\|_{L_t^\I H_x^{s+\e}}\le K,
  }
  where $\Psi$ is defined in \eqref{def_psi}.
  Then there exists $C=C(K)>0$ such that
  \EQS{\label{P}
    \|u\|_{L_T^\I H_\om^s}^2
    \le \|u_0\|_{H_\om^s}^2 + C T^{1/4}\|u\|_{Z_{\om,T}^{s}}
    \|u\|_{L_T^\I H_\om^s} +T\|\Psi\|_{L_t^\I H_x^{s+\e}}^2.
  }
\end{prop}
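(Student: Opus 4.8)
The plan is to run a frequency-localized $H^s_\om$ energy estimate for $u$, treating nonresonant frequency interactions by the Bourgain-type bounds of Lemmas~\ref{lem_Bourgain1}--\ref{lem_Bourgain4} and the (nearly) resonant ones by the refined Strichartz estimate of Proposition~\ref{prop_stri1}. First I would apply $P_N$ to \eqref{eq2}, pair it with $\overline{P_Nu}$ in $L^2_x$, take the imaginary part of the resulting scalar identity (the term $\int_\R\p_x^2P_Nu\,\overline{P_Nu}\,dx$ is real and drops out), and integrate over $[0,t]$; this is legitimate since $u$ satisfies the Duhamel formula of \eqref{eq2} (Remark~\ref{rem_sol}). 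Multiplying by $\om_N^2N^{2s}$ and summing in $N$ gives
\[
\|u(t)\|_{H^s_\om}^2=\|u_0\|_{H^s_\om}^2+2\sum_{N}\om_N^2N^{2s}\,\IM\int_0^t\!\!\int_\R\big(P_NF(u,\psi)-P_N\Psi\big)\,\overline{P_Nu}\,dx\,dt'.
\]
Taking the supremum in $t\in[0,T]$ reduces everything to the $\Psi$-contribution and the $F(u,\psi)$-contribution. For the first, Cauchy--Schwarz in $x$ and in $N$, the embedding $\|\Psi\|_{H^s_\om}\lesssim\|\Psi\|_{H^{s+\e}}$ (valid since $\om_N\lesssim N^{\e/2}$ under $\om_{2N}\le2^{\e/2}\om_N$), Young's inequality and $T\le T^{1/4}$ produce the term $T\|\Psi\|_{L^\I_tH_x^{s+\e}}^2$ together with a contribution of the form $CT^{1/4}\|u\|_{L^\I_TH^s_\om}^2\le CT^{1/4}\|u\|_{Z^s_{\om,T}}\|u\|_{L^\I_TH^s_\om}$.

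Next I would split $F(u,\psi)$ as in \eqref{def_F} according to the number of factors of $u$ and the location of the derivative. The terms linear in $u$ in which the derivative falls on $u$ (namely $|\psi|^2\p_xu$, $\psi^2\p_x\bar u$, and the corresponding pieces of $\p_x\psi\,\RE u\bar\psi$ and $u\psi\p_x\bar\psi$) are handled by moving the derivative off $u$ via the commutator identities behind Lemma~\ref{lem_comm1} and \eqref{eq_comm3}: for instance the main part is $\int_\R|\psi|^2\p_xP_Nu\,\overline{P_Nu}\,dx=-\tfrac12\int_\R\p_x(|\psi|^2)\,|P_Nu|^2\,dx$, and this together with the commutator remainder is bounded by $\|\p_x(|\psi|^2)\|_{L^\I}\|P_{\sim N}u\|_{L^2}^2\lesssim_K\|P_{\sim N}u\|_{L^2}^2$; after summation and time integration these cost only $T\|u\|_{L^\I_TH^s_\om}^2\le T^{1/4}\|u\|_{Z^s_{\om,T}}\|u\|_{L^\I_TH^s_\om}$. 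The remaining linear- and quadratic-in-$u$ terms in which the derivative falls on $\psi$ are even simpler, being controlled directly by the product estimates \eqref{eq2.2}, \eqref{eq2.2.1}, \eqref{eq2.3} and the bounds on $J_x^{s+1+\e}\psi$.

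The core of the argument is the cubic part $i\la|u|^2\p_xu+i\mu u^2\p_x\bar u$ together with the quadratic-in-$u$, linear-in-$\psi$ terms $\p_xu\,\RE u\bar\psi$ and $u\psi\p_x\bar u$. Here I would decompose every factor dyadically, symmetrize by integrating by parts in $x$ so that the relevant quantities take the form $\int_\R\Pi_N(\cdot,\cdot)(\cdot)(\cdot)\,dx$ (respectively the $I_t^{(2)}$-form of \eqref{def_I2}), and separate the high$\times$high interactions from the rest. When the output, or the undifferentiated low factor, sits at a frequency comparable to the top one the interaction is nonresonant: Lemma~\ref{lem_res1} makes $|\Om|$ large, and Lemmas~\ref{lem_Bourgain1}--\ref{lem_Bourgain4} (with a dyadic block of $\psi$ in the role of a low-frequency factor when needed) convert the derivative loss into the $X^{-1,1}$-gain and produce the $T^{1/4}$; the $X^{-1,1}$ norm is absorbed into $\|u\|_{Z^s_{\om,T}}$ while the remaining copies of $u$ and $\psi$ are bounded by $\|u\|_{Z^{3/4}_T}+\|J_x^{s+1+\e}\psi\|_{L^\I_{t,x}}+\|\p_t\psi\|_{L^\I_{t,x}}\le K$. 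In the complementary (nearly) resonant regime I would instead apply the refined Strichartz estimate of Proposition~\ref{prop_stri1} to $u$, viewed as a solution of $i\p_tu+\p_x^2u=F(u,\psi)-\Psi$, which again yields a positive power of $T$ and is precisely where $s\ge3/4$ is used. The sum over the (necessarily low) free frequency converges absolutely because of the $\e$-slack in the hypotheses $J_x^{s+1+\e}\psi\in L^\I$ and $\om_{2N}\le2^{\e/2}\om_N$. Collecting all pieces and taking $\sup_{t\in[0,T]}$ gives \eqref{P}.

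The main obstacle is the term $|u|^2\p_xu=u\bar u\,\p_xu$ in the regime where two of its three factors together with the output $\overline{P_Nu}$ all sit at the top frequency: in contrast to $u^2\p_x\bar u$, whose resonance function satisfies $|\Om|\gtrsim N_1^2$, here one only has $|\Om|\gtrsim N_1\,(N_3\vee N_4)$ by \eqref{eq_res1}, so fully recovering the derivative while retaining a positive power of $T$ and leaving room for the dyadic summation forces the delicate split between the Bourgain bound of Lemma~\ref{lem_Bourgain1} and the refined Strichartz estimate, and this is the reason the threshold in Proposition~\ref{prop_apri} is $s=3/4$.
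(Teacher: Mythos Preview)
Your proposal is correct and follows essentially the same strategy as the paper: the frequency-localized energy identity, symmetrization via $\Pi_N$ (Lemma~\ref{lem_comm1}) for the terms carrying $\p_x u$ or $\p_x\bar u$, Bourgain-type bounds (Lemmas~\ref{lem_Bourgain1}--\ref{lem_Bourgain4}) in the nonresonant frequency configurations, refined Strichartz (Proposition~\ref{prop_stri1} together with \eqref{eq_2.3apl}--\eqref{eq_2.3apl1}) in the remaining ones, and the identification of $|u|^2\p_x u$ as the term fixing the threshold $s=3/4$. One small caveat: your commutator bound for $|\psi|^2\p_x u$ as written only covers the piece of $|\psi|^2$ at frequency $\ll N$; when $|\psi|^2$ sits at frequency $\gtrsim N$ (a high$\times$high$\to$low contribution) you need the full decay $\|P_M(|\psi|^2)\|_{L^\infty}\lesssim M^{-(s+1+\e)}K^2$ coming from $J_x^{s+1+\e}\psi\in L^\infty$ via \eqref{eq2.7}, exactly as in the paper's Case~2 of $J_t^{(5)}$, but this is a routine addendum.
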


\begin{proof}
   First, we note that by Lemma \ref{lem47} it holds that
$ u\in Z_{\om,T}^{s}$.
  By using \eqref{eq2}, we have
  \EQQS{
    \frac{d}{dt}\|P_N u(t,\cdot)\|_{L_x^2}^2
    &=2\la \sum_{j=1}^{5} \RE\int_\R P_N f_j(u) P_N \bar{u}dx\\
    &\quad+2\mu \sum_{j=6}^{10} \RE\int_\R P_N f_j(u) P_N \bar{u}dx
     -2\IM \int_\R P_N \Psi P_N \bar{u}dx,
  }
 where $\Psi=i\p_t \psi+\p_x^2 \psi-i\la|\psi|^2\p_x \psi-i\mu\psi^2 \p_x \bar{\psi}$ and
 \EQS{\label{def_fu}
   \begin{aligned}
     &f_1(u)= |u|^2\p_x u,\
     f_2(u)=  |u|^2\p_x \psi,\
     f_3(u)=2 \p_x u \RE u\bar{\psi},\\
     &f_4(u)=2  \p_x \psi \RE u\bar{\psi},\
     f_5(u)=  |\psi|^2 \p_x u,\
     f_6(u)=  u^2\p_x \bar{u},\\
     &f_7(u)=  u^2\p_x \bar{\psi},\
     f_8(u)=2  u\psi \p_x\bar{u},\
     f_9(u)=2  u\psi \p_x\bar{\psi},\
     f_{10}(u)= \psi^2\p_x \bar{u}.
   \end{aligned}
 }
 Since $\la$ and $\mu$ do not play any role, we assume that $\la=\mu=1$ for the sake of simplicity.
 Fixing $ t\in [0,T] $,  integration in time between $0$ and $t$, multiplication by $ \om_N^2 N^{2s} $ and summation over $N$ yield
  \EQS{\label{PP}
   \begin{aligned}
     \|u(t)\|_{H_\om^s}^2
     &\le \|u_0\|_{H_\om^s}^2
       +\sum_{j=1}^{10}|J_t^{(j)}|\\
     &\quad +2\sum_N \om_N^2 N^{2s} \bigg|\IM \int_0^t \int_\R P_N \Psi P_N \bar{u}dx dt'\bigg|,
   \end{aligned}
  }
  where $\Psi$ is defined in \eqref{def_psi} and
  \EQQS{
    J_t^{(j)}:=2\sum_{N}  \om_N^2 N^{2s}
    \RE \int_0^t\int_\R f_j(u) P_N^2 \bar{u} dxdt'.
  }
  For the last term in the right hand side of \eqref{PP}, the Cauchy-Schwarz inequality shows that
  \EQQS{
    &\sum_N \om_N^2 N^{2s} \bigg|\IM \int_0^t \int_\R P_N \Psi P_N \bar{u}dx dt'\bigg|\\
    &\les \int_0^t \sum_{N} \om_N^2 N^{2s}
      \|P_N \Psi\|_{L_x^2} \|P_N \bar{u}\|_{L_x^2} dx dt'
    \les T\|\Psi\|_{L_T^\I H_x^{s+\e}}
      \|u\|_{L_T^\I H_\om^{s}}.
  }
  Here, $\om_N$ is absorbed by $\|\Psi\|_{L_T^\I H_x^{s+\e}}$.
  In what follows, we treat each $J_t^{(j)}$ for $j=1,\dots,10$ in  \eqref{PP}.
  We notice that \eqref{eq2.2.1} immediately implies that
  \EQQS{
    |J_t^{(2)}|
    \les T\|f_2(u)\|_{L_T^\I H_\om^s}\|u\|_{L_T^\I H_\om^s}
    \les TK^2 \|u\|_{L_T^\I H_\om^s}^2
  }
  since a derivative lands on $\psi$.
  Similarly, $J_t^{(4)},J_t^{(7)}$ and $J_t^{(9)}$ is bounded by the same way as above.
  Except for $J_t^{(1)}, J_t^{(3)}$ and $J_t^{(5)}$, we first simply employ the Littlewood-Paley decomposition to each $u$ and $\psi$, for example $u=\sum_{N_1}P_{N_1}u$.
  On the other hand, for $J_t^{(1)}, J_t^{(3)}$ and $J_t^{(5)}$, we make a complex conjugate copy of its integrand before being decomposed so as to use integration by parts (to be precise, Lemma \ref{lem_comm1}).
  For the sake of simplicity, we set
  \EQQS{
    K:=1+\|u\|_{Z_T^{3/4}}+\|\p_t \psi\|_{L_{t,x}^\I}
     +\|J_x^{s+1+\e}\psi\|_{L_{t,x}^\I}
      +\|\Psi\|_{L_T^\I H_x^{s+\e}}.
  }

  \noindent
  \textbf{Estimate for $J_t^{(1)}$.}
  As stated above, we use Lemma \ref{lem_comm1} in order to close the estimate. For that purpose, we decompose functions in the following way:
  \EQS{\label{eq_symm}
    \begin{aligned}
      2\RE\int_\R |u|^2 \p_x u P_N^2 \bar{u}dx
      &=\RE\int_\R (|u|^2 \p_x u P_N^2 \bar{u}
        + |u|^2 \p_x \bar{u} P_N^2 u)dx\\
      &=\RE\int_\R \Pi_N(u,\bar{u})|u|^2 dx\\
      &=\sum_{N_1,\dots,N_4}\RE\int_\R
        \Pi_N(P_{N_1}u,P_{N_2}\bar{u})
        P_{N_3}u P_{N_4}\bar{u} dx,
    \end{aligned}
  }
  where $\Pi_N(\cdot,\cdot)$ is defined by \eqref{def_pi}.
  By taking the complex conjugate, we may assume that $N_1\ge N_2$.
  We consider $J_t^{(1)}$ by the case-by-case analysis.
  In the following, we frequently use the following estimate: by Proposition \ref{prop_stri1} and
  \EQS{\label{eq_stri1}
    \begin{aligned}
      &\sum_N\om_N^2 N^{2s-1/2} \|P_N u\|_{L_T^2 L_x^\I}^2\\
      &\les \sum_N \om_N^2 N^{2s}
       (\|P_N u\|_{L_{T,x}^2}
        +N^{-1}\|P_N (F(u,\psi)-\Psi)\|_{L_{T,x}^2})^2\\
      &\les TC(K)\|u\|_{L_T^\I H_\om^s}^2
      +T\|\Psi\|_{L_T^\I H_x^{s+\e}}^2.
    \end{aligned}
  }

  \noindent
  \underline{Case 1: $N_1\sim N_2\gts N_3\vee N_4$.}
  First note that we have $N\sim N_1\sim N_2$.
  So it suffices to consider $J_t^{(1)}$ for $N\gg 1$, which we still denote by $J_t^{(1)}$.
  First we consider the case $N_3\sim N_4$.
  Without loss of generality, we may assume $N_3\ge 1$ since it is easy to check that $\|P_{N_3}u\|_{L_T^2 L_x^\I}\les \|P_{N_3}u\|_{L_{T,x}^2}$ when $N_3=0$.
  Lemma \ref{lem_comm1}, Proposition \ref{prop_stri1} with \eqref{eq_stri1} and \eqref{eq_2.3apl} imply that
  \EQQS{
    |J_t^{(1)}|
    &\les
    \sum_{N_1\sim N_2\gts N_3\sim N_4\ge 1}\om_N^2 N_1^{2s}N_3
    \int_0^t \|P_{N_1}u\|_{L_x^2}\|P_{N_2}u\|_{L_x^2}
      \|P_{N_3}u\|_{L_x^\I}\|P_{N_4}u\|_{L_x^\I}dt'\\
    &\les \|u\|_{L_T^\I H_\om^s}^2
     \sum_{N_3\ge 1}N_3\|P_{N_3}u\|_{L_T^2 L_x^\I}^2\\
    &\les \|u\|_{L_T^\I H_\om^s}^2
     \sum_{N_3\ge 1}(N_3^{3/2}\|P_{N_3}u\|_{L_{T,x}^2}^2
      +N^{-1/2}\|P_{N_3}(F(u,\psi)-\Psi)\|_{L_{T,x}^2}^2)\\
    &\les T C(K) \|u\|_{L_T^\I H_\om^s}^2.
  }
  On the other hand, when $N_3\gg N_4$, we use Bourgain type estimates (Lemma \ref{lem_Bourgain1}).
  For that purpose, we take the extensions $\check{u}=\rho_T(u)$ of $u$ defined in \eqref{def_ext}.
  Then, we notice that
  \EQQS{
    J_t^{(1)}
    =2\sum_{N}\sum_{N_1,\dots,N_4}\om_N^2 N^{2s}
     I_t(P_{N_1}\check{u},P_{N_2}\bar{\check{u}},
      P_{N_3}\check{u},P_{N_4}\bar{\check{u}}).
  }
  Lemma \ref{lem_Bourgain1}, \eqref{eq2.1} and the Young inequality show that
  \EQQS{
    |J_t^{(1)}|
    \les T^{1/4}K^2
     (\|\check{u}\|_{X_\om^{s-1,1}}+\|\check{u}\|_{L_t^\I H_\om^{s}})
     \|\check{u}\|_{L_t^\I H_\om^{s}}
    \le T^{1/4}C(K) \|u\|_{Z_{\om,T}^s}
     \|u\|_{L_T^\I H_\om^{s}}.
  }
  For the case $N_4\gg N_3$, according to Remark \ref{rem_Bourgain1}, we can estimate $J_t^{(1)}$ by the same bound as above.

  \noindent
  \underline{Case 2: $N_1\sim N_3\gts N_2\vee N_4$.}
  When $N_4\gts N_2$, the H\"older inequality, Proposition \ref{prop_stri1} with \eqref{eq_stri1}, the Young inequality, \eqref{eq_2.3apl} and \eqref{eq_2.3apl1} show that
  \EQQS{
    |J_t^{(1)}|
    &\les \sum_N\sum_{N_1\sim N_3\gts N_4\gts N_2} \int_0^t \om_N^2 N^{2s}
      \|P_{N_3}u\|_{L_x^2}\|P_{N_4}u\|_{L_x^\I}\\
    &\quad\times(N_1\|P_{N_1}u\|_{L_x^2}\|P_N^2 P_{N_2}u\|_{L_x^\I}
     +N_2\|P_N^2P_{N_1}u\|_{L_x^2}\|P_{N_2}u\|_{L_x^\I}) dt'\\
    &\les \|u\|_{L_T^\I H_x^{3/4}}^2 \sum_{N_4\gts N_2}
     \om_{N_2}^2 N_2^{2s} N_4^{-1/2}
     \|P_{N_2}u\|_{L_T^2 L_x^\I}
     \|P_{N_4}u\|_{L_T^2 L_x^\I}\\
    &\quad+\|u\|_{L_T^\I H_\om^s}^2
     \sum_{N_4\gts N_2}N_2\|P_{N_2}u\|_{L_T^2 L_x^\I}
     \|P_{N_4}u\|_{L_T^2 L_x^\I}\\
    &\les K^2 \sum_{N_4\gts N_2}
     \bigg(\frac{N_2}{N_4}\bigg)^{s+1/4}
     \om_{N_2}N_2^{s-1/4}\|P_{N_2}u\|_{L_T^2 L_x^\I}
     \om_{N_4}N_4^{s-1/4}\|P_{N_4}u\|_{L_T^2 L_x^\I}\\
    &\quad+\|u\|_{L_T^\I H_\om^s}^2
     \sum_{N_4\gts N_2}\bigg(\frac{N_2}{N_4}\bigg)^{1/2}
     N_2^{1/2}\|P_{N_2}u\|_{L_T^2 L_x^\I}
     N_4^{1/2}\|P_{N_4}u\|_{L_T^2 L_x^\I}\\
    &\les T C(K)\|u\|_{L_T^\I H_\om^s}^2
     +T\|\Psi\|_{L_T^\I H_x^{s+\e}}^2.
  }
  On the other hand, when $N_2\gg N_4$, as in Case 1, we see from \eqref{eq_I2.1} and \eqref{eq2.1} that
  \EQQS{
    |J_t^{(1)}|
    \les T^{1/4}C(K) \|u\|_{Z_{\om,T}^s}
     \|u\|_{L_T^\I H_\om^{s}}.
  }

  \noindent
  \underline{Case 3: $N_3\sim N_4\gts N_1\ge N_2$.}
  This case is easy to treat since derivatives are already distributed.
  The H\"older inequality, Proposition \ref{prop_stri1} and the Young inequality show that
  \EQQS{
    |J_t^{(1)}|
    &\les \sum_{N_3\sim N_4\gts N_1\ge N_2}
    \om_{N_2}^2 N_1^{2s}N_2
    \int_0^t\|P_{N_3}u\|_{L_x^2}\|P_{N_4}u\|_{L_x^2}
     \|P_{N_1}u\|_{L_x^\I}\|P_{N_2}u\|_{L_x^\I} dt'\\
    &\les \|u\|_{L_T^\I H_{\om}^s}^2
     \sum_{N_1\ge N_2}\bigg(\frac{N_2}{N_1}\bigg)^{1/2}
     \prod_{j=1}^2 (\|P_{N_j}u\|_{L_{T}^2 H_x^{3/4}}
      +\|P_{N_j}(F(u,\psi)-\Psi)\|_{L_{T}^2 H_x^{-1/4}})\\
    &\les TC(K)\|u\|_{L_T^\I H_{\om}^s}^2.
  }
  Here, we used $N_1^{2s}\le N_3^{2s}$ and $\om_{N_2}\les \om_{N_3}$ in the first inequality.
  This completes the estimate the contribution of $f_1(u)$.

  \noindent
  \textbf{Estimate for $J_t^{(3)}$.}
  In this case, we use Lemma \ref{lem_comm1}. As in \eqref{eq_symm}, we rewrite the integral on $\R$:
  \EQQS{
    J_t^{(3)}
    &=\la\sum_{N}\om_N^2 N^{2s}
     \RE\int_0^t\int_\R \Pi_N(u,\bar{u}) \RE(u\bar{\psi})dxdt'\\
    &=\la\sum_{N}\sum_{N_1,\dots, N_4}\om_N^2 N^{2s}
     \RE\int_0^t\int_\R \Pi_N(P_{N_1}u,P_{N_2} \bar{u}) \RE(P_{N_3} uP_{N_4}\bar{\psi})dxdt',
  }
  where $\Pi_N(\cdot,\cdot)$ is defined in \eqref{def_pi}.
  By symmetry, we may assume that $N_1\ge N_2$.

  \noindent
  \underline{Case 1: $N_4\gts N_3$.}
  In this case, we can distribute one derivative to $\psi$ after using Lemma \ref{lem_comm1}:
  \EQQS{
    \|\p_x (\RE(P_{N_3}uP_{N_4}\bar{\psi}))\|_{L_x^\I}
    \les N_4\|P_{N_3}u\|_{L_x^\I}\|P_{N_4}\psi\|_{L_x^\I}.
  }
  So, it is easy to see that $|J_t^{(3)}|\les T K^2 \|u\|_{L_T^\I H_\om^s}^2$.

  \noindent
  \underline{Case 2: $N_2\ge N_3\gg N_4$.}
  We see from Lemma \ref{lem_Bourgain3}, \eqref{eq2.1} and the Young inequality that $|J_t^{(3)}|
  \les T^{1/4}C(K) \|u\|_{Z_{\om,T}^s} \|u\|_{L_T^\I H_\om^{s}}$.

  \noindent
  \underline{Case 3: $N_3\ge N_2\gg N_4$.}
  Similarly to Case 2, Lemma \ref{lem_Bourgain4}, \eqref{estXregular} and the Young inequality show that
  $|J_t^{(3)}|
  \les T^{1/4}C(K)\|u\|_{Z_{\om,T}^s} \|u\|_{L_T^\I H_\om^{s}}$.

  \noindent
  \underline{Case 4: $N_3\ge N_4\gts N_2$.}
  In this case, we have $N_1\sim N_3$, and we can distribute derivatives on $\psi$.
  It is easy to see that
  \EQQS{
    |J_t^{(3)}|
    &\les \sum_{N_1\sim N_3\ge N_4\gts N_2}
     \om_{N_1}^2 N_1^{2s}N_2 \int_0^t
     \|P_{N_1}u\|_{L_x^2}\|P_{N_2}u\|_{L_x^\I}
     \|P_{N_3}u\|_{L_x^2}\|P_{N_4}\psi\|_{L_x^\I}dt'\\
    &\les T K^2 \|u\|_{L_T^\I H_\om^s}^2
  }
  since $N_1 N_2^{2s}\le N_1^{2s}N_2$ and $\om_{N_2}\les \om_{N_1}$.

  \noindent
  \textbf{Estimate for $J_t^{(5)}$.}
  As in \eqref{eq_symm}, we rewrite $J_t^{(5)}$:
  \EQQS{
    J_t^{(5)}
    =2\la \sum_{N}\sum_{N_1,\dots, N_4}
     \om_N^2 N^{2s}\RE
     \int_0^t \int_\R \Pi_N(P_{N_1}u, P_{N_2}\bar{u})
     P_{N_3}\psi P_{N_4} \bar{\psi} dxdt'.
  }
  By symmetry, we may assume that $N_1\ge N_2$. Without loss of generality, we also assume that $N_3\ge N_4$ since where the complex conjugate is has no rule in the following argument.

  \noindent
  \underline{Case 1: $N_1\sim N_2\gts N_3\ge N_4$.}
  In this case, we note that $N_1\sim N_2\sim N$. We only consider the case $N\gg 1$.
  By Lemma \ref{lem_comm1}, we have
  \EQQS{
    |J_t^{(5)}|
    &\les \sum_{N_1\sim N_2\gts N_3\ge N_4}\om_{N_1}^2 N_1^{2s} \|P_{N_1}u\|_{L_{T,x}^2}\|P_{N_2}u\|_{L_{T,x}^2}
    \|P_{N_3}\p_x \psi\|_{L_{T,x}^\I}
    \|P_{N_4}\psi\|_{L_{T,x}^\I}\\
    &\les T K^2 \|u\|_{L_T^\I H_\om^s}^2
  }
  since $J_x^{s+1+\e}\psi\in L^\I(\R^2)$.

  \noindent
  \underline{Case 2: $N_1\sim N_3\gts N_2\vee N_4$.}
  The H\"older inequality, we have
  \EQQS{
    |J_t^{(5)}|
    &\les \sum_{N_1\sim N_3\gts N_2\vee N_4}\om_{N_1}^2 N_1^{2s+1}\|P_{N_1}u\|_{L_{T,x}^2}\|P_{N_2}u\|_{L_{T,x}^2}
    \|P_{N_3}\psi\|_{L_{T,x}^\I}
    \|P_{N_4}\psi\|_{L_{T,x}^\I}\\
    &\les T K \|u\|_{L_T^\I H_x^{0+}} \sum_{N_1\sim N_3}
     \om_{N_1}^2 N_1^{2s+1}\|P_{N_1}u\|_{L_{T}^\I L_x^2}
     \|P_{N_3}\psi\|_{L_{T,x}^\I}
    \les TK^2 \|u\|_{L_T^\I H_\om^s}^2.
  }
  In the first inequality, we used
  $\sum_{N\les N_1}N^{2s}\les N_1^{2s}$.

  \noindent
  \underline{Case 3: $N_3\sim N_4\gts N_1\ge N_2$.}
  This case can be treated by the same way as Case 1 to get
  $|J_t^{(5)}|\les TK^2 \|u\|_{L_T^\I H_\om^s}^2$.

   \noindent
   \textbf{Estimate for $J_t^{(6)}$.}
   First we take the Littlewood-Paley decomposition of each function:
   \EQQS{
    J_t^{(6)}
    =\mu \sum_N \sum_{N_1,\dots,N_4}
     \om_N^2 N^{2s}\RE
     \int_0^t \int_\R
     \Pi_N(P_{N_1}\bar{u},P_{N_2}\bar{u})
     P_{N_3}u P_{N_4} u dxdt'.
   }
   By the symmetrization argument, we may assume that $N_1\ge N_2$ and $N_3\ge N_4$.

   \noindent
   \underline{Case 1: $N_1\sim N_2\gts N_3\ge N_4$.}
   This case can be treated as in Case 1 in the estimates for $J_t^{(1)}$.
   When $N_3\sim N_4$, we first use Lemma \ref{lem_comm1}, and then employ Proposition \ref{prop_stri1} to $P_{N_3}u$ and $P_{N_4}u$.
   On the other hand, when $N_3\gg N_4$, we use Lemma \ref{lem_Bourgain1} to get
   $|J_t^{(6)}|\les T^{1/4}C(K)\|u\|_{L_T^\I H_\om^s}^2+T^{1/4}\|\Psi\|_{L_T^\I H_x^s}^2$.

  \noindent
  \underline{Case 2: $N_1\sim N_3\gts N_2\vee N_4$.}
  This case can be treated by the same way as Case 2 in the estimates for $J_t^{(1)}$ with \eqref{eq_I2.1}.

  \noindent
  \underline{Case 3: $N_3\sim N_4\gts N_1\ge N_2$.}
  This case is identical to Case 3 in the estimates for $J_t^{(1)}$. We can obtain $|J_t^{(6)}|\les TC(K)\|u\|_{L_T^\I H_{\om}^s}^2$.

  \noindent
  \textbf{Estimate for $J_t^{(8)}$.}
  By decomposing each function, we have
  \EQQS{
    J_t^{(8)}
    =\mu \sum_N \sum_{N_1,\dots,N_4}\om_N^2 N^{2s}\RE
     \int_0^t \int_\R \Pi_N(P_{N_1}\bar{u},P_{N_2}\bar{u})P_{N_3}u
     P_{N_4}\psi dxdt'.
  }
  By the symmetrizations argument, we may assume that $N_1\ge N_2$.
  Then, we can use the same argument as the estimate for $J_t^{(3)}$ with Lemmas \ref{lem_Bourgain3} and \ref{lem_Bourgain4}.

  \noindent
  \textbf{Estimate for $J_t^{(10)}$.}
  This case is identical with the estimate for $J_t^{(5)}$.
  We can use the same argument if we replace $|\psi|^2, \p_x u$ by $\psi^2,\p_x \bar{u}$, respectively.

  Collecting above estimates, we obtain \eqref{P} by Lemma \ref{extensionlem}, which completes the proof.
\end{proof}

\section{Estimate for the Difference in the Regular Case}
\label{regular}

In this section, we deduce the estimate for the difference in the regular case.
Since the difference equation has fewer symmetries than the original equation, we have to derive such estimates at a lower regularity than those of the solution.
We refer to the regular case as the situation where we can estimate the difference of solutions at least one regularity level below that of the solutions.
This will hold in two different settings: when $ s\ge 1 $ without any assumption on $ (\lambda,\mu) $, and when $ s>3/4 $ under the assumption $ \lambda=2\mu $ (the divergence form case).
In this section, we prove a Lipschitz bound for the difference of solutions in $ H^{s-1}$ for $ s\in (3/4,1]  $ under the assumption $ \lambda=2\mu $, and in $ L^2$ for $ s\ge 1 $, without any restriction on $ (\lambda,\mu) $.
\begin{lem}\label{LemZ1}
  Let $0<T<1$.
  Let $u_1$ and $u_2$ be two solutions of \eqref{eq2} on $[0,T]$ belonging to $ L^\infty_T H_x^s $ with $ s>0 $.
  Assume that there exists $K_s>0$ such that
  \EQQS{
    \|u_1\|_{L_T^\I H_x^s}+\|u_2\|_{L_T^\I H_x^s}+\|J_x\psi\|_{L_{T,x}^\I}
    \le K_s.
  }
  If $ s\in (3/4,1]$ and $ \lambda=2\mu $, then $ w=u_1-u_2 $ satisfies
  \begin{equation}\label{estZ1}
    \|w\|_{Z^{s-1}_T}
    \le C(K_s) \|w\|_{L^\infty_T H_x^{s-1}}
  \end{equation}
  and for any $ N\ge 1 $,
  \EQS{\label{estwLinfi1}
    \|P_N w\|_{L^4_T L^\infty_x}
    \le C(K_s) N^{\frac{5}{4}-s}
    \|w\|_{L_T^\I H_x^{s-1}},
  }
  whereas if $ s\ge 1 $, then for any $(\lambda,\mu) $, $w$ satisfies
  \begin{equation}\label{estZ2}
    \|w\|_{Z^{0}_T}
    \le C(K_1) \|w\|_{L^\infty_T L_x^2}
  \end{equation}
  and for any $ N\ge 1 $,
  \EQS{\label{estwLinfi2}
    \|P_N w\|_{L^4_T L^\infty_x}
    \le C(K_1) N^{\frac{1}{4}}
      \|w\|_{L_T^\I L^2_x}.
  }
\end{lem}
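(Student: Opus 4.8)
The plan is to subtract the two equations satisfied by $u_1$ and $u_2$ and run an argument parallel to Lemma \ref{lem47}, but at a regularity level one unit (resp.\ one-half unit) below that of the solutions, using the Bourgain-space linear estimates combined with the bilinear product estimates \eqref{eq2.3} and Corollary \ref{cor26}. Write $w=u_1-u_2$; then $w$ solves $i\p_t w+\p_x^2 w=F(u_1,\psi)-F(u_2,\psi)$, and each term of $F(u_1,\psi)-F(u_2,\psi)$ is trilinear (or bilinear, or linear) in $(u_1,u_2,\psi)$ and contains exactly one factor of $w$, obtained by telescoping, e.g.\ $|u_1|^2\p_x u_1-|u_2|^2\p_x u_2 = |u_1|^2\p_x w + (u_1\bar w+\bar u_2 w\cdot\text{(copy)})\p_x u_2$, and similarly for the other nine terms in \eqref{def_F}. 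Since $w$ is the low-regularity object, the key point is always to arrange that the derivative $\p_x$ does \emph{not} fall on $w$; when it falls on $u_j$ (which lives in $H^s$ with $s>3/4$) or on $\psi$ (which lies in $J_x^{s+1+\e}\psi\in L^\infty$), there is enough regularity to spare.

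Concretely, for $\lambda=2\mu$ I would estimate $\|w\|_{X_T^{s-2,1}}$ by the standard inhomogeneous linear estimate in Bourgain spaces, reducing matters to $\|F(u_1,\psi)-F(u_2,\psi)\|_{L^2_T H_x^{s-2}}$; here the divergence structure $i\mu\p_x(|v|^2v)$ lets one integrate the derivative off by hand, i.e.\ $|u_1|^2\p_x u_1-|u_2|^2\p_x u_2$ is (up to lower order) $\p_x$ of a cubic term, so one estimates that cubic term in $H^{s-1}$ using \eqref{eq2.3} with $s_1=s_2=s$, $s_3=s-1$ (valid since $s-1<2s-1/2$), and for the terms involving $\psi$ one uses Corollary \ref{cor26} exactly as indicated in the remark after it, i.e.\ $\||\psi|^2 w\|_{H^{s-1}}\lesssim \|J_x\psi\|_{L^\infty}^2\|w\|_{H^{s-1}}$ with $s_2=s_3=s-1$. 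Combining with \eqref{eq2.2} for the mixed $u\psi$ terms and \eqref{eq2.2.1} where a derivative lands on $\psi$ gives \eqref{estZ1} after invoking the extension Lemma \ref{extensionlem}. The pointwise bound \eqref{estwLinfi1} then follows by applying Proposition \ref{prop_stri1} to $P_N w$ with $(p,q)=(\infty,4)$ and $\tilde q=4$: one gets $\|P_N w\|_{L^4_T L^\infty_x}\lesssim N^{1/4}\|P_N w\|_{L^\infty_T L^2_x}+N^{-3/4}\|P_N(F(u_1,\psi)-F(u_2,\psi))\|_{L^2_T L^2_x}$, and since $F(u_1,\psi)-F(u_2,\psi)\in L^2_T H^{s-2}_x$ with bound $C(K_s)\|w\|_{L^\infty_T H^{s-1}}$, the second term contributes $N^{-3/4}\cdot N^{2-s}\|w\|_{L^\infty_T H^{s-1}}=N^{5/4-s}\|w\|$, which dominates; this yields \eqref{estwLinfi1}.

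For $s\ge 1$ and arbitrary $(\lambda,\mu)$ the argument is the same with $H^{s-1}$ replaced by $L^2$: estimate $\|w\|_{X^{-1,1}_T}$ via the linear estimate and $\|F(u_1,\psi)-F(u_2,\psi)\|_{L^2_T H^{-1}_x}$, where now the telescoped trilinear terms are handled by \eqref{eq2.3} with $s_1=s_2=1$, $s_3=-1$ (since $-1<3/2$) — crucially here no divergence structure is needed because $s-1\ge 0$ means the factor multiplying $\p_x w$ lies in $H^1\subset L^\infty$, so one can integrate by parts at the level of the energy estimate or simply bound $\|(u_1\bar u_2)\p_x w\|_{H^{-1}}\lesssim\|u_1\bar u_2\|_{H^1}\|w\|_{L^2}$ after moving the derivative. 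The $\psi$-terms are treated as before using Corollary \ref{cor26} and \eqref{eq2.2.1}. Then \eqref{estwLinfi2} follows from Proposition \ref{prop_stri1} with $(p,q)=(\infty,4)$ as above: $\|P_N w\|_{L^4_T L^\infty_x}\lesssim N^{1/4}\|w\|_{L^\infty_T L^2_x}+N^{-3/4}\|P_N(F(u_1,\psi)-F(u_2,\psi))\|_{L^2_TL^2_x}$, and since the source term is in $L^2_T H^{-1}_x$ with the right bound, the second term is $O(N^{-3/4}\cdot N\cdot\|w\|)=O(N^{1/4}\|w\|)$, matching the first.

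The main obstacle is bookkeeping rather than a single hard estimate: one must check for every one of the (roughly thirty) telescoped cubic/bilinear/linear terms that, after the one factor of $w$ is isolated, the remaining factors carry enough regularity to absorb the derivative and to satisfy the hypotheses of \eqref{eq2.3} or Corollary \ref{cor26} — in particular the strict inequality $s_3<s_1+s_2-1/2$, which is exactly why $s>3/4$ is needed in the divergence case (there $s_1+s_2-1/2=2s-1/2>s-1$) and why the plain $s\ge 1$ threshold suffices otherwise. The term $u\bar u\,\p_x w$ with the derivative on $w$, flagged in the introduction as the source of the genuine difficulty below $H^1$, is precisely the one that is \emph{not} controllable in $H^{s-1}$ when $s<1$ and $\lambda\ne 2\mu$ — here it is either absent (because $\lambda=2\mu$ puts everything in divergence form) or harmless (because $s\ge 1$), so in this lemma one never has to confront it; that confrontation is deferred to the modified-energy argument of Section \ref{nonregular}.
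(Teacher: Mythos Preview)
Your proposal is correct and follows essentially the same route as the paper: write the equation for $w$ in divergence form when $\lambda=2\mu$, bound the cubic/bilinear terms inside the divergence in $H^{s-1}$ via the product Lemmas \ref{Lem24}--\ref{Lem25} and Corollary \ref{cor26} (the paper uses exactly $s_1=1/2$, $s_2=s_3=s-1$ for the $|\psi|^2 w$ term), conclude \eqref{estZ1} by linear Bourgain-space estimates, and then feed this into Proposition \ref{prop_stri1} with $(p,q,\tilde q)=(\infty,4,4)$ for \eqref{estwLinfi1}; for $s\ge 1$ the non-divergence pieces are rewritten as $a\,\p_x\bar w=\p_x(a\bar w)-(\p_x a)\bar w$ and bounded in $H^{-1}$ using $L^1\hookrightarrow H^{-1}$, exactly as you outline. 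One small inaccuracy in your commentary: the product-estimate constraint $s_3<s_1+s_2-1/2$ in fact only forces $s>1/2$ here (the paper's proof of \eqref{tr1} explicitly works for $1/2<s\le 1$); the threshold $s>3/4$ enters later, in the refined-Strichartz part of the energy argument in Proposition \ref{prop_dif1}, not in this lemma.
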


\begin{proof}
We notice that \eqref{eq1} may be rewritten as
\EQQS{
  i \p_t v+\p_x^2 v
  =\frac{i\la}{2}\partial_x (|v|^2 v)
   + \frac{i(2\mu-\la)}{2} v^2\p_x \ov{v},
}
and thus, setting $ z=u_1+u_2$,  $w=u_1-u_2$ satisfies the following equation:
\EQS{\label{eq_w}
  \begin{aligned}
    \p_t w
    &=i\p_x^2 w
     +\frac{\la}{2} \p_x \Bigl( |u_1|^2 w+u_2\RE(z\bar{w})
     + \psi \RE(z\bar{w})
     +2u_1 \RE(\psi \bar{w})\\
    &\quad\quad\quad\quad\quad\quad\quad
     +2w \RE(\psi \bar{u}_2)
     +2 \psi\RE(\psi \bar{w})
     +|\psi|^2w\Bigr)\\
    &\quad+\frac{2\mu-\la}{2}\Bigl(u_1^2\p_x \bar{w}
     +zw\p_x \bar{u}_2
     +wz\p_x \bar{\psi}
     +2w\psi \p_x \bar{u}_2\\
    &\quad\quad\quad\quad\quad\quad\quad
     +2u_1\psi\p_x \bar{w}
     +2w\psi\p_x \bar{\psi}
     +\psi^2 \p_x \bar{w}\Bigr)\\
    &=:i\p_x^2 w
     +\frac{\la}{2}\p_x \Bigl(  \sum_{j=1}^7 \tilde{g}_j(u_1,u_2)\Bigr)
     +\frac{2\mu-\la}{2}\sum_{j=8}^{14} g_j(u_1,u_2) \; .
    \end{aligned}
  }
By Lemmas \ref{Lem24} and \ref{Lem25}, we can check that for $ 1/2<s\le 1  $,
\EQS{\label{tr1}
  \bigg\| \sum_{j=1}^7 \tilde{g}_j(u_1,u_2)\bigg\|_{H_x^{s-1}}
  \le C(K_s)\|w\|_{H_x^{s-1}}.
}
Here, we applied \eqref{eq_2.3apl4} to $\tilde{g}_7$ with $s_1=1/2$ and $s_2=s_3=s-1$.
This proves \eqref{estZ1} by standard linear estimates in Bourgain spaces (see, e.g., \cite{GTV97}).
To prove \eqref{estZ2}, it now remains to evaluate the contributions of $ g_j$ for $ j\in \{8,\dots,14\}$.
For that purpose, we first notice that
$\|g_{13}(u_1,u_2)\|_{H^{-1}}\les \|g_{13}(u_1,u_2)\|_{L^2}\les  \|J_x\psi\|_{L^{\infty}}^2 \|w\|_{L^2} $ and
\EQQS{
  \bigg\|\sum_{j=9}^{11} g_j(u_1,u_2)\bigg\|_{H_x^{-1}}
  \les \bigg\|\sum_{j=9}^{11} g_j(u_1,u_2)\bigg\|_{L_x^1}
  \le C(K_1)\|w\|_{L_x^2}.
}
On the other hand, rewriting $g_8(u_1,u_2)$ as $ g_8(u_1,u_2)= \partial_x(u_1^2  \bar{w})-\partial_x(u_1^2) \bar{w} $ we get
\EQQS{
  \|g_8(u_1,u_2)\|_{H_x^{-1}}
  \les \|u_1^2 \bar{w}\|_{L_x^2}
   +\|\partial_x(u_1^2) \bar{w} \|_{L_x^1}
  \les \|u_1\|_{H_x^1}^2 \|w\|_{L_x^2}.
}
Treating $ g_{12}(u_1,u_2) $ and $ g_{14}(u_1,u_2) $ in the same way we eventually get
\EQS{\label{tr2}
  \bigg\|\sum_{j=8}^{14} g_j(u_1,u_2)\bigg\|_{H_x^{-1}}
  \le C(K_1)\|w\|_{L_x^2},
}
which, together with \eqref{tr1}, leads to \eqref{estZ2} by standard linear estimates in Bourgain spaces.

Next, to prove \eqref{estwLinfi1}, we rely on Proposition \ref{prop_stri1}.
By taking $\tilde{q}=q=4$ and $p=\I$, we infer that
\EQQS{
  \|P_N w\|_{L^4_T L^\infty_x}
  &\les N^{\frac{1}{4}}\|P_N w \|_{L^\infty_T L^2_x}
    + N^{-\frac{3}{4}}
    \bigg\| \partial_x P_N \sum_{j=1}^6 \tilde{g}_j(u_1,u_2)\bigg\|_{L^\infty_T L^2_x} \\
  &\les N^{\frac{5}{4}-s} \bigg( \|P_N w \|_{L^\infty_T H^{s-1}}
   +\bigg\|\sum_{j=1}^6 \tilde{g}_j(u_1,u_2)\bigg\|_{L^\infty_T H_x^{s-1}}\bigg)
}
that leads to \eqref{estwLinfi1} thanks to \eqref{tr1}.
Finally, \eqref{estwLinfi2} follows in the same way by making use of \eqref{tr2}.
This completes the proof.
\end{proof}

\begin{prop}\label{prop_dif1}
  Let $0<T<1$ and $ s>0$.
  Let $u_1$ and $u_2$ be two solutions of \eqref{eq2} on $[0,T]$ belonging to $Z_T^s$ and emanating from $u_{0,1}\in H^s(\R)$ and $u_{0,2}\in H^s(\R)$, respectively.
  Let $K_s\ge 1$ such that
  \EQQS{
    \|u_1\|_{Z_T^{s\wedge 1}}+\|u_2\|_{Z_T^{s\wedge 1}}+\|\p_t \psi\|_{L_{t,x}^\I}+\|J_x^{1+\e}\psi\|_{L_{t,x}^\I}
      + \|\Psi\|_{L_t^\I H^{\e}_x} \le K_s,
  }
  where $\Psi$ is defined in \eqref{def_psi}.
  If $ s\in (3/4,1]$ and $ \lambda=2\mu $, then $w=u_1-u_2$ satisfies
   \EQS{\label{eq_dif1}
    \|w\|_{L_T^\I  H_x^{s-1}}^2
    \le \|u_{0,1}-u_{0,2}\|_{H_x^{s-1}}^2 \
    + C(K_s) T^{1/4}\|w\|_{Z_{T}^{s-1}}
    \|w\|_{L_T^\I H_x^{s-1}},
  }
  whereas if $ s\ge 1 $, then for any $(\lambda,\mu)\in \R^2$,  $w$ satisfies
  \EQS{\label{eq_dif2}
    \|w\|_{L_T^\I L_x^2}^2
    \le \|u_{0,1}-u_{0,2}\|_{L_x^2}^2 + C(K_1) T^{1/4}\|w\|_{Z_{T}^{0}}
    \|w\|_{L_T^\I L_x^2}.
  }
\end{prop}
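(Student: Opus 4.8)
The plan is to run an energy estimate on the difference equation \eqref{eq_w} at the level $H^{s-1}$ in the case $\la=2\mu$ (so the terms $g_j$, $j\ge 8$, are absent) and at the level $L^2$ in the case $s\ge1$, following the same scheme as in the proof of Proposition \ref{prop_apri}. First I would localize in frequency: writing $w=u_1-u_2$, multiply \eqref{eq_w} by $P_N^2\bar w$, take real parts, integrate over $[0,t]\times\R$, multiply by $N^{2(s-1)}$ (resp. $N^0$) and sum over dyadic $N$, to obtain
\EQQS{
  \|w(t)\|_{H_x^{s-1}}^2 \le \|u_{0,1}-u_{0,2}\|_{H_x^{s-1}}^2 + \sum_j |\mathcal J_t^{(j)}|,
}
where $\mathcal J_t^{(j)}=\la\RE\sum_N N^{2(s-1)}\int_0^t\int_\R \p_x\ti g_j(u_1,u_2)\,P_N^2\bar w\,dx\,dt'$ (resp. with the exponent $0$ and including the $g_j$). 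The terms carrying the derivative harmlessly onto $\psi$ or onto a factor other than $w$ — e.g. $\ti g_2,\dots,\ti g_6$ in the divergence-form case, and $g_9,\dots,g_{14}$ in the $s\ge1$ case — are estimated directly by the product estimates \eqref{tr1}, \eqref{tr2} together with the Cauchy–Schwarz inequality in $N$, yielding a bound of the form $T\,C(K_s)\|w\|_{L_T^\I H_x^{s-1}}^2$, which is acceptable (and in fact stronger than the claimed $T^{1/4}$-bound).

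The genuinely difficult term is $\ti g_1(u_1,u_2)=|u_1|^2 w$, i.e. the contribution $\la\RE\sum_N N^{2(s-1)}\int_0^t\int_\R \p_x(|u_1|^2 w)\,P_N^2\bar w\,dx\,dt'$, because the derivative can land on a high-frequency factor and there is no $L^2$-conservation-type symmetry for the difference. Here I would first symmetrize as in \eqref{eq_symm}: combining the term with its complex conjugate (using $g_8=\p_x(u_1^2\bar w)-\p_x(u_1^2)\bar w$ is not available when $\la=2\mu$, so instead I write $|u_1|^2\p_x w + |u_1|^2\p_x\bar w$ type combinations), the worst piece becomes $\RE\sum_N N^{2(s-1)}\int\Pi_N(w,\bar w)\,|u_1|^2$, to which Lemma \ref{lem_comm1} applies in the resonant/output-low regime, shifting the derivative off the high-frequency pair. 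Then I distinguish cases by the relative sizes $N_1,\dots,N_4$ of the four frequencies exactly as in Proposition \ref{prop_apri}: in the nonresonant regimes ($N_1\sim N_2$ with one high derivative, or $N_1\sim N_3$) I would invoke the Bourgain-type bilinear-in-$Q_L$ estimates — here the relevant analogues of Lemmas \ref{lem_Bourgain1}–\ref{lem_Bourgain2}, applied with two of the slots being $w$ and two being $u_1$ (so that $\|w\|_{X^{s-2,1}}$ and $\|u_1\|_{Z^{s\wedge1}}$ appear) — combined with \eqref{estZ1} and the extension Lemma \ref{extensionlem}, to produce the gain $T^{1/4}$ and close at $\|w\|_{Z_T^{s-1}}\|w\|_{L_T^\I H_x^{s-1}}$; in the resonant regime $N_3\sim N_4$ I would instead use the refined Strichartz estimate of Proposition \ref{prop_stri1} applied to $P_N w$ — precisely the content of \eqref{estwLinfi1}, which provides $\|P_N w\|_{L_T^4 L_x^\infty}\lesssim C(K_s)N^{5/4-s}\|w\|_{L_T^\I H_x^{s-1}}$ — together with $\|u_1\|_{L_T^\I H_x^{s}}$ bounds on the two low-frequency factors. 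The low$\times$high$\times$high$\to$low interaction that would otherwise obstruct working at level $H^{s-1}$ is exactly why one needs $\la=2\mu$ here: in that case $\ti g_1$ is the \emph{only} term with a derivative on $w$ paired against two $u_1$'s, and after symmetrization Lemma \ref{lem_comm1} removes the derivative loss.

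For the case $s\ge1$ the scheme is the same but simpler: at the $L^2$ level one has the genuine symmetry $\RE\int |u_1|^2\p_x w\,\bar w + |u_1|^2\p_x\bar w\,w = \RE\int |u_1|^2\p_x(|w|^2) = -\RE\int \p_x(|u_1|^2)|w|^2$ modulo the Littlewood–Paley localization error controlled by Lemma \ref{lem_comm1}, so $\ti g_1$ contributes only a $T\,C(K_1)\|w\|_{L_T^\I L_x^2}^2$ term; all remaining $\ti g_j$ and $g_j$ are handled by \eqref{tr1}–\eqref{tr2} and, for the pieces needing dispersion, by \eqref{estwLinfi2} via Proposition \ref{prop_stri1}, again paying $T^{1/4}$ and $\|w\|_{Z_T^0}$. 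I expect the case analysis for $\ti g_1$ (and its conjugate $g_8$ when $\la\ne2\mu$, $s\ge1$) to be the main obstacle, since it requires combining the commutator lemma, the Bourgain spaces $X^{s-2,1}$ for $w$, and the refined Strichartz estimate \eqref{estwLinfi1}; once the analogues of Lemmas \ref{lem_Bourgain1}–\ref{lem_Bourgain4} are set up with the slot pattern $(w,\bar w,u_1,\bar u_1)$, the remaining bookkeeping is routine and the smallness factor $T^{1/4}$ comes out uniformly.
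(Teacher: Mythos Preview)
Your overall scheme is right, but there is a genuine gap in how you triage the terms. You claim that $\tilde g_2,\dots,\tilde g_6$ (and $g_9,\dots,g_{14}$) ``carry the derivative harmlessly'' and can be bounded directly by \eqref{tr1}--\eqref{tr2}. This is false: every $J_t^{(j)}$ has the form $\sum_N N^{2(s-1)}\int \tilde g_j\,\partial_x P_N^2\bar w$, and the outer $\partial_x$ (equivalently the factor $N$) lands on the high-frequency $P_N w$. Cauchy--Schwarz together with \eqref{tr1} would require $\|\tilde g_j\|_{H^s}$, not $\|\tilde g_j\|_{H^{s-1}}$, so the estimate does not close. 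The product bound \eqref{tr1} only disposes of the low-frequency sum $N\lesssim 1$ (see \eqref{estlowN} in the paper); for $N\gg 1$ \emph{every} $J_t^{(j)}$, $j=1,\dots,7$, requires the full frequency case analysis with the Bourgain-type lemmas and the refined Strichartz bounds \eqref{estwLinfi1}--\eqref{estL1}. The paper carries this out explicitly for $J_t^{(2)}$ and $J_t^{(3)}$ (the others being analogous), and likewise for $J_t^{(8)}$, $J_t^{(9)}$, $J_t^{(11)}$ in the $s\ge 1$ part.

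A second, smaller issue: your symmetrization of $\tilde g_1$ via $\Pi_N(w,\bar w)|u_1|^2$ is not how the paper handles the generic case. The paper decomposes $|u_1|^2 w\,\partial_x P_N^2\bar w$ directly into $\RE(P_{N_1}u_1 P_{N_2}\bar u_1)\,P_{N_3}w\,\partial_x P_N^2 P_{N_4}\bar w$ and only invokes the $\tilde\Pi_N(w,\bar w)$ structure in the specific subcase $N_1\vee N_2\ll N$ (i.e.\ $N_3\sim N_4\sim N$). In the other cases it relies on the non-resonance relations \eqref{eq_res1}--\eqref{eq_res2} combined with Lemma \ref{lem_Bourgain2}, and on the product of Strichartz bounds \eqref{estL1} for $P_{N_2}u_1$ and \eqref{estwLinfi1} for $P_{N_3}w$ --- not Strichartz on two $u_1$ factors as you suggest. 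Your $s\ge 1$ heuristic for $\tilde g_1$ via $\RE\int |u_1|^2\partial_x|w|^2$ is appealing but does not survive the $P_N^2$ localization cleanly; you still need Lemma \ref{lem_comm1} and the case analysis.
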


\begin{proof}
We will make use of the equation \eqref{eq_w} satisfied by $ w$.
As in the preceding lemma, we first prove \eqref{eq_dif1} that enables at the same time to estimate the contribution of $ \tilde{g}_j $, $j\in \{1,\dots,7\} $ in  \eqref{eq_dif2}.
By using \eqref{eq_w}, when $\lambda=2\mu $, we have
\EQQS{
  \frac{d}{dt}\|P_N w(t,\cdot)\|_{L_x^2}^2
  =\la\sum_{j=1}^{7} \RE\int_\R \p_x P_N \tilde{g}_j(u_1,u_2) P_N \bar{w}dx.
}
In the sequel, we will frequently use the fact that Bernstein inequalities together with \eqref{eq_2.3apl1} and Proposition \ref{prop_stri1} with $\tilde{q}=q=4 $ and $ p=\infty $ leads for any $ N\ge 1 $ to
\EQS{\label{estL1}
  \|P_N u_j\|_{L^4_T L^\infty_x}
  \le C(K_s) N^{\frac{1}{4}-s},
}
where $j=1,2$.
Recall that we fix $ s\in (3/4,1] $.
Fixing $ t\in [0,T] $, multiplying by $N^{2(s-1)}$, integrating in time between $0$ and $t$ and summing over $N$ yield
\EQS{\label{PP2}
  \begin{aligned}
    \|w(t)\|_{H_x^{s-1}}^2
    &\le  \|u_{0,1}-u_{0,2}\|_{H_x^{s-1}}^2
    +|\la|\sum_{j=1}^{7}|J_t^{(j)}|,
  \end{aligned}
  }
where
\EQQS{
  J_t^{(j)}
  := \sum_{N\ge 1} N^{2(s-1)}
    \RE \int_0^t\int_\R \tilde{g}_j(u_1,u_2) P_N^2 \p_x \bar{w} dxdt'.
}
First thanks to \eqref{tr1} we see that the contribution of the sum over $ 1\le N\les 1 $ of $\sum_{j=1}^{7}|J_t^{(j)}| $ can be easily estimated by
\EQS{\label{estlowN}
  \sum_{1\le N\les 1} N  \bigg\|\sum_{j=1}^7 \tilde{g}_j(u_1,u_2) \bigg\|_{H_x^{s-1}} \|w\|_{H_x^{s-1}}
  \le C(K_s) \|w\|_{H_x^{s-1}}^2.
}
Now, to control the contribution of the sum over $ N\gg 1$, as in the proof of Proposition \ref{prop_apri}, we proceed by treating each $J_t^{(j)}$ for $j=1,\dots,7$ in \eqref{PP}.
We employ the non-homogeneous Littlewood-Paley decomposition to each $u_1$, $u_2$ and $\psi$, for example $u_1=\sum_{N_1\ge 1}P_{N_1}u_1$.
In the sequel, $ (\delta_{2^j})_{j\in\N} $ denotes any sequence in $ \ell^2(\N) $ with $\|(\delta_{2^j})\|_{\ell^2(\N)}\le 1$.

\noindent
\textbf{Estimate for $J_t^{(1)}$.}
We eventually get
\EQQS{
  J_t^{(1)}
  \les\sum_{N\gg 1} N^{2(s-1)}\Bigl|\sum_{\substack{N_1,N_2,N_3\ge 1,\\N_4\sim N}}
   \RE\int_0^t \int_{\R} \RE(P_{N_1} u_1P_{N_2} \bar{u}_1) P_{N_3}w  \partial_x P_N^2 P_{N_4} \bar{w}dxdt' \Bigr|.
}
\underline{Case 1: $N_1\vee N_2\gts N$.}
By symmetry we may assume $N_1 \ge N_2$.
Then we must have $ N_1 \gts N_3 $.
We notice that the contribution of the sum over $1\le N_3\les 1$ can be easily estimated by making use of discrete Young's inequality and Sobolev embeddings
\EQS{
  \begin{aligned}\label{tou1}
    |J_t^{(1)}|
    &\les \sum_{N_1\gts N\gg 1} N^{2s-1}
    \int_0^t \|P_{N_1}u_1\|_{L_x^2}
     \|P_{\les N_1}u_1\|_{L_x^\I}
     \|P_{\les 1}w\|_{L_x^\I}
     \|P_{N}w\|_{L_x^2}dt'\\
    &\les\sum_{N_1\gts N\gg 1}
     \Bigl(\frac{N}{N_1}\Bigr)^{s}\delta_N \delta_{N_1}
     \int_0^t \|u_1\|_{H_x^s}
     \|u_1\|_{H_x^{\frac{1}{2}+}}\|w\|_{H_x^{s-1}}
     \|w\|_{H_x^{s-1}}dt' \\
    &\les T \|u_1\|_{L^\I_T H_x^s}^2 \|w\|_{L^\I_T H_x^{s-1}}^2.
  \end{aligned}
}

\noindent
\underline{Subcase 1.1: $N_2\gts N_3\gg 1$.}
Then we make use again of discrete Young's inequality but this time together with \eqref{estwLinfi1} and \eqref{estL1} to get for $ 3/4<s\le 1 $,
\EQQS{
  |J_t^{(1)}|
  &\les \sum_{\substack{N_1\gts N\gg 1,\\1\ll N_3 \les N_2\le N_1}}
   \Bigl(\frac{N}{N_1}\Bigr)^{s}\delta_N\delta_{N_1}
   \int_0^t \|u_1\|_{H_x^s}
   \|P_{N_2}u_1\|_{L_x^\infty}\|P_{N_3}w\|_{L_x^\infty}
   \|w\|_{H_x^{s-1}}dt'\\
  &\les T^{1/2} K_s \|w\|_{L^\infty_T H_x^{s-1}}
   \sum_{\substack{N_1\gts N\gg 1,\\1\ll N_3\les N_2\le N_1}}
   \Bigl(\frac{N}{N_1}\Bigr)^{s} \delta_N \delta_{N_1}
   \| P_{N_2}u_1\|_{L^4_T L_x^\I}
   \| P_{N_3}w\|_{L^4_T L^\infty_x}\\
  &\le C(K_s) T^{1/2} \|w\|_{L^\infty_T H_x^{s-1}}^2
   \sum_{1\ll N_3\les N_2}
   N_2^{\frac{1}{4}-s} N_3^{\frac{5}{4}-s}
  \le C(K_s) T^{1/2} \|w\|_{L_T^\I H_x^{s-1}}^2.
  }

\noindent
\underline{Subcase 1.2: $1\le N_2\ll N_3$.}
When $N\gts N_3$ or $N_3\gg N\gg N_2$, Lemma \ref{lem_res1} ensures that
$ |\Om |\sim  N_1 (N\wedge N_3) $, and \eqref{eq_I2.1} of Lemma \ref{lem_Bourgain2} leads to
\EQQS{
  |J_t^{(1)}|
  \les T^{1/4}\|u_1\|_{Z^s_T}^2 \|w\|_{Z^{s-1}_T}
  \|w\|_{L^\I_T H_x^{s-1}}.
}
On the other hand, when $N\les N_2$, as in Subcase 1.1 we get
\EQQS{
  |J_t^{(1)}|
  \les T^{1/2} K_s^2  \|w\|_{L^\infty_T H_x^{s-1}}^2.
}

\noindent
\underline{Case 2: $N_1\vee N_2\ll N$.}
Then $N_4\sim N_3 \sim N $.
Similarly to \eqref{eq_symm}, we notice that this contribution may be rewritten as
\EQQS{
  J_t^{(1)}
  :=\sum_{N\gg 1} N^{2(s-1)}
   \bigg|\sum_{\substack{1\le N_1,N_2\ll N,\\N_3\sim N_4\sim N}}
   \RE\int_0^t \int_{\R} \RE(P_{N_1} u_1 P_{N_2} \bar{u}_1)
   \ti{\Pi}_N(P_{N_3}w, P_{N_4}\bar{w})dxdt'\bigg|
}
with $\ti{\Pi}_N(\cdot,\cdot)$ defined in \eqref{def_pi2}.

\noindent
\underline{Subcase 2.1: $N_1\sim N_2 \ge  1$.}
Then by Lemma \ref{lem_comm1} and \eqref{estL1}, we get
\EQQS{
  |J_t^{(1)}|
  &\les T^{1/2} \|w\|_{L^\infty_T H_x^{s-1}}^2
   \sum_{ N_1\sim N_2\ge 1}
   N_1 \|P_{N_1}u_1\|_{L^4_T L^\infty_x}
   \|P_{N_2}u_1\|_{L^4_T L^\infty_x}\\
  &\les T^{1/2} K^2 \|w\|_{L^\infty_T H_x^{s-1}}^2
   \sum_{N_1\ge 1} N_1^{\frac{3}{2}-2s}
  \les T^{1/2} K_s^2 \|w\|_{L^\infty_T H_x^{s-1}}^2,
}
which is acceptable for $ s>3/4$.

\noindent
\underline{Subcase 2.2: $N_1\gg N_2 $ or $N_2\gg N_1$.}
Then Lemma \ref{lem_res1} ensures that
$ |\Om |\sim  (N_1\vee N_2)N  $.
So, we see from Lemma \ref{lem_comm1} and \eqref{eq_I1} of Lemma \ref{lem_Bourgain1} that
\EQQS{
  |J_t^{(1)}|
  \les T^{1/4} \|u_1\|_{Z^s_T}^2
   \|w\|_{Z^{s-1}_T} \|w\|_{L^\infty_T H_x^{s-1}}.
}
This completes the estimate of $J_t^{(1)}$.

\noindent
\textbf{Estimate for $J_t^{(2)}$.}
We eventually get
\EQQS{
  J_t^{(2)}
  \les \sum_{N\gg 1} N^{2(s-1)}
   \Bigl|\sum_{\substack{N_1,N_2,N_3\ge 1,\\ N_4\sim N}}
   \RE\int_0^t \int_{\R} \RE(P_{N_1} z P_{N_2} \bar{w}) P_{N_3}u_2
   \p_x P_N^2 P_{N_4} \bar{w}dxdt' \Bigr|.
}

\noindent
\underline{Case 1: $N_1\vee N_3\gts N$.}
Notice that $N_1\gts N_2$.
We can assume $ N_1\ge N_3 $.
As in \eqref{tou1}, the contribution of the sum over $ 1<N_2\les 1 $ can be easily estimated as
\EQQS{
  J_t^{(2)}
  &\les \sum_{N\gg 1} N^{2(s-1)}
   \Bigl|\sum_{\substack{N_1\ge N_3\ge 1,\\N_4\sim N}}
   \RE\int_0^t \int_{\R} \RE(P_{N_1} z P_{\les 1} \bar{w})
   P_{N_3}u_2 \p_x P_N^2 P_{N_4} \bar{w} dxdt' \Bigr|\\
  &\les T \|u_2\|_{L^\infty_T H_x^s}\|z\|_{L^\infty_T H_x^s} \|w\|_{L^\infty_T H_x^{s-1}}^2.
}

\noindent
\underline{Subcase 1.1: $N_3\gts N_2\gg 1$.}
Then we can proceed exactly as in the Subcase 1.1 in the estimates for
$J_t^{(1)}$ to get for $ 3/4<s\le 1 $,
\EQQS{
  |J_t^{(2)}|
  &\les \sum_{\substack{N_1\gts N\gg 1,\\1\ll N_2 \les N_3\le N_1}} N^{2s-1}
   \int_0^t \|P_{N_1}z\|_{L_x^2}
   \|P_{N_2}w\|_{L_x^\infty}\|P_{N_3}u_2\|_{L_x^\infty}
   \|P_{N}w\|_{L_x^2}dt'\\
  &\les T^{1/2} K_s^2
   \|w\|_{L^\infty_T H_x^{s-1}}^2.
}

\noindent
\underline{Subcase 1.2: $N_3\ll N_2 $.}
We can argue in an almost parallel way to Subcase 1.2 in the estimates for $J_t^{(1)}$.
When $N\gts N_2$ or $N_2\gg N\gg N_3$, Lemma \ref{lem_res1} gives $ |\Om |\sim N_1 (N\wedge N_2)$.
On the other hand, when $N_3\gts N$, we apply \eqref{estwLinfi1} to $P_{N_3}w$ and $P_N w$.
In both cases, we obtain
\EQQS{
  |J_t^{(2)}|
  \le C(K_s) T^{1/4}\|w\|_{L^\infty_T H_x^{s-1}}^2.
}

\noindent
\underline{Case 2: $N_1\vee N_3\ll N $.}
We have $ N_2\sim N_4\sim N $.
By the same argument as in Case 2 of the estimates for $J_t^{(1)}$, we obtain
\EQQS{
  |J_t^{(2)}|
  \les T^{1/4} \|u_1\|_{Z^s_T} \|z\|_{Z^s_T}
   \|w\|_{Z^{s-1}_T} \|w\|_{L^\infty_T H_x^{s-1}},
}
which completes the estimates for $J_t^{(2)}$.

\noindent
\textbf{Estimate for $J_t^{(3)}$.}
We eventually get
\EQQS{
  J_t^{(3)}
  \les \sum_{N\gg 1} N^{2(s-1)}
   \Bigl|\sum_{\substack{N_1,N_2,N_4>1,\\N_3\sim N}}
   \RE\int_0^t \int_{\R} \RE(P_{N_1} z P_{N_2} \bar{w})
   P_{N_4}\psi \p_x P_N^2 P_{N_3}
   \bar{w}dxdt' \Bigr|.
}

\noindent
\underline{Case 1: $N_1\vee N_4\gts N$.}
The contribution of the sum over $ 1\le N_2\les 1 $ can be easily estimated by
\EQQS{
  |J_t^{(3)}|
  \les T \|z\|_{L^\infty_T H_x^s}
   \|J_x^{1+\e}\psi\|_{L_{T,x}^\I}
   \|w\|_{L^\infty_T H_x^{s-1}}^2.
}

\noindent
\underline{Subcase 1.1: $N_1\ge N_4$}.
If $ N_4\gts N_2\wedge N\gg 1 $, then we get
\EQQS{
  |J_t^{(3)}|
  \les T \|z\|_{L^\infty_T H_x^s}
   \|J_x\psi\|_{L_{T,x}^\I}
   \|w\|_{L^\infty_T H_x^{s-1}}^2
}
whereas if $ N_4\ll N_2 \wedge N $ then \eqref{eq_res1} of Lemma   \ref{lem_res1}  leads to
$ |\Om |\sim N_1(N_2 \wedge N)$.
Applying \eqref{eq_I2.1} of Lemma \ref{lem_Bourgain2}, we infer that
\EQQS{
  |J_t^{(3)}|
  \les T^{1/4} \|z\|_{Z^s_T}
   (\|\p_t \psi\|_{L^\infty_{t,x}} +\|J_x\psi\|_{L_{t,x}^\I})
   \|w\|_{Z^{s-1}_T}\|w\|_{L_T^\I H_x^{s-1}}.
}

\noindent
\underline{Subcase 1.2: $N_4\ge N_1$}.
Then, we easily get
\EQQS{
  |J_t^{(3)}|
  \les T \|z\|_{L^\I_T L^\infty_x }
   \|J_x^{1+\e}\psi\|_{L_{T,x}^\I}
   \|w\|_{L^\infty_T H_x^{s-1}}^2.
}

\noindent
\underline{Case 2: $N_1\vee N_4\ll N$.}
We have $ N_3\sim N_2\sim N $ in this case.
By almost the same argument as in Case 1 in the estimate of $J_t^{(1)}$, we obtain
\EQQS{
  |J_t^{(3)}|
  \les T^{1/4} \|z\|_{Z^s_T}
   (\|\p_t \psi\|_{L^\infty_{t,x}}+\|J_x\psi\|_{L_{t,x}^\I})
   \|w\|_{Z^{s-1}_T}
   \|w\|_{L^\infty_T H_x^{s-1}}.
}
Here, we used \eqref{eq_I3} instead of \eqref{eq_I1}.
This completes the estimates for $ J_t^{(3)}$.

\noindent
\textbf{Estimates for $J_t^{(4)}$ and$J_t^{(5)}$.}
These terms can be estimated in exactly the same way as $J_t^{(3)}$.

\noindent
\textbf{Estimate for  $J_t^{(6)}$.}
As in \eqref{eq_symm}, we rewrite $J_t^{(6)}$ as follows:
\EQQS{
  J_t^{(6)}
  &= \frac{1}{4}\sum_{N\gg 1} N^{2(s-1)}
   \Bigl|\sum_{N_1,N_2,N_3\ge1}
   \RE\int_0^t \int_{\R}  P_{N_1} (\psi^2)
   \ti{\Pi}_N(P_{N_2} \bar{w}, P_{N_3} \bar{w}) dxdt' \Bigr|\\
  &\quad+\frac{1}{4}\sum_{N\gg 1} N^{2(s-1)}
   \Bigl|\sum_{N_1,N_2,N_3\ge1}
   \RE\int_0^t \int_{\R}
   P_{N_1}(|\psi|^2)
   \ti{\Pi}_N(P_{N_2} w, P_{N_3} \bar{w}) dxdt' \Bigr|,
}
where $\ti{\Pi}_N(\cdot,\cdot)$ is defined in \eqref{def_pi2}.
By symmetry, we may assume that $N_2\ge N_3$.
We only consider the first term (which we still denote by $J_t^{(6)}$), since the second term can be handled in exactly the same way.
We can handle both terms simultaneously since we do not have to use Bourgain type estimates.

\noindent
\underline{Case 1: $ N_2\gg N_3$.}
We have $N_1\sim N_2\sim N$.
By the impossible interaction, the integrand of $J_t^{(6)}$ becomes $P_{N_1}(\psi^2)\p_x P_N^2 P_{N_2}\bar{w} P_{N_3}\bar{w}$, ensuring that
\EQQS{
  |J_t^{(6)}|
  &\les \sum_{N\sim N_1\sim N_2\gg N_3\ge1}N^{2s-1}
   \|P_{N_1}(\psi^2)\|_{L_{T,x}^\I}
   \|P_N^2 P_{N_2}w\|_{L_{T,x}^2}
   \|P_{N_3}w\|_{L_{T,x}^2}\\
  &\les T \|J_x^{1+\e}\psi\|_{L_{T,x}^\I}^2
   \|w\|_{L^\infty_T H_x^{s-1}}^2
  \les T\|J_x^{1+\e}\psi\|_{L_{T,x}^\I}^2
   \|w\|_{L^\infty_T H_x^{s-1}}^2.
}

\noindent
\underline{Case 2: $ N_2\sim N_3$.}
Note that $N\sim N_2\sim N_3\gts N_1$.
By Lemma \ref{lem_comm1}, we have
\EQQS{
  |J_t^{(6)}|
  &\les \sum_{N\sim N_2\sim N_3\gts N_1\ge1}N^{2(s-1)}
   \|\p_x P_{N_1}(\psi^2)\|_{L_{T,x}^\I}
   \|P_{N_2}w\|_{L_{T,x}^2}
   \|P_{N_3}w\|_{L_{T,x}^2}\\
  &\les T\|J_x^{1+\e}\psi\|_{L_{T,x}^\I}
   \|w\|_{L^\infty_T H_x^{s-1}}^2.
}
Here, we used \eqref{eq2.7} in the last inequality.
This completes the estimates for $J_t^{(6)}$.

\noindent
\textbf{Estimate for $J_t^{(7)}$.}
This term is identified with the second term of $J_t^{(6)}$.
So, this can be estimated in exactly the same way as above.
Gathering the above estimates for $ J_t^{(j)} $ for $ j=1,\dots,7$, and \eqref{PP2}, we obtain \eqref{eq_dif1}.

Let us now tackle the proof of \eqref{eq_dif2}.
As in \eqref{PP2}, we get
\EQS{\label{PP22}
  \begin{aligned}
    \|w(t)\|_{L^2_x}^2
    &\le \|u_0-v_0\|_{L^2_x}^2
     +|\la|\sum_{j=1}^{7}|J_t^{(j)}|
     + |2\mu-\la| \sum_{j=8}^{14}|J_t^{(j)}|,
  \end{aligned}
}
where for $j=1,\dots,7$,
\EQQS{
  J_t^{(j)}
  := \sum_{N\ge 1}
   \RE \int_0^t\int_\R \tilde{g}_j(u_1,u_2) P_N^2 \p_x \bar{w} dxdt'
}
and for $j=8,\dots,14$,
\EQQS{
  J_t^{(j)}
  := \sum_{N\ge 1}
   \RE \int_0^t\int_\R g_j(u_1,u_2) P_N^2 \bar{w} dxdt'.
}
It thus remains to control $ J_t^{(j)} $ for $ j=8,\dots,14$.
First we notice that since
$\|w z \partial_x \bar{\psi}\|_{L_x^2}$
$+$
$\|w \psi \p_x\bar{\psi}\|_{L_x^2}\les K_1^2 \|w\|_{L_x^2}$,
we directly get (recall that $ s=1$)
\EQQS{
  |J_t^{(10)}|+|J_t^{(13)}|
  \les T K_1^2 \|w\|_{L_T^\I L_x^2}^2.
}

\noindent
\textbf{Estimate for $J_t^{(8)}$.}
We rewrite $ u_1^2 \p_x \bar{w} $ as $u_1^2 \p_x \bar{w}= \p_x( u_1^2  \bar{w})-\p_x (u_1^2) \bar{w}$.
Then the contribution of $\p_x(u_1^2 \bar{w})$ can be estimated in exactly the same way as $ J_t^{(2)} $, and it thus suffices to estimate the contribution of $ \p_x (u_1^2) \bar{w}$:
\EQQS{
  J_t^{(8,1)}
  = \sum_{N\gg 1}
   \Bigl| \sum_{\substack{N_1,N_2,N_3 \ge 1,\\N_4\sim N}}
   \int_0^t \int_{\R} \p_x (P_{N_1} u_1 P_{N_2} u_1)
   P_{N_3} \bar{w} P^2_N P_{N_4} \bar{w} dxdt' \Bigr|.
}
By symmetry, we may assume $N_1\ge N_2 $.

\noindent
\underline{Case 1: $N_3\les 1$.}
We have $N_1\gts N$.
Then, integration by parts and the discrete Young inequality lead to
\EQQS{
  J_t^{(8,1)}
  & \les \int_0^t
   \sum_{N_1\gts N\gg 1} \Bigl(\frac{N}{N_1}  \Bigr)
   \|P_{N_1} u_1(t')\|_{H_x^1} \|P_N w(t')\|_{L^2_x}
   \|w(t')\|_{L^2_x}
   \|u_1(t')\|_{L^\infty_x} dt' \\
  & \les T \|u_1\|_{L^\infty_T H_x^1}^2
   \|w\|_{L^\infty_T L^2_x}^2
  \les T K_1^2
   \|w\|_{L^\infty_T L^2_x}^2.
}

\noindent
\underline{Case 2: $N_1\ll N_3$.}
In this case, we have $ N_3\sim N_4\sim N\gg 1 $.
Then, \eqref{eq_res2} of Lemma \ref{lem_res1} ensures that
$ |\Om | \sim N^2 $.
We see from \eqref{eq_I2.2} of Lemma \ref{lem_Bourgain2} that
\EQQS{
  |J_t^{(8,1)}|
  \les T^{1/4} K_1^2 \|w\|_{Z^{0}_T}
   \|w\|_{L^\infty_T L^2_x}.
}
\underline{Case 3: $N_1\gts N_3\gg 1$.}
Notice that $ N_1\gts N$.

\noindent
\underline{Subcase 3.1: $N_2\gts N_3$.}
Then we make use of discrete Young's inequality together with
\eqref{estwLinfi2} and \eqref{estL1} to get
\EQQS{
  J_t^{(8,1)}
  &\les \sum_{N_1\gts N\gg 1} \Bigl(\frac{N}{N_1}\Bigr)
   \int_0^t
   \|P_{N_1}u_1\|_{H_x^1} \|P_N w\|_{L^2_x}
   \sum_{1\ll N_3\les N_2\le N_1}
   \| P_{N_2}u_1\|_{L_x^\infty}
   \| P_{N_3}w\|_{L^\infty_x}dt'\\
  &\les T^{1/2} \|u_1\|_{L^\infty_T H_x^1}
   \|w\|_{L^\infty_T L^2_x}
   \sum_{1\ll N_3\les N_2}
   \| P_{N_2}u_1\|_{L^4_T L_x^\infty}
   \| P_{N_3}w\|_{L^4_T L^\infty_x}\\
  &\le C(K_1) T^{1/2} \|w\|_{L^\infty_T L_x^2}^2
   \sum_{1\ll N_3\les N_2} N_2^{-\frac{3}{4}} N_3^{\frac{1}{4}}
  \le C(K_1) T^{1/2} \|w\|_{L^\infty_T L^2_x}^2.
}

\noindent
\underline{Subcase 3.2: $1\le N_2\ll N_3$.}
When $N\gtrsim N_3$ or $N_3\gg N\gg N_2$, Lemma \ref{lem_res1} gives $ |\Om |\sim N_1(N\wedge N_3) $ and \eqref{eq_I2.1} of Lemma \ref{lem_Bourgain2} yields
\EQQS{
  |J_t^{(8,1)}|
  \les T^{1/4} K_1^2 \|w\|_{Z^{0}_T} \|w\|_{L^\infty_T L^2_x}.
}
On the other hand, when $N\les N_2$, we can apply the argument of Subcase 1.1 in the estimate of $J_t^{(1)}$ with $s=1$ so that
\EQQS{
  |J_t^{(8,1)}|
  \les T^{1/2} K_1^2 \|w\|_{L^\infty_T L^2_x}^2.
}

\noindent
\textbf{Estimate for $J_t^{(9)}$.}
We eventually get
\EQQS{
  | J_t^{(9)}|
  \les \sum_{N\gg 1}
   \Bigl| \sum_{\substack{N_1,N_2,N_3 \ge 1,\\N_4\sim N}}
   \int_0^t \int_{\R} P_{N_1} z \partial_x P_{N_2} \bar{u}_2 P_{N_3}w
    P^2_N P_{N_4} \bar{w} dxdt'\Bigr|.
}

\noindent
\underline{Case 1: $N_3\les 1$.}
Notice that $N_1\vee N_2 \gts N $.
We treat only the most diffcult case that is $ N_2\gg N_1 $ since for $ N_1\gts N_2 $ we may share the derivative on $z$ and $ v$ and close the estimate.
When $N_2\gg N_1$, it holds that $N_2\sim N$.
The Bernstein and the discrete Young inequality lead to
\EQQS{
  |J_t^{(9)}|
  & \les \sum_{N\gg 1}\int_0^t
  \|P_{\sim N} u_2\|_{H_x^1} \|P_N w\|_{L^2_x}
  \|w\|_{L^2_x} \|z\|_{H_x^1} dt'\\
  & \les T  \|u_2\|_{L^\infty_T H_x^1} \|z\|_{L^\infty_T H_x^1} \|w\|_{L^\infty_T L^2_x}^2
  \les T K_1^2 \|w\|_{L^\infty_T L^2_x}^2.
}

\noindent
\underline{Case 2: $N_1\vee N_2\ll N_3$.}
We have $ N_3\sim N_4\sim N\gg 1 $ in this case.

\noindent
\underline{Subcase 2.1: $N_1\gts N_2$}.
Then, we share the derivative on $z$ and $ v$ and conclude the estimate by using \eqref{estL1}.

\noindent
\underline{Subcase 2.2: $N_1\ll N_2$}.
Then Lemma \ref{lem_res1} gives
$ |\Om |\sim N_2 N $, and \eqref{eq_I2.1} of Lemma \ref{lem_Bourgain2} yields
\EQQS{
  |J_t^{(9)}|
  \les T^{1/4} K_1^2 \|w\|_{Z^{0}_T}
  \|w\|_{L^\infty_T L^2_x}.
}

\noindent
\underline{Case 3: $N_1\vee N_2\gts N_3\gg 1$.}

\noindent
\underline{Subcase 3.1: $N_1\gts N_2$.}
Since $ N_1\gts N $, by distributing the derivative and making use of \eqref{estwLinfi2}, we eventually obtain
\EQQS{
  |J_t^{(9)}|
  &\les T^{1/2}
   \sum_{N_1\gts N_2\vee N_3\vee N_4}
   N_2 \|P_{N_1}z\|_{L^\I_T L_x^2}
   \|P_{N_2}u_2\|_{L^\I_T L^2_x}
   \|P_{N_3}w\|_{L^4_T L^\infty_x}
   \|P_{N_4}w\|_{L^4_T L^\infty_x}\\
  &\le C(K_1) T^{1/2} \|z\|_{L^\I_T H_x^1}
   \|u_2\|_{L^\I_T H_x^1}
   \|w\|_{L^\I_T L^2_x}^2
   \sum_{N_1\gts N_2\vee N_3\vee N_4}
   N_1^{-1}N_3^{\frac{1}{4}}N_4^{\frac{1}{4}}\\
  &\le C(K_1) T^{1/2} \|w\|_{L^\infty_T L^2_x}^2.
}

\noindent
\underline{Subcase 3.2: $N_2\gg N_1\gts  N_3$}.
Notice that $N_2\sim N \sim N_4 $.
By \eqref{eq_res2} of Lemma \ref{lem_res1}, we have $|\Om|\sim N_2^2$.
Then, we can get the desired result easily by making use of \eqref{eq_I2.2} of Lemma \ref{lem_Bourgain2}.

\noindent
\underline{Subcase 3.3: $1\le N_1\ll N_3\les N_2$.}
If one of following configurations holds (a) $N_2\gg N_3$, (b) $N_2\sim N_3$ and $N_1\gg N$, or (c) $N_2\sim N_3$ and $N_1\ll N$, Lemma \ref{lem_res1} gives
$ |\Om |\gts N_3 (N\vee N_1) $.
In particular, we have the strong resonant relation $|\Om |\sim N_2^2$ when (a) holds.
Then, we see from \eqref{eq_I2.1} of Lemma \ref{lem_Bourgain2} that
\EQQS{
  |J_t^{(9)}|
  \les T^{1/4} K_1^2 \|w\|_{Z^{0}_T}
   \|w\|_{L^\infty_T L^2_x}.
}
On the other hand, when $N_2\sim N_3$ and $N\sim N_1$, it is easy to see that
\EQQS{
  |J_t^{(9)}|
  &\les \sum_{N_2\sim N_3\gg N_1\sim N_4}
   N_1^{-1}N_4^{\frac{1}{2}}
   \|P_{N_2}u_2\|_{L_T^2 H_x^1}
   \|P_{N_3}w\|_{L_{T,x}^2}
   \|J_x\psi\|_{L_{T}^2 L_x^\I}
   \|w\|_{L_T^\I L_x^2}\\
  &\les T K_1^2 \|w\|_{L_T^\I L_x^2}^2,
}
which completes the estimates for $ J_t^{(9)}$.

\noindent
\textbf{Estimate for $J_t^{(11)}$.}
We apply an argument almost parallel to the estimates for $J_t^{(9)}$.
We use the fact that $J_x^{1+\e}\psi\in L^\I(\R^2)$ instead of \eqref{estL1}.

Finally, as for $ J_t^{(12)} $, we notice that $g_{12}=\p_x(u_1\psi\bar{w})-\p_x u_1\psi\bar{w}-u_1\p_x\psi\bar{w}$.
The first and third terms are essentially handled by the estimates for $J_t^{(3)}$ and $J_t^{(10)}$, respectively.
On the other hand, the second term can be treated in a similar way as $J_t^{(11)}$.
As for $J_t^{(14)} $, we observe that $g_{14}=\p_x(\psi^2 \bar{w})-2\psi\p_x \psi \bar{w}$.
The first and second terms can be estimated by using the argument applied to $J_t^{(6)}$ and $J_t^{(13)}$, respectively.
By collecting the above estimates, we obtain \eqref{eq_dif2}, which completes the proof.
\end{proof}

To end this section, we adapt the above proposition on the difference  of two solutions $u_1$ and $u_2$ to the case where they are associated with possibly two different functions $ \psi_1$ and $\psi_2$ satisfying \eqref{hyp_psi}.
This result is needed to prove the continuity with respect to initial data in Zhidkov spaces.

\begin{prop}\label{prop_dif1Z1}
  Let $0<T<1$ and $s>0$.
  Let $u_1$ and $u_2$ be two solutions of \eqref{eq2} on $[0,T]$, associated with possibly two different functions $ \psi_1$ and $\psi_2 $, both belonging to $L^\infty_T H^s $, and emanating from $u_{0,1}\in H^s(\R)$ and $u_{0,2}\in H^s(\R)$, respectively.
  Let $K_s\ge 1$ such that
  \EQQS{
   \sum_{j=1}^2 \Big(\|u_j\|_{Z_T^{s\wedge 1}}
    +\|\p_t \psi_j\|_{L_{t,x}^\I}
    +\|J_x^{1+\e}\psi_j\|_{L_{T,x}^\I}
    + \|\Psi_j\|_{L_t^\I H_x^{\e}}\Big)
    \le K_s,
  }
  where $\Psi_i$ is defined in \eqref{def_psi}.
  Setting $w=u_1-u_2$ then,
  for $s\in (3/4,1]$ whenever $ \lambda=2\mu $ and $s =1 $ otherwise,  it holds
  \EQS{\label{estwLinfi1Z1}
    \begin{aligned}
      &\|P_N w\|_{L^4_T L^\infty_x}\\
      &\le C(K_s) N^{\frac{5}{4}-s}
      \Bigl(\|w\|_{L_T^\I H_x^{s-1}}
      + \|J_x(\psi_1-\psi_2)\|_{L^\infty_{T,x}}
      +\|\Psi_1-\Psi_2\|_{L^\infty_T L^2_x}\Bigr),
    \end{aligned}
  }
  \EQS{\label{estZ1Z1}
    \|w\|_{Z^{s-1}_T}
    \le C(K_s)\Bigl(  \|w\|_{L^\infty_T H_x^{s-1}}
    +\|J_x(\psi_1-\psi_2)\|_{L^\infty_{T,x}}
    + \|\Psi_1-\Psi_2\|_{L^\infty_T L^2_x}\Bigr),
  }
  and
  \EQS{\label{eq_dif1Z1}
    \begin{aligned}
      \|w\|_{L_T^\I  H_x^{s-1}}^2
      &\le \|u_{0,1}-u_{0,2}\|_{H_x^{s-1}}^2
       +C(K_s) T^{1/4}\|w\|_{Z_{T}^{s-1}}
      \Bigl(\|w\|_{L_T^\I H_x^{s-1}}\\
      &\quad\quad\quad\quad\quad\quad\quad
      + \|J_x(\psi_1-\psi_2)\|_{L^\infty_{t,x}}
      + \|\Psi_1-\Psi_2\|_{L^\infty_T L^2_x}\Bigr).
    \end{aligned}
  }
\end{prop}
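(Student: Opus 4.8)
The plan is to mimic the proof of Proposition \ref{prop_dif1}, but to track carefully the extra source terms that arise when $\psi_1\neq\psi_2$. Writing $z=u_1+u_2$, $w=u_1-u_2$ and $\Phi=\psi_1-\psi_2$, the difference $w$ now satisfies an equation of the same shape as \eqref{eq_w} but with two kinds of new contributions: first, in each nonlinear term of \eqref{eq_w} where $\psi$ appears, one of the two factors $\psi_j$ must be replaced by $\Phi$ (so we pick up terms of the schematic form $u_j\Phi\,\partial_x\bar w$, $\Phi\RE(z\bar w)$, $w\Phi\,\partial_x\bar\psi_j$, $\psi_j\Phi\,\partial_x\bar w$, etc., in which one factor is the ``small'' function $\Phi$); second, since $u_j$ solves \eqref{eq2} with its own $\Psi_j$, the difference equation contains the inhomogeneous term $\Psi_1-\Psi_2$. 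All the terms of the first kind in which $w$ still appears are estimated exactly as the corresponding terms in Lemmas \ref{LemZ1} and Proposition \ref{prop_dif1}, simply substituting a factor $\|J_x\Phi\|_{L^\infty_{t,x}}$ for one factor of $\|J_x^{1+\e}\psi_j\|_{L^\infty_{t,x}}$, which is harmless since $\|J_x\Phi\|_{L^\infty_{t,x}}$ is exactly what appears on the right-hand side of the claimed estimates. The terms of the second kind, together with the terms in which $\Phi$ appears but $w$ does not, are treated by the same product and Strichartz estimates, now producing $\|\Phi\|$-type and $\|\Psi_1-\Psi_2\|$-type factors instead of a $\|w\|$ factor.

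Concretely, I would first record the analogue of \eqref{tr1}: using Lemmas \ref{Lem24}, \ref{Lem25} and Corollary \ref{cor26} one checks that the right-hand side of the $w$-equation, grouped as in \eqref{eq_w}, is bounded in $H^{s-1}_x$ by $C(K_s)\bigl(\|w\|_{H^{s-1}_x}+\|J_x\Phi\|_{L^\infty_x}+\|\Psi_1-\Psi_2\|_{L^2_x}\bigr)$; the term $\Psi_1-\Psi_2$ enters at the $L^2$ level, hence contributes $\|\Psi_1-\Psi_2\|_{L^\infty_TL^2_x}$, and by the trivial embedding $L^2\hookrightarrow H^{s-1}$ for $s\le 1$ this is consistent. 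Feeding this into the standard linear Bourgain estimate gives \eqref{estZ1Z1}, and feeding it into Proposition \ref{prop_stri1} with $(\tilde q,q,p)=(4,4,\infty)$ gives \eqref{estwLinfi1Z1}, exactly as \eqref{estZ1} and \eqref{estwLinfi1} were deduced from \eqref{tr1}.

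For the energy estimate \eqref{eq_dif1Z1} I would run the same case analysis on $J_t^{(j)}$ as in the proof of Proposition \ref{prop_dif1}. The terms involving only $u_1,u_2,w$ are unchanged and produce $\|w\|_{Z^{s-1}_T}\|w\|_{L^\infty_TH^{s-1}_x}$. For a term in which a factor $\psi_j$ was replaced by $\Phi$: if that term was previously estimated by brute force (distributing the derivative onto $\psi$, Case 1 of $J_t^{(3)}$ etc.), the same computation now yields $T\,C(K_s)\|J_x\Phi\|_{L^\infty_{t,x}}\|w\|_{L^\infty_TH^{s-1}_x}$, which after a Young inequality is of the desired form; if it was previously estimated by a Bourgain-type lemma (Lemma \ref{lem_Bourgain2} or \ref{lem_Bourgain3}), the same lemma applies with $\Phi$ in the role of $\psi$ — here one uses that $\partial_t\Phi\in L^\infty$ and $J_x\Phi\in L^\infty$, both controlled by $K_s$ up to the factor $\|J_x\Phi\|_{L^\infty_{t,x}}$ that one keeps explicit — giving $T^{1/4}C(K_s)\|J_x\Phi\|_{L^\infty_{t,x}}\|w\|_{Z^{s-1}_T}$. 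Finally the contribution of $\Psi_1-\Psi_2$ to $dt\,\|P_Nw\|_{L^2}^2$ is $\sum_N N^{2(s-1)}|\IM\int_0^t\int_\R P_N(\Psi_1-\Psi_2)P_N\bar w\,dx\,dt'|\lesssim T\|\Psi_1-\Psi_2\|_{L^\infty_TH^{s-1}_x}\|w\|_{L^\infty_TH^{s-1}_x}\lesssim T\|\Psi_1-\Psi_2\|_{L^\infty_TL^2_x}\|w\|_{L^\infty_TH^{s-1}_x}$, again of the announced form after Young. Collecting everything and absorbing gives \eqref{eq_dif1Z1}.

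The main obstacle, as in Proposition \ref{prop_dif1}, is the low$\times$high$\times$high$\to$low interaction in the term $u_1\bar u_1\partial_x w$ (Case 1, Subcase 1.2 there) and its $\Phi$-analogue: these are exactly the interactions that forced the $H^{s-1}$ (rather than $H^{s-1/2}$) level to be delicate and required the sharp resonance bound \eqref{eq_res1} plus the refined Strichartz bounds \eqref{estwLinfi1}/\eqref{estwLinfi1Z1}. One must therefore make sure that the bootstrap is consistent: \eqref{estwLinfi1Z1} is proved first (it only needs the product estimate \eqref{tr1}-analogue, not the energy inequality), and then used inside the case analysis for \eqref{eq_dif1Z1}, so there is no circularity. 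Everything else is a routine bookkeeping variation of the already-completed proof.
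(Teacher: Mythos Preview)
Your overall plan coincides with the paper's: write the equation for $w$ with the $w$-linear terms $\tilde{\mathfrak g}_j,\mathfrak g_j$ (same form as $\tilde g_j,g_j$ but with a specific $\psi_j$ inserted), the new $\theta$-linear source $\Lambda(\theta)$ (your ``terms in which $\Phi$ appears but $w$ does not''), and the inhomogeneity $\Psi_1-\Psi_2$; then rerun Lemma~\ref{LemZ1} and Proposition~\ref{prop_dif1} on the $w$-linear part and add the new contributions. The deduction of \eqref{estwLinfi1Z1}--\eqref{estZ1Z1} from an $H^{s-1}$ bound on the right-hand side, and the treatment of $\Psi_1-\Psi_2$, are exactly as in the paper.

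Where you diverge from the paper is in the handling of $\Lambda(\theta)$. The paper does \emph{not} run any case analysis on these terms: it observes once and for all that $\|\Lambda_1\|_{H^s_x}\le C(K_s)\|J_x\theta\|_{L^\infty_x}$ and (for $s=1$) $\|\Lambda_2\|_{L^2_x}\le C(K_1)\|J_x\theta\|_{L^\infty_x}$ by straightforward product rules (Lemma~\ref{Lem24}). Since $\Lambda_1$ enters in divergence form and $\Lambda_2$ only arises when $s=1$, this single bound immediately yields both \eqref{estwLinfi1Z1}--\eqref{estZ1Z1} and, via Cauchy--Schwarz in the energy identity, the $\Lambda$-contribution to \eqref{eq_dif1Z1}. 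Your proposal to treat the $\Lambda$ terms case by case, sometimes ``applying the same Bourgain-type lemma with $\Phi$ in the role of $\psi$'', is unnecessary and the claimed output is not quite right: the last term in \eqref{eq_I3} carries $\|P_{N_4}\partial_t\psi\|_{L^\infty_{t,x}}$, which for $\Phi$ is bounded by $K_s$ alone and does \emph{not} produce the factor $\|J_x\Phi\|_{L^\infty}$ you assert; moreover some $\Lambda$ terms (e.g.\ $\psi_1\RE(\bar u_2\theta)$) contain two $L^\infty$-only functions, so none of Lemmas~\ref{lem_Bourgain1}--\ref{lem_Bourgain4} applies directly. Replacing your case-by-case plan for $\Lambda$ by the one-line product bound above removes these issues and recovers the paper's argument verbatim.
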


\begin{proof}
As in \eqref{eq_w}, setting $ z=u_1+u_2$ and $\theta=\psi_1-\psi_2 $,  we notice that $w=u_1-u_2$ satisfies the following equation:
\EQQS{
  \p_t w
  &=i\p_x^2 w
   +\frac{\la}{2} \p_x \Bigl( |u_1|^2 w+u_2\RE(z\bar{w})
   + \psi_1\RE(z\bar{w})
   +2u_1 \RE(\psi_1 \bar{w})\\
  &\quad\quad\quad\quad\quad\quad\quad
   +2w \RE(\psi_2 \bar{u}_2)
   +2 \psi_1\RE(\psi_1 \bar{w})
   +|\psi_1|^2w\Bigr)\\
  &\quad+\frac{2\mu-\la}{2}\Bigl(u_1^2\p_x \bar{w}
   +zw\p_x \bar{u}_2
   +wz\p_x \bar{\psi}_1
   +2w\psi_1 \p_x \bar{u}_1\\
  &\quad\quad\quad\quad\quad\quad\quad
   +2u_2\psi_2\p_x \bar{w}
   +2w\psi_1\p_x \bar{\psi}_1
   +\psi^2_1 \p_x \bar{w}\Bigr)+ \Lambda(\theta)+\Psi_1-\Psi_2\\
  &=:i\p_x^2 w
   +\frac{\la}{2}\p_x \Bigl(  \sum_{j=1}^7 \tilde{\mathfrak{g}}_j(u_1,u_2)\Bigr)
   +\frac{2\mu-\la}{2}\sum_{j=8}^{14} \mathfrak{g}_j(u_1,u_2) + \Lambda(\theta) +\Psi_1-\Psi_2,
}
where
\EQQS{
  \Lambda(\theta)
  & =\la \p_x \Bigl( \frac{|u_2|^2}{2} \theta
   +u_1 \RE(\theta \bar{u}_2)
   + \frac{u_2}{2} \RE(\bar{\theta}(\psi_1+\psi_2))
   +\psi_1  \RE(\bar{u}_2 \theta)
   + \theta \RE (\bar{u}_2 \psi_2)\Bigr)\\
  &\quad+\frac{2\mu-\la}{2}
   \Bigl(\theta (\psi_1+\psi_2) \partial_x \bar{u}_2
   +u_2^2 \partial_x \bar{\te}
   + 2 u_2 \theta \partial_x \bar{u}_1
   +2u_2\theta \partial_x \bar{\psi}_1
   + 2 u_2 \psi_2 \partial_x \bar{\theta}\Bigr)\\
  &:= \la \partial_x\Lambda_1(\theta)
   +\frac{2\mu-\la}{2}\Lambda_2(\theta)
}
It is straightforward to verify that $\tilde{\mathfrak{g}}_j $ for $j=1,\dots,7$ and $\mathfrak{g}_j$ for $j=8,\dots,14$, are of exactly the same form as $ \tilde{g}_j$ for $j=1,\dots,7$ and $g_j$ for $j=8,\dots,14$ in \eqref{eq_w}, respectively.
Thus, these terms can be treated in the same way as in Lemma \ref{LemZ1} and Proposition \ref{prop_dif1}.
On the other hand, for $ 3/4<s\le 1$, it follows directly from Lemma \ref{Lem24} that
\EQS{\label{New}
  \|\Lambda_1\|_{H_x^s} \le C(K_s) \|J_x\theta\|_{L_x^\I}
  \quad \text{and} \quad
  \|\Lambda_2\|_{L_x^2} \le C(K_1) \|J_x\theta\|_{L_x^\I}.
}
Therefore, the proof of Lemma \ref{LemZ1} leads to  \eqref{estwLinfi1Z1}-\eqref{estZ1Z1}.
Moreover, we deduce from \eqref{New} that
\EQQS{
  &\sum_{N\ge 1} N^{2(s-1)} \Bigl|\int_0^t\int_{\R}
   \partial_x P_N\Lambda_1(\theta) P_N w dxdt' \Bigr|\\
  &\les T \|\Lambda_1(\theta)\|_{L_T^\I H_x^s}
   \|w\|_{L_T^\I H_x^{s-1}}
  \le C(K_s)T \|J_x(\psi_1-\psi_2)\|_{L_{T,x}^\I}
   \|w\|_{L_T^\I H_x^{s-1}}
}
and
\EQQS{
  &\sum_{N\ge 1} N^{2(s-1)} \Bigl|\int_0^t\int_{\R}
   P_N\Lambda_2(\theta) P_N w dxdt' \Bigr|\\
  &\lesssim T \|\Lambda_2(\theta)\|_{L_T^\I L^2_x}\|w\|_{L_T^\I L^2_x}
  \le C(K_1)T\|J_x(\psi_1-\psi_2)\|_{L_{T,x}^\I}
   \|w\|_{L_T^\I L^2_x}.
}
The proof of Proposition \ref{prop_dif1}, combined with the above estimates and using \eqref{estwLinfi1Z1} in place of \eqref{estwLinfi1} or \eqref{estwLinfi2}, as well as \eqref{estZ1Z1} instead of \eqref{estZ1} or \eqref{estZ2}, yields \eqref{eq_dif1Z1}, which completes the proof.
\end{proof}

\section{Estimate for the Difference in the Nonregular Case}
\label{nonregular}

In this section, we discuss estimates for the difference when $\la=2\mu$ does not hold.
Recall that we assume $\la\neq0$.
This corresponds to the case where we cannot place the difference $w \in Z^{s-1}_T $ for $s<1$ since the equation is no longer in divergence form.
Nonlinear terms such as $ u\bar{u} \p_x w $ exhibit unfavorable interactions, specifically of the type {\it low$\times$high$\times$high$\to$low}, which are resonant and not well defined for $ u\in Z^s_T $ and $ w\in Z^{s-1}_T $.
At the same time, this issue makes it difficult to apply refined Strichartz estimates (see Proposition \ref{prop_stri1}), since we would like to use estimates of the form $\||u_1|^2\p_x w\|_{H^{s-2}}\les \|u_1\|_{H^s}^2\|w\|_{H^{s-1}}$, which holds when $s\ge 1$.

On the other hand, in the context of the Benjamin-Ono equation (that has the same dispersion order as the Schr\"odinger equation), it was shown in \cite{MPV18} that the difference of two solutions can be estimated in $ Z^{s-1/2}$ by introducing a modified energy that cancels the most problematic nonresonant interactions.
It is worth noting that the corresponding term, such as $u^2\p_x\bar{w}$, exhibits very strong nonresonance (see \eqref{eq_I2.2} in Lemma \ref{lem_Bourgain2}), so it is not necessary to use the modified energy; instead, we can directly apply the argument from Proposition \ref{prop_dif1}.
In \cite{MPV18}, this approach allows one to prove local well-posedness of the Benjamin-Ono equation in $ H^s(\R) $ for $ s>1/4 $.
This threshold corresponds to the minimal regularity required for the product $ u w $ to be well-defined when $ u_1\in H^s(\R) $ and $ w\in H^{s-1/2}(\R)$.
In this section, we follow this strategy, but we require that $ |u_1|^2 \p_x w $ be well-defined for $ u_1\in H^s(\R) $ and $ w\in H^{s-1/2}(\R) $, which forces $ s+(s-3/2) >0 \Leftrightarrow s>3/4 $.
Although the contribution from the time derivative of the correction terms arising from the \textit{linear} part of the equation eliminates the worst terms involving derivative loss, the contribution from \textit{nonlinear} terms may still contain some unfavorable configurations.
One such problematic case occurs when the derivative loss falls again on the difference $ w$ in the nonlinear term, particularly when $w$ has the highest frequency.
However, it was observed in \cite{EW19,MPV18} that a fundamental cancellation occurs in such terms for quadratic KdV-like equations, and we recover a similar cancellation in our setting.

Another difficulty results from the fact that the auxiliary function $\psi $ does not belong to $ L^2(\R) $.
More precisely, the modified energy method produces terms with anti-derivative (defined in \eqref{def_anti}) of the form $ \partial_x^{-1}(\dot{P}_M( w u_1 )) $ and $ \partial_x^{-1}(\dot{P}_M( w \psi )) $ that can be problematic when $ M>0 $ is very small.
Here, $\dP_M$ denotes the homogeneous Littlewood-Paley operator.
Actually, the first one is easy to handle: taking the $ L^\I $-norm and using the Bernstein inequality, the anti-derivative allows us to recover the $ L^1$-norm of the product $w u $, which is convenient since both functions belong to $ L^2(\R) $.
On the other hand, estimating the second term is more delicate.
First, control of $\psi$ in $ L^\I_x $ alone is insufficient, since $ w$ does not belong to $ L^1(\R) $.
Second, the anti-derivative causes a derivative loss in the highest frequency if we attempt to recover the $L^2$-norm (instead of $L^1$-norm) via the Bernstein inequality, which is not acceptable in our setting.
The key idea here is to assume, in addition to \eqref{hyp_psi}, that $\p_x \psi\in L^1(\R) $.
As discussed in Subsection \ref{subs_set}, this is a reasonable hypothesis and allows us to control the frequency projections of $ \psi $ in $ L^1$.
Note that, in order to exploit this hypothesis, it is also necessary to place the difference $ w$ in function spaces equipped with a weight on the low frequencies.
Such weighted spaces for the difference are frequently used in the context of low regularity results (see, for instance, \cite{IKT, MPV18}).
Our low frequency weight will be close to, but strictly larger than, $ -1/2 $, since the nonlinear term does not take the divergence form.
More precisely, we take the weight to be $\LR{N^{-1/2+\de}} $ for some $ 0<\delta \ll 1 $.
In conclusion, for the rest of this paper, we impose the following hypotheses on the given function $\psi$:
\EQS{\label{hyp_psi2}
  \p_x \psi\in L^\I(\R;L^1(\R))\  \textrm{with}\ \eqref{hyp_psi}.
}

\begin{rem}\label{rem_psi}
  It follows from \eqref{hyp_psi2} that $\p_x\psi\in L^\I(\R;L^2(\R))$, which implies that $|\psi|^2\p_x \psi,\psi^2\p_x\bar{\psi}\in L^\I(\R;L^2(\R))$.
  This together with $\Psi\in L^\I(\R;H^{s+\e}(\R))$ implies that $i\p_t\psi+\p_x^2\psi\in L^\I(\R;L^2(\R))$.
  This is a key observation in the estimates for the modified energy $\E_N^3$ in Proposition \ref{prop_dif2}.
\end{rem}

The a priori estimate for the difference $w=u_1-u_2$ is based on the energy estimate with correction terms, also known as the modified energy.
We use the anti-derivative \eqref{def_anti} to define correction terms \eqref{def_E2}.
This construction requires the use of homogeneous decompositions to ensure that $\p_x^{-1}\dP_M f$ is well-defined.
As a consequence, we need to consider nonlinear interactions involving very low frequencies, such as those appearing on the left hand side of \eqref{trf2}.
To address this, we work in a function space equipped with a weight at very low frequencies, which is defined below.

\begin{defn}
  For $s,\delta\in\R$, we define the Banach space
  \EQQS{
    \ov{H}^{s,\de}(\R)
    =\{\varphi\in H^s(\R): \|\varphi\|_{\overline{H}^{s,\de}}<\I\},
  }
  with
  \EQQS{
    \|\varphi\|_{\overline{H}^{s,\de}}
    :=\bigg(\sum_{N\in 2^\Z} (N^{-1+2\de}\vee N^{2s})
      \|\dot{P}_N\varphi\|_{L^2}^2\bigg)^{1/2}.
  }
\end{defn}

Remark that $\ov{H}^{s,\de}(\R) \hookrightarrow H^s(\R)$.
The aim of this article is to show the local well-posedness in $H^s(\R)$ rather than in $\ov{H}^{s,\de}(\R)$.
However, without loss of generality, we can assume that the two initial data coincide at very low frequencies $0<N<1$ when using the norm $\|\cdot\|_{\ov{H}^{s,\de}}$.
See the proof of Theorem \ref{thm2} in Section \ref{sec_proof}.
In what follows, we will take $\de>0$ sufficiently small.

\begin{lem}\label{lem_weight}
  Let $0<T<1$, $3/4< s < 1$, $N_0\gg 1$ and  $\de>0$ be sufficiently small.
  Let $u_1$ and $u_2$ be two solutions of \eqref{eq2} on $[0,T]$ belonging to $Z_T^s$ and emanating from $u_{0,1}\in H^s(\R)$ and $u_{0,2}\in H^s(\R)$, respectively.
  Assume that there exists $K>0$ such that
  \EQQS{
    \|u_1\|_{Z_T^{s}}+\|u_2\|_{Z_T^{s}}
    +\|\p_x\psi\|_{L_T^\I L_x^1}
    +\|J_x^{s+1+\e}\psi\|_{L_{T,x}^\I}
    \le K,
  }
  Then, there exists $C=C(K,N_0)>0$ such that
  \EQQS{
    \|P_{\le N_0} w\|_{L_T^\I \overline{H}^{0,\de}}
    \les \| P_{\le N_0}(u_{0,1}-u_{0,2})\|_{\overline{H}^{0,\de}}
     +C(K,N_0) \|w\|_{L_T^\I H_x^{1/4+}},
  }
  where $w=u_1-u_2$.
\end{lem}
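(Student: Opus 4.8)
The goal is an estimate for $P_{\le N_0}w$ in the low-frequency-weighted space $\overline{H}^{0,\de}$, where the only nontrivial part of the norm is the sum over homogeneous dyadic $M\le 1$ of $M^{-1+2\de}\|\dot P_M w\|_{L^2}^2$ (the part with $M\gtrsim 1$ is already controlled by $\|w\|_{L^\infty_T H^{1/4+}}$, up to the cutoff at $N_0$). So the plan is: for each fixed homogeneous $M\le 1$, perform the standard $L^2$ energy estimate on $\dot P_M w$ using the difference equation \eqref{eq_w} (with the $\la=2\mu$ piece or, more precisely, the full nonregular equation — but since this lemma only needs an a priori bound on $P_{\le N_0}w$ and all terms are of the schematic form handled already, one just revisits \eqref{eq_w}), multiply by $M^{-1+2\de}$ and sum over $M\le 1$. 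Concretely, I would write
\EQQS{
  \frac{d}{dt}\|\dot P_M w\|_{L^2}^2
  = 2\RE\int_\R \dot P_M(\partial_t w)\,\dot P_M\bar w\,dx,
}
substitute \eqref{eq_w}, integrate in time, multiply by $M^{-1+2\de}$, and sum. The linear term $i\partial_x^2 w$ contributes nothing. For the nonlinear contributions one uses that $\partial_x$ hitting $\dot P_M$ at low frequency is harmless — it produces a factor $M$, which is a \emph{gain} — so the genuinely dangerous phenomenon (derivative loss) simply does not occur here. What has to be checked instead is that every product $\dot P_M(\text{nonlinearity})$ can be paired against $\dot P_M\bar w$ and bounded by $C(K,N_0)\,M^{1-2\de}\,\|w\|_{H^{1/4+}}(\text{stuff})$, with the $M^{1-2\de}$ needed to cancel the weight after the time integral.

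The key mechanism, following \cite{IKT,MPV18}, is the antiderivative trick: for a term like $\dot P_M(u_1\bar u_1\partial_x w)$ with $M$ small, one integrates by parts in $x$ to move the derivative off $w$, or one writes the contribution to the energy as $\RE\int \dot P_M(\cdots)\dot P_M\bar w$ and recognizes that $\dot P_M\bar w = \partial_x \partial_x^{-1}\dot P_M\bar w$, integrating by parts to put the antiderivative $\partial_x^{-1}\dot P_M\bar w$ on the other factor. Since $\partial_x^{-1}\dot P_M$ at frequency $M\le 1$ is bounded on $L^2$ with norm $\sim M^{-1}$ but, crucially, one recovers an $L^1$ input: by Bernstein, $\|\partial_x^{-1}\dot P_M(fg)\|_{L^\infty}\lesssim M^{-1/2}\|\dot P_M(fg)\|_{L^2}\lesssim M^{-1/2}\|fg\|_{L^1}$, and both factors in the relevant product lie in $L^2$. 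For the terms where the low-frequency product involves $\psi$ — precisely $\partial_x^{-1}\dot P_M(w\psi)$ — one cannot use $\psi\in L^\infty_x$ since $w\notin L^1$; instead one uses the hypothesis $\partial_x\psi\in L^\infty_T L^1_x$, which via $\dot P_M\psi = \dot P_M\partial_x^{-1}(\partial_x\psi)$ gives $\|\dot P_M\psi\|_{L^1}\lesssim M^{-1}\|\partial_x\psi\|_{L^1}$ and, paired with the other $L^2$ factors and the antiderivative, closes with the right power of $M$. This is exactly the reason hypothesis \eqref{hyp_psi2} is imposed.

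The routine bookkeeping is then: split each nonlinearity in \eqref{eq_w} by Littlewood-Paley pieces, note that on $P_{\le N_0}$ all high-frequency outputs are truncated so only frequencies $\lesssim N_0$ occur (whence constants may depend on $N_0$), and in every case extract a factor $M^{1-2\de}$ (or better) from the antiderivative/Bernstein step so that
\EQQS{
  \sum_{M\le 1} M^{-1+2\de}\Big|\int_0^t \RE\int_\R \dot P_M(\text{nonlin})\,\dot P_M\bar w\,dx\,dt'\Big|
  \le C(K,N_0)\,T\,\|w\|_{L^\infty_T H^{1/4+}}^2 + (\text{data term}),
}
and the $M\gtrsim 1$ part of the norm is trivially $\lesssim \|P_{\le N_0}w\|_{L^\infty_T H^{1/4+}}$. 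Combining with $\|\dot P_M w(0)\|_{L^2}$ summed against $M^{-1+2\de}$ — i.e. $\|P_{\le N_0}(u_{0,1}-u_{0,2})\|_{\overline H^{0,\de}}$ — gives the claim. \textbf{The main obstacle} I anticipate is the $\partial_x^{-1}\dot P_M(w\psi)$ family: one must verify that combining $\|\dot P_M\psi\|_{L^1}\lesssim M^{-1}\|\partial_x\psi\|_{L^1}$ with the antiderivative ($M^{-1}$) and Bernstein/Hölder does not overshoot the available weight $M^{-1+2\de}$ — the bookkeeping of powers of $M$ is tight, and it is precisely here that $\de>0$ (the weight being \emph{strictly} above $-1/2$ is not what matters, but rather that we only need $L^2$, not derivative, control, and the small $\de$ absorbs borderline logarithms/endpoints). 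Everything else is a direct transcription of the energy estimates already carried out in Sections \ref{sec_apri} and \ref{regular}, now with the harmless low-frequency derivative replaced by a gain.
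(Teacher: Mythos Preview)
Your energy-method framework is essentially equivalent to what the paper does, since the paper uses the Duhamel formula (the Schr\"odinger group is unitary on $\overline{H}^{0,\de}$) and reduces the lemma to the single estimate
\[
\|P_{\le N_0}\bigl(F(u_1,\psi)-F(u_2,\psi)\bigr)\|_{\overline{H}^{0,\de}}\le C(K,N_0)\,\|w\|_{H_x^{1/4+}}.
\]
Your $\tfrac{d}{dt}\|\dot P_M w\|_{L^2}^2$ computation, after Cauchy--Schwarz and Gronwall, yields exactly the same requirement. So at the level of ``what needs to be bounded,'' the two approaches coincide; the paper's Duhamel packaging is just slightly cleaner.

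Where your plan drifts is in the discussion of the ``main obstacle.'' The object $\partial_x^{-1}\dot P_M(w\psi)$ does \emph{not} arise in the proof of this lemma --- it appears only in the correction terms $\mathcal{E}_N^3$ of the modified energy (Proposition~\ref{prop_dif2}, see in particular \eqref{trf2}), which is a different and harder statement. In the present lemma there is no antiderivative trick at all: for each nonlinear piece one writes, e.g., $|u_1|^2\partial_x w=\partial_x(|u_1|^2 w)-w\,\partial_x(|u_1|^2)$ and bounds both pieces in $\overline{H}^{0,\de}$ directly via Bernstein ($\|\dot P_N f\|_{L^2}\lesssim N^{1/2}\|f\|_{L^1}$ for $N<1$) and a high--high/low--low frequency split. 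The hypothesis $\partial_x\psi\in L_T^\infty L^1_x$ enters here only indirectly, through interpolation with $\partial_x\psi\in L^\infty_{T,x}$ to obtain $\partial_x\psi\in L^\infty_T L^2_x$ (Remark~\ref{rem_psi}); terms like $w\bar\psi\,\partial_x\psi$ are then bounded by $\|w\|_{L^2}\|\psi\|_{L^\infty}\|\partial_x\psi\|_{L^2}$, with no antiderivative needed. So your plan would work, but the obstacle you flag is a phantom for this lemma, and the mechanism you propose to overcome it belongs to the next proposition.
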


\begin{proof}
  First, notice that $w$ satisfies the following:
  \EQQS{
    w(t)
    =e^{it\p_x^2}(u_{0,1}-u_{0,2})
     -i\int_0^t e^{i(t-t')\p_x^2}(F(u_1,\psi)-F(u_2,\psi))dt',
  }
  where $F(u,\psi)$ is defined by \eqref{def_F}.
  As in \eqref{eq_w}, we have
  \EQS{\label{eq_w3}
    \begin{aligned}
     &F(u_1,\psi)-F(u_2,\psi)\\
     &=i\la(|u_1|^2\p_x w+u_1\bar{w}\p_x u_2+w\bar{u}_2\p_x u_2
      +w\bar{z}\p_x \psi+2\RE(u_1\bar{\psi})\p_x w\\
     &\quad\quad\quad+2\RE(w\bar{\psi})\p_x u_2
      +2\RE(w\bar{\psi})\p_x \psi+|\psi|^2\p_x w)\\
     &\quad+i\mu(u_1^2\p_x \bar{w}+zw\p_x \bar{u}_2
      +wz\p_x \bar{\psi}
      +2u_1\psi\p_x \bar{w}
      +2w\psi \p_x \bar{u}_2\\
     &\quad\quad\quad
      +2w\psi\p_x \bar{\psi}+\psi^2 \p_x \bar{w})\\
      &=i\la \sum_{j=1}^8 h_j(u_1,u_2) + i\mu \sum_{j=9}^{15} h_j(u_1,u_2).
    \end{aligned}
  }
  Since the Schr\"odinger group is unitary in
  $ \overline{H}^{0,\delta} $, it suffices to bound $F(u_1,\psi)-F(u_2,\psi)$ in $ L^\infty_T \overline{H}^{0,\delta}$.
  As we will see below, we do not have to use time integrability.
  By the Minkowski inequality, we mainly consider $\|F(u_1,\psi)(t')-F(u_2,\psi)(t')\|_{\ov{H}^{0,\de}}$ for each $t'\in [0,t]$.
  In what follows, we drop $t'$ for simplicity.
  Notice that we have a different formulation compared to \eqref{eq_w}.
  So, we need to estimate 15 terms.
  For $h_1(u_1,u_2)=|u_1|^2 \p_x w$, note that $|u_1|^2 \p_x w=\p_x (|u_1|^2 w)-w\p_x (|u_1|^2)$.
  Then, it is easy to see that
  \EQQS{
  \|P_{\le N_0} \p_x (|u_1|^2 w)\|_{\overline{H}^{0,\de}}
   & \le \sum_{0<N\le N_0}   \|\dot{P}_N \p_x(|u_1|^2 w)\|_{\overline{H}^{0,\de}} \\
   & \les \Bigl( \sum_{0<N< 1 }  N^{\frac{1}{2}+\de}
    +\sum_{1\le N\le N_0} N \Bigr) \||u_1|^2w\|_{L_x^2} \\
   & \les N_0 \|u_1\|_{L_x^\I}^2 \|w\|_{L_x^2}\, .
  }
  Now, we estimate $w\p_x (|u_1|^2)$.
  This term is cumbersome since it is not a divergence form.
  We decompose
  \EQQS{
  \dot{P}_{N}  (w\p_x (|u_1|^2))=\sum_{N_1,N_2>0}\dot{P}_{N}
   \Bigr( \dot{P}_{N_1}w\p_x \dot{P}_{N_2}(|u_1|^2)\Bigl).
  }
  When $N_1\les N$, we have $N_2\les N$.
  Therefore, we can bound this contribution as follows
  \EQQS{
    &\sum_{0<N\le N_0} \sum_{0<N_1,N_2\les N}
    \Big\|\dot{P}_N
     \Big(\dot{P}_{N_1}w\p_x \dot{P}_{N_2}(|u_1|^2)\Big) \Big\|_{\overline{H}^{0,\de}}\\
    &\les  \Bigl( \sum_{0<N< 1 }  N^{-\frac{1}{2}+\de}
     +\sum_{1\le N\le N_0} \Bigr) \sum_{0<N_1,N_2\les N}
     \|\dot{P}_{N_1}w\|_{L_x^\infty}\| \p_x\dot{P}_{N_2}(|u_1|^2)\|_{L_x^2}\\
    &\les  \Bigl( \sum_{0<N< 1 }  N^{\de}
     +\sum_{1\le N\le N_0} N^{\frac{1}{2}} \Bigr) \sum_{0<N_1,N_2\les N} N_1^{1/2} N_2^{1/2}
     \|w\|_{L_x^2}\| |u_1|^2\|_{L_x^2}\\
    &\les N_0^{\frac{3}{2}}\|u_1\|_{H_x^{\frac{1}{2}+}}^2 \|w\|_{L_x^2}.
  }
  On the other hand, when $N_1\gg N$ we have $N_1\sim N_2$.
  By the Bernstein inequality and \eqref{eq2.2}, we have
  \EQQS{
     &\sum_{0<N\le N_0} \sum_{N_1\sim N_2\gg N}
     \Big\| \dot{P}_N
     \Big(\dot{P}_{N_1}w
     \p_x \dot{P}_{N_2}(|u_1|^2)\Big)
     \Big\|_{\overline{H}^{0,\de}}\\
    &\les\Bigl( \sum_{0<N<1 }  N^{\de}+\sum_{1\le N\le N_0} N^{\frac{1}{2}} \Bigr) \sum_{N_1\sim N_2\gg N} N_2
     \|\dot{P}_{N_1} w\|_{L_x^2}\|\dot{P}_{N_2}(|u_1|^2)\|_{L_x^2}\\
   &
    \les N_0^{\frac{1}{2}}
    \|u_1\|_{H_x^{\frac{3}{4}+}}^2 \|w\|_{H_x^{\frac{1}{4}}}.
  }
  In a similar manner, we can show that
  \EQQS{
    \|P_{\le N_0} h_j\|_{\overline{H}^{0,\de}}
    \les  C(K,N_0) \|w\|_{H_x^{\frac{1}{4}}}
  }
  for $j=2,3,9,10$.
  It is easy to see that
  \EQQS{
    \|P_{\le N_0} h_4\|_{\overline{H}^{0,\de}}
    \le   \sum_{0<N\le N_0} \|\dot{P}_N(w\bar{z}\p_x \psi)\|_{\overline{H}^{0,\de}}
    \les N_0^{\de} \|w\bar{z}\p_x \psi\|_{L_x^1}
    \les C(K,N_0) \|w\|_{L_x^2}.
  }
  By exactly the same argument, we also obtain $\|P_{\le N_0} h_{11}\|_{\overline{H}^{0,\de}}\le C(K,N_0) \|w\|_{L_x^2}$.
  For $h_6$, we proceed as for $ h_1 $ to estimate $P_{\le N_0}(w\bar{\psi}\p_x u_2)$.
  We decompose
  \EQQS{
    P_{\le N_0} (w\bar{\psi}\p_x u_2)
    =\sum_{0<N\le N_0} \sum_{N_1,N_2>0} \dot{P}_{N_0}\Bigl( \dot{P}_{N_1}(w\bar{\psi})\p_x \dot{P}_{N_2}u_2 \Bigr) .
  }
  When $N_1\les N$, we have $N_1,N_2\les N$.
  Then, we obtain
   \EQQS{
    &\sum_{0<N\le N_0}
     \sum_{0<N_1,N_2\les N}
     \Big\|\dot{P}_N\Big(\dot{P}_{N_1}(w\bar{\psi})
     \p_x \dot{P}_{N_2}u_2\Big) \Big\|_{\overline{H}^{0,\de}}\\
    &\les  \Bigl( \sum_{0<N< 1 }  N^{\de}
     +\sum_{1\le N\le N_0} N^{\frac{1}{2}} \Bigr) \sum_{0<N_1,N_2\les N} N_1^{1/2} N_2^{1/2}
     \|w\bar{\psi} \|_{L_x^2}\| u_2\|_{L_x^2}\\
    &
    \les N_0^{\frac{3}{2}} \|\psi\|_{L^\infty_{x}} \|u_2\|_{ L^2_x} \|w\|_{L_x^2}.
  }
  On the other hand, when $N_1\gg N$, we have $N_1\sim N_2$.
  Then, we get
  \EQQS{
    &\sum_{0<N\le N_0} \sum_{N_1\sim N_2\gg N}
     \Big\| \dot{P}_N
     \Big(\dot{P}_{N_1}(w\bar{\psi})\p_x \dot{P}_{N_2}u_2\Big) \Big\|_{\overline{H}^{0,\de}}\\
    &\les\Bigl( \sum_{0<N< 1 }  N^{\de}
     +\sum_{1\le N\le N_0} N^{\frac{1}{2}} \Bigr) \sum_{N_1\sim N_2\gg N} N_2
     \|\dot{P}_{N_1}(w\bar{\psi})\|_{L_x^2}\|\dot{P}_{N_2}u_2\|_{L_x^2}\\
    &\les N_0^{\frac{1}{2}}
     \|w\bar{\psi}\|_{H_x^\frac{1}{4}}
     \|u_2\|_{H_x^{\frac{3}{4}+}}
    \les  N_0^{\frac{1}{2}}
     \|J_x^{\frac{1}{4}}\psi\|_{L_{x}^\I}
     \|u_2\|_{H_x^{\frac{3}{4}+}}
     \|w\|_{H_x^\frac{1}{4}},
  }
  where we used \eqref{eq2.6} in the last step.
 This shows the estimate for $h_6$.
  By the same way, we can estimate $h_5,h_{12},h_{13}$.
  Finally, we treat $h_7$.
  It remains to estimate $w\bar{\psi}\p_x \psi$ since ``$\RE$" does not play any role in the following argument.
  Once we can close the estimate for $h_7$, we can close estimates for $h_8,h_{14},h_{15}$ by noticing $|\psi|^2\p_x w=\p_x(|\psi|^2w)-w\psi\p_x\bar{\psi}-w\bar{\psi}\p_x \psi$.
 Recall that, by Remark \ref{rem_psi}, $\|\partial_x \psi \|_{L^\infty_t L^2_x}\le K  $, this term can be directly bounded as follows
  \EQQS{
  \|P_{\le N_0} (w\bar{\psi}\p_x \psi)\|_{\overline{H}^{0,\de}}
  & \le \sum_{0<N\le N_0}
   \|\dot{P}_N \p_x(w\bar{\psi}
   \p_x \psi)\|_{\overline{H}^{0,\de}} \\
  & \les \Bigl( \sum_{0<N< 1 }  N^{\de}
   +\sum_{1\le N\le N_0} N^{\frac{1}{2}} \Bigr)
   \|w\|_{L^2_x} \|\partial_x \psi\|_{L^2_x} \|\psi\|_{L^\infty_x}\\
  & \les N_0^{\frac{1}{2}}
   \|\partial_x \psi\|_{L^2_x} \|\psi\|_{L^\infty_x} \|w\|_{L_x^2},
  }
  which completes the proof.
\end{proof}

\begin{lem}\label{LemZ22}
  Let $0<T<1$.
  Let $u_1$ and $u_2$ be two solutions of \eqref{eq2} on $[0,T]$ belonging to $ L^\infty_T H^s $ with $ s\in (3/4,1] $.
  For any $\te\in(1/4,s-1/2)$,  $ w=u_1-u_2 $ satisfies
  \EQS{\label{estZ22}
    \|w\|_{Z^{\theta}_T}
    \le
     C(\|u_1\|_{L_T^\I H_x^s}, \|u_2\|_{L_T^\I H_x^s},
     \|J_x\psi\|_{L_{T,x}^\I})
     \|w\|_{L^\infty_T H^{\theta}}.
  }
\end{lem}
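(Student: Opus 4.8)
The plan is to argue exactly as in the proof of Lemma~\ref{lem47}. By the extension Lemma~\ref{extensionlem} it suffices to bound the Bourgain component $\|w\|_{X^{\theta-1,1}_T}$ of $\|w\|_{Z^\theta_T}$, the $L^\infty_T H_x^\theta$ part being trivially bounded by $\|w\|_{L^\infty_T H_x^\theta}$. Subtracting the two copies of \eqref{eq2}, the inhomogeneity $\Psi$ cancels and, by Remark~\ref{rem_sol}, $w=u_1-u_2$ solves $i\p_t w+\p_x^2 w=F(u_1,\psi)-F(u_2,\psi)$, whose right-hand side decomposes, up to the constants $i\la$, $i\mu$, into the fifteen terms $h_j(u_1,u_2)$ of \eqref{eq_w3}. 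Then the standard linear estimates in Bourgain spaces give
\EQQS{
  \|w\|_{X^{\theta-1,1}_T}
  &\les \|w(0)\|_{H_x^{\theta-1}}+\|F(u_1,\psi)-F(u_2,\psi)\|_{L^2_T H_x^{\theta-1}}\\
  &\les \|w\|_{L^\infty_T H_x^\theta}+T^{1/2}\|F(u_1,\psi)-F(u_2,\psi)\|_{L^\infty_T H_x^{\theta-1}},
}
where $\|w(0)\|_{H_x^{\theta-1}}\le\|w\|_{C([0,T];H_x^{\theta-1})}\le\|w\|_{L^\infty_T H_x^\theta}$. Hence everything reduces to the fixed-time bound
\EQQS{
  \|F(u_1,\psi)-F(u_2,\psi)\|_{H_x^{\theta-1}}\les C(\|u_1\|_{H_x^s},\|u_2\|_{H_x^s},\|J_x\psi\|_{L_x^\infty})\,\|w\|_{H_x^\theta},
}
which is linear in $w$ and $\bar w$.

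To prove this bound I estimate the $h_j$ in $H^{\theta-1}$ by groups, using that $\theta\in(1/4,1/2)$, so that $\theta-1\in(-3/4,-1/2)$ and $1-\theta\in(1/2,3/4)$, leaves exactly enough room for the product rules of Lemma~\ref{Lem25}, Corollary~\ref{cor26} and \eqref{eq2.7}. First, when the derivative falls on $w$ or $\bar w$ ($h_1,h_5,h_8,h_9,h_{12},h_{15}$) I group the remaining two factors and pair the result with $\p_x w\in H^{\theta-1}$: the factor $|u_1|^2$ or $u_1^2$ is placed in $H^s$ (with norm $\les\|u_1\|_{H^s}^2$ by Lemma~\ref{Lem25}), and a factor carrying $\psi$ is controlled by Corollary~\ref{cor26} — at the cost of $\|J_x^{(1-\theta)+}\psi\|_{L^\infty}\les\|J_x\psi\|_{L^\infty}$ — together with \eqref{eq2.7} when this factor is $|\psi|^2$ or $\psi^2$. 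Second, when the derivative falls on $u_1$ or $u_2$ ($h_2,h_3,h_6,h_{10},h_{13}$) I either group it with another solution factor (e.g.\ $u_1\p_x u_2,\ \bar u_2\p_x u_2,\ z\p_x\bar u_2\in H^{s-1}$ with norm $\les\|u_1\|_{H^s}\|u_2\|_{H^s}$) and pair with $w$ or $\bar w\in H^\theta$, or — for $h_6,h_{13}$ — I place $w\bar\psi$ (resp.\ $w\psi$) in a Sobolev space of positive order $\les\theta$ via Corollary~\ref{cor26} and pair it with $\p_x u_2\in H^{s-1}$. Third, when the derivative falls on $\psi$ or $\bar\psi$ ($h_4,h_7,h_{11},h_{14}$) I integrate by parts, e.g.
\EQQS{
  w\bar z\,\p_x\psi=\p_x(w\bar z\,\psi)-\psi\,\p_x(w\bar z),
}
so that $\psi$ appears undifferentiated; then $w\bar z\in H^\theta$ (Lemma~\ref{Lem25}) and $\p_x(w\bar z)\in H^{\theta-1}$, and Corollary~\ref{cor26} — together with \eqref{eq2.7} for the $|\psi|^2$, $\psi^2$ produced by the integration by parts in $h_7$ and $h_{14}$ — closes the estimate with a constant depending only on $\|J_x\psi\|_{L^\infty}$. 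Summing over $j=1,\dots,15$ yields the required fixed-time bound, and hence the lemma.

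The genuinely delicate point is this last group. Since the hypotheses control only $\|J_x\psi\|_{L^\infty_{T,x}}$, neither $\p_x\psi\in L^\infty_x$ nor $\p_x\psi\in L^2_x$ is available, so $h_4,h_7,h_{11},h_{14}$ cannot be estimated directly; the remedy is precisely that $0<\theta<1$, which is just enough room to transfer the offending derivative off $\psi$ and onto the smooth combinations $w\bar z$, $wz$, $w\psi$ by integration by parts. A related subtlety is that these combinations must be treated as single factors — one should not expand $\p_x(w\bar z)=\p_x w\,\bar z+w\,\p_x\bar z$ — since only in grouped form does $w\bar z$ lie in a Sobolev space of the right positive order; this is where the constraints $1/4<\theta<s-1/2$ with $s>3/4$ (which in particular force $\theta+s>1$) enter.
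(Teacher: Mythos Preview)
Your proof is correct and follows essentially the same strategy as the paper: reduce to bounding the nonlinearity in $H^{\theta-1}$ via the standard linear $X^{s,b}$ estimates, then close with the product rules of Lemmas~\ref{Lem24}, \ref{Lem25} and Corollary~\ref{cor26}, exploiting the key constraint $\theta+(s-1)>0$.

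The only notable difference is organizational. The paper works with the partially divergence-form decomposition \eqref{eq_w}, putting $\sum_{j=1}^7\tilde g_j$ directly in $H^\theta$ and only the seven non-divergence terms $g_8,\dots,g_{14}$ in $H^{\theta-1}$; you instead use the fifteen $h_j$ from \eqref{eq_w3} and estimate everything in $H^{\theta-1}$. Both routes are equivalent. For the $\p_x\psi$ terms the paper simply invokes $\|\p_x\psi\|_{L^\infty}$ (tacitly relying on the extra $\varepsilon$ regularity in \eqref{hyp_psi}), whereas your integration-by-parts device $w\bar z\,\p_x\psi=\p_x(w\bar z\psi)-\psi\p_x(w\bar z)$ is a slightly cleaner way to stay strictly within the stated dependence on $\|J_x\psi\|_{L^\infty}$. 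One small remark: the condition $\theta+s>1$ that you flag at the end is already needed in your first group (e.g.\ for $h_1$ via Lemma~\ref{Lem25} with $(s_1,s_2,s_3)=(s,\theta-1,\theta-1)$), not only for the $\p_x\psi$ terms.
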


\begin{proof}
We proceed as in the proof of Lemma \ref{LemZ1}.
It suffices to estimate $\|w\|_{X^{\theta-1,1}_T} $.
By a similar argument to \eqref{tr1}, Lemma \ref{Lem24} shows that
\EQQS{
  \bigg\| \sum_{j=1}^7 \tilde{g}_j(u_1,u_2)\bigg\|_{H_x^{\te}}
  \le C(\|u_1\|_{L_T^\I H_x^s}, \|u_2\|_{L_T^\I H_x^s},
  \|J_x\psi\|_{L_{T,x}^\I})\|w\|_{H_x^{\te}},
}
where $\tilde{g}_j$ is defined in \eqref{eq_w}.
It remains to evaluate the contributions of $ h_j$ for $ j\in \{8,\dots,14\}$ which are also defined in \eqref{eq_w}.
We first notice that
$\|h_{13}(u_1,u_2)\|_{H_x^{\theta-1}} \lesssim \|h_{13}(u_1,u_2)\|_{L_x^2}\lesssim  \|J_x\psi\|_{L_x^{\infty}}^2\|w\|_{L_x^2} $ and
since $ \theta+(s-1)>0 $, we can directly check that Lemmas \ref{Lem24} and \ref{Lem25}, and Corollary \ref{cor26} lead to
\EQQS{
  \bigg\|\sum_{j=8}^{14} h_j(u_1,u_2)\bigg\|_{H_x^{\theta-1}}
  \le C(\|u_1\|_{L_T^\I H_x^s}, \|u_2\|_{L_T^\I H_x^s},
  \|J_x\psi\|_{L_{T,x}^\I})\|w\|_{H_x^\theta}.
}
For instance, we rewrite $h_{14}(u_1,u_2) $ as $ h_{14}(u_1,u_2)= \p_x(\psi^2  \bar{w})-2\psi\p_x\psi \bar{w} $.
The first term is divergence free and easy to treat.
We use Corollary \ref{cor26} with $(s_1,s_2,s_3)=(s,\te,\te-1)$ and apply Lemma \ref{Lem24} to the second term so that
\EQQS{
  \|\psi\p_x\psi \bar{w}\|_{H_x^{\te-1}}
  \les \|\p_x \psi\|_{L_x^\I} \|\psi w\|_{H_x^\te}
  \les \|J_x\psi\|_{L_x^\I}^2 \|w \|_{H_x^\te},
}
which completes the proof.
\end{proof}

We now introduce the modified energy for the difference $w$ of two solutions $u_1$ and $u_2$.
As a preliminary step, we define the anti-derivative operator $\p_x^{-1}$.
For a function $f\in \Sp(\R)$ such that $ \xi^{-1} \hat{f} \in \Sp(\R) $, we set
\EQS{\label{def_anti}
  \p_x^{-1}f=\F_x^{-1}((i\xi)^{-1}\hat{f}(\xi)).
}
Note in particular that, for such a function, we have $f=\p_x \p_x^{-1}f=\p_x^{-1} \p_x f$.
For $N_0\ge 1$, we define the modified energy for the difference by

\EQS{\label{def_E2}
  \E_N(u_1,u_2,N_0)=
  \begin{cases}
    \|\dot{P}_N w\|_{L_x^2}^2&\mathrm{for} \quad N\le N_0,\\
    \|\dot{P}_N w\|_{L_x^2}^2 +\sum_{j=1}^3 c_j \E_N^j(u_1,u_2)&\mathrm{for} \quad N> N_0,
  \end{cases}
}
where $w:=u_1-u_2$ and
\EQQS{
  \E_N^1(u_1,u_2)
  &:=\sum_{\substack{N_1\sim N_3\gg N_2\vee N_4,\\
     N_1^{-1}\le M\le N_1^{2/3}} }
    \RE i \int_\R \p_x^{-1}P_N^2 P_{N_1}\bar{w}\p_x P_{N_3}u_2
    \p_x^{-1}\dot{P}_M (P_{N_2}\bar{w}P_{N_4}u_1)dx,\\
  \E_N^2(u_1,u_2)
  &:=\sum_{\substack{N_1\sim N_3\gg N_2\vee N_4,\\
     N_1^{-1}\le M\le N_1^{2/3}}}
    \RE i \int_\R \p_x^{-1}P_N^2 P_{N_1}\bar{w}\p_x P_{N_3}u_2
    \p_x^{-1}\dot{P}_M (P_{N_2}w P_{N_4}\bar{u}_2)dx,\\
  \E_N^3(u_1,u_2)
  &:=\sum_{\substack{N_1\sim N_3\gg N_2\vee N_4,\\
     N_1^{-1}\le M\le N_1^{2/3}}}
   \RE  i \int_\R \p_x^{-1}P_N^2 P_{N_1} \bar{w}\p_x P_{N_3}u_2
    \RE \p_x^{-1}\dot{P}_M(P_{N_2}w P_{N_4}\bar{\psi})dx
}
and $c_1,c_2,c_3$ are real constants that will be fixed later in the proof of Proposition \ref{prop_dif2}.
Recall that $P_N$ is the non-homogeneous Littlewood-Paley operator, whereas $\dot{P}_N $ is the homogeneous one (see Subsection \ref{subs_notation}).

Let us explain the choice of this modified energy.
The obstruction to going below $ H^1(\R) $ without modifying the energy is due to the contribution of three bad terms, each involving a product of the form $P_N^2 P_{N_1}\bar{w}\p_x P_{N_3}u_2 $, multiplied respectively by $ \dP_M (P_{N_2}\bar{w} P_{N_4} u_1)$, $ \dP_M (P_{N_2}w P_{N_4} \bar{u}_2)$, and
$ \dP_M (P_{N_2}w P_{N_4} \bar{\psi})$ with $N_1\sim N_3\gg N_2\vee N_4$ and $  N_1^{-1}\le M\le N_1^{2/3}$.
For these interactions, the resonance function is given by
\EQQS{
  \Omega(\xi_1,\xi_2,\xi_3,\xi_4)
  = \xi_1^2+\xi_2^2-\xi_3^2-\xi_4^2=-2(\xi_1+\xi_4)(\xi_2+\xi_4).
}
Note in particular that $ |\Omega|\sim N_1 M $.
Instead of applying a Fourier multiplier operator whose symbol is the inverse of $(\xi_1+\xi_4)(\xi_2+\xi_4) $ to the problematic terms, we take a slightly different approach.
While such a multiplier would yield complete cancellation, it is not conveniently formulated in physical space.
Therefore, we choose to apply a Fourier multiplier operator involving only the inverse of the main contribution in $ \Omega $, namely $ \xi_1 (\xi_2+\xi_4)$.
This choice has the advantage that it can be easily expressed in physical space using the anti-derivative operator.
Naturally, this approach leaves a residual term in the cancellation of the bad terms.
However, such a term consists of a Fourier multiplier whose symbol is $\xi_4 \xi_1^{-1} $, which effectively allows us to exchange a high-frequency derivative for a lower-order one.
See the second term in the right hand side of \eqref{eq_exchange}.
This exchange is sufficient to obtain the desired estimates.
We also define the modified energy at the $H^\theta$-regularity associated with the difference of two solutions:
\EQS{\label{def_E}
  E^\theta(u_1,u_2,N_0)
  =\sum_{N\in 2^\Z} (N^{-1+2\de}\vee N^{2\theta}) |\E_N(u_1,u_2,N_0)|.
}

The following Proposition ensures that $E^\theta(u_1,u_2,N_0)$ can be treated as $\|u_1(t)-u_2(t)\|_{\overline{H}^{\theta,\de}}^2$ if we choose $N_0>1$ sufficiently large.
Thanks to this estimate with Lemma \ref{lem_weight}, we can deduce the a priori estimate for $\|u_1-u_2\|_{L_T^\I H_x^\theta}^2$ once we have the corresponding estimate for $E^\theta(u_1,u_2,N_0)$.

\begin{prop}[Coercivity of the modified energy]\label{prop_coer}
  Let $0<T\le 1$, $3/4< s < 1$ and $1-s< \theta< s-1/2$.
  Let $u_1,u_2\in  L_T^\I H^s$ and $J_x\psi\in L^\I(\R^2)$.
  Then, for $N_0\gg(\|u_1\|_{L_T^\I H_x^s}+\|u_2\|_{L_T^\I H_x^s}+\|J_x\psi\|_{L_{T,x}^\I})^\frac{2}{s-\theta-1/2}$ it holds
  \EQS{\label{eq_coercivity}
    \begin{aligned}
      \big|E^\theta(u_1(t),u_2(t),N_0)-\|w(t)\|_{\overline{H}^{\theta,\de}}^2\big|
      \le \frac14 \|w(t)\|_{\overline{H}^{\theta,\de}}^2.
    \end{aligned}
  }
  for $t\in[0,T]$.
  In particular, for any $t\in [0,T]$,
  \EQS{\label{eq_coercivity2}
    E^\te(u_1(t),u_2(t),N_0)
    \le \frac{5}{4} \|w(t)\|_{\ov{H}^{\te,\de}}^2
    \le \frac{5}{3} E^\te(u_1(t),u_2(t),N_0).
  }
\end{prop}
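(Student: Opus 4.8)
The plan is to establish \eqref{eq_coercivity} by bounding each correction term $\E_N^j(u_1,u_2)$ and showing that, after multiplying by the weight $(N^{-1+2\de}\vee N^{2\theta})$ and summing over $N\in 2^\Z$, the total contribution is at most $\tfrac14\|w(t)\|_{\overline{H}^{\theta,\de}}^2$ provided $N_0$ is large enough. Once \eqref{eq_coercivity} is proven, \eqref{eq_coercivity2} follows immediately: the left inequality in \eqref{eq_coercivity2} is $E^\theta\le\|w\|^2+\tfrac14\|w\|^2=\tfrac54\|w\|^2$, and the right one is $\tfrac54\|w\|^2\le\tfrac54\cdot\tfrac43 E^\theta=\tfrac53 E^\theta$, using $\|w\|^2\le\tfrac43 E^\theta$ which is the other half of \eqref{eq_coercivity}. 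So the entire content is the estimate on $\sum_{N>N_0}(N^{-1+2\de}\vee N^{2\theta})|\sum_{j=1}^3 c_j\E_N^j(u_1,u_2)|$.

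First I would estimate a single $\E_N^j$. In each term, the anti-derivative $\p_x^{-1}P_N^2 P_{N_1}\bar w$ gains a factor $N_1^{-1}$ (recall $N\sim N_1\sim N_3$ on the support), the factor $\p_x P_{N_3}u_2$ costs $N_1$, and the inner anti-derivative $\p_x^{-1}\dot P_M(\cdot)$ gains $M^{-1}$. Using Hölder with $L^2\times L^2\times L^\infty$ (placing the high-frequency factors $P_{N_1}\bar w$, $P_{N_3}u_2$ in $L^2$ and the low-frequency anti-derivative term in $L^\infty$), and Bernstein on $\dot P_M$, one gets for $\E_N^1$ a bound of the shape
\EQQS{
  |\E_N^1(u_1,u_2)|
  \les \sum_{\substack{N_1\sim N_3\sim N,\ N_2\vee N_4\ll N_1\\ N_1^{-1}\le M\le N_1^{2/3}}}
  M^{-1/2}\|\dot P_{N_1}w\|_{L^2}\|\dot P_{N_3}u_2\|_{L^2}
  \|\dot P_{N_2}w\|_{L^2}\|\dot P_{N_4}u_1\|_{L^2},
}
where the Bernstein factor $M^{1/2}$ from $\|\dot P_M f\|_{L^\infty}\les M^{1/2}\|\dot P_M f\|_{L^2}$ combines with $M^{-1}$ from the anti-derivative. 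Summing $M^{-1/2}$ over the dyadic range $N_1^{-1}\le M\le N_1^{2/3}$ is dominated by the smallest $M$, i.e. by $N_1^{1/2}$, which is harmless. Summing in $N_2,N_4$ (the low frequencies) costs $\|w\|_{L^\infty_T H^\theta}$ and $\|u_1\|_{L^\infty_T H^s}$ with $\theta>1/4$, $s>3/4$ so that $\theta+s>0$ keeps these sums finite — here one must be slightly careful near $N_2,N_4\to 0$ and use the $\overline H^{\theta,\de}$-weight on $w$'s low frequencies, which is exactly why that space was introduced; the sum $\sum_{N_2<1} N_2^{1/2}\cdot N_2^{-1/2+\de}$ converges. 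The terms $\E_N^2$ and $\E_N^3$ are handled analogously: for $\E_N^3$ one places $\dot P_{N_4}\psi$ in $L^\infty_x$ (using $J_x\psi\in L^\infty$), so no $L^2$-weight on $\psi$ is needed and the argument only uses $\|J_x\psi\|_{L^\infty_{T,x}}$.

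Collecting, after multiplying by the weight and summing over $N>N_0$, each term produces a bound of the form $C(\|u_1\|_{L^\infty_T H^s}+\|u_2\|_{L^\infty_T H^s}+\|J_x\psi\|_{L^\infty_{T,x}})^2\, N_0^{-(s-\theta-1/2)}\,\|w(t)\|_{\overline H^{\theta,\de}}^2$; the negative power of $N_0$ comes from the fact that in $\E_N^j$ two derivatives land on factors of regularity $s$ while the weight only provides $2\theta$ (or the low-frequency version), so there is a genuine gain $N_1^{2\theta-2s+1+}=N_1^{-(s-\theta-1/2)\cdot 2+}$ summable in $N_1\sim N>N_0$. Choosing
\EQQS{
  N_0\gg \bigl(\|u_1\|_{L^\infty_T H^s}+\|u_2\|_{L^\infty_T H^s}+\|J_x\psi\|_{L^\infty_{T,x}}\bigr)^{\frac{2}{s-\theta-1/2}}
}
makes the constant at most $\tfrac14$ (absorbing also $|c_1|+|c_2|+|c_3|$), which is \eqref{eq_coercivity}.

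The main obstacle I expect is the careful bookkeeping of the very-low-frequency sums (the $M$-summation down to $M=N_1^{-1}$, and the $N_2,N_4\to 0$ regime), ensuring all dyadic series converge using precisely the weight in $\overline H^{\theta,\de}$ and the constraint $\theta>1/4$; a secondary subtlety is verifying that the frequency constraints in \eqref{def_E2} ($N_1\sim N_3\gg N_2\vee N_4$, $N\sim N_1$, and $N_1^{-1}\le M\le N_1^{2/3}$) are internally consistent so that the anti-derivatives $\p_x^{-1}P_N^2 P_{N_1}\bar w$ and $\p_x^{-1}\dot P_M(\cdot)$ are well-defined — this is where the homogeneous decomposition and the precise ranges matter.
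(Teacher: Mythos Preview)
Your approach is essentially the same as the paper's, and the overall structure is sound. A few corrections are in order, however.

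First, in the definition of the $\E_N^j$ the projections $P_{N_2}$ and $P_{N_4}$ are \emph{non-homogeneous} (only $\dot P_M$ is homogeneous), so $N_2,N_4\ge 1$ and your concern about $N_2,N_4\to 0$ is moot; the paper explicitly notes ``we take the non-homogeneous decomposition to $P_{N_2}w$'' and simply bounds the inner sum by $\|w\|_{L^2}\|u_1\|_{H^s}$ using $s>1/2$. The $\overline H^{\theta,\de}$-weight plays no role at this stage; it enters only at the end via the trivial embedding $\|w\|_{H^\theta}\le\|w\|_{\overline H^{\theta,\de}}$.

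Second, your displayed estimate for $|\E_N^1|$ is missing a Bernstein factor: after $\|\dot P_M f\|_{L^\infty}\lesssim M^{1/2}\|f\|_{L^2}$ you still need to bound $\|P_{N_2}\bar w\,P_{N_4}u_1\|_{L^2}$, which costs an extra $N_4^{1/2}$ (placing $P_{N_4}u_1$ in $L^\infty$). This is exactly how the paper writes it. Consequently the per-$N$ gain is $N^{-(s-\theta-1/2)}$, not $N^{-2(s-\theta-1/2)}$: the two high-frequency factors are $P_{N_1}\bar w$ and $P_{N_3}u_2$, so only one of them has regularity $s$, contrary to your remark that ``two derivatives land on factors of regularity $s$''. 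Your conclusion about the threshold $N_0\gg(\cdots)^{2/(s-\theta-1/2)}$ is nevertheless correct, since after summing over $N>N_0$ one extracts $N_0^{-(s-\theta-1/2)}$ and the remaining constant is quadratic in the data norms.
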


\begin{proof}
  For simplicity, we drop $t$.
  We see from \eqref{def_E} and the triangle inequality that
  \EQQS{
    \big|E^\theta(u_1,u_2,N_0)-\|w\|_{\overline{H}^{s,\de}}^2\big|
    \le \sum_{j=1}^3\sum_{N>N_0}N^{2\theta}|\E_N^j(u_1,u_2)|.
  }
  The Bernstein inequality shows
  \EQQS{
    &N^{2\theta}|\E_N^1(u_1,u_2)|\\
    &\les \|P_N w\|_{H_x^\theta}\|P_N u_2\|_{H_x^\theta}
     \sum_{N^{-1}\les M\les N^{2/3}}\sum_{N_2\vee N_4\ll N}
     M^{-1/2}\|P_{N_2} w\|_{L_x^2}N_4^{1/2}\|P_{N_4} u_1\|_{L_x^2}\\
    &\les N^{-s+\theta+1/2} \|P_N w\|_{H_x^\theta}\|P_N u_2\|_{H_x^s}
     \|w\|_{L_x^2}\|u_1\|_{H_x^s}
  }
  since $1-s<\theta<s-1/2$ ad we take the non-homogeneous decomposition to $P_{N_2}w$.
  Similarly, we have
  \EQQS{
    N^{2\theta}|\E_N^2(u_1,u_2)|
    \les N^{-s+\theta+1/2} \|P_N w\|_{H_x^\theta}\|P_N u_2\|_{H_x^s}
     \|w\|_{L_x^2}\|u_2\|_{H_x^s}.
  }
  Finally, we estimate $\E_N^3(u_1,u_2)$.
  In this case, we need to look at frequency interactions more carefully.
  We have
  \EQQS{
    N^{2\theta}|\E_N^3(u_1,u_2)|
    &\les \sum_{N^{-1}\les M\les N^{2/3}}
     \sum_{N_2\vee N_4\ll N}
     \|P_N w\|_{H_x^\theta}\|P_N u_2\|_{H_x^\theta}
     M^{-1/2}\|\dP_M(P_{N_2}w P_{N_4}\bar{\psi})\|_{L_x^2}\\
    &\les N^{-s+\theta+1/2} \|P_N w\|_{H_x^\theta}\|P_N u_2\|_{H_x^s}
    \|w\|_{L_x^2} \|J_x\psi\|_{L_{x}^\I}.
  }
  Therefore, by choosing $N_0$ sufficiently large (i.e., $N_0^{-s+\theta+1/2}(\|u_1\|_{L_T^\I H_x^s}+\|u_2\|_{L_T^\I H_x^s}+\|J_x\psi\|_{L_{T,x}^\I})^2\ll 1$)
  with a trivial embedding $\|w\|_{H_x^\theta}\les \|w\|_{\ov{H}^{\theta,\de}}$, we conclude the proof of \eqref{eq_coercivity}.
  Finally, \eqref{eq_coercivity2} follows directly from \eqref{eq_coercivity} and the triangle inequality.
\end{proof}

\begin{prop}\label{prop_dif2}
  Let $N_0>1$ be a dyadic number, $0<T<1$, $3/4< s < 1$, and $1/4< \theta< s/2-1/8$.
  Let $u_1$ and $u_2$ be two solutions of \eqref{eq2} on $[0,T]$ belonging to $Z_T^s$ and emanating from $u_{0,1}\in H^s(\R)$ and $u_{0,2}\in H^s(\R)$, respectively.
  Setting
  \EQS{
   \begin{aligned}\label{eq77}
     K_s= &\|u_1\|_{Z_T^{s}}+\|u_2\|_{Z_T^{s}}
      +\|J_x^{s+1+\e}\psi\|_{L_{t,x}^\I}\\
      &+\|\p_t \psi\|_{L_{t,x}^\I}+\|\p_x\psi\|_{L_{t}^\I L_x^1}
      +\|(i\p_t+\p_x^2)\psi\|_{L_t^\I L_x^2}
      +\|\Psi\|_{L_t^\I H_x^{s+1+\e}},
   \end{aligned}
  }
  where $\Psi$ is defined in \eqref{def_psi}.
  Then, there exists $C=C(K_s)>0$ such that
  \EQS{\label{eq_dif_nonreg}
    \sup_{t\in [0,T]}E^\theta(u_1(t),u_2(t),N_0)
    \le E^\theta(u_{0,1},u_{0,2},N_0)
    +CT^{1/4}\|w\|_{Z_T^\te}\|w\|_{L_T^\I \ov{H}^{\theta,\de}},
  }
  where we set $w=u_1-u_2$.
\end{prop}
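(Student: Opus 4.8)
The plan is to run an energy estimate on the modified quantity $E^\theta(u_1(t),u_2(t),N_0)$ by differentiating in time and integrating over $[0,t]$, exactly in the spirit of the proof of Proposition \ref{prop_apri} but now for the difference equation \eqref{eq_w3} and with the correction terms $\E_N^j$. First I would compute $\frac{d}{dt}\|\dot P_N w\|_{L^2_x}^2$ using \eqref{eq_w3}, which produces the analogue of the terms $J_t^{(j)}$ together with the contribution of $\Psi_1-\Psi_2=0$ (the two solutions share the same $\psi$, so that term drops). For the low frequencies $N\le N_0$ there is no correction term and one simply invokes Lemma \ref{lem_weight} together with Lemma \ref{LemZ22}: the weight $\overline{H}^{0,\de}$ at low frequencies is controlled by $\|w\|_{H^{1/4+}}$, which is in turn controlled by $\|w\|_{Z^\theta_T}$ since $\theta>1/4$. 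For the high frequencies $N>N_0$ one has the extra terms $c_j\,\frac{d}{dt}\E_N^j$; the constants $c_1,c_2,c_3$ are chosen so that the leading (worst) contributions coming from the time derivative hitting the factor $e^{it\p_x^2}$-type oscillation in $\dot P_N\bar w$ cancel against the bad terms in $\frac{d}{dt}\|\dot P_N w\|_{L^2_x}^2$ that arise from the interactions $N_1\sim N_3\gg N_2\vee N_4$, $N_1^{-1}\le M\le N_1^{2/3}$ appearing in the nonlinearities $u_1\bar w\p_x u_2$, $w\bar u_2\p_x u_2$, $w\bar\psi\p_x u_2$ of \eqref{eq_w3}. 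This is the normal-form / modified-energy cancellation already described in the text before \eqref{def_E}.

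The detailed bookkeeping then splits into several families of terms. First, the terms where the derivative loss does not land on the worst frequency (or where the interaction is genuinely nonresonant) are handled directly by the Bourgain-type estimates Lemmas \ref{lem_Bourgain1}--\ref{lem_Bourgain4} combined with the extension operator $\rho_T$ (Lemma \ref{extensionlem}) and the a priori bound $\|w\|_{Z^\theta_{\om,T}}\le C(K_s)\|w\|_{L^\infty_T H^\theta}$ from Lemma \ref{LemZ22}; these give the gain $T^{1/4}$ and the structure $\|w\|_{Z^\theta_T}\|w\|_{L^\infty_T\overline H^{\theta,\de}}$ required by \eqref{eq_dif_nonreg}, just as in Proposition \ref{prop_dif1}. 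Second, the divergence-form pieces and the pieces in which a derivative can be integrated by parts onto $\psi$ (using $\p_x\psi\in L^\infty_TL^1_x$, $J_x^{s+1+\e}\psi\in L^\infty_{t,x}$, and Lemma \ref{lem_comm1}) are controlled by $T$ times $C(K_s)$ times the energy. Third — and this is the heart — one differentiates $\E_N^j$. By the Leibniz rule this produces: (a) the term where $\p_t$ hits $\p_x^{-1}P_N^2P_{N_1}\bar w$, which by the difference equation becomes $\p_x^{-1}P_N^2P_{N_1}(i\p_x^2\bar w+\dots)$, the $i\p_x^2\bar w$ part giving $iP_N^2P_{N_1}\p_x\bar w$ and hence exactly the oscillatory contribution that is designed to cancel the bad $J_t$ terms after summing the geometric series in $M$ (telescoping $\sum_{N_1^{-1}\le M\le N_1^{2/3}}$); (b) the term where $\p_t$ hits $\p_x P_{N_3}u_2$, producing $\p_x P_{N_3}(i\p_x^2 u_2+F(u_2,\psi)-\Psi)$ — here $\p_x^3 u_2$ is a genuine two-derivative loss on $u_2$ but it is at frequency $\sim N$ against $\p_x^{-1}P_N^2 P_{N_1}\bar w$ at the same frequency, so the net derivative count is acceptable after using $\|u_2\|_{Z^s_T}$ and the low-frequency weight on the $M$-piece; (c) the terms where $\p_t$ hits $\p_x^{-1}\dot P_M(P_{N_2}\bar w P_{N_4}u_1)$ (or the analogues with $u_2$, $\psi$), which after substituting the equations are lower order because $M$ is small and one only needs $L^1$ or $L^2$ control of the low-frequency products — here the hypotheses $\p_x\psi\in L^\infty_TL^1_x$ and $(i\p_t+\p_x^2)\psi\in L^\infty_TL^2_x$ (Remark \ref{rem_psi}) are exactly what make the $\psi$-terms in $\E_N^3$ close, as flagged in Remark \ref{rem_psi}; and (d) the residual ``exchange'' term with symbol $\xi_4\xi_1^{-1}$ described after \eqref{def_E}, which trades a derivative at frequency $N_1\sim N$ for one at frequency $N_4\ll N$ and is therefore summable.

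After the cancellation of (a) against the three bad families, the surviving contribution from (b)--(d) and from all non-problematic interactions is bounded by $C(K_s)\big(T+T^{1/4}\big)\|w\|_{Z^\theta_T}\|w\|_{L^\infty_T\overline H^{\theta,\de}}$, and since $0<T<1$ and $E^\theta\simeq \|w\|_{\overline H^{\theta,\de}}^2$ by Proposition \ref{prop_coer}, integrating from $0$ to $t$ and taking the supremum over $t\in[0,T]$ gives \eqref{eq_dif_nonreg}. The constraint $1/4<\theta<s/2-1/8$ enters precisely to make the worst exponent $N^{-s+2\theta+\text{(something)}}$ summable over dyadic $N>N_0$ after the derivative redistributions in (b) and the exchange term (d), and to keep all the low-frequency products $\p_x^{-1}\dot P_M(\cdots)$ in $L^2$ without loss, mirroring the role $s>1/4$ (resp. $s>3/4$) plays in \cite{MPV18} and in the discussion opening Section \ref{nonregular}. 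The main obstacle I expect is (b)--(d): organizing the time-derivative-of-correction-term contributions so that the designed cancellation with the bad $J_t^{(j)}$ terms is exact, controlling the residual $\xi_4\xi_1^{-1}$ exchange term uniformly in the $M$-sum, and verifying that the $\psi$-dependent pieces of $\frac{d}{dt}\E_N^3$ really do close using only $\p_x\psi\in L^\infty_TL^1_x$ and $(i\p_t+\p_x^2)\psi\in L^\infty_TL^2_x$ rather than any $L^2$ control of $\psi$ itself — this is the step where the non-divergence structure genuinely bites and where the weighted space $\overline H^{\theta,\de}$ is indispensable.
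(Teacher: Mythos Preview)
Your overall plan is right, but the description of the cancellation mechanism has two concrete errors, and the second one is the genuine content of the proof.

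First, your items (a) and (b) misread the primary cancellation. The term $A_1$ (linear part of $\p_t$ hitting $\p_x^{-1}P_N^2P_{N_1}\bar w$) is $\int \p_x P_N^2P_{N_1}\bar w\,\p_x P_{N_3}u_2\,\p_x^{-1}\dot P_M(\cdots)\,dx$, which is \emph{not} the bad $J_t$ term (the latter has no $\p_x$ on $\bar w$ and no $\p_x^{-1}$ on $\dot P_M$), so there is no direct cancellation and no telescoping in $M$. Likewise the $\p_x^3 u_2$ contribution in (b) is not ``acceptable by derivative count'': it carries $N^{2}$ net high-frequency derivatives against the $M^{-1}$ low piece, giving a divergent power $N^{\theta+2-s+}$ after summing. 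What actually happens is that all four linear contributions $A_1,A_2,A_3,A_4$ (from $\p_t$ hitting each of the four factors and substituting $i\p_x^2$) are combined: one integrates $A_2$ by parts twice and uses the identity $\p_x^{-1}(fg)=f\p_x^{-1}g-\p_x^{-1}(\p_x f\,\p_x^{-1}g)$ on $A_3+A_4$, and the sum collapses to exactly $-2\times(\text{bad term})$ plus the $\xi_4\xi_1^{-1}$ exchange residual you call (d). See \eqref{eq_exchange}.

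Second, and this is the real gap, you miss the \emph{secondary} cancellation flagged in the introduction as ``one more difficulty''. When you substitute the \emph{nonlinear} part of the equation into $\p_t(\p_x^{-1}P_N^2P_{N_1}\bar w)$ and into $\p_t(\p_x P_{N_3}u_2)$, you generate terms such as $A_{5,3}=\int \p_x^{-1}P_N^2P_{N_1}(\overline{|u_2|^2\p_x w})\,\p_x P_{N_3}u_2\cdot(\text{low})$ and $A_{6,1}=\int \p_x^{-1}P_N^2P_{N_1}\bar w\,\p_x P_{N_3}(|u_2|^2\p_x u_2)\cdot(\text{low})$. The piece where $|u_2|^2$ sits at frequency $\ll N$ puts the derivative back on $w$ (resp.\ $u_2$) at frequency $\sim N$, and neither term is individually summable. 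The resolution is that, up to commutators $[P_N,P_{\ll N}(|u_2|^2)]$ (controlled by \eqref{eq_comm3}) and the high-frequency part $P_{\gts N}(|u_2|^2)$ (which donates regularity), the two bad pieces sum to $\p_x\bigl(\p_x^{-1}P_N^2P_{N_1}\bar w\,\p_x P_{N_3}u_2\bigr)P_{\ll N}(|u_2|^2)\cdot(\text{low})$, and a further integration by parts moves the derivative onto the low-frequency factors. This is why the paper uses the reformulation \eqref{eq_w2} with $|u_2|^2\p_x w$ (not $|u_1|^2\p_x w$): the coefficients must match for the cancellation to occur. The same pairing has to be carried out for $(A_{5,6},A_{6,3})$ and $(A_{5,8},A_{6,5})$, and then again inside $\E_N^2$ and $\E_N^3$. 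Separately, the nonlinear substitutions $A_{5,10},A_{5,13},A_{5,15}$ (coming from $u_2^2\p_x\bar w$, $u_2\psi\p_x\bar w$, $\psi^2\p_x\bar w$) do \emph{not} admit such a cancellation and must instead be handled by Bourgain-type estimates exploiting the strong resonance $|\Omega|\gtrsim N^2$; this is where the restriction $\theta<s/2-1/8$ (rather than merely $\theta<s-1/2$) is actually used, since one loses an extra $N^{1/2}$ from the crude bound \eqref{tutu2} on the low piece when taking extensions. Without organizing these secondary cancellations and the residual Bourgain estimates, the argument does not close.
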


\begin{proof}
  For simplicity, we put $z:=u_1+u_2$.
  Similarly to Propositions \ref{prop_apri} and \ref{prop_dif1}, we consider the time derivative of $\E_N(u_1(t),u_2(t),N_0)$ for each $N\in 2^\Z$.
  Once we have finished proving the desired estimates, we sum over $N$ based on \eqref{def_E}.
  We notice that the low frequency part $ N\le N_0 $ is already estimated by Lemma \ref{lem_weight}, it remains to treat the high frequency part $N>N_0$, which is the most delicate partof this article.
  By the fundamental theorem of calculus, we have
  \EQS{\label{eq_6.16}
    \begin{aligned}
       N^{2\theta}
      \E_N(u_1(t),u_2(t),N_0)
      &=N^{2\theta}
      \E_N(u_1(0),u_2(0),N_0)\\
      &\quad +\sum_{j=1}^{15}2\la_j N^{2\theta}
       \RE\int_0^t \int_\R  h_j(u_1,u_2) P^2_N \bar{w}dxdt'\\
      &\quad +\sum_{j=1}^{3} c_j  N^{2\theta}
       \int_0^t \frac{d}{dt'}\E_N^j (u_1(t'),u_2(t')) dt',
    \end{aligned}
  }
  where $h_j$ is defined in \eqref{eq_w3}, $\la_j=\la$ for $j=1,\dots,8$ and $\la_j=\mu$ for $j=9,\dots,15$.
  Now, for $j=1,\dots, 15$, we set
  \EQQS{
    J_t^{(j)}:=  N^{2\theta}
    \RE \int_0^t\int_\R h_j(u_1,u_2) P_N^2 \bar{w} dxdt'.
  }
  In what follows, we treat each $J_t^{(j)}$ for $j=1,\dots,15$.
  First it immediately follows from \eqref{eq2.2.1} that
  \EQQS{
  \sum_{N> N_0}|J_t^{(4)}|
  \les T\|h_4(u_1,u_2)\|_{L_T^\I H_x^\theta}\|w\|_{L_T^\I H_x^\theta}
  \les TK^2 \|w\|_{L_T^\I H_x^\theta}^2.
  }
  Similarly, we can estimate $J_t^{(7)}, J_t^{(11)}$ and $J_t^{(14)}$ by the same bound as above.
  Except for $J_t^{(2)}, J_t^{(3)}$ and $J_t^{(6)}$,
  the strategy to obtain the desired bound is the same as Proposition \ref{prop_dif1}, i.e., we use Bourgain type estimates \eqref{eq4.3} for nonresonant interactions whereas the refined Strichartz estimate (Proposition \ref{prop_stri1}) for resonant interactions after employing the non-homogeneous Littlewood-Paley decomposition to each $u_1,u_2$ and $\psi$, for example $u_1=\sum_{N_1\in 2^\N}P_{N_1}u_1$ and $\psi=\sum_{N_2\in 2^\N}P_{N_2}\psi$.
  Moreover, for $J_t^{(1)}, J_t^{(5)}$ and $J_t^{(8)}$, we take the complex conjugate of the integrand before decomposition so as to use integration by parts (to be precise, Lemma \ref{lem_comm1}).
  On the other hand, for $J_t^{(2)}, J_t^{(3)}$ and $J_t^{(6)}$, correction terms play an essential role in canceling out derivative losses.

  \noindent
  \textbf{Estimate for $J_t^{(1)}$.}
  Following \eqref{eq_symm} from the proof of Proposition \ref{prop_apri}, it suffices to estimate the following:
  \EQQS{
    J_t^{(1)}
    =N^{2\theta}
      \sum_{N_1,\dots,N_4\in 2^\N}
      \RE\int_0^t \int_\R \Pi_N(P_{N_1}w,P_{N_2}\bar{w})
      P_{N_3} u_1 P_{N_4}\bar{u}_1dxdt'
  }
  where $\Pi_N(\cdot,\cdot)$ is defined by \eqref{def_pi}.
  By taking the complex conjugate, we may assume that $N_1\ge N_2$.
  We consider $J_t^{(1)}$ by the case-by-case analysis.

  \noindent
  \underline{Case 1: $N_1\sim N_2\gts N_3\vee N_4$.}
  When $N_3\sim N_4$, we can use Proposition \ref{prop_stri1} on $P_{N_3}u_1$ and $P_{N_4}u_1$ after using Lemma \ref{lem_comm1}.
  On the other hand, when $N_3\gg N_4$ or $N_4\gg N_3$, we employ \eqref{eq_I2.1}.

  \noindent
  \underline{Case 2: $N_1\sim N_3\gts N_2\vee N_4$.}
  When $N_2\les N_4$, we can use Proposition \ref{prop_stri1} on $P_{N_2}u_1$ and $P_{N_4}u_1$.
  On the other hand, when $N_2\gg N_4$, we employ \eqref{eq_I2.2}.

  \noindent
  \underline{Case 3: $N_1\sim N_4\gts N_2\vee N_3$.}
  When $N_2\les N_3$, we can use Proposition \ref{prop_stri1} on $P_{N_2}u_1$ and $P_{N_3}u_1$.
  On the other hand, when $N_2\gg N_3$, we employ \eqref{eq_I2.1}.

  \noindent
  \underline{Case 4: $N_3\sim N_4\gts N_1\ge N_2$.}
  We use Proposition \ref{prop_stri1} on $P_{N_1}u_1$ and $P_{N_2}u_1$.

  \noindent
  \textbf{Estimate for $J_t^{(5)}$.}
  As in \eqref{eq_symm}, we have
  \EQQS{
    J_t^{(5)}
    =N^{2\theta}
     \sum_{N_1,\dots, N_4\in 2^{\N}} \RE
     \int_0^t \int_\R \Pi_N(P_{N_1}w, P_{N_2}\bar{w})\RE(P_{N_3}u_1 P_{N_4}\bar{\psi})dxdt'.
  }
  By the symmetry, we may assume that $N_1\ge N_2$.

  \noindent
  \underline{Case 1: $N_1\sim N_2\gts N_3\vee N_4$.}
  When $N_3\les N_4$, we can put one derivative on $P_{N_4}\psi$ after using Lemma \ref{lem_comm1}.
  On the other hand, when $N_3\gg N_4$, we see from Lemma \ref{lem_Bourgain3} that $|J_t^{(5)}|\les T^{1/4}K^2 \|w\|_{L_T^\I H_x^\theta}^2$.

  \noindent
  \underline{Case 2: $N_1\sim N_3\gts N_2\vee N_4$.}
  When $N_2\les N_4$, we can put $N_2^{2\theta}$ on $P_{N_4}\psi$, since $N_1^{2\theta}N_2\le N_1 N_2^{2\theta}$.
  When $N_2\gg N_4$, we can use \eqref{eq_I4.1} (resp. \eqref{eq_I4.2}) for $P_{N_3}\bar{u}_1P_{N_4}\psi$ (resp. $P_{N_3}u_1 P_{N_4}\bar{\psi}$) with
  \EQQS{
    (u_1,u_2,u_3)
    =(\p_x P_{N_1}w,P_{N_3}u_1, P_N^2 P_{N_2}w)\quad \mathrm{or}\quad (P_N^2 P_{N_1}w,P_{N_3}u_1, \p_x P_{N_2}w).
  }

  \noindent
  \underline{Case 3: $N_1\sim N_4\gts N_2\vee N_3$ or $N_3\sim N_4\gts N_1\ge N_2$.}
  Since $N_4$ is one of the highest frequencies, we can put derivatives on $P_{N_4}\psi$.

  \noindent
  \textbf{Estimate for $J_t^{(8)}$.}
  By the same way as the estimate for $J_t^{(5)}$ in the proof of Proposition \ref{prop_apri}, we obtain $|J_t^{(8)}|
  \les TK^2 \|w\|_{L_T^\I H_x^\theta}^2$.

  \noindent
  \textbf{Estimate for $J_t^{(9)}$.}
  Recall that $h_9=u_1^2 \p_x \bar{w}$.
  Since the estimate of $h_1=|u_1|^2\p_x w$ is closed as in the estimate for $J_t^{(1)}$, we can apply this argument to $h_9=u_1^2 \p_x \bar{w}$.
  Recall that \eqref{eq_I2.2} (which is better than \eqref{eq_I2.1}) is available to the most difficult interaction whereas \eqref{eq_I2.1} is strong enough.

  \noindent
  \textbf{Estimate for $J_t^{(10)}$.}
  We estimate the following:
  \EQQS{
    J_t^{(10)}
    =N^{2\theta}
      \sum_{N_1,\dots,N_4\in 2^\N}
      \RE\int_0^t \int_\R P_N^2 P_{N_1}\bar{w}P_{N_2}w
      \p_x P_{N_3}\bar{u}_2 P_{N_4}z dxdt'.
  }
  It suffices to consider the case $N_1\ge N_2$ since the case $N_2\ge N_1$ can be controlled by this case.

  \noindent
  \underline{Case 1: $N_1\sim N_2\gts N_3\vee N_4$.}
  This case can be treated by the same way as Case 1 in the estimates for $J_t^{(1)}$.

  \noindent
  \underline{Case 2: $N_1\sim N_3\gts N_2\vee N_4$.}
  When $N_1\sim N_3\sim N_2\sim N_4\gg 1$, we use Proposition \ref{prop_stri1}:
  \EQQS{
    \sum_{N>N_0}|J_t^{(10)}|
    &\les \sum_{N_1\sim N_3\sim N_2\sim N_4} N_1^{2\theta+1}
     \int_0^t \|P_{N_1}w\|_{L_x^2}\|P_{N_2}w\|_{L_x^2}
     \|P_{N_3}u_2\|_{L_x^\I}\|P_{N_4}z\|_{L_x^\I}dt'\\
    &\les \|w\|_{L_T^\I H_x^\theta}^2
     \sum_{N_3\sim N_4} N_3 \|P_{N_3}u_2\|_{L_T^2 L_x^\I}
     \|P_{N_4}z\|_{L_T^2 L_x^\I}
    \les TC(K)\|w\|_{L_T^\I H_x^\theta}^2.
  }
  On the other hand, when $N_1\sim N_3\gg N_2\vee N_4$, $N_1\sim N_3\gts N_2\gg N_4$ or $N_1\sim N_3\gts N_4\gg N_2$, we can use \eqref{eq_I2.2} after taking extensions defined in \eqref{def_ext}.

  \noindent
  \underline{Case 3: $N_3\sim N_4\gts N_1\ge N_2$.}
  When $N_1\sim N_2$, we use Proposition \ref{prop_stri1} on $P_{N_3}u_2$ and $P_{N_4}z$.
  When $N_1\gg N_2$, we can use \eqref{eq_I2.1} and after taking extensions defined in \eqref{def_ext}.

  \noindent
  \textbf{Estimate for $J_t^{(13)}$.}
  We estimate the following:
  \EQQS{
    J_t^{(13)}
    =N^{2\theta}
      \sum_{N_1,\dots,N_4\in 2^\N}
      \RE\int_0^t \int_\R P_N^2 P_{N_1}\bar{w} P_{N_2}w
      \p_x P_{N_3}\bar{u}_2 P_{N_4}\psi dxdt'.
  }
  By the same reason as in the estimate for $J_t^{(10)}$, it suffices to consider the case $N_1\ge N_2$.

  \noindent
  \underline{Case 1: $N_1\sim N_2\gts N_3\vee N_4$.}
  When $N_4\gts N_3$, we can put one derivative on $P_{N_4}\psi$, so it is easy to close this case.
  On the other hand, when $N_3\gg N_4$, we can use \eqref{eq_I4.1} after taking extensions.

  \noindent
  \underline{Case 2: $N_1\sim N_3\gts N_2\vee N_4$.}
  When $N_1\sim N_3\sim N_2\sim N_4$, we are allowed to put some derivatives to $P_{N_4}\psi$, which enables us to close this case.
  On the other hand, when $N_1\sim N_3\gg N_2\vee N_4$, $N_1\sim N_3\gts N_2\gg N_4$ or $N_1\sim N_3\gts N_4\gg N_2$, we can use \eqref{eq_I4.2} as in Case 2 in the estimates for $J_t^{(10)}$.

  \noindent
  \underline{Case 3: $N_3\sim N_4\gts N_1\ge N_2$.}
  Since $N_4$ is one of the highest frequencies, we can easily distribute some derivatives to $P_{N_4}\psi$.

  \noindent
  \textbf{Estimates for $J_t^{(12)}$ and $J_t^{(15)}$.}
  The contributions of $J_t^{(12)}$ and $J_t^{(15)}$ can be treated by the same way as the estimates for $J_t^{(5)}$ and $J_t^{(8)}$, respectively.

  Now, we have to take the contribution of correction terms into account so as to close the estimate.
  To be precise, some interactions in $J_t^{(2)}, J_t^{(3)}$ and $J_t^{(6)}$ cannot be closed by Bougain type estimates (such as Lemma \ref{lem_Bourgain1}) nor refined Strichartz estimates (Proposition \ref{prop_stri1}), and are cancelled by the contribution of correction terms.

  \noindent
  \textbf{Estimate for $J_t^{(2)}$.}
  We estimate the following:
  \EQQS{
    J_t^{(2)}
    &=N^{2\theta}\sum_{N_1,\dots,N_4\in 2^\N}
     \RE \int_0^t \int_\R
     P_N^2 P_{N_1}\bar{w} P_{N_2} \bar{w}
     \p_x P_{N_3}u_2 P_{N_4}u_1 dxdt'
  }
   In what follows, we only consider the case $N_1\ge N_2$ since we can easily close the estimate when $N_2\ge N_1$.
  For example, when $N_2\sim N_3\gg N_1\vee N_4$, we can distribute derivatives $N^{2\theta}$ to at least 3 functions, whereas we have to introduce the correction term $\E_N^1(u_1,u_2)$ when $N_1\sim N_3\gg N_2\vee N_4$.
  First, we consider cases where we can close the argument using Bourgain-type estimates and refined Strichartz estimates.
  We identify the most difficult case, and we eliminate it by choosing $c_1$ in the corresponding correction term.

  \noindent
  \underline{Case 1: $N_1\sim N_2\gts N_3\vee N_4$.}
  We can follow the argument of the estimate for $J_t^{(10)}$ in this case.
  When $N_3\les N_4$, we can use Proposition \ref{prop_stri1} on $P_{N_3}u_2$ and $P_{N_4}u_1$.
  On the other hand, we use \eqref{eq_I2.2} after taking extensions defined in \eqref{def_ext} when $N_3\gg N_4$.

  \noindent
  \underline{Case 2: $N_1\sim N_4\gts N_2\vee N_3$.}
  This case is similar to Case 1. We use Proposition \ref{prop_stri1} when $N_2\gts N_3$.
  When $N_3\gg N_2$, we use \eqref{eq_I2.1} after taking extensions defined in \eqref{def_ext}.

  \noindent
  \underline{Case 3: $N_3\sim N_4\gts N_1\ge N_2$.}
  When $N_1\sim N_2$, we use Proposition \ref{prop_stri1} and the Young inequality on $P_{N_1}w$ and $P_{N_2}w$ since $\theta> 1/4$.
  When $N_1\gg N_2$, we use \eqref{eq_I2.2} after taking extensions defined in \eqref{def_ext}.

  \noindent
  \underline{Case 4: $N_1\sim N_3\gg N_2\vee N_4$.}
  First, we take a homogeneous dyadic decomposition for $P_{N_2}\bar{w}P_{N_4}u_1=\sum_{M\in 2^\Z}\dP_M(P_{N_2}\bar{w}P_{N_4}u_1)$:
  \EQQS{
   &N^{2\theta}
     \sum_{\substack{N_1 \sim N_3\gg N_2\vee N_4,\\ M>0}}
     \RE \int_0^t \int_\R
     P_N^2 P_{N_1}\bar{w} \p_x P_{N_3}u_2
     \dP_M( P_{N_2} \bar{w} P_{N_4}u_1) dxdt'\\
   &=N^{2\theta}
     \sum_{\substack{N_1 \sim N_3\gg N_2\vee N_4,\\ 0<M< N_1^{-1}}}
     \RE \int_0^t \int_\R
     P_N^2 P_{N_1}\bar{w} \p_x P_{N_3}u_2
     \dP_M( P_{N_2} \bar{w} P_{N_4}u_1) dxdt'\\
   &\quad+N^{2\theta}\sum_{\substack{N_1 \sim N_3\gg N_2\vee N_4,\\ M\ge N_1^{-1}}}
     \RE \int_0^t \int_\R
     P_N^2 P_{N_1}\bar{w} \p_x P_{N_3}u_2
     \dP_M( P_{N_2} \bar{w} P_{N_4}u_1) dxdt'\\
   &=:J_t^{(2,1)}+J_t^{(2,2)}.
  }
  We notice that $N_1\gg 1$ since $N_1\sim N> N_0\gg 1$.
  When $M< N_1^{-1}$, the Bernstein inequality shows that
    \EQQS{
      &\sum_{N>N_0}|J_t^{(2,1)}|\\
      &\les \sum_{\substack{N_1 \sim N_3\gg N_2\vee N_4,\\ 0<M< N_1^{-1}}}
        N_1^{2\theta+1}N_4^{1/4}M^{3/4}
        \|P_{N_1}w\|_{L_{T,x}^2}
        \|P_{N_3}u_2\|_{L_{T,x}^2}
        \|P_{N_2}w\|_{L_T^\I L_x^2}
        \|P_{N_4}u_1\|_{L^\infty_T L_x^2}\\
      &\les \|w\|_{L^\infty_T \ov{H}^{\theta,\de}}
        \|u_1\|_{L^\infty_T H_x^{s}}
        \sum_{N_1 \sim N_3\gg 1} N_1^{-1/4}
        \|D_x^{\theta} P_{N_1}w\|_{L_{T,x}^2}
        \|D_x^{\theta+1/2} P_{N_3}u_2\|_{L_{T,x}^2}\\
      &\les TK^2 \|w\|_{L_T^\I H_x^\theta}^2.
    }
  Now we consider $J_t^{(2,2)}$, and further decompose this into two cases $M> N_1^{2/3}$ and $N_1^{-1}\le M\le N_1^{2/3}$:
  \EQQS{
    J_t^{(2,2)}
    &=N^{2\theta}
      \sum_{\substack{N_1 \sim N_3\gg N_2\vee N_4,\\ N_1^{2/3}< M}}
      \RE \int_0^t \int_\R
      P_N^2 P_{N_1}\bar{w} \p_x P_{N_3}u_2
      \dP_M( P_{N_2} \bar{w} P_{N_4}u_1) dxdt'\\
    &\quad+N^{2\theta}\sum_{\substack{N_1 \sim N_3\gg N_2\vee N_4,\\ N_1^{-1}\le M\le N_1^{2/3}}}
      \RE \int_0^t \int_\R
      P_N^2 P_{N_1}\bar{w} \p_x P_{N_3}u_2
      \dP_M( P_{N_2} \bar{w} P_{N_4}u_1) dxdt'\\
    &=:J_t^{(2,2,1)}+J_t^{(2,2,2)}.
  }
  When $M> N_1^{2/3}$, we see from the proof of Lemma \ref{lem_res1} that we have $|\xi_1^2+\xi_2^2-\xi_3^2-\xi_4^2|\gts N_1M\gts  N_1^{5/3}$, which is strong enough to close.
  To see this, we first observe
  \EQQS{
    \sum_{N> N_0}|J_t^{(2,2,1)}|
    \le \sum_{N> N_0}N^{2\theta}
     \sum_{\substack{N_1 \sim N_3\gg N_2\vee N_4,\\ N_1^{2/3}< M}}
     |I_{t,M}(P_N^2 P_{N_1}\bar{w},
      \p_x P_{N_3}u_2,P_{N_2}\bar{w},P_{N_4}u_1)|,
  }
  where we put
  \EQS{\label{def_I5}
    I_{t,M}(\al_1,\al_2,\al_3,\al_4)
    :=\RE\int_0^t \int_\R \al_1 \al_2 \dP_M (\al_3 \al_4)dxdt'.
  }
  Following the proof of Lemma \ref{lem_Bourgain1}, we have
  \EQQS{
    &|I_{t,M}(P_N^2 P_{N_1}\bar{w}, \p_x P_{N_3}u_2,P_{N_2}\bar{w},P_{N_4}u_1)|\\
    &\le |I_{\I,M}(P_N^2 P_{N_1}\1_{t,R}^{\text{high}}\bar{\check{w}}, \p_x P_{N_3}\1_t\check{u}_2,P_{N_2}\bar{\check{w}},P_{N_4}\check{u}_1)|\\
    &\quad+|I_{\I,M}(P_N^2 P_{N_1}\1_{t,R}^{\text{low}}\bar{\check{w}}, \p_x P_{N_3}\1_{t,R}^{\text{high}}\check{u}_2,P_{N_2}\bar{\check{w}},P_{N_4}\check{u}_1)|\\
    &\quad+|I_{\I,M}(P_N^2 P_{N_1}\1_{t,R}^{\text{low}}\bar{\check{w}}, \p_x P_{N_3}\1_{t,R}^{\text{low}}\check{u}_2,P_{N_2}\bar{\check{w}},P_{N_4}\check{u}_1)|
    =:I_{\I,M,1}+I_{\I,M,2}+I_{\I,M,3},
  }
  where $R=N_1^{1/3}M^{4/3}$ and $\check{w}=\rho_T(w)$ is the extension of $w$.
  See \eqref{def_ext} for its definition.
  Remark that $|\Om|\gts N_1M\gts N_1^{1/3} M^{5/3} =M^{1/3}R\gg R$ since $N_1^{2/3}<M\les N_1$, which implies that we can apply \eqref{eq4.6} to $I_{\I,M,3}$.
  We see from the proof of \eqref{eq_time} and the Bernstein inequality that
  \EQQS{
    I_{\I,M,1}
    &\les N_3 M^{1/4} \|\1_{t,R}^{\text{high}}\|_{L_t^1}
     \|P_N^2 P_{N_1}\check{w}\|_{L_t^\I L_x^2}
     \|P_{N_3}\check{u}_2\|_{L_t^\I L_x^2}
     \|P_{N_2}\check{w}\|_{L_t^\I L_x^4}
     \|P_{N_4}\check{u}_1\|_{L_{t,x}^\I}\\
    &\les T^{1/4} N_1^{3/4}M^{-3/4}
     \|P_N^2 P_{N_1}\check{w}\|_{L_t^\I L_x^2}
     \|P_{N_3}\check{u}_2\|_{L_t^\I L_x^2}
     \|P_{N_2}\check{w}\|_{L_t^\I L_x^4}
     \|P_{N_4}\check{u}_1\|_{L_{t,x}^\I},
  }
  which implies that
  \EQQS{
    &\sum_{N> N_0}
     \sum_{\substack{N_1 \sim N_3\gg N_2\vee N_4,\\ N_1^{2/3}< M}}
     N^{2\theta}
     I_{\I,M,1}\\
    &\les T^{1/4}
     \sum_{\substack{N> N_0,\\ N_1 \sim N_3\gg N_2\vee N_4}}
     N_1^{2\theta+1/4}
     \|P_N^2 P_{N_1}\check{w}\|_{L_t^\I L_x^2}
     \|P_{N_3}\check{u}_2\|_{L_t^\I L_x^2}
     \|P_{N_2}\check{w}\|_{L_t^\I L_x^4}
     \|P_{N_4}\check{u}_1\|_{L_{t,x}^\I}\\
    &\les T^{1/4} \|\check{u}_1\|_{L_t^\I H_x^s}
    \|\check{u}_2\|_{L_t^\I H_x^s} \|\check{w}\|_{L_t^\I H_x^\theta}^2
    \les T^{1/4}K^2 \|w\|_{L_T^\I H_x^\theta}^2.
  }
  Here, in the last inequality, we used \eqref{eq2.1single}.
  Notice that we can estimate $I_{\I,M,2}$ by the same way.
  As for $I_{\I,M,3}$, Lemmas \ref{eq_res1} and \ref{resonance} show that
  \EQQS{
    I_{\I,M,3}
    &\le |I_{\I,M}(P_N^2 P_{N_1}\ov{Q_{\gts L}(\1_{t,R}^{\text{low}}\check{w})},
    \p_x P_{N_3}\1_{t,R}^{\text{low}}\check{u}_2,
    P_{N_2}\bar{\check{w}},P_{N_4}\check{u}_1)|\\
    &\quad +|I_{\I,M}(P_N^2 P_{N_1}\ov{Q_{\ll L}(\1_{t,R}^{\text{low}}\check{w})},
    \p_x P_{N_3}Q_{\gts L}(\1_{t,R}^{\text{low}}\check{u}_2),
    P_{N_2}\bar{\check{w}},P_{N_4}\check{u}_1)|\\
    &\quad +|I_{\I,M}(P_N^2 P_{N_1}\ov{Q_{\ll L}(\1_{t,R}^{\text{low}}\check{w})},
    \p_x P_{N_3}Q_{\ll L}(\1_{t,R}^{\text{low}}\check{u}_2),
    P_{N_2}\ov{Q_{\gts L}\check{w}},P_{N_4}\check{u}_1)|\\
    &\quad +|I_{\I,M}(P_N^2 P_{N_1}\ov{Q_{\ll L}(\1_{t,R}^{\text{low}}\check{w})},
    \p_x P_{N_3}Q_{\ll L}(\1_{t,R}^{\text{low}}\check{u}_2),
    P_{N_2}\ov{Q_{\ll L}\check{w}},P_{N_4}Q_{\gts L}\check{u}_1)|\\
    &=:I_{\I,M,3,1}+I_{\I,M,3,2}+I_{\I,M,3,3}+I_{\I,M,3,4}
  }
  with $L=N_1M$.
  It then follows that
  \EQQS{
    &\sum_{N> N_0}
     \sum_{\substack{N_1 \sim N_3\gg N_2\vee N_4,\\ N_1^{2/3}< M}}
     N^{2\theta}
     |I_{\I,M,3,1}|\\
    &\les
     \sum_{\substack{N_1 \sim N_3\gg N_2\vee N_4,\\ N_1^{2/3}< M}}
     N_1^{2\theta+1} M^{1/4}
     \|P_{N_1}
      Q_{\gts L}(\1_{t,R}^{\text{low}}\check{w})\|_{L_{t,x}^2}
     \|P_{N_3}\1_{t,R}^{\text{low}}\check{u}_2\|_{L_{t,x}^2}
     \|P_{N_2}\check{w}P_{N_4}\check{u}_1\|_{L_t^\I L_x^4}\\
    &\les T^{1/2} \sum_{\substack{N_1 \sim N_3\gg N_2\vee N_4,\\ N_1^{2/3}< M}}
    N_1^{2\theta} M^{-3/4}
    \|P_{N_1}\check{w}\|_{X^{0,1}}
    \|P_{N_3}\check{u}_2\|_{L_t^\I L_x^2}
    \|P_{N_2}\check{w}\|_{L_t^\I L_x^4}
    \|P_{N_4}\check{u}_1\|_{L_{t,x}^\I}\\
    &\les T^{1/2}K^2\|\check{w}\|_{X^{\theta-1,1}}
     \|\check{w}\|_{L_t^\I H_x^\theta}
    \les T^{1/2}K^2 \|w\|_{Z_T^\theta}
     \|w\|_{L_T^\I H_x^\theta}.
  }
  Similarly, by \eqref{eq2.1}, we obtain
  \EQQS{
    &\sum_{N> N_0}
     \sum_{\substack{N_1 \sim N_3\gg N_2\vee N_4,\\ N_1^{2/3}< M}}
     N^{2\theta}
     (|I_{\I,M,3,2}|+|I_{\I,M,3,3}|+|I_{\I,M,3,4}|)\\
    &\les T^{1/2}K^2(\|\check{w}\|_{X^{\theta-1,1}}
      +\|\check{w}\|_{L_t^\I H_x^\theta})
      \|\check{w}\|_{L_t^\I H_x^\theta}
    \les T^{1/2}K^2 \|w\|_{Z_T^\theta}
     \|w\|_{L_T^\I H_x^\theta}.
  }
  Finally, it is left to treat the contribution of $J_t^{(2,2,2)}$.
  For that purpose, we take the modified energy into account.
  First we will observe that the highest terms coming from the time derivative of a correction term indeed cancels out $J_t^{(2,2,2)}$.
  Using the equation \eqref{eq2}, we have
  \EQQS{
    &\int_0^t\frac{d}{dt'}\E_N^1(u_1(t'),u_2(t'))dt'\\
    &=\sum\RE i\int_0^t\int_\R \p_x^{-1}P_N^2 P_{N_1}\p_t\bar{w}\p_x P_{N_3}u_2
      \p_x^{-1}\dP_M (P_{N_2}\bar{w}P_{N_4}u_1)dxdt'\\
    &\quad+\sum\RE i\int_0^t\int_\R \p_x^{-1}P_N^2 P_{N_1}\bar{w}\p_x P_{N_3}\p_t u_2
      \p_x^{-1}\dP_M (P_{N_2}\bar{w}P_{N_4}u_1)dxdt'\\
    &\quad+\sum\RE i\int_0^t\int_\R \p_x^{-1}P_N^2 P_{N_1}\bar{w}\p_x P_{N_3}u_2
      \p_x^{-1}\dP_M (P_{N_2}\p_t \bar{w}P_{N_4}u_1)dxdt'\\
    &\quad+\sum\RE i\int_0^t\int_\R \p_x^{-1}P_N^2 P_{N_1}\bar{w}\p_x P_{N_3}u_2
      \p_x^{-1}\dP_M (P_{N_2}\bar{w}P_{N_4}\p_t u_1)dxdt'\\
    &=\sum\RE \int_0^t\int_\R \p_x P_N^2 P_{N_1}\p_x^2 \bar{w}\p_x P_{N_3}u_2
      \p_x^{-1}\dP_M (P_{N_2}\bar{w}P_{N_4}u_1)dxdt'\\
    &\quad-\sum\RE\int_0^t\int_\R \p_x^{-1}P_N^2 P_{N_1}\bar{w}\p_x^3 P_{N_3}u_2
      \p_x^{-1}\dP_M (P_{N_2}\bar{w}P_{N_4}u_1)dxdt'\\
    &\quad+\sum\RE\int_0^t\int_\R \p_x^{-1}P_N^2 P_{N_1}\bar{w}\p_x P_{N_3}u_2
      \p_x^{-1}\dP_M (P_{N_2}\p_x^2 \bar{w}P_{N_4}u_1)dxdt'\\
    &\quad-\sum\RE\int_0^t\int_\R \p_x^{-1}P_N^2 P_{N_1}\bar{w}\p_x P_{N_3}u_2
      \p_x^{-1}\dP_M (P_{N_2}\bar{w}P_{N_4}\p_x^2 u_1)dxdt'\\
    &\quad-\sum\RE i \int_0^t\int_\R \p_x^{-1}P_N^2 P_{N_1}\ov{W}\p_x P_{N_3}u_2
      \p_x^{-1}\dP_M (P_{N_2}\bar{w}P_{N_4}u_1)dxdt'\\
    &\quad+\sum\RE i \int_0^t\int_\R \p_x^{-1}P_N^2 P_{N_1}\bar{w}\p_x P_{N_3} V
      \p_x^{-1}\dP_M (P_{N_2}\bar{w}P_{N_4}u_1)dxdt'\\
    &\quad-\sum\RE i \int_0^t\int_\R \p_x^{-1}P_N^2 P_{N_1}\bar{w}\p_x P_{N_3}u_2
      \p_x^{-1}\dP_M (P_{N_2}\ov{W}P_{N_4}u_1)dxdt'\\
    &\quad+\sum\RE i\int_0^t \int_\R \p_x^{-1}P_N^2 P_{N_1}\bar{w}\p_x P_{N_3}u_2
      \p_x^{-1}\dP_M (P_{N_2}\bar{w}P_{N_4} U)dxdt'\\
    &=:\sum_{j=1}^8 A_j,
  }
  where
  \EQS{\label{def_WUV}
    W=F(u_1,\psi)-F(u_2,\psi),\quad
    U=F(u_1,\psi)-\Psi,\quad
    V=F(u_2,\psi)-\Psi.
  }
  Here, we denoted $\sum_{N_1\sim N_3\gg N_2\vee N_4}\sum_{N_1^{-1}\le  M\le N_1^{2/3}}$ by $\sum$ for simplicity.
  As for $W$, we use the following re-formulation of \eqref{eq_w}:
  \EQS{\label{eq_w2}
    \begin{aligned}
      \p_t w
      &=i\p_x^2 w
       +\la(w\bar{u}_1\p_x u_1 + u_2\bar{w}\p_x u_1 + |u_2|^2 \p_x w
       +w\bar{z}\p_x \psi+2\RE(w\bar{\psi})\p_x u_1\\
      &\quad\quad\quad+2\RE(u_2\bar{\psi})\p_x w
       +2\RE(w\bar{\psi})\p_x \psi+|\psi|^2\p_x w)\\
      &\quad+\mu(wz\p_x \bar{u}_1 + u_2^2 \p_x \bar{w}
       +wz\p_x \bar{\psi}
       +2w\psi\p_x \bar{u}_1
       +2u_2\psi \p_x \bar{w}\\
      &\quad\quad\quad
       +2w\psi\p_x \bar{\psi}+\psi^2 \p_x \bar{w})\\
      &=:i\p_x^2 w + \la \sum_{j=1}^8 \tilde{h}_j(u_1,u_2)
       + \mu \sum_{j=9}^{15} \tilde{h}_j(u_1,u_2).
    \end{aligned}
  }
  This formulation plays an important role in the cancellation between $A_{5,3}$ and $A_{6,1}$ which are defined below.
  By integration by parts, we obtain
  \EQQS{
    A_2
    &=-\sum\RE\int_0^t \int_\R \p_x P_{N_3}u_2 \p_x^2(\p_x^{-1}P_N^2 P_{N_1}\bar{w}
      \p_x^{-1}\dP_M (P_{N_2}\bar{w}P_{N_4}u_1))dxdt'\\
    &=-\sum\RE\int_0^t \int_\R \p_x P_N^2 P_{N_1}\bar{w}\p_x P_{N_3}u_2
      \p_x^{-1}\dP_M (P_{N_2}\bar{w}P_{N_4}u_1)dxdt'\\
    &\quad-2 \sum\RE\int_0^t \int_\R P_N^2 P_{N_1}\bar{w}\p_x P_{N_3}u_2
      \dP_M (P_{N_2}\bar{w}P_{N_4}u_1)dxdt'\\
    &\quad-\sum\RE\int_0^t \int_\R \p_x^{-1} P_N^2 P_{N_1}\bar{w}\p_x P_{N_3}u_2
      \p_x \dP_M (P_{N_2}\bar{w}P_{N_4}u_1)dxdt'.
  }
  We also notice that
  \EQS{\label{eq_anti}
    \p_x^{-1}(fg)=f \p_x^{-1}g -\p_x^{-1}(\p_x f \p_x^{-1}g),
  }
  which implies that
  \EQQS{
    \p_x^{-1}(P_{N_2}\p_x^2 \bar{w}P_{N_4}u_1)-\p_x^{-1}(P_{N_2}\bar{w}P_{N_4}\p_x^2 u_1)
    =\p_x P_{N_2}w P_{N_4}u_1 - P_{N_2}w \p_x P_{N_4}u_1.
  }
  This together with $A_2$ shows that
  \EQS{\label{eq_exchange}
    \begin{aligned}
      &A_1+A_2+A_3+A_4\\
      &=-2 \sum\RE\int_0^t\int_\R P_N^2 P_{N_1}\bar{w}\p_x P_{N_3}u_2
        \dP_M (P_{N_2}\bar{w}P_{N_4}u_1)dxdt'\\
      &\quad-\sum\RE\int_0^t\int_\R \p_x^{-1} P_N^2 P_{N_1}\bar{w}\p_x P_{N_3}u_2
        \p_x\dP_M(P_{N_2}\bar{w}P_{N_4}u_1)dxdt'\\
      &\quad+\sum\RE\int_0^t\int_\R \p_x^{-1} P_N^2 P_{N_1}\bar{w}\p_x P_{N_3}u_2
        \p_x^{-1}\dP_M(P_{N_2}\p_x^2 \bar{w}P_{N_4}u_1-P_{N_2}\bar{w}P_{N_4}\p_x^2 u_1)dxdt'\\
      &=-2 \sum\RE\int_0^t\int_\R P_N^2 P_{N_1}\bar{w}\p_x P_{N_3}u_2
        \dP_M (P_{N_2}\bar{w} P_{N_4}u_1)dxdt'\\
      &\quad-2\sum\RE\int_0^t\int_\R \p_x^{-1} P_N^2 P_{N_1}\bar{w}\p_x P_{N_3}u_2
        \dP_M(P_{N_2}\bar{w} \p_x P_{N_4} u_1)dxdt',
    \end{aligned}
  }
  where $\sum$ denotes $\sum_{N_1\sim N_3\gg N_2\vee N_4}\sum_{N_1^{-1}\le  M\le N_1^{2/3}}$.
  Note that this is indeed the decomposition announced after the definition of the modified energy: the contribution of  the linear part of the equation in the derivative of the modified energy consists of the bad term we aim to cancel, plus a remaining term that is obtained by applying the Fourier multiplier by $ \xi_4 \xi_1^{-1} $ to this bad term.
  Then, by choosing a constant $c_1$ in \eqref{def_E2} appropriately, the first term of $A_1+A_2+A_3+A_4$ coincides with $J_t^{(2,2,2)}$.
  On the other hand, the second term in $A_1+A_2+A_3+A_4$ can be treated in a similar manner to $J_t^{(2,2,1)}$ since we can write
  \EQQS{
    &\sum_{N>N_0}\sum_{\substack{N_1\sim N_3\gg N_2\vee N_4,\\
      N_1^{-1}\le M\le N_1^{2/3}}} N^{2\theta}
      \RE\int_0^t\int_\R \p_x^{-1} P_N^2 P_{N_1}\bar{w}\p_x P_{N_3}u_2
        \dP_M(P_{N_2}\bar{w}\p_xP_{N_4} u_1)dxdt'\\
    &\le \sum_{N>N_0}\sum_{\substack{N_1\sim N_3\gg N_2\vee N_4,\\
      N_1^{-1}\le M\le N_1^{2/3}}} N^{2\theta}
      |I_{t,M}(\p_x^{-1}P_N^2 P_{N_1}\bar{w}, \p_x P_{N_3}u_2,P_{N_2}\bar{w},\p_x P_{N_4}u_1)|\\
    &\les T^{1/4}K^2 \|w\|_{Z_T^\theta}\|w\|_{L_T^\I H_x^\theta}.
  }
  Next, we consider the nonlinear contributions of $A_5,A_6,A_7$ and $A_8$.
  For $A_5$, we use the equation \eqref{eq_w2}, i.e.,
  we set
  \EQQS{
    A_5
    &=\sum_{j=1}^{15}
     \sum_{\substack{N_1\sim N_3\gg N_2\vee N_4,\\
      N_1^{-1}\le M\le N_1^{2/3}}}
      \RE i\int_0^t\int_\R \p_x^{-1}P_N^2 P_{N_1}\bar{\tilde{h}}_j
       \p_x P_{N_3}u_2
      \p_x^{-1}\dP_M (P_{N_2}\bar{w}P_{N_4}u_1)dxdt'\\
    &=:\sum_{j=1}^{15}A_{5,j}.
  }
  We also set
  \EQQS{
    A_6
    &=\sum_{j=1}^{10}\sum_{\substack{N_1\sim N_3\gg N_2\vee N_4,\\
      N_1^{-1}\le M\le N_1^{2/3}}}
      \RE i \int_0^t\int_\R \p_x^{-1}P_N^2 P_{N_1}\bar{w}\p_x P_{N_3} f_j(u_2)
      \p_x^{-1}\dP_M (P_{N_2}\bar{w}P_{N_4}u_1)dxdt'\\
    &=:\sum_{j=1}^{10}A_{6,j},
  }
  where $f_j(u_2)$ is defined in \eqref{def_fu}.
  Then, we see the cancellation between  $A_{5,3}$ and $A_{6,1}$.
  First we decompose $|u_2|^2 \p_x w = P_{\ll N}(|u_2|^2) \p_x w +P_{\gts N}(|u_2|^2) \p_x w $ and $|u_2|^2 \p_x u_2 = P_{\ll N}(|u_2|^2) \p_x u_2 +P_{\gts N}(|u_2|^2) \p_x u_2 $.
  The dangerous contributions are the ones of $ P_{\ll N} (|u_2|^2) $ since for the other ones we can ``recuperate some derivatives" by making use of the regularity of $u_2$.
  The trick is that, up to terms that contain the commutator  $[P_N,P_{\ll N} (|u_2|^2) ] $ which enables to exchange one high derivative with a low one (that is sufficient for us!), the contribution of
  $ P_{\ll N} (|u_2|^2) $ is of the form:
  \EQQS{
    &\Big(P_N^2 P_{N_1} \bar{w} \partial_x P_{N_3} u_2
    + \p_x^{-1} P_N^2 P_{N_1} \bar{w} \partial_x^2 P_{N_3} u_2\Big) P_{\ll N} g(u_1,u_2,w)\\
    &= \partial_x\Big(\partial_x^{-1}P_N^2 P_{N_1} \bar{w} \partial_x P_{N_3}u_2\Big)P_{\ll N} g(u_1,u_2,w),
  }
  so that by integration by parts we may also exchange a high derivative with a low one.
  More precisely, we have
  \EQS{\label{eq_6.1}
    \begin{aligned}
      &A_{5,3}+A_{6,1}\\
      &=\sum\RE \int_0^t\int_\R \p_x^{-1}P_N^2 P_{N_1}(P_{\ll N}(|u_2|^2)\p_x \bar{w})\p_x P_{N_3}u_2
        \p_x^{-1}\dP_M (P_{N_2}\bar{w}P_{N_4}u_1)dxdt'\\
      &\quad+\sum\RE i\int_0^t\int_\R \p_x^{-1}
       P_N^2 P_{N_1}
       (P_{\gts N}(|u_2|^2)\p_x \bar{w})\p_x P_{N_3}u_2
        \p_x^{-1}\dP_M (P_{N_2}\bar{w}P_{N_4}u_1)dxdt'\\
      &\quad+\sum\RE i \int_0^t\int_\R \p_x^{-1}P_N^2 P_{N_1} \bar{w}\p_x P_{N_3}(P_{\ll N}(|u_2|^2)\p_x u_2)
        \p_x^{-1}\dP_M (P_{N_2}\bar{w}P_{N_4}u_1)dxdt'\\
      &\quad+\sum\RE i \int_0^t\int_\R \p_x^{-1}P_N^2 P_{N_1} \bar{w}\p_x P_{N_3}(P_{\gts N}(|u_2|^2)\p_x u_2)
        \p_x^{-1}\dP_M (P_{N_2}\bar{w}P_{N_4}u_1)dxdt'\\
      &=\sum\RE i \int_0^t\int_\R \p_x^{-1}P_N^2 P_{N_1}(P_{\gts N}(|u_2|^2)\p_x \bar{w})\p_x P_{N_3}u_2
        \p_x^{-1}\dP_M (P_{N_2}\bar{w}P_{N_4}u_1)dxdt'\\
      &\quad+\sum\RE i \int_0^t\int_\R \p_x^{-1}P_N^2 P_{N_1} \bar{w}\p_x P_{N_3}(P_{\gts N}(|u_2|^2)\p_x u_2)
        \p_x^{-1}\dP_M (P_{N_2}\bar{w}P_{N_4}u_1)dxdt'\\
      &\quad+\sum\RE i\int_0^t\int_\R \p_x^{-1}([P_N^2 P_{N_1},P_{\ll N}(|u_2|^2)]\p_x \bar{w})\p_x P_{N_3}u_2
        \p_x^{-1}\dP_M (P_{N_2}\bar{w}P_{N_4}u_1)dxdt'\\
      &\quad+\sum\RE i \int_0^t\int_\R \p_x^{-1}P_N^2 P_{N_1} \bar{w}\p_x ([P_{N_3},P_{\ll N}(|u_2|^2)]\p_x u_2)
        \p_x^{-1}\dP_M (P_{N_2}\bar{w}P_{N_4}u_1)dxdt'\\
      &\quad+\sum\RE i\int_0^t\int_\R \p_x^{-1}(P_{\ll N}(|u_2|^2)\p_x P_N^2 P_{N_1} \bar{w})\p_x P_{N_3}u_2
        \p_x^{-1}\dP_M (P_{N_2}\bar{w}P_{N_4}u_1)dxdt'\\
      &\quad+\sum\RE i \int_0^t\int_\R \p_x^{-1}P_N^2 P_{N_1} \bar{w}\p_x (P_{\ll N}(|u_2|^2)\p_x P_{N_3}u_2)
        \p_x^{-1}\dP_M (P_{N_2}\bar{w}P_{N_4}u_1)dxdt',\\
      &=:\sum_{l=1}^{6} A_{5,3,l}
    \end{aligned}
  }
  where we denoted $\sum_{N_1\sim N_3\gg N_2\vee N_4}\sum_{N_1^{-1}\le  M\le N_1^{2/3}}$ by $\sum$.
  In estimating $A_j$, we frequently use the following estimate:
  \EQS{\label{eq_6.6}
    \sum_{\substack{N_2,N_4\ll N_1,\\ N_1^{-1}\le M\le N_1^{2/3}}}
    \|\p_x^{-1}\dP_M (P_{N_2}\bar{w} P_{N_4}u_1)\|_{L_x^\I}
    \les N_1^{0+} \|u_1\|_{H_x^s}
    \|w\|_{H_x^\theta}.
  }
  Indeed, by the Bernstein inequality,
  \EQQS{
    \sum_{\substack{N_2,N_4\ll N_1,\\ N_1^{-1}\le M\le N_1^{2/3}}}
    \|\p_x^{-1}\dP_M (P_{N_2}\bar{w} P_{N_4}u_1)\|_{L_x^\I}
    &\les \sum_{\substack{N_2,N_4\ll N_1,\\ N_1^{-1}\le M\le N_1^{2/3}}}
     \|P_{N_2}w\|_{L_x^2}\|P_{N_4}u_1\|_{L_x^2}\\
    &\les (\log N_1) \|u_1\|_{H_x^s}
    \|w\|_{H_x^\theta},
  }
  which implies \eqref{eq_6.6}.
  Notice that $A_{5,3,1},A_{5,3,2},A_{5,3,3}$ and $A_{5,3,4}$ in the right hand side of \eqref{eq_6.1} are not dangerous.
  Indeed, we see from Proposition \ref{prop_stri1} and \eqref{eq_6.6} that
  \EQS{\label{eq_6.9}
    \begin{aligned}
      &\sum_{N> N_0}N^{2\theta}|A_{5,3,1}|\\
      &\les \sum_{N> N_0}\sum_{\substack{N_1\sim N_3\gg N_2\vee N_4,\\N_1^{-1}\le  M\le N_1^{2/3}}}\sum_{\ti{N}\gts N} N^{2\theta}
      \|P_{N_3}u_2\|_{L_T^1 L_x^\I}
      \|\p_x^{-1}\dP_M (P_{N_2}\bar{w} P_{N_4}u_1)\|_{L_{T,x}^\I}\\
      &\quad\quad \times(\|P_N^2 P_{N_1}(P_{\ti{N}}(|u_2|^2)\p_x P_{\sim \ti{N}} \bar{w})\|_{L_T^\I L_x^1}
      +\|P_N^2 P_{N_1}(P_{\ti{N}}(|u_2|^2)\p_x P_{\ll N} \bar{w})\|_{L_T^\I L_x^1})\\
      &\les K \|w\|_{L_T^\I \ov{H}^{\theta,\de}}\|w\|_{L_T^\I H_x^{1/4}}
       \sum_{N> N_0}\sum_{\ti{N}\gts N} N^{2\theta+\de}
       \ti{N}^{3/4}\|P_{\ti{N}}(|u_2|^2)\|_{L_T^\I L_x^2}
       \|P_{\sim N}u_2\|_{L_T^1 L_x^\I}\\
      &\les K T^{3/4} \|w\|_{L_T^\I \ov{H}^{\theta,\de}}
       \||u_2|^2\|_{L_T^\I H_x^s}
       \|w\|_{L_T^\I H_x^\theta}\|J_x^{2\te}u_2\|_{L_T^4 L_x^{\I}}\\
      &\les C(K)T^{3/4}\|w\|_{L_T^\I \ov{H}^{\theta,\de}}\|w\|_{L_T^\I H_x^\theta}.
    \end{aligned}
  }
  Here, we used that $ 2\theta +\frac{1}{4}<s$ and we took $ \delta>0 $ such that $ \frac{3}{4}+\delta<s $. We can get the same result for the term containing $P_{\gts N}(|u_2|^2)\p_x u_2$ in the right hand side of \eqref{eq_6.1} as above.
  On the other hand, for the term containing $[P_N^2 P_{N_1},P_{\ll N}(|u_2|^2)]\p_x \bar{w}$ in the right hand side of \eqref{eq_6.1}, we use Proposition \ref{prop_stri1}, \eqref{eq2.6} and \eqref{eq_comm3}:
  \EQS{
    \begin{aligned}\label{eq_6.7}
      &\|\p_x^{-1}([P_N^2 P_{N_1},P_{\ll N}(|u_2|^2)]\p_x \bar{w})\p_x P_{N_3}u_2\|_{L_{T,x}^1}\\
      &\les \|[P_N^2 P_{N_1},P_{\ll N}(|u_2|^2)]\p_x \bar{w}\|_{L_T^1 L_x^2}\|P_{N_3}u_2\|_{L_T^\I L_x^2}\\
      &\les \|\p_x P_{\ll N}(|u_2|^2)\|_{L_T^1 L_x^\I}
      \|P_{\sim N} w\|_{L_T^\I L_x^2}
      \|P_{N_3}u_2\|_{L_T^\I L_x^2}\\
      &\les N^{1/2}\|D_x^{1/2+1/p}(|u_2|^2)\|_{L_T^1 L_x^p}
      \|P_{\sim N} w\|_{L_T^\I L_x^2}
      \|P_{N_3}u_2\|_{L_T^\I L_x^2}\\
      &\les N^{1/2}\|u_2\|_{L_{T,x}^\I}\|D_x^{1/2+1/p}u_2\|_{L_T^1 L_x^p}
      \|P_{\sim N} w\|_{L_T^\I L_x^2}
      \|P_{N_3}u_2\|_{L_T^\I L_x^2}\\
      &\les C(K)T^{3/4}N^{1/2}\|P_{\sim N} w\|_{L_T^\I L_x^2}
      \|P_{N_3}u_2\|_{L_T^\I L_x^2},
    \end{aligned}
  }
  where we chose $1<p<\I$ so that $3/4+1/2p<s$.
  We can get the same result for the term containing $[P_{N_3},P_{\ll N}(|u_2|^2)]\p_x u_2$ in the right hand side of \eqref{eq_6.1} as above.
  For the last two terms in the right hand side of \eqref{eq_6.1}, they look dangerous, but they cancel out each other.
  We see from \eqref{eq_6.1} and the integration by parts that
  \EQS{\label{eq_6.2}
    \begin{aligned}
      &A_{5,3,5}+A_{5,3,6}\\
      &=\sum\RE i \int_0^t \int_\R \p_x^{-1}(P_{\ll N}(|u_2|^2)\p_x P_N^2 P_{N_1} \bar{w})\p_x P_{N_3}u_2
        \p_x^{-1}\dP_M (P_{N_2}\bar{w}P_{N_4}u_1)dxdt'\\
      &\quad-\sum\RE i \int_0^t \int_\R P_N^2 P_{N_1} \bar{w} P_{\ll N}(|u_2|^2)\p_x P_{N_3} u_2
        \p_x^{-1}\dP_M (P_{N_2}\bar{w}P_{N_4}u_1)dxdt'\\
      &\quad-\sum\RE i \int_0^t \int_\R \p_x^{-1} P_N^2 P_{N_1} \bar{w} P_{\ll N}(|u_2|^2)\p_x P_{N_3} u_2
        \dP_M (P_{N_2}\bar{w}P_{N_4}u_1)dxdt'\\
      &=-\sum\RE i\int_0^t\int_\R \p_x^{-1}(\p_x P_{\ll N}(|u_2|^2) P_N^2 P_{N_1} \bar{w})\p_x P_{N_3}u_2
        \p_x^{-1}\dP_M (P_{N_2}\bar{w}P_{N_4}u_1)dxdt'\\
      &\quad-\sum\RE i\int_0^t \int_\R \p_x^{-1} P_N^2 P_{N_1} \bar{w} P_{\ll N}(|u_2|^2)\p_x P_{N_3} u_2
        \dP_M (P_{N_2}\bar{w}P_{N_4}u_1)dxdt',
    \end{aligned}
  }
  where $\sum$ denotes $\sum_{N_1\sim N_3\gg N_2\vee N_4}\sum_{N_1^{-1}\le  M\le N_1^{2/3}}$.
  We see from \eqref{eq_6.7} that these two terms are indeed closed.
  Now we consider contributions of $A_{5,j}$ except for $A_{5,3}$.

  \noindent
  \underline{Estimate for $A_{5,1}$.}
  Thanks to \eqref{eq_6.6}, it suffices to show that
  \EQS{\label{eq_6.8}
    \|\p_x^{-1}P_N^2 P_{N_1}(w\bar{u}_1\p_x u_1)\p_x P_{N_3}u_2\|_{L_{T,x}^1}
    \les N^{-2\theta-\e}C(K)T^{3/4}
    \|w\|_{L_T^\I H_x^\theta}
  }
  for sufficiently small $\e>0$.
  By Proposition \ref{prop_stri1} and \eqref{eq2.6}, we have
  \EQS{
    \begin{aligned}\label{eq_6.14}
      &\|\p_x^{-1}P_N^2 P_{N_1}(\bar{w}u_1\p_x \bar{u}_1)\p_x P_{N_3}u_2\|_{L_{T,x}^1}\\
      &\le \|\p_x^{-1}P_N^2 P_{N_1}(P_{\ll N}(\bar{w}u_1)\p_x \bar{u}_1)\p_x P_{N_3}u_2\|_{L_{T,x}^1}\\
      &\quad +\|\p_x^{-1}P_N^2 P_{N_1}(P_{\gts N}(\bar{w}u_1)\p_x P_{\ll N} \bar{u}_1)\p_x P_{N_3}u_2\|_{L_{T,x}^1}\\
      &\quad+\|\p_x^{-1}P_N^2 P_{N_1}(P_{\gts N}(\bar{w}u_1)\p_x P_{\gts N} \bar{u}_1)\p_x P_{N_3}u_2\|_{L_{T,x}^1}\\
      &\les N\|P_{\ll N}(\bar{w}u_1)\|_{L_T^1 L_x^\I}
       \|P_{\sim N}u_1\|_{L_T^\I L_x^2}
       \|P_{N_3}u_2\|_{L_T^\I L_x^2}\\
      &\quad+N^{1-2s}\|P_{\gts N}(\bar{w}u_1)\|_{L_T^1 L_x^\I}
        \|P_{\ll N}u_1\|_{L_T^\I H_x^s}
        \|P_{N_3}u_2\|_{L_T^\I H_x^s}\\
      &\quad +\sum_{\ti{N}\gts N}\ti{N}^{3/4-s}
       \|P_{\ti{N}}(\bar{w}u_1)\|_{L_T^\I H_x^{1/4}}
       \|P_{\sim \ti{N}}u_1\|_{L_T^\I H_x^s}
      \|P_{N_3}u_2\|_{L_T^1 L_x^\I}\\
      &\les C(K) N^{1-2s}T^{3/4}\|w\|_{L_T^\I H_x^\theta}
      \les  N^{-2\theta-}C(K)T^{3/4} \|w\|_{L_T^\I H_x^\theta},
    \end{aligned}
  }
  where we used $s>2\theta+1/4$ and $\theta>1/4$.
  Here, we applied Proposition \ref{prop_stri1} to $u_2$:
  \EQQS{
    \|P_{N_3}u_2\|_{L_T^1 L_x^\I}
    \les N_3^{1/4-s}T^{3/4}C(K).
  }

  \noindent
  \underline{Estimate for $A_{5,2}$.}
  This is the same as $A_{5,1}$.

  \noindent
  \underline{Estimate for $A_{5,4}$.}
  This case is easy to treat since we have
  \EQQS{
    \|\p_x^{-1}P_N^2 P_{N_1}(\bar{w}z \p_x \bar{\psi})\p_x P_{N_3} u_2\|_{L_x^1}
    &\les \|\bar{w}z \p_x \bar{\psi}\|_{L_x^2}
     \|P_{N_3} u_2\|_{L_x^2}\\
    &\les \|w\|_{L_x^2}\|z\|_{L_x^2}\|\p_x \psi\|_{L_x^\I}
     \|P_{N_3} u_2\|_{L_x^2}.
  }

  \noindent
  \underline{Estimate for $A_{5,5}$.}
  This case is similar to \eqref{eq_6.8}.

  \noindent
  \underline{Estimate for $A_{5,6}$.}
  In order to close the estimate for $A_{5,6}$, we have to observe a cancellation with $A_{6,3}$ as in \eqref{eq_6.1}.
  We can follow the argument in $A_{5,3}$ with replacing $|u_2|^2$ with $2\RE(u_2\bar{\psi})$, and we place the $L^\I$-norm on $\psi$ instead of the $L^2$-norm.

  \noindent
  \underline{Estimate for $A_{5,7}$.}
  As in $A_{5,4}$, we obtain
  \EQQS{
    \|\p_x^{-1}P_N^2 P_{N_1}(\RE(w\bar{\psi})\p_x \bar{\psi})\p_x P_{N_3}u_2\|_{L_x^1}
    &\les \|P_N^2 P_{N_1}(\RE(w\bar{\psi})\p_x \bar{\psi})\|_{L_x^2}
    \|P_{N_3}u_2\|_{L_x^2}\\
    &\les \|w\|_{L_x^2}\|\psi\|_{L_x^\I}\|\p_x \psi\|_{L_x^\I}
    \|P_{N_3}u_2\|_{L_x^2},
  }
  which is enough to close.

  \noindent
  \underline{Estimate for $A_{5,8}$.}
  In order to close the estimate for $A_{5,8}$, we need to exploit a cancellation with $A_{6,5}$ as in \eqref{eq_6.1}.
  The estimates are similar to those of $A_{5,3}$.
  We use \eqref{eq2.7} instead of \eqref{eq2.6} to $|\psi|^2$ since $\psi$ does not belong to $L^2$.

  \noindent
  \underline{Estimate for $A_{5,9}$.}
  We see from the proof of \eqref{eq_6.8} that
  \EQQS{
    \|\p_x^{-1}P_N^2 P_{N_1}(\bar{w}z\p_x \bar{u}_1)
     \p_x P_{N_3}u_2\|_{L_{T,x}^1}
    \les N^{-2\theta-\e}C(K)T^{3/4}
    \|w\|_{L_T^\I H_x^\theta},
  }
  which is enough to close.

  \noindent
  \underline{Estimate for $A_{5,10}$.}
  As in $A_{5,3}$, the most dangerous part is $P_{\ll N}(\bar{u}_2^2)\p_x P_N^2 P_{N_1} w$.
  In particular, it suffices to consider the interaction $P_{\ll N}((P_{\ll N}\bar{u}_2)^2)\p_x P_N^2 P_{N_1} w$
  since the interaction $P_{\ll N}((P_{\gts N}\bar{u}_2)^2)\p_x P_N^2 P_{N_1} w$ can be estimated by the same way as \eqref{eq_6.9}.
  Other interactions are estimated by the same way as in $A_{5,3}$.
  On the one hand, we need to observe the cancellation in $A_{5,3}$ and $A_{5,8}$ in order to close estimates.
  On the other hand, we can use Bourgain type estimates for the interaction
  $P_{\ll N}((P_{\ll N} u_2)^2)\p_x P_N^2 P_{N_1} w$
  thanks to the strong resonance relation.
  More precisely, the resonance relation of
  \EQQS{
    \int_0^t\int_\R \p_x^{-1}(P_{\ll N}(P_{\ll N}\bar{u}_2)^2 \p_x P_N^2 P_{N_1}w)\p_x P_{N_3}u_2
    \p_x^{-1}\dP_M (P_{N_2}\bar{w}P_{N_4}u_1)dxdt'
  }
  is the following:
  \EQQS{
    |\Om^{(2)}(\xi_1,\dots, \xi_6)|
    =|-\xi_1^2-\xi_2^2+\xi_3^2+\xi_4^2\pm\xi_5^2\mp\xi_6^2|
    \gts N_1^2,
  }
  since $|\xi_1|,|\xi_2|,|\xi_5|,|\xi_6|\ll N$ and $|\xi_3|,|\xi_4|\sim N$.
  We take extensions of $\check{u}_1=\rho_T(u_1)$, $\check{u}_2=\rho_T(u_2)$ and $\check{w}=\rho_T(w)$ defined in \eqref{def_ext}.
  We drop the check in the sequel to simplify the notation.
  With a slight abuse of notation, we set
  \EQQS{
    &I^{(6)}_{t,M}(\al_1,\al_2,\al_3,\al_4,\al_5,\al_6)\\
    &:=\RE \int_0^t\int_\R \p_x^{-1}
     (P_{\ll N}(P_{\ll N}\al_1 P_{\ll N}\al_2)
      P_N^2 P_{N_1}\al_3)
      P_{N_3}\al_4
    \p_x^{-1}\dP_M (P_{N_2}\al_5 P_{N_4}\al_6)dxdt'.
  }
  So, as in Lemma \ref{lem_Bourgain1}, we have
  \EQQS{
    &\RE\int_0^t\int_\R \p_x^{-1}(P_{\ll N}(P_{\ll N}\bar{u}_2)^2 \p_x P_N^2 P_{N_1}w)\p_x P_{N_3}u_2
    \p_x^{-1}\dP_M (P_{N_2}\bar{w} P_{N_4}u_1)dxdt'\\
    &=I^{(6)}_{\I,M}(\bar{u}_2,\bar{u}_2,
    \1_{t,R}^{\text{high}}\p_xw,
      \1_t \p_x u_2,\bar{w},u_1)
    +I^{(6)}_{\I,M}(\bar{u}_2,\bar{u}_2,
    \1_{t,R}^{\text{low}}\p_x w,
    \1_{t,R}^{\text{high}}\p_x u_2,\bar{w},u_1)\\
    &\quad+I^{(6)}_{\I,M}(\bar{u}_2,\bar{u}_2,
    \1_{t,R}^{\text{low}}\p_x w,
    \1_{t,R}^{\text{low}}\p_x u_2,\bar{w},u_1),
  }
  where $R=N_1^{5/3}$.
  The first two terms in the right hand side can be estimated easily if we use \eqref{eq4.1}.
  So, we focus on the third term in the right hand side.
  Putting $L:=N_1^2\gg R$, by Lemma \ref{resonance}, we have six terms as follows:
  \EQQS{
    &I^{(6)}_{\I,M}(\bar{u}_2,\bar{u}_2,
      \1_{t,R}^{\text{low}}\p_x w,
      \1_{t,R}^{\text{low}}\p_x u_2,\bar{w},u_1)\\
    &=I^{(6)}_{\I,M}(\bar{u}_2,\bar{u}_2,
      Q_{\gts L}(\1_{t,R}^{\text{low}}\p_x w),
      \1_{t,R}^{\text{low}}\p_x u_2,\bar{w},u_1)\\
    &\quad+I^{(6)}_{\I,M}(\bar{u}_2,\bar{u}_2,
      Q_{\ll L}(\1_{t,R}^{\text{low}}\p_x w),
      Q_{\gts L}(\1_{t,R}^{\text{low}}\p_x u_2),\bar{w},u_1)+\cdots\\
    &\quad+I^{(6)}_{\I,M}(\ov{Q_{\ll L}u_2},\ov{Q_{\ll L}u_2},
      Q_{\ll L}(\1_{t,R}^{\text{low}}\p_x w),
      Q_{\ll L}(\1_{t,R}^{\text{low}}\p_x u_2),\ov{Q_{\ll L}w},Q_{\gts L}u_1).
  }
  It then follows from the Bernstein inequality that
  \EQS{\label{tutu}
    \begin{aligned}
      &\sum_{N>N_0}
       \sum_{\substack{N_1\sim N_3\gg N_2\vee N_4,\\ N_1^{-1}\le  M\le N_1^{2/3}}} N^{2\theta}
       |I^{(6)}_{\I,M}(\bar{u}_2,\bar{u}_2,
         Q_{\gts L}(\1_{t,R}^{\text{low}}\p_x w),
         \1_{t,R}^{\text{low}}\p_x u_2,\bar{w},u_1)|\\
      &\les \sum_{N>N_0}\sum_{\substack{N_1\sim N_3\gg N_2\vee N_4,\\ N_1^{-1}\le  M\le N_1^{2/3}}}
       N^{2\theta+1}
       \|P_{\ll N}u_2\|_{L_{t,x}^\I}^2
       \|P_N^2 P_{N_1}Q_{\gts L}(\1_{t,R}^{\text{low}} w)\|_{L_{t,x}^2}\\
      &\quad\times \|P_{N_3}\1_{t,R}^{\text{low}}u_2\|_{L_{t,x}^2}
      \|P_{N_2}w\|_{L_t^\I L_x^2}
        \|P_{N_4}u_1\|_{L_t^\I L_x^2}\\
      &\les T^{1/2}K^3 \|w\|_{L_t^\I H_x^\theta}
       \sum_{N_1\sim N_3\gts N_0}N_1^{2\theta-1+}\|P_{N_1}w\|_{X^{0,1}}
       \|P_{N_3}u_2\|_{L_t^\I L_x^2}\\
        &\les T^{1/2}K^3 \|w\|_{L_t^\I H_x^\theta}
       \sum_{N_1\sim N_3\gts N_0}N_1^{\theta-s+}\|P_{N_1}w\|_{X^{\theta-1,1}}
       \|P_{N_3}u_2\|_{L_t^\I H_x^s}\\
      &\les K^4 T^{1/2}\|w\|_{L_t^\I H_x^\theta}
       \|w\|_{X^{\theta-1,1}}.
    \end{aligned}
  }
  Other terms can be estimated in a similar way.

  \noindent
  \underline{Estimate for $A_{5,11}$.}
  It is easy to see that $A_{5,11}$ can be estimated by the same way as $A_{5,4}$.

  \noindent
  \underline{Estimate for $A_{5,12}$.}
  Notice that $A_{5,12}$ can be estimated by the same way as $A_{5,5}$.

  \noindent
  \underline{Estimate for $A_{5,13}$.}
  Recall that in $A_{5,6}$ we have to observe the cancellation for the interaction $(\RE P_{\ll N}(u_2\bar{\psi}))\p_x P_N^2 P_{N_1}\bar{w}$.
  So, the most dangerous part of $A_{5,13}$ is the interaction $(\RE P_{\ll N}(\bar{u}_2\bar{\psi}))\p_x P_N^2 P_{N_1}w$.
  Moreover, it suffices to consider the interaction $(\RE P_{\ll N}(P_{\ll N} \bar{u}_2 P_{\ll N}\bar{\psi}))\p_x P_N^2 P_{N_1}w$
  since $(\RE P_{\gts N}(P_{\gts N} \bar{u}_2 P_{\ll N}\bar{\psi}))\p_x P_N^2 P_{N_1}w$ can be estimated easily.
  And this interaction is estimated by a similar way to that of $A_{5,10}$ with using the argument of Lemma \ref{lem_Bourgain3} instead of Lemma \ref{lem_Bourgain1}.

  \noindent
  \underline{Estimate for $A_{5,14}$.}
  This is the same as $A_{5,7}$.

  \noindent
  \underline{Estimate for $A_{5,15}$.}
  Similarly to $A_{5,13}$, we only consider the interaction
  \EQQS{
    P_{\ll N}((P_{\ll N}\bar{\psi})^2)\p_x P_N^2 P_{N_1}w.
  }
  We can still use the argument of Lemma \ref{lem_Bourgain3} to this interaction.
  Indeed, if we have
  \EQQS{
    &\int_{\R^2} \p_x^{-1}(P_{\ll N}(P_{\ll N}R_{\ll L}
     \bar{\psi}P_{\ll N}\bar{\psi})) \p_x P_N^2 P_{N_1}Q_{\ll L}(\1_{t,R}^{\text{low}}w))
     \p_x P_{N_3}Q_{\ll L}(\1_{t,R}^{\text{low}}u_2)\\
    &\quad\quad\quad\times\RE\p_x^{-1}\dP_M
    (P_{N_2}\ov{Q_{\ll L}w}P_{N_4}Q_{\ll L}u_1)dxdt',
  }
  then it automatically holds that $P_{\ll N}\bar{\psi}= P_{\ll N}R_{\gts L}\bar{\psi}$ with $L=N_1^2$ since
  \EQQS{
    |\ta_1-\xi_1^2+\ta_2-\xi_2^2+\ta_3+\xi_3^2+\ta_4+\xi_4^2+\ta_5-\xi_5^2+\ta_6-\xi_6^2|\gts L
  }
  and $|\ta_1|, |\xi_1|^2, |\xi_2|^2, |\ta_3+\xi_3^2|, |\ta_4+\xi_4^2|, |\ta_5-\xi_5^2|, |\ta_6-\xi_6^2|\ll L$.
  This finishes estimates for $A_5$.

  Estimates for $A_6$ can be obtained similarly to $A_5$.
  Next, we consider $A_7$.
  Roughly speaking, derivative do not fall on the highest frequency in $A_7$, so we can put a part of them to some functions.
  And it turns out that we do not need to use either the refined Strichartz estimate (Proposition \ref{prop_stri1}) nor Bourgain type estimates.
  Actually, since $s>\theta+\frac{1}{2} $, it is enough to prove the following:
  \EQS{\label{eq_6.12}
    \sum_{\substack{N_2,N_4\ll N_1,\\ N_1^{-1}\le M\le N_1^{2/3}}}
    \|\p_x^{-1}\dP_M (P_{N_2}\ov{W} P_{N_4}u_1)\|_{L_x^\I}
    \les N_1^{\frac{1}{2}} K^3 \|w\|_{H_x^{\theta}}.
  }
  Remark that $N_2,N_4\in 2^{\N}$.
  In what follows, we show \eqref{eq_6.12} with a slightly better bound.
  That is, we show that the loss is at most $N_1^{\frac{1}{6}}$ although $N_1^{\frac{1}{2}}$ is acceptable.
  We only consider $W=\tilde{h}_j$ for $j=9,\dots, 15$ since the complex conjugate does not have any role in the subsequent argument.
  Recall that the $\tilde{h}_j$ are defined in \eqref{eq_w2}.

  \noindent
  \underline{Estimate for $\tilde{h}_9$.}
  Notice that
  \EQS{\label{eq_6.10}
    \begin{aligned}
      P_{N_2}(wz\p_x \bar{u}_1)
      &=P_{N_2}(wz\p_x P_{\les 1}\bar{u}_1)
       +\sum_{\ti{N}_2\gg 1} P_{N_2}(P_{\ll \ti{N}_2} (wz) \p_x P_{\ti{N}_2} \bar{u}_1)\\
      &\quad+\sum_{\ti{N}_2\gg 1} P_{N_2}(P_{\gts \ti{N}_2} (wz) \p_x P_{\ti{N}_2} \bar{u}_1).
    \end{aligned}
  }
  Then it is easy to see that
  \EQS{\label{eq_6.11}
    \begin{aligned}
      &\sum_{\substack{N_2,N_4\ll N_1,\\ N_1^{-1}\le M\le N_1^{2/3}}}
      \|\p_x^{-1}\dP_M (P_{N_2}( \bar{w} \bar{z} \p_x P_{\les 1}u_1) P_{N_4}u_1)\|_{L_x^\I}\\
      &\les \sum_{\substack{N_2,N_4\ll N_1,\\ N_1^{-1}\le M\le N_1^{2/3}}}
       \|P_{N_2}(\bar{w}\bar{z}\p_x P_{\les 1}u_1) P_{N_4}u_1\|_{L_x^1}\\
      &\les N_1^{0+} \|wz\|_{L_x^2}
      \| u_1\|_{L^2_x} \|u_1\|_{L_x^\I}
      \les N_1^{0+} K^3 \|w\|_{L_x^2}.
    \end{aligned}
  }
  By the same way,
  \EQS{\label{eq_6.1111}
    \begin{aligned}
      &\sum_{\substack{N_2,N_4\ll N_1,\\ N_1^{-1}\le M\le N_1^{2/3}}}\sum_{\ti{N}_2\gg 1}
      \|\p_x^{-1}\dP_M (P_{N_2}(P_{\gts \ti{N}_2}(\bar{w} \bar{z})\p_x P_{\ti{N}_2}u_1) P_{N_4} u_1 )\|_{L_x^\I}\\
      &\les \sum_{\substack{N_2,N_4\ll N_1,\\ N_1^{-1}\le M\le N_1^{2/3}}}\sum_{\ti{N}_2\gg 1}
       \|P_{N_2}(P_{\gts \ti{N}_2}(\bar{w} \bar{z})\p_x P_{\ti{N}_2}u_1) P_{N_4}u_1\|_{L_x^1}\\
      &\les \sum_{\substack{N_2,N_4\ll N_1,\\ N_1^{-1}\le M\le N_1^{2/3}}}\sum_{\ti{N}_2\gg 1}
     \ti{N}_2^{\frac{1}{4}}  \|P_{\gts \ti{N}_2}(wz)\|_{L^2_x} \ti{N}_2^{\frac{3}{4}}\|P_{\ti{N}_2}u_1\|_{L^2_x}  \|u_1\|_{L_x^\I}
      \les N_1^{0+} K^3 \|w\|_{H_x^{\frac{1}{4}}}.
    \end{aligned}
}
Finally we notice that $P_{N_2}(P_{\ll \ti{N}_2} (wz) \p_x P_{\ti{N}_2} \bar{u}_1)$ vanishes unless
$ N_2 \sim \ti{N}_2 \gg 1 $ and we estimate its contribution by splitting the sum over $ 1\ll N_2\les M $ and
 the sum over $M\ll N_2\ll N_1 $.
 Note that $N_4\sim M$ in the first case and $ N_4\sim N_2$ in the second case.
 It then follows that
  \EQS{\label{eq_6.1c}
    \begin{aligned}
      &\sum_{\substack{1\ll N_2\ll N_1 ,N_4\ll N_1,\\ N_1^{-1}\le M\le N_1^{2/3}}}
      \|\p_x^{-1}\dP_M (P_{N_2}(P_{\ll N_2}(\bar{w} \bar{z})\p_x P_{\sim N_2}u) P_{N_4}u_1)\|_{L_x^\I}\\
      &\les  \sum_{1\ll N_2\les M\sim N_4\ll N_1}
        M^{-\frac{1}{2}} \|P_{N_2}(P_{\ll N_2}(\bar{w} \bar{z})\p_x P_{\sim N_2}u) P_{N_4}u_1\|_{L_x^2}\\
      &\quad +\sum_{\substack{M\ll N_2\sim N_4\ll N_1,\\ N_1^{-1}\le M\le N_1^{2/3}}}
        \|P_{N_2}(P_{\ll N_2}(\bar{w} \bar{z})\p_x P_{\sim N_2}u) P_{N_4}u_1\|_{L_x^1}\\
      &\les  \sum_{1\ll N_2\les M\sim N_4\ll N_1}
        M^{-\frac{1}{2}}N_2 \|wz\|_{L^2_x} \|P_{\sim N_2}u_1\|_{L_x^\infty}
        \|P_{N_4}u_1\|_{L_x^\infty}\\
      &\quad +\sum_{\substack{M\ll N_2\sim N_4\ll N_1,\\ N_1^{-1}\le M\le N_1^{2/3}}}
        \|wz\|_{L^2_x} N_2^\frac{3}{4} \| P_{\sim N_2}u_1\|_{L^2_x} N_4^\frac{1}{4} \| P_{N_4}u_1\|_{L_x^\I}
      \les \log(N_1) K^3 \|w\|_{H_x^{\frac{1}{4}}}.
    \end{aligned}
}

  \noindent
  \underline{Estimate for $\tilde{h}_{10}$.} We rewrite this term as  $u_2^2\p_x \bar{w}=\partial_x(u_2^2 \bar{w})- 2\bar{w} u_2\p_x u_2$.
  The last term clearly has the same form as $\tilde{h}_9 $ and can be treated in exactly the same way.
  To estimate the contribution of the first term we proceed similarly as in \eqref{eq_6.1c} by separating the sum over $ N_2\les 1$, $ 1\ll N_2 \les M $ and $ N_2\gg M $.
  More precisely, we have
   \EQS{\label{eq_6.1cc}
    \begin{aligned}
      &\sum_{\substack{ N_2\ll N_1 ,N_4\ll N_1,\\ N_1^{-1}\le M\le N_1^{2/3}}}
      \|\p_x^{-1}\dP_M (\p_x P_{N_2} (\bar{u}_2^2 w) P_{N_4}u_1)\|_{L_x^\I}\\
      &\les  \sum_{\substack{1\ll N_2\les M ,N_4\ll N_1,\\ N_1^{-1}\le M\le N_1^{2/3}}}
        M^{-\frac{1}{2}}  \|\p_x P_{N_2} (\bar{u}_2^2 w)  P_{N_4}u_1\|_{L_x^2}\\
      &\quad +\Bigg(\sum_{\substack{0< N_2\les 1, 1\le N_4\ll N_1,\\ N_1^{-1}\le M\le N_1^{2/3}}}
        +\sum_{\substack{M\ll N_2\sim N_4\ll N_1,\\ N_1^{-1}\le M\le N_1^{2/3}}}\Bigg)
        \|\p_x P_{N_2} (\bar{u}_2^2 w) P_{N_4}u_1\|_{L_x^1}\\
      &\les
       \sum_{\substack{N_4\ll N_1,\\ N_1^{-1}\le M\le N_1^{2/3}}}
       (M^{\frac{1}{4}}\|u_2^2 \bar{w}\|_{H_x^\frac{1}{4}} \|P_{N_4}u_1\|_{L_x^\infty}
       +\|u_2^2 \bar{w}\|_{L_x^2} \|P_{N_4}u_1\|_{L_x^2})\\
      &\quad+\sum_{\substack{ N_1^{-1}\le M\le N_1^{2/3}}}
       \|u_2^2\bar{w}\|_{H_x^\frac{1}{4}} \|u_1\|_{H_x^{\frac{3}{4}}}
      \les N_1^\frac{1}{6}K^3 \|w\|_{H_x^{\frac{1}{4}}}.
    \end{aligned}
}
  \noindent
  \underline{Estimate for $\tilde{h}_{11}$.}
  We can easily close the estimate in this case since the derivative is landing on $\psi$.
  Indeed, since $\p_x \psi\in L^\I(\R;L^p(\R)) $ for any $p\in[1,\I]$, we have
  \EQQS{
    &\sum_{\substack{N_2,N_4\ll N_1,\\N_1^{-1}\le M\le N_1^{2/3}}}
    \|\p_x^{-1}\dP_M (P_{N_2}(\bar{w}\bar{z}\p_x \psi)) P_{N_4}u_1)\|_{L_x^\I}\\
    &\les (\log N_1)\sum_{N_2,N_4\ll N_1}
     \|P_{N_2}(\bar{w}\bar{z}\p_x \psi)\|_{L_x^1}
     \|P_{N_4}u_1\|_{L_x^\I}
    \les N_1^{0+} K^3\|w\|_{L_x^2}.
  }

  \noindent
  \underline{Estimate for $\tilde{h}_{12}$.}
  First we decompose $\tilde{h}_{12}=2w\psi\p_x \bar{u}_1$ as in \eqref{eq_6.10}:
  \EQQS{
    P_{N_2}(w\psi\p_x \bar{u}_1)
    &=P_{N_2}(w\psi\p_x P_{\les 1}\bar{u}_1)
     +\sum_{\ti{N}_2\gg1}P_{N_2}(P_{\ll \ti{N}_2}(w\psi)\p_x P_{\ti{N}_2}\bar{u}_1)\\
    &\quad+\sum_{\ti{N}_2\gg1}P_{N_2}(P_{\gts\ti{N}_2}(w\psi)
     \p_x P_{\ti{N}_2} \bar{u}_1)
    =:\tilde{h}_{12,1}+\tilde{h}_{12,2}+\tilde{h}_{12,3}.
  }
  By \eqref{eq_6.11},  \eqref{eq_6.1111}, and \eqref{eq_6.1c}, we can estimate $\tilde{h}_{12,1}, \tilde{h}_{12,2}$, and $\tilde{h}_{12,3}$, respectively.

  \noindent
  \underline{Estimate for $\tilde{h}_{13}$.}
  Notice that $\tilde{h}_{13}=2u_2\psi\p_x \bar{w}=2\p_x(u_2\psi \bar{w})-2\p_x u_2\psi \bar{w}-2 u_2\p_x \psi \bar{w}$.
  The second (resp. the third) term can be treated by the same way as $\tilde{h}_{12}$ (resp. $\tilde{h}_{11}$).
  On the other hand, we can apply the argument of $\tilde{h}_{10}$ to the first term thanks to $\|u_2\psi\bar{w}\|_{H_x^{\frac{1}{4}}}\les \|u_2\|_{H_x^{\frac{3}{4}+}}
  \|J_x^{\frac{1}{4}}\psi\|_{L_x^{\I}}
  \|w\|_{H_x^{\frac{1}{4}}}$, which is a direct consequence of \eqref{eq2.6}.

  \noindent
  \underline{Estimate for $\tilde{h}_{14}$.}
  This term is easily estimated as follows:
  \EQQS{
    &\sum_{\substack{ N_2, N_4\ll N_1,\\N_1^{-1}\le M\le N_1^{2/3}}}
    \|\p_x^{-1}\dP_M (P_{N_2}(\bar{w}\bar{\psi}\p_x \psi) P_{N_4}u_1)\|_{L_x^\I} \\
    &\les \sum_{\substack{ N_2, N_4\ll N_1,\\N_1^{-1}\le M\le N_1^{2/3}}}
    \|\dP_M (P_{N_2}(\bar{w}\bar{\psi}\p_x \psi) P_{N_4}u_1)\|_{L_x^1}
    \les N_1^{0+} K^3 \|w\|_{L_x^2}
  }
  since $J_x^{s+1+\e}\psi \in L^\I(\R^2)$.

  \noindent
  \underline{Estimate for $\tilde{h}_{15}$.} We rewrite this term as $\psi^2 \p_x \bar{w}= \p_x(\psi^2  \bar{w})-2\bar{w} \psi \p_x \psi $ and notice that the second term is exactly $ \tilde{h}_{14} $ whereas the contribution of the first term can be easily be treated
  as in \eqref{eq_6.1cc}.
  Indeed, by \eqref{eq2.6}, we have $\|\bar{\psi}^2 w\|_{H_x^{\frac{1}{4}}}\les
  \|J_x^{\frac{1}{4}}\psi\|_{L_x^{\I}}^2
  \|w\|_{H_x^{\frac{1}{4}}}$, which implies that
  \EQQS{
    &\sum_{\substack{ N_2,N_4\ll N_1,\\ N_1^{-1}\le M\le N_1^{2/3}}}
    \|\p_x^{-1}\dP_M (P_{N_2}\p_x (\bar{\psi}^2 w) P_{N_4}u_1)\|_{L_x^\I}
    \les N_1^\frac{1}{6}K^3 \|w\|_{H_x^{\frac{1}{4}}}.
}
  Finally we consider estimates for $A_8$.
  We would like to show the following:
  \EQQS{
   \sum_{ \substack{N_2,N_4\ll N_1,\\N_1^{-1}\le M\le N_1^{2/3}}}
    \|\p_x^{-1}\dP_M (P_{N_2}\bar{w} P_{N_4}U)\|_{L_x^\I}
    \les N_1^{1/2} K^3 \|w\|_{H_x^{\theta}}.
  }
  Again, we remark that $N_2,N_4\in 2^{\N}$.
  For that purpose, it suffices to consider $U=f_j(u_1)$ for $j=1,\dots,5$ since other estimates follows similarly and the complex conjugate does not have any role, where $f_j(u_1)$ is defined in \eqref{def_fu}.
  For $f_1(u)$, we decompose $f_1(u_1)=|u_1|^2 \p_x u_1$ as in \eqref{eq_6.10}, and then we apply the argument of $\tilde{h}_{9}$.
  Following the argument of $\tilde{h}_{11}$, we can estimate $f_2(u_1)=|u_1|^2 \p_x \psi $ easily.
  Notice that $f_3(u_1)=2\p_x u_1\RE(u_1\bar{\psi})$ can be estimated by the same way as in $\tilde{h}_{12}$.
  We also note that the estimate for $f_4(u_1)=2\p_x \psi \RE(u_1\bar{\psi})$ follows from the argument of $\tilde{h}_{14}$.
  The argument of $\tilde{h}_{15}$ can be applied to $f_5(u_1)$.
  In conclusion, the estimates for $\E_N^1(u_1,u_2)$ is completed.

  \noindent
  \textbf{Estimate for $J_t^{(3)}$.}
  This case follows from the estimates for $J_t^{(2)}$.

  \noindent
  \textbf{Estimate for $J_t^{(6)}$.}
  We estimate the following:
  \EQQS{
    J_t^{(6)}
    = N^{2\theta}
     \sum_{N_1,\dots,N_4\in 2^\N}
     \RE \int_0^t \int_\R
     P_N^2 P_{N_1} \bar{w}
     \p_x P_{N_3} u_2
     \RE( P_{N_2} w P_{N_4} \bar{\psi}) dxdt'.
  }
  We see from the analysis in the estimates for $J_t^{(10)}$ that $N_1\sim N_3\gg N_2\vee N_4$ is the most delicate case.
  Furthermore, by using the decomposition $P_{N_2} w P_{N_4} \bar{\psi}=\sum_{M\les N_2\vee N_4}\dP_M(P_{N_2} w P_{N_4} \bar{\psi})$ as in Case 4 in the estimates for $J_t^{(2)}$, the most difficult interaction is $N_1^{-1}\le M\le N_1^{2/3}$.
  Therefore, we are reduced to focusing on the following (which we still denote by $J_t^{(6)}$):
  \EQQS{
    J_t^{(6)}
    =N^{2\theta}
     \sum_{\substack{N_1\sim N_3\gg N_2\vee N_4,\\ N_1^{-1}\le M\le N_1^{2/3}}}
     \RE \int_0^t \int_\R
     P_N^2 P_{N_1} \bar{w}
     \p_x P_{N_3} u_2
     \RE \dP_M( P_{N_2} w P_{N_4} \bar{\psi}) dxdt',
  }
  where $N_1,\dots, N_4\in 2^\N $ and $ M\in 2^\Z$.
  In what follows, we show that this term is indeed canceled out by the correction term $\E_N^3(u_1,u_2)$.
  To see this, we first take the time derivative of $\E_N^3(u_1,u_2)$:
  \EQQS{
    &\int_0^t \frac{d}{dt}\E_N^3(u_1(t'),u_2(t'))dt'\\
    &=\sum\RE i \int_0^t \int_\R \p_x^{-1}P_N^2 P_{N_1} \p_t \bar{w}\p_x P_{N_3}u_2
     \RE\p_x^{-1}\dP_M(P_{N_2}w P_{N_4}\bar{\psi})dxdt'\\
     &\quad+\sum\RE i \int_0^t \int_\R \p_x^{-1}P_N^2 P_{N_1} \bar{w}
      \p_x P_{N_3}\p_t u_2
      \p_x^{-1} \RE\p_x^{-1}
      \dP_M(P_{N_2}w P_{N_4}\bar{\psi})dxdt'\\
     &\quad+\sum\RE i \int_0^t \int_\R \p_x^{-1}P_N^2 P_{N_1} \bar{w}\p_x P_{N_3}u_2
     \RE\p_x^{-1}\dP_M(P_{N_2}\p_t w P_{N_4}\bar{\psi})dxdt'\\
     &\quad+\sum\RE i \int_0^t \int_\R \p_x^{-1}P_N^2 P_{N_1} \bar{w}\p_x P_{N_3}u_2
      \RE\p_x^{-1}\dP_M(P_{N_2}w P_{N_4}\p_t \bar{\psi})dxdt'\\
    &=\sum\RE \int_0^t \int_\R \p_x P_N^2 P_{N_1} \bar{w}\p_x P_{N_3}u_2
     \RE\p_x^{-1}\dP_M(P_{N_2}w P_{N_4}\bar{\psi})dxdt'\\
     &\quad-\sum\RE\int_0^t \int_\R \p_x^{-1}P_N^2 P_{N_1} \bar{w}
      \p_x^3 P_{N_3} u_2
      \RE\p_x^{-1}
      \dP_M(P_{N_2}w P_{N_4}\bar{\psi})dxdt'\\
     &\quad-\sum\RE\int_0^t \int_\R \p_x^{-1}P_N^2 P_{N_1} \bar{w}\p_x P_{N_3}u_2
      \IM \p_x^{-1}\dP_M(P_{N_2}\p_x^2 w P_{N_4}\bar{\psi})dxdt'\\
     &\quad+\sum\RE i \int_0^t \int_\R \p_x^{-1}P_N^2 P_{N_1} \bar{w}\p_x P_{N_3}u_2
      \RE\p_x^{-1}\dP_M(P_{N_2}w P_{N_4}\p_t \bar{\psi})dxdt'\\
     &\quad-\sum\RE i \int_0^t \int_\R \p_x^{-1}P_N^2 P_{N_1} \ov{W}
      \p_x P_{N_3}u_2
      \RE\p_x^{-1}\dP_M(P_{N_2}w P_{N_4}\bar{\psi})dxdt'\\
     &\quad+\sum\RE i \int_0^t \int_\R \p_x^{-1}
       P_N^2 P_{N_1} \bar{w}\p_x P_{N_3}V
       \RE\p_x^{-1}\dP_M(P_{N_2}w P_{N_4}\bar{\psi})dxdt'\\
     &\quad+\sum\RE i \int_0^t \int_\R \p_x^{-1}P_N^2 P_{N_1} \bar{w}\p_x P_{N_3}u_2
     \RE
     \p_x^{-1}\dP_M(P_{N_2} W P_{N_4}\bar{\psi})dxdt'\\
    &=:\sum_{j=1}^7 A_j,
  }
  where $\sum$ denotes $\sum_{N_1\sim N_3\gg N_2\vee N_4}
   \sum_{N_1^{-1}\le M\le N_1^{2/3}}$ and $W,U$ and $V$ are defined in \eqref{def_WUV}.
  As in the estimates for $J_t^{(2)}$, first we need to check that $A_1+A_2+A_3+A_4$ can cancel out $J_{t}^{(6)}$ if we choose a constant $c_3\in\R$ appropriately.
  By integration by parts, we have
  \EQQS{
    A_2
    &=-\sum\RE \int_0^t \int_\R \p_x P_N^2 P_{N_1} \bar{w}
     \p_x P_{N_3} u_2
     \RE\p_x^{-1}
     \dP_M(P_{N_2}w P_{N_4}\bar{\psi})dxdt'\\
    &\quad-2\sum\RE\int_0^t \int_\R P_N^2 P_{N_1} \bar{w}
     \p_x P_{N_3} u_2
     \RE \dP_M(P_{N_2}w P_{N_4}\bar{\psi})dxdt'\\
    &\quad-\sum\RE\int_0^t \int_\R \p_x^{-1}P_N^2 P_{N_1} \bar{w}
     \p_x P_{N_3} u_2
     \RE\p_x \dP_M(P_{N_2}w P_{N_4}\bar{\psi})dxdt'
  }
  where $\sum$ denotes $\sum_{N_1\sim N_3\gg N_2\vee N_4}
   \sum_{N_1^{-1}\le M\le N_1^{2/3}}$.
  As for $A_4$, we notice that
  \EQS{\label{eq_psi}
   \begin{aligned}
     &\int_\R \p_x^{-1}P_N^2 P_{N_1} \bar{w}\p_x P_{N_3}u_2
     \RE\p_x^{-1}\dP_M(P_{N_2}w P_{N_4}\p_t \bar{\psi})dx\\
     &=\int_\R \p_x^{-1}P_N^2 P_{N_1} \bar{w}\p_x P_{N_3}u_2
     \IM \p_x^{-1}\dP_M(P_{N_2}w P_{N_4}\p_x^2 \bar{\psi})dx\\
     &\quad+\int_\R \p_x^{-1}P_N^2 P_{N_1} \bar{w}\p_x P_{N_3}u_2
     \RE\p_x^{-1}\dP_M(P_{N_2}w P_{N_4}(\p_t+i\p_x^2)\bar{\psi})dx.
   \end{aligned}
  }
  Moreover, as for $A_3$ and $A_4$, we also see from \eqref{eq_anti} that
  \EQQS{
    &-\p_x^{-1}\dP_M(\p_x^2P_{N_2}w P_{N_4}\bar{\psi})
    +\p_x^{-1}\dP_M(P_{N_2}w \p_x^2 P_{N_4}\bar{\psi})\\
    &=-\dP_M(\p_x P_{N_2}w P_{N_4}\bar{\psi})
     +\p_x^{-1}\dP_M(\p_x P_{N_2}w \p_x P_{N_4}\bar{\psi})\\
    &\quad+\dP_M(P_{N_2}w \p_x P_{N_4}\bar{\psi})
     -\p_x^{-1}\dP_M(\p_x P_{N_2}w \p_x P_{N_4}\bar{\psi})\\
    &=-\dP_M(\p_x P_{N_2}w P_{N_4}\bar{\psi})
     +\dP_M(P_{N_2}w \p_x P_{N_4}\bar{\psi}).
  }
  Notice that there are no terms having derivatives more than one, which allows us to close estimates under the assumption \eqref{hyp_psi2}.
  This observation will be used in the estimates of $A_1+\cdots+A_5$.
  By \eqref{eq_6.6} with a slight modification, we see from Remark \ref{rem_psi} that
  \EQQS{
    \sum_{\substack{N_2,N_4\ll N_1,\\ N_1^{-1}\le M\le N_1^{2/3}}}
    \|\p_x^{-1}\dP_M (P_{N_2}\bar{w} P_{N_4}(\p_t+i\p_x^2)\bar{\psi})\|_{L_x^\I}
    \les N_1^{0+} \|(i\p_t+\p_x^2)\psi\|_{L_x^2}
    \|w\|_{L_x^2}.
  }
  So, by \eqref{eq_anti} and \eqref{eq_psi}, $A_1+A_2+A_3+A_4$ takes the form
  \EQQS{
    &\sum_{n=1}^4 A_n\\
    &=-2\sum\RE\int_0^t \int_\R P_N^2 P_{N_1} \bar{w}
     \p_x P_{N_3} u_2
     \RE \dP_M(P_{N_2}w P_{N_4}\bar{\psi})dxdt'\\
    &\quad-\sum\RE\int_0^t \int_\R \p_x^{-1}P_N^2 P_{N_1} \bar{w}
     \p_x P_{N_3} u_2
     \RE\p_x \dP_M(P_{N_2}w P_{N_4}\bar{\psi})dxdt'\\
    &\quad-\sum\RE\int_0^t \int_\R \p_x^{-1}P_N^2 P_{N_1} \bar{w}
     \p_x P_{N_3} u_2
     \IM
     \dP_M(\p_x P_{N_2}w P_{N_4}\bar{\psi})dxdt'\\
    &\quad+\sum\RE\int_0^t \int_\R \p_x^{-1}P_N^2 P_{N_1} \bar{w}
     \p_x P_{N_3} u_2
     \IM
     \dP_M(P_{N_2}w \p_x P_{N_4}\bar{\psi})dxdt'\\
    &\quad+\sum\RE i\int_0^t  \int_\R \p_x^{-1}P_N^2 P_{N_1} \bar{w}\p_x P_{N_3}u_2
     \RE\p_x^{-1}\dP_M(P_{N_2}w P_{N_4}(\p_t+i\p_x^2)\bar{\psi})dxdt'\\
    &=:A_{1,1}+\cdots+A_{1,5},
  }
  where $\sum$ denotes $\sum_{N_1\sim N_3\gg N_2\vee N_4}
   \sum_{N_1^{-1}\le M\le N_1^{2/3}}$.
  Notice that $A_{1,1}$ coincides with $J_t^{(6)}$ up to a constant.
  In this step, we choose $c_3\in\R$ such that $A_{1,1}$ eliminates $J_{t}^{(6)}$.
  As for $A_{1,2}, A_{1,4}$ and $A_{1,5}$, it is easy to have
  \EQQS{
    \sum_{N>N_0}N^{2\theta}(|A_{1,2}|+|A_{1,4}|+|A_{1,5}|)
    \les K^2 T\|w\|_{L_T^\I H_x^\theta}^2.
  }
  In the case $N_4\gts N_2$ in $A_{1,3}$, we can distribute some derivatives to $\psi$.
  On the other hand, when $N_4\ll N_2$, we can use the Bourgain with the resonance relation $|\Om|\gts NN_2$, which is enough to close.
  The advantage of introducing $\E_N^3$ is that we can ``downgrade" a derivative $N$ to $N_2$ or $N_4$.
  Next, we observe the cancellation such as in the estimates for $J_t^{(2)}$.
  We consider $A_5$ and $A_6$.
  We set
  \EQQS{
    &A_5\\
    &=-\sum_{j=1}^{15}\sum_{\substack{N_1\sim N_3\gg N_2\vee N_4,\\ N_1^{-1}\le M\le N_1^{2/3}}}
     \RE i \int_0^t \int_\R \p_x^{-1}P_N^2 P_{N_1} \bar{\tilde{h}}_j
     \p_x P_{N_3}u_2
     \RE\p_x^{-1}\dP_M(P_{N_2}w P_{N_4}\bar{\psi})dxdt'\\
    &=:\sum_{j=1}^{15}A_{5,j}
  }
  and
  \EQQS{
    &A_6\\
    &=\sum_{j=1}^{10}\sum_{\substack{N_1\sim N_3\gg N_2\vee N_4,\\ N_1^{-1}\le M\le N_1^{2/3}}}
    \RE i \int_0^t\int_\R \p_x^{-1}
      P_N^2 P_{N_1} \bar{w}\p_x P_{N_3}f_j(u_2)
      \RE\p_x^{-1}\dP_M(P_{N_2}w P_{N_4}\bar{\psi})dxdt'\\
    &=:\sum_{j=1}^{10}A_{6,j},
  }
  where $f_j$ and $\tilde{h}_j$ are defined in \eqref{def_fu} and \eqref{eq_w2}, respectively.
  Here, we remark that we use the equation \eqref{eq_w2} not \eqref{eq_w} for $A_5$ so as to have $|u_2|^2\p_x w$.
  As in the estimates for $J_t^{(2)}$, we can see that  there is a cancellation between the  worst terms of $A_{5,3}$ and $A_{6,1}$.
  Similarly, so do $(A_{5,6},A_{6,3})$ and $(A_{5,9},A_{6,5})$.
  In the estimate of $A_{5,1}$ in $J_t^{(2)}$, the contribution of $\tilde{h}_1=w\bar{u}_1\p_x u_1$ is not a dangerous term since \eqref{eq_6.6} is available.
  As in \eqref{eq_6.6} we aim to establish an estimate of the type
  \EQQS{
  \sum_{N_1^{-1}\le M\le N_1^{2/3}}\Big\|\sum_{N_2,N_4\ll N_1}\p_x^{-1}\dP_M(P_{N_2}w P_{N_4}\bar{\psi})\Big\|_{L_x^\I} \les K N_1^{0+} \|w\|_{H_x^\theta} \, .
  }
  However, in the current case, if we merely follow the proof of \eqref{eq_6.6}, we can only obtain an estimate on the contribution of $ P_{N_4}\psi $ for $ M\ge 1 $.
  Indeed, it is easy to check that
  \EQS{\label{trf}
    \begin{aligned}
      &\sum_{1\le M\le N_1^{2/3}}
       \Big\|\sum_{N_2,N_4\ll N_1}\p_x^{-1}\dP_M(P_{N_2}w P_{N_4}\bar{\psi})\Big\|_{L_x^\I}\\
      & \les \sum_{1\le M\le N_1^{2/3}} \sum_{N_2,N_4\ll N_1}  M^{-\frac{1}{2}} \|P_{N_2}w P_{N_4}\bar{\psi}\|_{L_x^2}
       \les N_1^{0+} K \|w\|_{L_x^2} \, .
    \end{aligned}
  }
 To tackle the contribution of $ 0<M<1 $  we have to use  homogenous projections on $w$ and $ \psi $ together with the weight on low frequency for $ w$ and the fact that $ \p_x \psi(t) \in L^1(\R) $ for $t\in\R$ a.e.
 More precisely, we write
    \EQQS{
    &\sum_{N_1^{-1}\le M< 1}
     \Big\|\sum_{\substack{N_2,N_4\ll N_1}}\p_x^{-1}\dP_M(\dP_{N_2}w
     \dP_{N_4}\bar{\psi})\Big\|_{L_x^\I}\\
    &\les \sum_{\substack{0<N_2\les M,\\N_1^{-1}\le M< 1}}
     M^{-\frac{1}{2}} \|\dP_M(\dP_{N_2}w
     \dP_{\les M}\bar{\psi})\|_{L_x^2}
    +\sum_{\substack{M\ll N_2 \sim  N_4\ll N_1,\\N_1^{-1}\le M< 1}}
     \|\dP_M(\dP_{N_2}w \dP_{N_4}\bar{\psi})\|_{L_x^1}\\
    &=:R_1+R_2.
  }
  As for $R_1$, by the definition of the norm $\|\cdot\|_{\ov{H}^{\theta,\delta}}$, we have
  \EQQS{
    R_1
    &\le \sum_{\substack{0<N_2\les M,\\N_1^{-1}\le M< 1}}
     M^{-\frac{1}{2}}N_2^{\frac{1}{2}-\de}
     \|\dP_{N_2}w\|_{\ov{H}^{0,\de}} \|\dP_{\les M}\psi \|_{L_x^\I}
    \les N_1^\de K \|w\|_{\ov{H}^{0,\de}} \; .
  }
  On the other hand, we estimate $R_2$ as follows:
  \EQQS{
    R_2
    &\le \sum_{\substack{M\ll N_2 \sim  N_4\ll N_1,\\N_1^{-1}\le M< 1}}
       \|\dP_{N_2}w\|_{L^2_x} \|\dP_{N_4}\bar{\psi}\|_{L_x^2}\\
    &\les \sum_{\substack{M\ll N_2 \sim  N_4\ll N_1,\\N_1^{-1}\le M< 1}}
       (1\vee N_2^{-\de}) \|\dP_{N_2}w\|_{\ov{H}^{0,\de}} \|\p_x \dP_{N_4} \psi\|_{L_x^1}
    \les N_1^\de K \|w\|_{\ov{H}^{0,\de}},
  }
  which together with \eqref{trf} implies that
  \EQS{\label{trf2}
    \sum_{N_1^{-1}\le M< N_1^{2/3}}
     \Big\|\sum_{\substack{N_2,N_4\ll N_1}}\p_x^{-1}\dP_M(P_{N_2}w
      P_{N_4}\bar{\psi})\Big\|_{L_x^\I}
    \les N_1^\de K \|w\|_{\ov{H}^{0,\de}}.
  }
  Decomposing $\tilde{h}_1=w\bar{u}_1\p_x u_1$ we have
  \EQQS{
    &A_{5,1}\\
    &=-\sum \RE i \int_0^t \int_\R \p_x^{-1}P_N^2 P_{N_1}(P_{\ll N}(\bar{w}u_1)\p_x P_{\sim N}\bar{u}_1)
    \p_x P_{N_3}u_2
    \RE\p_x^{-1}\dP_M(P_{N_2}w P_{N_4}\bar{\psi})dxdt'\\
    &\quad-\sum \RE i \int_0^t \int_\R
     \p_x^{-1}P_N^2 P_{N_1}(P_{\gts N}(\bar{w}u_1)
     \p_x P_{\ll N}\bar{u}_1)
     \p_x P_{N_3}u_2
    \RE\p_x^{-1}\dP_M(P_{N_2}w P_{N_4}\bar{\psi})dxdt'\\
    &\quad-\sum \RE i \int_0^t \int_\R
     \p_x^{-1}P_N^2 P_{N_1}(P_{\gts N}(\bar{w}u_1)
     \p_x P_{\gts N}\bar{u}_1)
     \p_x P_{N_3}u_2
    \RE\p_x^{-1}\dP_M(P_{N_2}w P_{N_4}\bar{\psi})dxdt'\\
    &=:A_{5,1,1}+A_{5,1,2}+A_{5,1,3},
  }
  where $\sum$ denotes $\sum_{N_1\sim N_3\gg N_2\vee N_4}\sum_{N_1^{-1}\le M\le N_1^{2/3}}$.
  These three terms can be treated in a similar way to the estimates for $J_t^{(2)}$.
  We have to use \eqref{trf2} with \eqref{eq2.6} and Proposition \ref{prop_stri1}.
  For instance we have
  \EQQS{
    &\sum_{N>N_0}N^{2\theta}|A_{5,1,3}|\\
    &\les \sum_{N>N_0}\sum_{\substack{N_1\sim N_3\\ N^{-1}\les M\les N^{2/3}}}
    \sum_{\ti{N}_1\gts N}
    N^{2\theta}\ti{N}_1
    \|P_{\ti{N}_1}(\bar{w}u_1)\|_{L_{T,x}^2}
    \|P_{\sim \ti{N}_1}u_1\|_{L_T^\I L_x^2}
    \|P_{N_3}u_2\|_{L_T^2 L_x^\I}\\
    &\quad\quad\times
     \Big\|\sum_{N_2\vee N_4\ll N_1}\p_x^{-1}\dP_M(P_{N_2}w P_{N_4}\bar{\psi})\Big\|_{L_{T,x}^\I}\\
    &\les K \|w\|_{L_T^\I \ov{H}^{\theta,\de}}
     \sum_{\ti{N}_1\gts N_3\gts N_0}
     N_3^{2\theta+\de}\ti{N}_1
     \|P_{\ti{N}_1}(w\bar{u}_1)\|_{L_{T,x}^2}
     \|P_{\sim \ti{N}_1}u_1\|_{L_T^\I L_x^2}
     \|P_{N_3}u_2\|_{L_T^2 L_x^\I}\\
    &\les K \|w\|_{L_T^\I \ov{H}^{\theta,\de}}
     \|J_x^{2\te}u_2\|_{L_T^2 L_x^{\I}}
     \sum_{\ti{N}_1\gts N_0}\tilde{N}_1^{1+\de}
     \|P_{\ti{N}_1}(w\bar{u}_1)\|_{L_{T,x}^2}
     \|P_{\sim\tilde{N}_1}u_1\|_{L_T^\I L_x^2}\\
    &\les C(K)\|w\|_{L_T^\I \ov{H}^{\theta,\de}}
    \|w\|_{L_T^2 H_x^{1/4}}\|u_1\|_{L_T^\I H_x^s}
    \les C(K)T^{1/2}\|w\|_{L_T^\I \ov{H}^{\theta,\de}}
     \|w\|_{L_T^\I H_x^\theta}.
  }
  Here, in the fourth inequality, we used $2\theta<s-1/4$ and $3/4+\de<s$.
  Terms $A_{5,1,1}$ and $A_{5,1,2}$ can be estimated similarly to \eqref{eq_6.14}.
  We can follow the argument of the estimates for $J_t^{(2)}$ so that we obtain closed estimates for $W=\tilde{h}_j$, $j\in\{1,\dots,15\}\backslash\{10,13,15\}$.
  When $j=10,13,15$, we have to use Bourgain type estimates on some interactions to recover the derivative loss as we did in $A_{5,10}$ of the estimates for $J_t^{(2)}$.
  In particular, for those interactions, we have a strong resonant relation $|\Om|\gts N_1^2$.
  For instance, for $W=\tilde{h}_{10}=u_2^2 \p_x \bar{w}$ in $A_5$, we first decompose it as
  \EQQS{
    P_N (u_2^2 \p_x \bar{w})
    &= P_N(P_{\ll  N}(u_2^2) \p_x \bar{w})
     +P_N(P_{\gts  N}(u_2^2) \p_x \bar{w})\\
    &=P_N(P_{\ll  N}((P_{\gts N}u_2)^2) \p_x P_{\sim N} \bar{w})
     +P_N(P_{\ll  N}((P_{\ll N}u_2)^2)\p_x P_{\sim N} \bar{w})\\
    &\quad+P_N(P_{\gts  N}(u_2^2) \p_x \bar{w}).
  }
  We can treat the first term and the third term by the same way as \eqref{eq_6.9} with using \eqref{trf} instead of \eqref{eq_6.6}.
  Now for the second term, we have to use Bourgain-type estimates. Actually we can apply the argument of $A_{5,10}$ in the estimates for $J_t^{(2)}$ (see \eqref{tutu}).
  Note that we have a factor $ N_1^{\theta-s+} \le N_1^{-(\frac{1}{2}+)} $ in the middle of \eqref{tutu} since $s/2-1/8<s-1/2 $ for $s>3/4$ and we took $ \theta<s/2-1/8$.
  This additional gain allows us to avoid using \eqref{trf2} when utilizing Bourgain-type estimates, which is convenient since we need to take extensions of each function such as $\check{w}=\rho_T(w)$.
  In this case, we replace \eqref{trf2} with the trivial estimates
  \EQS{\label{tutu2}
   \|\p_x^{-1}\dP_M(P_{N_2}w P_{N_4}\bar{\psi})\|
   \les M^{-1/2} \|P_{N_2}w\|_{L^2_x}
   \|P_{N_4}\psi\|_{L^\infty_x}
  }
  that loses a factor $N_1^{\frac{1}{2}} $ since $ M\ge N^{-1} $.
  To be precise, following the arguments used in the analysis of $A_{5,10}$ in the estimates for $J_t^{(2)}$ with \eqref{tutu2} in hand, we can obtain
  \EQQS{
   &\sum_{N>N_0}
    \sum_{\substack{N_1\sim N_3\gg N_2\vee N_4,\\N_1^{-1}\le M\le N_1^{2/3}}} N^{2\theta}
    \Bigg|\RE i \int_0^t \int_\R \p_x^{-1}P_N^2 P_{N_1}(P_{\ll  N}(P_{\ll N}\bar{u}_2)^2 \p_x P_{\sim N} w)\\
    &\quad\quad\quad\times\p_x P_{N_3}u_2
    \RE\p_x^{-1}\dP_M(P_{N_2}w P_{N_4}\bar{\psi})dxdt'\Bigg|\\
   &\les T^{1/2} K^2 (\|w\|_{X^{\theta-1,1}}+\|w\|_{L_t^\I H_x^\theta})\|w\|_{L_t^\I H_x^\theta}.
  }
  Estimates for $W=\tilde{h}_{13},\tilde{h}_{15}$ follows similarly.
  By a similar argument, we can estimate $A_6$.
  Finally, as for $A_7$, it suffices to show the following:
  \EQS{\label{eq_6.15}
    \sum_{\substack{N_2,N_4\ll N_1,\\N_1^{-1}\le M\le N_1^{2/3}}}
    \|\p_x^{-1}\dP_M(P_{N_2}WP_{N_4}\bar{\psi})\|_{L_x^\I}
    \les N_1^{1/2}K^3\|w\|_{H_x^\theta},
  }
  where $W$ is defined by \eqref{def_WUV}.
  Most of the proofs follow from ones of \eqref{eq_6.12}.
  We only describe how to estimate $W=\tilde{h}_{15}=\psi^2 \p_x \bar{w}$ because this term requires a slight modification.
  We rewrite this term as $\p_x(\psi^2 \bar{w})-2\psi \p_x \psi \bar{w} $.
  Recalling that $\|\partial_x \psi \|_{L^2} \les K$, the contribution of the first term is easily estimated as
  \EQQS{
    &\sum_{\substack{1\le N_2,N_4\ll N_1,\\N_1^{-1}\le M\le N_1^{2/3}}}
     \|\p_x^{-1}\dP_M(P_{N_2}\p_x( \psi^2 \bar{w})P_{N_4}\bar{\psi})\|_{L_x^\I}\\
    &\les \sum_{\substack{0<N_2\les M,1\le N_4\ll N_1,\\N_1^{-1}\le M\le N_1^{2/3}}}
      M^{-1/2} N_2 \|\dP_{N_2}( \psi^2 \bar{w})\|_{L_x^2}
      \|P_{N_4}\psi\|_{L_x^\I} \\
    &\quad+\sum_{\substack{M\ll N_2,(1\vee N_2) \sim N_4\ll N_1,\\N_1^{-1}\le M\le N_1^{2/3}}}
       N_2 \|\dP_{N_2}( \psi^2 \bar{w})\|_{L_x^2}
      \|P_{N_4}\psi\|_{L_x^2} \\
    &\les K^2 \sum_{\substack{N_1^{-1}\le M\le N_1^{2/3}}}
     \|w\|_{L^2_x}
     \Bigl(M^{-1/2} K
     + \log(N_1)\|\partial_x \psi\|_{L^2_x}\Bigr)
    \le  N_1^{1/2} K^3  \|w\|_{L^2_x} \, ,
   }
whereas the second term can be estimated as
  \EQQS{
    &\sum_{\substack{1\le N_2,N_4\ll N_1,\\N_1^{-1}\le M\le N_1^{2/3}}}
     \|\p_x^{-1}\dP_M(P_{N_2}( \psi \p_x \psi\bar{w})P_{N_4}\bar{\psi})\|_{L_x^\I}\\
    & \les \sum_{\substack{1\le N_2,N_4\ll N_1,\\N_1^{-1}\le M\le N_1^{2/3}}}
     \|\dP_M(P_{N_2}( \psi \p_x \psi\bar{w})P_{N_4}\bar{\psi})\|_{L_x^1}\\
       &\les \sum_{\substack{1\le N_2,N_4\ll N_1,\\N_1^{-1}\le M\le N_1^{2/3}}} \|\psi\|_{L_x^\I}\|\p_x \psi\|_{L^2_x} \|w\|_{L^2_x} \|\psi\|_{L_x^\I}
    \les  N_1^{0+} K^3  \|w\|_{L^2_x} \, ,
   }
    This completes the estimates for $\E_N^3(u_1,u_2)$.
  Putting all the estimates we obtained above, Lemma \ref{extensionlem}, and \eqref{estdiffXregular}, we finally arrive at
  \EQQS{
    E^\theta(u_1(t),u_2(t),N_0)
    \les E^\theta(u_{0,1},u_{0,2},N_0)
    + CT^{1/4}(\|w\|_{L_T^\I H_x^\theta}^2+\|w\|_{L_T^\I \ov{H}^{\theta,\delta}}^2).
  }
  By taking supremum over $[0,T]$, we get \eqref{eq_dif_nonreg}, which concludes the proof.
\end{proof}

\section{Proof of the Main Theorems}\label{sec_proof}

In this section, we discuss the proofs of Theorems \ref{thm1}, \ref{thm2} and \ref{thm3}.

\subsection{Proof of Theorems \ref{thm1} and \ref{thm2}}

The proofs of Theorems \ref{thm1} and \ref{thm2} follow standard arguments (see, for instance, Section 6 of \cite{MT21}).
We omit the proof of Theorem \ref{thm1}, but we provide that of Theorem \ref{thm2} for the reader's convenience.

\begin{proof}[Proof of Theorem \ref{thm2}]
  Let $s>3/4$.
  We fix $\e,\de>0$ such that $s>3/4+2\de+\e$.
  We assume that $\psi$ satisfies \eqref{hyp_psi2} with this choice of $s$ and $\e$.
  We begin by showing the uniqueness result.

  \noindent
  \underline{Step 1. Unconditional Uniqueness.}
  Let $u_0 \in H^s(\R)$ and let $u_1,u_2\in L_T^\I H^s$ be two solutions to \eqref{eq2} emanating from the same initial data $u_0 $ for some $0<T<1$.
  According to Lemma \ref{lem47}, $ u_1$ and $ u_2 $ belong to $ Z^s_T $ and since $ u(0)-v(0) =0 $.
  We infer from Lemma \ref{lem_weight} that $w=u_1-u_2 $ belongs to $ L^\infty_T \overline{H}^{s,\delta}(\R) $.
  Then, by choosing $1/4<\theta<s/2-1/8 $, $N_0\gg 1 $ large enough and $T>0$ sufficiently small, Lemma \ref{LemZ22}, Propositions \ref{prop_coer} and \ref{prop_dif2} then ensure that
  \EQS{\label{71}
    \|u_1-u_2\|_{L_T^\I H_x^\theta}\lesssim
    \|u_1-u_2\|_{L_T^\I \overline{H}^{\theta,\de}}\lesssim \|u_1(0)-u_2(0)\|_{{\overline H}^{\theta,\de}}=0,
  }
  which implies that $u_1(t)=u_2(t)$ on $t\in[0,T]$.
  For the whole time interval, we repeat this argument a finite number of times.
  This is possible due to the coincidence of the initial data.

  \noindent
  \underline{Step 2. Local existence.}
    Let $u_0\in H^s(\R)$.
    Define $\{u_{0,n}\}_{n\ge 1}\subset H^{\I}(\R)$ by $u_{0,n}=P_{\le 2^n}u_0$ so that $u_{0,n}\to u_0$ in $H^s(\R)$ as $n\to\I$.
    Notice that $P_{\le 1}(u_{0,n}-u_{0,m})=0$ for any $n,m\in\N$ and that $\|u_{0,n}\|_{H_x^{s'}}\le \|u_{0}\|_{H_x^{s'}}$ for $0\le s'\le s$.
    Let $u_n\in C([0,T_{1,n}];H^{\I}(\R))$ be a solution to \eqref{eq2} with the initial data $u_{0,n}$ for $n\in \N_{\ge 1}$.
    Moreover, we can assume that $\liminf_{t\to T_{1,n}}\|u_n(t)\|_{H_x^2}=\I$ or $T_{1,n}=\I$ holds for each $n\in \N_{\ge 1}$.
    Such solutions can be obtained via the classical energy method at least in $H^2(\R)$, or by Theorem \ref{thm1}.
    We see from Proposition \ref{prop_apri} with $\om_N\equiv 1$ that there exists an increasing continuous function $G:\R_+\to\R_+$ such that
    \EQS{\label{eq_aps2}
      \|u_n\|_{L_T^\I H_x^s}^2
      \le \|u_{0,n}\|_{H_x^s}^2
       + T^{1/4}G(\|u_n\|_{L_T^\I H_x^{s_0}}+K_{s})
         \|u_n\|_{L_T^\I H_x^s}^2
       + T\|\Psi\|_{L_T^\I H_x^{s+\e}}
    }
    for any $0\le T\le T_{1,n}$ and $s\ge s_0>3/4$,
    where $K_s$ is defined in \eqref{eq77}.
    Remark that we do not need $\|\p_x\psi\|_{L_{t}^\I L_x^1}$ in order to deduce \eqref{eq_aps2}, whereas it is required to obtain \eqref{71}.
    By setting
    \EQQS{
      T_{2,n}:=\min \big\{\big(4 G(2\|u_0\|_{H_x^{s_0}}+K_{s_0})\big)^{-4},\|u_0\|_{H_x^{s_0}}(2\|\Psi\|_{L_t^\I H_x^{s_0+\e}}+1)^{-1},T_{1,n}\big\},
    }
    we see from \eqref{eq_aps2} with $s=s_0$ and the continuity argument that
    \EQS{\label{eq_aps}
      \|u_n\|_{L_T^\I H_x^{s_0}}^2\le 2 \|u_{0,n}\|_{H_x^{s_0}}^2
      \le 2\|u_0\|_{H_x^{s_0}}^2.
    }
    for $0\le T\le T_{2,n}$ and $n\in \N_{\ge 1}$.
    We put again
    \EQQS{
      T_3(s,n)
        &:=\inf\{T\in [0,T_{1,n}]:
         \|u_n\|_{L_T^\I H_x^s}^2> 3\|u_0\|_{H_x^s}^2\},\\
      \ti{T}_{3}(s)
        &:=\min \big\{\big(4G(2\|u_0\|_{H_x^{s_0}}+K_s)\big)^{-4},\|u_0\|_{H_x^{s_0}}(2\|\Psi\|_{L_t^\I H_x^{s+\e}}+1)^{-1},T_3(s,n)\big\}.
    }
    Notice that $T_3(s,n)>0$ and $\|u_n\|_{L_T^\I H_x^s}^2\le 3\|u_0\|_{H_x^s}^2$ for $0\le T\le T_3(s,n)$.
    On the other hand, by \eqref{eq_aps2}, \eqref{eq_aps}, and the definition of $\ti{T}_{3}(s)$, we obtain
    \EQS{\label{eq_aps3}
      \|u_n\|_{L_T^\I H_x^s}^2
      \le 2 \|u_{0,n}\|_{H_x^s}^2
      \le 2\|u_0\|_{H_x^s}^2
      <3\|u_0\|_{L_T^\I H_x^s}^2.
    }
    for $0\le T\le \ti{T}_{3}(s)$ and $n\in \N_{\ge 1}$, which implies that $\ti{T}_{3}(s)<T_3(s,n)$.
    In particular, $\ti{T}_{3}(s)$ is independent of $n$.
    For simplicity, we write $T=\ti{T}_{3}(s)$.
    Thanks to this estimate, there exists $C>0$ such that for any $n,m\in\N_{\ge 1}$,
    \EQQS{
      \ti{K}_s=K_s+\|u_n\|_{L_T^\I H_x^s}+\|u_m\|_{L_T^\I H_x^s} \le CK_s+C\|u_0\|_{H_x^s}=:K_s'.
    }
    Note that we can control $N_0\gg 1$, which is used in Proposition \ref{prop_coer}, by this $K_s'$.
    By a similar argument to \eqref{71}, we obtain
    \EQS{\label{71.1}
      \|u_n-u_m\|_{L_{T'}^\I H_x^\te}
      \les \|u_{0,n}-u_{0,m}\|_{H_x^{\te}}
    }
    for $T'\sim G(K_s')^{-4}$.
    By \eqref{71.1}, interpolation, and the triangle inequality with \eqref{eq_aps3}, we also obtain for any $\theta<s'<s$,
    \EQQS{
      \|u_n-u_m\|_{L_{T'}^\I H_x^{s'}}
      &\le \|u_n-u_m\|_{L_{T'}^\I H_x^\theta}^{1-\eta}
      \|u_n-u_m\|_{L_{T'}^\I H_x^s}^{\eta}\\
      &\le G(K_{s}')\|u_n-u_m\|_{L_{T'}^\I H_x^\theta}^{\eta}
      \to0
    }
    as $n,m\to \I$, where $\eta=(s'-\theta)/(s-\theta)\in(0,1)$.
    This implies that $\{u_n\}_{n\ge 1}$ is a Cauchy sequence in $C([0,T'];H^{s-}(\R))$, and there exists $u\in C([0,T'];H^{s-}(\R))$ such that $u_n \to u$ in $C([0,T'];H^{s-}(\R))$.
    Moreover, \eqref{eq_aps3} (with a standard argument of functional analysis) ensures that $u\in L^\I([0,T'];H^s(\R))$ and $u\in C_w([0,T'];H^s(\R))$.
    It is easy to see that this $u$ solves \eqref{eq2} on $[0,T']$ with the initial data $u(0,x)=u_0(x)$.
    Moreover, by Step 1, this is the unique solution.

  \noindent
  \underline{Step 3. Strong continuity.}
  We first show the strong continuity at $t=0$.
  We see from the proof of \eqref{eq_aps2} that
  \EQQS{
    \limsup_{t\to 0}\|u(t)\|_{H_x^s}
    \le \|u_0\|_{H_x^s}+\lim_{t\to 0}t^{1/4}C(K_{s}')\|u_n\|_{L_T^\I H_x^s}^2
    \le \|u_0\|_{H_x^s}.
  }
  This, together with the weak continuity in $H^s(\R)$, shows that $\lim_{t\to 0}\|u(t)\|_{H_x^s}$ exists and $\lim_{t\to 0}\|u(t)\|_{H_x^s}=\|u_0\|_{H_x^s}$.
  Therefore, we have $u(t)\to u_0$ in $H^s(\R)$ as $t\to0$.
  Other cases $t=t'\in(0,T']$ follow from the same argument with \eqref{eq_aps2} starting at $t=t'$, where $T'$ is obtained by Step 2.
  Iterating this process employed above a finite number of times, we obtain $u_n\to u$ in $C([0,T'];H^{s-}(\R))$ and $u\in C([0,T'];H^s(\R))$.

  \noindent
  \underline{Step 4. Continuous dependence.}
  We denote the flow map defined by the above argument by $S$, i.e., $S:H^s(\R)\ni u_0\mapsto u\in C([0,T];H^s(\R))$.
  Let $u_0\in H^s(\R)$ and $\{u_{0,n}\}_{n\in \N_{\ge 1}}\subset H^s(\R)$.
  Assume that $u_{0,n}\to u_0$ in $H^s(\R)$ as $n\to \I$ and that $u,u_n\in C([0,T];H^s(\R))$ are the solutions to \eqref{eq2} with initial data $u_0,u_{0,n}$ for $n\in \N_{\ge1}$, respectively.
  Here, $T=T'$ is obtained by Step 2.
  Without loss of generality, we may assume that $\sup_{n\ge 1}\|u_{0,n}\|_{H^s}\le 2\|u_0\|_{H^s}$.
  Now, we show that $u_n\to u$ in $C([0,T];H^s(\R))$ as $n\to\I$.
  For that purpose, we first construct the frequency envelope.
  We define a sequence of positive numbers $\{\om_N\}_{N\in 2^{\N}}$ such that
  $\om_N\le \om_{2N}\le 2^{\e/2} \om_N$, $\om_N\to\I$ as $N\to\I$, and
  \EQQS{
    \|u_0\|_{H_\om^s}+\sup_{n\ge 1}\|u_{0,n}\|_{H_\om^s}<K''
  }
  for some $K''<\I$.
  For details, see Lemma 4.1 in \cite{KT1} (and also Lemma 4.6 in \cite{MT21}).
  By the same argument as \eqref{eq_aps3}, we have
  \EQS{\label{eq_aps4}
    \|u\|_{L_T^\I H_\om^s}^2
    \le 2\|u_0\|_{H_\om^s}^2,\quad
    \sup_{n\ge 1}\|u_n\|_{L_T^\I H_\om^s}^2
    \le 2\sup_{n\ge 1}\|u_{0,n}\|_{H_\om^s}^2
  }
  We notice that
  \EQQS{
    u-u_n
    &=S(u_0)-S(u_{0,n})\\
    &=[S(u_0)-P_{\le N}S(u_0)]
     +[P_{\le N}S(u_0)-P_{\le N}S(P_{\le N}u_0)]\\
    &\quad+[P_{\le N}S(P_{\le N}u_0)-P_{\le N}S(P_{\le N}u_{0,n})]
     +[P_{\le N}S(P_{\le N}u_{0,n})-P_{\le N}S(u_{0,n})]\\
    &\quad +[P_{\le N}S(u_{0,n})-S(u_{0,n})]
    =:A_1+\cdots+A_5.
  }
  Let $\ga>0$.
  Now we show that $\|u-u_n\|_{L_T^\I H_x^s}<\ga$ by choosing $N$ and $n$ appropriately.
  By choosing $N_1$ sufficiently large, we see from \eqref{eq_aps4} that
  \EQQS{
    &\|A_1\|_{L_T^\I H_x^s}
    =\|P_{> N_1}u\|_{L_T^\I H_x^s}
    \le \om_{N_1}^{-1}\|u\|_{L_T^\I H_\om^s}
    \le \om_{N_1}^{-1}C(K'')<\frac{\ga}{5},\\
    &\sup_{n\ge 1}\|A_5\|_{L_T^\I H_x^s}
    \le \om_{N_1}^{-1}C(K'')<\frac{\ga}{5},
  }
  On the other hand, by \eqref{71.1}, we choose $N_2$ sufficently large so that
  \EQQS{
    \|A_2\|_{L_T^\I H_x^s}
    &\le CN_2^{s-\theta}\|S(u_0)-S(P_{\le N_2}u_0)\|_{L_T^\I H_x^\theta}\\
    &\le CN_2^{s-\theta} \|P_{>N_2}u_0\|_{H_x^\theta}
    \le \om_{N_2}^{-1}C(K'')<\frac{\ga}{5},\\
    \sup_{n\ge 1}\|A_4\|_{L_T^\I H_x^s}
    &\le C(K_{s_0,s_0,\om}) N_2^{s-\theta}
     \sup_{n\ge 1} \|P_{>N_2}u_{0,n}\|_{H_x^\theta}
    \le \om_{N_2}^{-1}C(K'')<\frac{\ga}{5}.
  }
  Note that $P_{\le 1}(u_0-P_{\le N_2}u_0)=0$ if $N_2> 2$, which allows us to use \eqref{71.1}.
  Set $N:=\max\{N_1,N_2\}$.
  Let us now tackle the estimate on $A_3$.
  Note that here we cannot use \eqref{71.1} since $u_{0,n}-u_0$ is not guaranteed to belong to $\overline{H}^{\theta,\de}(\R) $.
  Instead, we make use of \eqref{eq_dif2} since both $ S(P_{\le N} u_0) $ and $ S(P_{\le N} u_{0,n})$ belong to $ H^1(\R) $.
  This ensures that for sufficiently small $ T'=T'(N)>0$,
  \EQQS{
    \|A_3\|_{L^\infty_{T'} H_x^s}
    \le N^s \|A_3\|_{L^\infty_{T'} L^2_x}
    \le C(N) \|u_0-u_{0,n} \|_{L^2_x}
  }
  for some $C(N)>0$.
  By repeating this argument finitely many times (depending on $N$), we obtain
  \EQQS{
    \|A_3\|_{L^\infty_T H_x^s}
    \le C'(N) \|u_0-u_{0,n} \|_{L^2_x}.
  }
  Then, by choosing $n\ge 1$ sufficiently large, we obtain
  \EQQS{
    \|A_3\|_{L_T^\I H_x^s}<\frac{\ga}{5},
  }
  which implies that $\|u-u_n\|_{L_T^\I H_x^s}<\ga$.
  This completes the proof of the continuity of the flow map.
\end{proof}

\subsection{Proof of Theorem \ref{thm3}}

In this subsection, we give a proof of Theorem \ref{thm3}.
For that purpose, we first state a useful lemma, which was essentially proved in \cite{G05}.

\begin{lem}\label{lem_Zhidkov1}
  Let $s\in \R$ and $f\in Y^s(\R)$.
  Then, there exist $\psi\in C_b^\I(\R)$ and $\vp\in H^s(\R)$ such that $\psi'\in H^\I(\R)$ and $f(x)=\psi(x)+\vp(x)$ for $x\in \R$ a.e.
  Moreover, the maps $f\mapsto \psi$ and $f\mapsto \vp$ can be defined as bounded linear operators in the following sense: the map $f\mapsto \psi$ is continuous from $Y^s(\R)$ to $Y^{s'}(\R)$ for any $s'\in\R$, and the map $f\mapsto \vp$ is continuous from $Y^s(\R)$ to $H^s(\R)$.
\end{lem}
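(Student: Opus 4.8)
\textbf{Proof plan for Lemma \ref{lem_Zhidkov1}.}

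The plan is to construct $\psi$ by smoothing and localizing $f$ at low frequencies only, so that the high-frequency part lands in $H^s$. First I would fix a Littlewood--Paley type decomposition adapted to low frequencies: write $f = P_{\le 1} f + P_{\ge 2} f$ in the sense of the non-homogeneous decomposition fixed in Subsection \ref{subs_notation}. The point is that $\p_x f \in H^{s-1}(\R)$ by definition of $Y^s(\R)$, hence $\p_x P_{\ge 2} f = P_{\ge 2} \p_x f \in H^{s-1}(\R)$ with Fourier support in $\{|\xi|\gtrsim 1\}$, which immediately gives $P_{\ge 2} f \in H^s(\R)$ (on that frequency range $\LR{\xi}^s \sim |\xi| \LR{\xi}^{s-1}$). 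So the natural candidates are $\vp := P_{\ge 2} f$ and $\psi := P_{\le 1} f$. The map $f \mapsto \vp$ is then visibly linear, and bounded $Y^s(\R) \to H^s(\R)$ since $\|P_{\ge 2} f\|_{H^s} \lesssim \|\p_x f\|_{H^{s-1}} \le \|f\|_{Y^s}$.

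Next I would verify the claimed properties of $\psi = P_{\le 1} f$. Since $f \in L^\infty(\R)$, $\psi = \chi(D_x) f$ is the convolution of an $L^\infty$ function with a Schwartz kernel $\F_x^{-1}\chi$, hence $\psi \in C_b^\infty(\R)$ with all derivatives bounded; more precisely $\|\p_x^k \psi\|_{L^\infty} \lesssim_k \|f\|_{L^\infty}$, so $f \mapsto \psi$ is bounded from $Y^s(\R)$ (indeed already from $L^\infty(\R)$) into $C_b^\infty$. For the statement $\psi' \in H^\infty(\R)$: $\p_x \psi = \p_x P_{\le 1} f = P_{\le 1}(\p_x f)$, and $\p_x f \in H^{s-1}(\R)$, so $P_{\le 1}(\p_x f)$ has compactly supported Fourier transform (in $\{|\xi|\le 2\}$) and belongs to $L^2(\R)$; a function with Fourier support in a bounded set that lies in $L^2$ automatically lies in $H^{s'}(\R)$ for every $s' \in \R$, so $\psi' \in H^\infty(\R)$. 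Finally $f = \psi + \vp$ pointwise a.e.\ is just the identity $f = P_{\le 1} f + P_{\ge 2} f$, which holds because $\phi_1 + \sum_{N\ge 2}\phi_N \equiv 1$. The continuity of $f \mapsto \psi$ from $Y^s(\R)$ to $Y^{s'}(\R)$ for arbitrary $s'$ follows by combining $\|\psi\|_{L^\infty} \lesssim \|f\|_{L^\infty} \le \|f\|_{Y^s}$ with $\|\p_x \psi\|_{H^{s'-1}} = \|P_{\le 1}\p_x f\|_{H^{s'-1}} \lesssim_{s'} \|P_{\le 1}\p_x f\|_{L^2} \lesssim \|\p_x f\|_{H^{s-1}} \le \|f\|_{Y^s}$, where the $s'$-dependent constant comes from bounding $\LR{\xi}^{s'-1}$ on $\{|\xi|\le 2\}$.

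There is essentially no hard obstacle here; the only point requiring a little care is the assertion that $f \mapsto \psi$ is continuous \emph{into $Y^{s'}(\R)$ for every $s'$}, i.e.\ that smoothing at low frequency gains arbitrary regularity without costing control — this is exactly why one cuts at frequency $1$ rather than anywhere else, and it is handled by the compact-Fourier-support observation above. I would also remark, to match the phrasing in \cite{G05}, that one can equally take any smooth radial cutoff equal to $1$ near the origin in place of $\chi$; the particular choice is irrelevant. Assembling these observations gives the lemma.
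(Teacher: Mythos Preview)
Your proof is correct and follows the standard low-frequency projection argument: setting $\psi=P_{\le 1}f$ and $\vp=P_{\ge 2}f$ and checking the required properties via the compact Fourier support of $\psi'$ and the frequency localization of $\vp$. The paper does not actually supply its own proof of this lemma; it simply cites \cite{G05}, where essentially the same construction appears, so your write-up is in line with the intended argument.
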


\begin{proof}[Proof of Theorem \ref{thm3}]

  First, we notice that $H^s(\R)\hookrightarrow Y^s(\R)$ since $s>3/4$.

  \noindent
  \underline{Step 1. Local existence.}
  Let $v_0\in Y^s(\R)$.
  We see from Lemma \ref{lem_Zhidkov1} that there exist $\psi\in C_b^\I(\R)$ and $u_0\in H^s(\R)$ such that $v_0=\psi+u_0$ and $\psi'\in H^\I(\R)$.
  Obviously, this $\psi$ is independent of $t$.
  By Lemma \ref{lem_Zhidkov1}, $\psi$ satisfies \eqref{hyp_psi}.
  Moreover, we have
  \EQQS{
    \|J_x^{s+3}\psi\|_{L^{\I}}
    +\|\Psi\|_{H^{s+1}}
    \le C(1+\|u_0\|_{Y^s})^3,\quad
    \|\vp\|_{H^s}
    \le C\|u_0\|_{Y^s}.
  }
  Then, by Theorem \ref{thm1}, there exist $T=T(\|v_0\|_{Y^s})>0$ and $u\in C([0,T];H^s(\R))$ such that $u$ is the solution to \eqref{eq2} with the initial data $u(0)=u_0\in H^s(\R)$.
  By setting $v(t):=u(t)+\psi$, $v$ is  solution to \eqref{eq1} emanating from $v(0)=v_0$ and satisfies $v\in C([0,T];Y^s(\R))$.
  We also have $v(t)-v_0=u(t)-u_0\in C([0,T];H^s(\R))$.

  \noindent
  \underline{Step 2. Uniqueness.}
  Let $v_1,v_2\in C([0,\de];Y^s(\R))$ be two solutions to \eqref{eq1} emanating from  the same initial data $v_0\in Y^s(\R)$.
  Assume that $v_1-v_0$ and $v_2-v_0\in C([0,\de];H^s(\R))$.
  By the decomposition due to Lemma \ref{lem_Zhidkov1}, there exists $ \psi\in  H^\I(\R)$ such $v_j(t)-\psi $ belong to $C([0,\de];H^s(\R))$  and satisfy \eqref{eq2} for $j=1,2$.
  The unconditional uniqueness of Theorem \ref{thm1} then ensures that  $v_1(t)-\psi=v_2(t)-\psi$, i.e., $v_1(t)=v_2(t)$ on $[0,\de]$.

  \noindent
  \underline{Step 3. Continuous dependence.}
  Let $\{v_{0,n}\}_{n\in \N}\subset Y^s(\R)$ and $v_0\in Y^s(\R)$.
  Assume that $v_{0,n}\to v_0$ in $Y^s(\R)$.
  Then by Lemma \ref{lem_Zhidkov1} there exists $ \{u_{0,n}\}_{n\in \N} \subset H^s(\R) $, $u_0\in H^s(\R)$, $\{\psi_n\}_{n\in \N} \subset C_b^\I(\R) $ and $ \psi\in C_b^\I(\R) $ such that
  $ v_{0,n}=u_{0,n}+\psi_n $, $ v_0=u_0+\psi$, $u_{0,n}-u_0\to 0 $ in $ H^s(\R) $ and $ \psi_n-\psi\to 0$ in $ Y^s(\R) $ as $n\to +\infty$.
  Moreover, we have $\psi_n',\psi'\in H^\I(\R)$.
  By the above argument, we can construct solutions $v_n=u_n+\psi_n $ and $ v=u+\psi $ to \eqref{eq1} with $v_n(0)=v_{0,n}$ and $v(0)=v_0$, respectively, where  $ u_n, u\in C([0,T]; H^s(\R)) $ are the solutions to \eqref{eq2} associated with respectively $(u_{0,n},\psi_n)$ and $ (u_0,\psi) $.
  From \eqref{eq_dif1Z1} in Proposition \ref{prop_dif1Z1}, we infer that  for $ s'=(s-1)\wedge 0 $ it holds
  \EQQS{
    \|u_n-u\|_{L^\infty_T H_x^{s'}} \to 0 \quad \text{as} \quad n\to +\infty.
  }
  Therefore, proceeding exactly as in the proof of Theorem \ref{thm2} with the help of the frequency envelopes, we get that
  \EQQS{
    \|u_n-u\|_{L^\infty_T H_x^{s}} \to 0 \quad \text{as} \quad n\to +\infty,
  }
 which leads to
  \EQQS{
    \|v_n-v\|_{L_T^\I Y_x^{s}}
    &\le \|u_n-u\|_{L_T^\I H_x^{s}}+\|\psi_n-\psi_0\|_{Y^{s}}
    \to 0\quad \text{as} \quad n\to +\infty.
  }
  This completes the proof.
\end{proof}

\section*{Acknowlegdements}

The second author was supported by the EPSRC New Investigator Award (grant no. EP/V003178/1), JSPS KAKENHI Grant Number 23K19019, JP20J12750, JSPS Overseas Challenge Program for Young Researchers and Iizuka Takeshi Scholarship Foundation and a grant from the Harris Science Research Institute of Doshisha University. A large part of this work was conducted during a visit of the second author at Institut Denis Poisson (IDP) of Universit\'e de Tours in France. The second author is deeply grateful to IDP for its kind hospitality.

\end{document}